\documentclass[10pt, reqno]{amsart}
 
\usepackage{amsmath,amsthm,amssymb,amscd,graphicx}

\usepackage{enumerate}
\usepackage{mathtools}
\usepackage{mathrsfs}
\usepackage{color}

\evensidemargin 0.0in \oddsidemargin 0.0in \textwidth = 13cm
\topmargin  -0.2in \textheight = 21cm \overfullrule = 0pt

 \usepackage{hyperref}


\renewcommand{\phi}{\varphi}
\def\C{\mathbb C}
\def\R{\mathbb R}

\def\Z{\mathbb Z}

\def\N{\mathbb N}
\def\E{\mathcal E}
\def\ET{\widetilde{\mathcal{E}}}

\def\e{e}
\def\eps{\varepsilon}
\def\dis{\displaystyle}   

\def\EE{\exp}

\newcommand{\pa}[1]{\left(#1\right)}
\newcommand{\pac}[1]{\left[#1\right]}
\newcommand{\paa}[1]{\left\{#1\right\}}
\newcommand{\bra}[1]{\left\langle #1 \right\rangle}
\newcommand{\abs}[1]{\left|#1\right|}
\newcommand{\absabs}[1]{\left\|#1\right\|}

\newcommand{\wt}{\widetilde}

\renewcommand{\Re}{  {\mathfrak{Re}}  }
\renewcommand{\Im}{   {\mathfrak{Im}} }
\newcommand{\ov}{  \overline  }
\newcommand\<{\langle}
\renewcommand\>{\rangle}
 
\newcommand{\simx}[2]{\underset{#1\to#2}{\thicksim}}

\catcode`\Ž = \active\def Ž{\'e} \catcode`\ƒ = \active\def ƒ{\'E}  \catcode`\Ë = \active\def Ë{\`A}\catcode`\ = \active\def {\`e}
\catcode`\ = \active\def {\c{c}} \catcode`\ˆ = \active\def
ˆ{\`a} \catcode`\ = \active \def {\^e} \catcode`\ = \active
\def {\`u} \catcode`\™ = \active \def ™{\^o} \catcode`\ž =
\active \def ž{\^u} \catcode`\" = \active \def "{\^{\i}}
\catcode`\‰ = \active \def ‰{\^a} \catcode`\š = \active \def
š{{\"o}}

\setcounter{tocdepth}{1}

\newtheorem{theorem}{Theorem}[section]

\newtheorem{lemme}[theorem]{Lemma}
\newtheorem{prop}[theorem]{Proposition}
\theoremstyle{definition}
\newtheorem{df}[theorem]{Definition}
\theoremstyle{remark}
\newtheorem{remark}[theorem]{Remark}

\numberwithin{equation}{section}

\begin{document}

\author{Pierre Germain}
\address{Department of Mathematics, Huxley building, South Kensington campus, Imperial College London, London SW7 2AZ, United Kingdom} %
\email{pgermain@ic.ac.uk}

\author{Valentin Schwinte}
\address{Universit\'e de Lorraine, CNRS, IECL, F-54000 Nancy, France}
\email{valentin.schwinte@univ-lorraine.fr}

\author{Laurent Thomann }
\address{Universit\'e de Lorraine, CNRS, IECL, F-54000 Nancy, France}
\email{laurent.thomann@univ-lorraine.fr}

\title[On the stability of the Abrikosov lattice in the LLL]{On the stability of the Abrikosov lattice in the Lowest Landau Level}

\subjclass[2010]{35Q55 ; 37K06 ; 35B08} 
\keywords{Nonlinear Schr\"odinger equation, Lowest Landau Level, Stationary solutions, periodic conditions, Theta functions.}

\begin{abstract}
We study the Lowest Landau Level equation set on simply and doubly-periodic domains (in other words, rectangles and strips with appropriate boundary conditions). To begin with, we study well-posedness and establish the existence of stationary solutions. Then we investigate the linear stability of the lattice solution and prove it is stable for  the (hexagonal) Abrikosov lattice, but unstable for rectangular lattices. 
\end{abstract}

\maketitle

\tableofcontents

\renewcommand{\labelenumi}{(\roman{enumi}).}

\bigskip

\section{Introduction and main results}

\subsection{The equation}

Consider  the Lowest Landau Level equation
\begin{equation}\label{LLL}\tag{LLL}
\left\{
\begin{aligned}
 &   i \partial_t u = \Pi (|u|^2 u),\\
 &u(0,\cdot)=  u_0,
\end{aligned}
\right.
\end{equation}
   where $\Pi$ is the projector in $L^2(\C)$ on the Fock-Bargmann space $$\mathcal{E} =\Big \{\, u(z) =  \EE\big( -{|z|^2}/2\big) f(z)\;,\;f \; \mbox{entire\ holomorphic}\,\Big\}\cap L^2(\C )\ ,$$ which is given by the kernel
\begin{equation}\label{defPi}
[\Pi u](z) = \frac{1}{\pi} \EE\Big( -{|z|^2}/2\Big)  \int_\mathbb{C}  \EE\Big( \ov  w z - |w|^2/2\Big) u(w) \,dL(w),
\end{equation}
($L$ being simply the Lebesgue measure on $\C$). Notice that $\Pi $ extends as an operator from $\mathscr{S}'(\C )$ onto the space 
$$\widetilde {\mathcal E}=\big\{ u(z) =\EE\big( -{|z|^2}/2\big) f(z)\;,\;f \; \mbox{entire\ holomorphic}\; : \; \exists M ,\ \vert u(z)\vert \lesssim \< z\>^M \, \big\} \ ,$$
on which $\Pi $ is the identity operator. For this reason, we shall extend equation (\ref{LLL}) to $\widetilde {\mathcal E}$. 

\subsection{The Physics of \eqref{LLL}}

The reviews~\cite{Aftalion,Fetter} provide very good overviews of the physics of rotating Bose-Einstein condensates.
Different experiments showed that they exhibit triangular arrays of vortices~\cite{ARVK,MCWD,SCEMC}, known as Abrikosov lattices, which are stable and support oscillations known as Tkachenko waves~\cite{BWCB}. These patterns are interpreted as minimizers of the Gross-Pitaevskii energy to which a trapping term and a rotation term are added. It can be written as follows
$$
 E(u) = \int_{\mathbb{R}^2} \left[ \frac{1}{2} \left| (\nabla - i \Omega A) u \right|^2 + \frac{1}{2}(1-\Omega^2) |x|^2 |u|^2 + \frac{1}{2} |u|^4 \right] \,dx.
$$
Here $A = \begin{pmatrix} -x_2 \\ x_1 \end{pmatrix}$, $\Omega$ is the speed of rotation, and the other physical constants were scaled out. 

In order to make sense of these arrays, further simplifications are needed. One possibility is the Thomas-Fermi regime, examined in~\cite{FB}. Another possibility, which will occupy us here, is to consider the lowest Landau level regime, which arises under two physical assumptions. To start with, we require that $\Omega \to 1$, so that the second term in $E$ becomes negligible compared to the first one. Furthermore, we assume that the first term in $E$ is dominant compared to the third, and that the energy levels of $\Delta_A = \nabla_A \cdot \nabla_A$ with $\nabla_A=\nabla -i \Omega A$ are well separated. This means that states of low energy~$E$ will be in the ground state of $\Delta_A$.

This ground state is very degenerate; this is the lowest Landau level which is well-known in quantum Hall Physics~\cite{GJ,PericeThese}.   Let us    describe the lowest Landau level in the case $\Omega = 1$. Observe that, writing $z=x_1+ix_2$, 
$$
\left\| (\nabla - i A) u \right\|^2_{L^2} = 2 \| u \|_{L^2}^2 + \| (2 \partial_{\overline{z}} + z) u \|_{L^2}^2.
$$
 This formula implies that the ground state of $\Delta_A$ (when $\Omega=1)$ is the whole Fock-Bargmann space~$\mathcal{E}$.  Summarizing, we reduced the problem to the energy
$$
H(u) = \int_{\mathbb{R}^2} \Big[ |u|^4 + (1-\Omega^2) |z|^2|u|^2 \Big]\,dL(z), \qquad u \in \mathcal{E}.
$$
 Considering now the time-dependent problem, it turns out that the Hamiltonian flow given by $\dis \int |z|^2|u|^2$ is trivial, and that it commutes with the Hamiltonian flow given by $\dis \int |u|^4$, which is~\eqref{LLL}; thus the full time-dependent problem reduces to~\eqref{LLL}.

In the lowest Landau level regime, the energy has been considerably simplified, and the state space has a very rigid structure; this enabled theoretical progress on the distribution of vortices of minimizers in~\cite{ABD,Ho,WBP}. Tkachenko waves can also be considered in the lowest Landau level regime~\cite{Baym,Sonin}.

\medskip

It is in the context of superconductors that regular arrays of vortices~\cite{Abrikosov} and the Tkachenko waves they support~\cite{Tkachenko} were first observed and theoretically described. These systems are described by the Ginzburg-Landau equations of superconductivity.
Focusing on the wave function and neglecting the self-consistent magnetic field, one finds the same energy as above, also defined on the Bargmann-Fock space, but there is a major difference: the relevant time-dependent problem is not Hamiltonian, but rather a gradient flow.
For this reason, the results derived in the present paper apply to the Ginzburg-Landau equations as long as they only pertain to properties of $H$ on the space $\mathcal{E}$ (typically, its Hessian at critical points). But the results describing properties of the Hamiltonian system~\eqref{LLL} do not have a good counterpart in the context of time-dependent Ginzburg-Landau.

\subsection{Mathematical results}

The mathematical analysis of \eqref{LLL} was initiated in~\cite{ABN} for the time-independent and \cite{Nier1} for the time-dependent problem. These works were later extended in \cite{GGT, Sch}. In \cite{SchTho, Tho1}, systems of coupled \eqref{LLL} equations were studied and travelling waves were constructed, providing examples of solutions with growing Sobolev norms.

Many works were devoted to the analysis of rotating Bose-Einstein condensates \cite{AB,BR,CRY,Rou2,Rou1,RSY2}, studying the variational problem at the level of the Gross-Pitaevskii equation and also proving its convergence to the time-independent version of \eqref{LLL}.

The existence and stability of doubly periodic solutions to the Ginzburg-Landau equations has been studied in~\cite{Sigal-Tza1, Sigal-Tza2, Sigal-Tza3}. These doubly periodic solutions appear in a regime corresponding to the lowest Landau level reduction; but, as noticed above, the time-dependent problem is parabolic and hence very different from the Hamiltonian system \eqref{LLL}.

The equation~\eqref{LLL} also arises as the completely resonant system for the cubic nonlinear Schr\"odinger equation. The  completely resonant system was derived from the cubic (NLS) on the two-dimensional torus in  \cite{FGH}, and was studied in~\cite{GHT, GHT2}. An analogous derivation can be followed from the confined Gross-Pitaevskii equation~\cite{MeSpa,FMS}. Finally, the completely resonant system can also be interpreted as the modified scattering limit of the cubic nonlinear Schr\"odinger equation with partial harmonic confinement~\cite{H-T}. In all these works, vortex arrays have not been reported.

We also refer to \cite{GT, BBCE, BiBiCrEv,BiBiEv, Clerck-Evnin, Sch} for more results on~\eqref{LLL} and related equations.
 
\subsection{The Theta function and the Abrikosov lattice} We now turn to the mathematical description of the Abrikosov lattice, which relies on the Jacobi Theta functions.

Let $\tau=\tau_1+i \tau_2 \in \C$ with $\tau_2>0$.  The Jacobi Theta function on the lattice $ \mathcal{L}_\tau = \mathbb{Z}\oplus \tau \mathbb{Z}$ is defined (see \cite[Chapter V]{Cha}) by 
\begin{equation*}
\Theta_{\tau}(z)=-i \sum_{n=-\infty}^{+\infty} (-1)^n \EE\Big( {i\pi \tau (n+1/2)^2}+{i(2n+1)\pi z}\Big), \qquad z\in \C.
\end{equation*}
The $\Theta_{\tau}$ function vanishes exactly on  $\mathcal{L}_\tau$ and is such that 
\begin{equation}\label{star} 
\Theta_{\tau}(-z)=-\Theta_{\tau}(z),\quad \Theta_{\tau}(z+1)=-\Theta_{\tau}(z),\quad \Theta_{\tau}(z+\tau)=-e^{-i \pi \tau}e^{-i 2\pi z}\Theta_{\tau}(z).
\end{equation}
For more results on Theta functions, we refer to \cite[$\S$ 4]{Godement} (in this book, the notation $\theta_1(z, \tau)=\Theta_{\tau}(z)$ is used) and we refer as well to the book \cite{Mumford}.  We also mention the expository paper \cite{Nier2} of F. Nier making connections with various points of view. 

To every Theta function, we associate the function 
$$
\Phi_0(z) =  \EE\Big( \frac{z^2}2 - \frac{{i \pi}z}  {\gamma} - \frac{|z|^2}2\Big) \Theta_\tau \left( \frac{z}{\gamma} - \frac{\tau-1}{2} \right).
$$
For the choice of $\gamma^2 \tau_2 = \gamma^2 \mathfrak{Im} \, \tau= \pi$, this function is $L^\infty$ and belongs to $\widetilde {\mathcal E}$. Furthermore, it is a stationary solution of the \eqref{LLL} equation (see Remark \ref{rem14} below). Finally, its period is the lattice $\mathcal{L}_{\tau,\gamma}=\gamma (\mathbb{Z} + \tau \mathbb{Z})$; 
if $\tau = j = \EE\big({{2i \pi}/{3}}\big)$ and $\gamma^2 = {2\pi}/{\sqrt{3}}$, this period is given by the hexagonal array known as the Abrikosov lattice!

\medskip

Introduce now the \textit{magnetic translation} in the direction $\alpha \in \mathbb{C}$
$$
R_\alpha u (z) = \EE\Big(\frac{\alpha \ov{z} - \ov{\alpha} z}2\Big)   u(z+\alpha).
$$
For any $\alpha\in \C$, $R_\alpha$ is a symmetry of \eqref{LLL} (and in particular, it leaves $\mathcal{E}$ invariant).  The symmetries of the $\Theta_{\tau}$ function translate into the following identity for $\Phi_0$:
\begin{equation}
\label{symPhi}
R_{\gamma} \Phi_0 = R_{\gamma \tau} \Phi_0 = \Phi_0.
\end{equation}

\subsection{Results obtained}
The main theme of the present paper is the stability of vortex lattices under the flow of \eqref{LLL}. Since it is quite difficult to make progress on this question in full generality, we simplify the problem by considering perturbations which respect the symmetries \eqref{symPhi} of $\Phi_0$, resulting in doubly- or simply-periodic functions.

\medskip

In \textit{Section \ref{section2}}, we investigate the case of doubly periodic functions, namely elements of $\mathcal{E}$ which satisfy $R_\gamma u = R_{\gamma \tau} u = u$ with the necessary quantization condition $\gamma^2 \tau_2 \in \pi \mathbb{N}$. The equation \eqref{LLL} turns into a finite-dimensional Hamiltonian system, for which we propose a convenient coordinate system, and establish some elementary properties.

\medskip

In \textit{Section \ref{section3}}, we consider simply periodic functions, namely elements of $\mathcal{E}$ such that $R_\gamma u = u$; they can be thought of as being defined on a vertical strip of width~$\gamma$ and extended by periodicity. We develop the theory of \eqref{LLL} in this setting by establishing local well-posedness and characterizing stationary solutions which decay at infinity. We also exhibit a Hilbert basis 
$$
\psi_n(z) = R_{{in\pi}/{\gamma}} \psi_0 (z), \quad \mbox{with} \quad \psi_0(z) = \left( {2}/({\pi \gamma^2}) \right)^{1/4} \EE\big( z^2/2 -  |z|^2/2 \big)   
$$
for periodic functions in $\mathcal{E}$ which are square integrable on the strip. This basis provides a very natural coordinate system for \eqref{LLL}: the formulation is simple and stationary solutions are very easy to express.

\medskip

In \textit{Section \ref{rectangular_lattice}}, we consider the linearization of \eqref{LLL} around $\Phi_0$ with $\tau = {i \pi}/{\gamma^2}$, which corresponds to rectangular lattices. The linearized problem is given by
\begin{equation}
\label{linearizedpb}
i \partial_t v + \lambda v = \Pi \big[2 |\Phi_0|^2 v + \Phi_0^2 \overline{v}\big].
\end{equation}
We are able to completely analyze this problem by viewing it in the Hilbert basis $(\psi_n)$, identifying a convolution structure, and then taking the Fourier transform. This leads to the following result.

\begin{theorem}[Instability for rectangular lattices]
For any $\gamma>0$ and $\tau = {i \pi}/{\gamma^2}$, the linearized problem~\eqref{linearizedpb} is exponentially unstable.
\end{theorem}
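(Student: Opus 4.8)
The plan is to diagonalize the linearized flow \eqref{linearizedpb} completely, by combining the Hilbert basis $(\psi_n)_{n\in\Z}$ of Section~\ref{section3} with the vertical translation symmetry of the lattice. First I would expand $v(t)=\sum_{n\in\Z}c_n(t)\psi_n$ and split the right-hand side of \eqref{linearizedpb} as $\mathcal{B}v=\mathcal{B}_1v+\mathcal{B}_2v$, with $\mathcal{B}_1v=\Pi[2|\Phi_0|^2v]$ ($\C$-linear) and $\mathcal{B}_2v=\Pi[\Phi_0^2\ov v]$ ($\C$-antilinear). Since $\tau=\tfrac{i\pi}{\gamma^2}$ we have $\gamma\tau=\tfrac{i\pi}{\gamma}$; the magnetic translation $R_{\gamma\tau}$ is unitary, commutes with $\Pi$ (it preserves $\mathcal{E}$), and fixes $\Phi_0$ by \eqref{symPhi}. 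The pointwise identities $R_\alpha(|w|^2v)=|R_\alpha w|^2R_\alpha v$ and $R_\alpha(w^2\ov v)=(R_\alpha w)^2\,\overline{R_\alpha v}$ (which are exactly what makes $R_\alpha$ a symmetry of \eqref{LLL}) then give $R_{\gamma\tau}\mathcal{B}_j=\mathcal{B}_jR_{\gamma\tau}$ for $j=1,2$, while the composition law for magnetic translations gives $R_{\gamma\tau}\psi_n=\psi_{n+1}$, i.e. $R_{\gamma\tau}$ acts on coefficients as the shift $c_n\mapsto c_{n-1}$. Consequently $\mathcal{B}_1$ is a convolution operator and $\mathcal{B}_2$ an antilinear convolution operator: setting $a_k:=\bra{\mathcal{B}_1\psi_0,\psi_k}$ and $b_k:=\bra{\mathcal{B}_2\psi_0,\psi_k}$, one has $(\mathcal{B}_1v)_j=\sum_na_{j-n}c_n$ and $(\mathcal{B}_2v)_j=\sum_nb_{j-n}\ov{c_n}$. (The same invariance applied to $\Phi_0$ itself forces $\Phi_0=d\sum_n\psi_n$ for a constant $d$, which makes the sequences $(a_k),(b_k)$ explicit Gaussian sums and, after normalizing $d\in\R$, pins down $\lambda=\tfrac12\sum_ka_k=\sum_kb_k$.)

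Next I would take the discrete Fourier transform $\hat c(t,\theta)=\sum_nc_n(t)e^{in\theta}$, $\theta\in\T$, and set $\hat a(\theta)=\sum_ka_ke^{ik\theta}$, $\hat b(\theta)=\sum_kb_ke^{ik\theta}$. Since $\widehat{\ov c}(\theta)=\overline{\hat c(-\theta)}$, equation \eqref{linearizedpb} becomes
$$
i\partial_t\hat c(\theta)+\lambda\hat c(\theta)=\hat a(\theta)\hat c(\theta)+\hat b(\theta)\,\overline{\hat c(-\theta)}.
$$
Coupling this with the complex conjugate of the same equation at $-\theta$, the pair $\mathbf x(\theta)=\big(\hat c(\theta),\overline{\hat c(-\theta)}\big)$ solves an autonomous $2\times2$ linear ODE $\partial_t\mathbf x=M(\theta)\mathbf x$ with
$$
M(\theta)=\begin{pmatrix}-i\big(\hat a(\theta)-\lambda\big)&-i\,\hat b(\theta)\\[4pt] i\,\overline{\hat b(-\theta)}&\ i\big(\overline{\hat a(-\theta)}-\lambda\big)\end{pmatrix}.
$$
Thus the linearized flow decomposes into the family of $2\times2$ blocks $\{M(\theta)\}_{\theta\in\T}$, and it is exponentially unstable as soon as some $M(\theta)$ has an eigenvalue with strictly positive real part (the set of such $\theta$ being open, one can then assemble a genuine exponentially growing $L^2$ wave packet on the strip).

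It remains to analyze $M(\theta)$. Self-adjointness of $\mathcal{B}_1$ (its weight $2|\Phi_0|^2$ is real and nonnegative) gives $a_{-k}=\ov{a_k}$, so $\hat a$ is real-valued, and the reflection symmetries of $|\Phi_0|$ available for a rectangular lattice make both $\hat a$ and $\hat b$ even; hence $\operatorname{tr}M(\theta)=0$ and $\det M(\theta)\in\R$, so the eigenvalues of $M(\theta)$ are $\mu_\pm(\theta)=\pm\sqrt{-\det M(\theta)}$ with
$$
\det M(\theta)=\big(\hat a(\theta)-\lambda\big)^2-|\hat b(\theta)|^2.
$$
Exponential instability is therefore equivalent to finding $\theta$ with $|\hat b(\theta)|>|\hat a(\theta)-\lambda|$. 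One has $\det M(0)=0$, reflecting the phase mode $v=i\Phi_0$ of \eqref{linearizedpb}, so the instability must come from $\theta\neq0$.

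The heart of the matter, and the step I expect to be the real obstacle, is to establish this strict inequality for at least one $\theta$. Using $\Phi_0=d\sum_n\psi_n$, the coefficients $a_k,b_k$ reduce to Gaussian integrals of products $\psi_{n_1}\psi_{n_2}\ov{\psi_{n_3}}\,\ov{\psi_{n_4}}$ over the strip, which can be computed and summed into closed forms for $\hat a(\theta)$ and $\hat b(\theta)$ (Gaussian, equivalently theta-constant, expressions attached to $\tau=\tfrac{i\pi}{\gamma^2}$). One then has to show $|\hat b(\theta)|^2-\big(\hat a(\theta)-\lambda\big)^2>0$ somewhere: concretely, I would expand near $\theta=0$, where $\det M(\theta)=-\kappa\,\theta^2+O(\theta^4)$ by evenness, and prove that the constant $\kappa$ is strictly positive (alternatively, evaluate $\det M$ at a convenient value such as $\theta=\pi$). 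This sign computation is precisely where the rectangular geometry enters essentially: the analogous computation for the hexagonal lattice $\tau=e^{2i\pi/3}$ must instead give $\det M(\theta)\ge0$ for all $\theta$, so the estimate on the theta-constant expression cannot be soft and carries the real content of the theorem. Once some $\theta_*$ with $\det M(\theta_*)<0$ is produced, $\mu_+(\theta_*)=\sqrt{|\det M(\theta_*)|}>0$ is an unstable eigenvalue and \eqref{linearizedpb} is exponentially unstable.
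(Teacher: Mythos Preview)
Your approach is essentially the same as the paper's: expand in the Hilbert basis $(\psi_n)$, recognise the convolution structure (the paper obtains it by direct computation rather than by invoking $R_{\gamma\tau}$-invariance, but this is cosmetic), take the discrete Fourier transform, and reduce to a family of $2\times2$ blocks whose determinant controls stability. Your identification of $\det M(0)=0$ and of the sign of $\det M$ near $0$ as the decisive point is exactly right.

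Two remarks on the final step, which you correctly flag as the heart of the matter. First, your expansion $\det M(\theta)=-\kappa\theta^2+O(\theta^4)$ is off by one order: in the paper's notation one has $a(\xi)=b(\xi)+O\big((\ell(\xi)-\ell(0))^2\big)$, so the quadratic terms of $a$ and $b$ cancel and the determinant vanishes to order~$4$ at $\xi=0$. A Taylor expansion to fourth order would still work, but the paper bypasses this entirely: it factors
\[
\det A_{\text{rect}}(\xi)=C_M^2\big(\ell(\xi)-\ell(0)\big)^2\big(\ell(\xi)+(1+\sqrt2)\ell(0)\big)\big(\ell(\xi)-(\sqrt2-1)\ell(0)\big),
\]
so the sign is governed by the single factor $\ell(\xi)-(\sqrt2-1)\ell(0)$, which is continuous and equals $(2-\sqrt2)\ell(0)>0$ at $\xi=0$. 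This factorisation is the computational idea that closes the argument cleanly and is what your sketch is missing; once you have it, no expansion or numerics are needed, and the same factor, bounded below via a geometric-series estimate on $\ell$, yields the stronger statement for the square lattice $\tau=i$.
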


We refer to Theorem \ref{thmunstable} for a precise statement.

\medskip

Finally, in \textit{Section \ref{section5}}, we turn to the linearization of \eqref{LLL} around Abrikosov lattices, namely~$\Phi_0$ with $\tau = \EE\big({{2i\pi}/{3}}\big)$. When viewed in the appropriate functional framework, this linearization is stable in $L^2$ and even exhibits decay in $L^\infty$. The approach is similar to the case of rectangular lattices: switching to the Hilbert basis, identifying a convolution structure, and taking the Fourier transform; but the analysis is more involved. 

\begin{theorem}[Stability for the Abrikosov lattice]
Consider a solution $v$ of the linearized problem~\eqref{linearizedpb} around the Abrikosov lattice, expand it in the Hilbert basis $(\psi_n)$ and take the Fourier transform of the coefficients to obtain $f(t,\xi)$, with $\xi \in [0,1]$, and define $g(t,\xi)=\ov{f}(t,-\xi)$. In particular, $f_0$ corresponds to the initial value $v(t=0)$ viewed through this transformation.
Then $L^2$ stability holds in the following form
$$
\left\| \frac{f+g}{\mu} \right\|_{L^2([0,1])} + \| f \|_{L^2([0,1])} \lesssim \left\| \frac{f_0+g_0}{\mu} \right\|_{L^2([0,1])} + \| f_0 \|_{L^2([0,1])}
$$
while $L^\infty$ decay can be captured as follows
$$
\| v(t) \|_{L^\infty(\C)} \lesssim \frac{1}{t^{1/3}} \left( \left\| \frac{f_0+g_0}{\mu} \right\|_{H^1([0,1])} + \| f_0 \|_{H^1([0,1])} \right),
$$
 where $\mu$ is a smooth 1-periodic function such that $\mu(\xi) \sim c \xi^2$ when $\xi \to 0$. 
\end{theorem}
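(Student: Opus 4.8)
The plan is to follow, with considerably heavier bookkeeping, the strategy already used for rectangular lattices in Section~\ref{section4}: reduce \eqref{linearizedpb} to a family of $2\times 2$ linear ODEs parametrized by the Fourier variable $\xi$, and then analyze this family. Concretely, I would expand a solution as $v(t,z)=\sum_{n\in\Z}c_n(t)\psi_n(z)$ and rewrite the equation as an infinite system for the coefficients $c_n(t)$. Two inputs are needed: the expansion of $\Phi_0$ itself in the basis $(\psi_n)$, whose coefficients are explicit Gaussian/theta sums since $\Phi_0\in\widetilde{\mathcal E}$ and $R_\gamma\Phi_0=\Phi_0$; and the matrix elements $\langle\Pi[2|\Phi_0|^2\psi_m],\psi_n\rangle$ and $\langle\Pi[\Phi_0^2\overline{\psi_m}],\psi_n\rangle$. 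Because $|\Phi_0|^2$ is $\mathcal L_{\tau,\gamma}$-periodic and magnetic translations act on the $\psi_n$ by index shift, the first family depends on $n-m$ only (up to a phase) and the second, owing to the conjugation, on $n+m$ only; this is the twisted-convolution structure. The discrete Fourier transform $f(t,\xi)=\sum_n c_n(t)e^{2i\pi n\xi}$ turns convolution into multiplication and, with $g(t,\xi)=\overline f(t,-\xi)$ as in the statement, closes the system into
$$
i\partial_t\begin{pmatrix} f\\ g\end{pmatrix}=M(\xi)\begin{pmatrix} f\\ g\end{pmatrix},\qquad
M(\xi)=\begin{pmatrix} a(\xi) & b(\xi)\\ -\overline{b(\xi)} & -\overline{a(\xi)}\end{pmatrix},
$$
where $a,b$ are explicit $1$-periodic functions assembled from $\Theta_\tau$ evaluated at $\tau=e^{2i\pi/3}$. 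The anti-diagonal Bogoliubov--de Gennes shape is forced by the conservation laws of \eqref{linearizedpb}.

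Next I would diagonalize $M(\xi)$ pointwise in $\xi$. One checks first that $a(\xi)$ is real (a consequence of the symmetries \eqref{symPhi} of $\Phi_0$), after which the eigenvalues are $\pm\omega(\xi)$ with $\omega(\xi)^2=a(\xi)^2-|b(\xi)|^2$; hence the linearized flow is stable exactly when $\omega(\xi)^2\ge 0$ for all $\xi\in[0,1]$, i.e.\ $|a(\xi)|\ge|b(\xi)|$. Proving this inequality \emph{for the equianharmonic value} $\tau=e^{2i\pi/3}$, and seeing why it must fail for $\tau=i\pi/\gamma^2$ so that Theorem~\ref{thmunstable} is recovered, is the heart of the matter. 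The zero of $\omega$ at $\xi=0$ is not a defect: it is forced by mass conservation (a gauge zero mode), it is \emph{quadratic}, $\omega(\xi)\sim c\,\xi^2$ — the Tkachenko dispersion law — equivalently $a^2-|b|^2$ vanishes to fourth order at $\xi=0$, and $\mu$ in the statement is precisely this $\omega$ (smoothed and extended $1$-periodically). Writing $M(\xi)=P(\xi)\,\mathrm{diag}(\omega,-\omega)\,P(\xi)^{-1}$, the two eigendirections coalesce as $\xi\to0$, so $\|P(\xi)^{-1}\|\sim 1/\mu(\xi)$; the weighted norm in the statement is exactly the one on which $P$ and $P^{-1}$, hence $e^{-itM(\xi)}$, act boundedly uniformly in $t$ and $\xi$, which gives the first estimate. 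One must also record that $a,b$ are smooth, that $\omega$ has no further zeros on $[0,1]$, and that the $1/\mu$ weight is compatible with the $\xi\leftrightarrow-\xi$ symmetry linking $f$ and $g$.

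For the $L^\infty$ decay I would write, after diagonalization, $f(t,\xi)=e^{-it\omega(\xi)}f^+(\xi)+e^{it\omega(\xi)}f^-(\xi)$, with $f^\pm$ linear in $(f_0,g_0)$ and, near $\xi=0$, of size $O(\mu)$ times the weighted data. Then $c_n(t)$ is a one-dimensional oscillatory integral $\int_0^1 e^{i\Psi(\xi)}f^\pm(\xi)\,d\xi$ with phase $\Psi(\xi)=2\pi n\xi\mp t\,\omega(\xi)$ (plus its partner). Since $\psi_n(z)=R_{in\pi/\gamma}\psi_0(z)$ is a translated Gaussian, $|\psi_n(z)|\lesssim e^{-c\,\mathrm{dist}(n,\,\pi^{-1}\gamma\Im z)^2}$, so $\|v(t)\|_{L^\infty(\C)}\lesssim\sup_n|c_n(t)|$ up to absolutely summable weights, and it suffices to bound $|c_n(t)|$ uniformly in $n$. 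Away from the critical point, non-stationary phase and one integration by parts give $O(t^{-1})$ at $W^{1,1}\hookleftarrow H^1([0,1])$ cost; at the critical point, the generic van der Corput rate $t^{-1/2}$ degrades to $t^{-1/3}$ exactly at the inflection point of $\omega$ (where $\omega''=0$ but $\omega'''\ne0$), and this is the bottleneck quoted in the statement. The neighbourhood of $\xi=0$ is handled separately: there the amplitude vanishes like $\mu\sim c\,\xi^2$, which absorbs the $1/\mu$ in the weighted data and yields far better than $t^{-1/3}$; the remaining errors and boundary terms are collected into the $H^1$ norms of $(f_0+g_0)/\mu$ and $f_0$.

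The step I expect to be the main obstacle is the \emph{reality of the spectrum}, i.e.\ the bound $\omega(\xi)^2\ge 0$ for all $\xi$ when $\tau=e^{2i\pi/3}$: this is a non-perturbative inequality between explicit theta-function series for which I see no soft argument, and which presumably rests on arithmetic identities peculiar to the hexagonal lattice (making certain Eisenstein-type sums vanish or degenerate) combined with careful real-variable estimates on the resulting one-variable functions. Secondary difficulties are the clean bookkeeping of the $\xi\to0$ degeneracy — matching the weight $\mu$ simultaneously to the collapse of the diagonalization, the $f\leftrightarrow g$ symmetry, and the amplitude in the oscillatory integral — and the uniform-in-$n$ degenerate stationary phase estimate at the inflection point of $\omega$.
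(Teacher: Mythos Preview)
Your proposal is correct and matches the paper's approach closely: reduction to a $2\times 2$ system in the Fourier variable $\xi$, the explicit formula for $e^{-itA(\xi)}$, the quadratic vanishing $\mu(\xi)\sim c\,\xi^2$, the $(f+g,f-g)$ variables for the weighted $L^2$ bound, and van der Corput at the inflection points of $\mu$ for the $t^{-1/3}$ decay. One clarification on the step you flag as the main obstacle: for $\tau=e^{2i\pi/3}$ both $a(\xi)$ and $b(\xi)$ are real, and the key spectral inequality $a(\xi)>b(\xi)>0$ is obtained by elementary geometric-series bounds on the explicit Fourier series $\ell(\xi)=\sum_k e^{-\pi^2k^2/\gamma^2}\cos(2\pi k\xi)$ and its odd-index counterpart $h(\xi)$ --- no special arithmetic or theta-function identities are needed.
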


We refer to Theorem \ref{thmL2stable} and Theorem \ref{decay_linf} for more precise statements.

\subsection{Perspectives}

Preliminary computations seem to indicate that the Abrikosov lattice ($\gamma^2 = {2\pi}/{\sqrt 3}$, $\tau =j$) is the only stable lattice, at least when perturbations are restricted to simply periodic functions, but it is still an open question. We refer to Remark \ref{rk_tau_general} for a short discussion of this question.
 
It seems very natural to try and prove nonlinear stability by building upon the decay proved for the linearized problem around the Abrikosov lattice. However, the rather weak decay rate $t^{-1/3}$, which is optimal, makes this problem quite challenging.

Finally, the stability of the Abrikosov lattice, both at the linearized and at the nonlinear level, remains an outstanding problem.

\subsection*{Acknowledgements} Pierre Germain was supported by a Wolfson fellowship from the Royal Society and the Simons collaboration on Wave Turbulence. Valentin Schwinte was supported by the grant DrEAM from Universit\'e de Lorraine. Laurent Thomann was supported by the ANR project ``SMOOTH'' ANR-22-CE40-0017.

 \section{(LLL) on doubly periodic domains (cells)}
 \label{section2}
 
In this section, we construct solutions to \eqref{LLL} such that $\vert u\vert$ is doubly periodic. We will rely on results of Aftalion-Serfaty~\cite[Section 3]{AftaSerfa}. In the sequel, we work with the general lattice 
$$\mathcal{L}_{\tau,\gamma} = \gamma(\mathbb{Z}\oplus \tau \mathbb{Z})$$
 with $\gamma>0 $, and we consider the space
\begin{multline*}
\mathcal{E}_{\tau,\gamma}=\big\{u\in \widetilde{\mathcal{E}}: R_\gamma u=u,\quad   R_{\gamma \tau} u=u\big\}\\
=\Big\{u\in \widetilde{\mathcal{E}}: u(z+\gamma)=\EE\Big( {\frac{\gamma}2(z-\ov{z})}\Big) u(z), \\
  u(z+\gamma\tau)=\EE\Big( \frac{\gamma}2 \big(\tau_1(z-\ov{z})-i\tau_2(z+\ov{z})\big)\Big) u(z)=\EE\Big( \frac{\gamma}{2}(\ov{\tau}z-\tau\ov{z})\Big) u(z)\Big\},
\end{multline*}
so that in particular for all $u \in \mathcal{E}_{\tau,\gamma}$ and $z\in\mathbb{C}$, $\vert u(z+\gamma)\vert= \vert u(z+\gamma\tau)\vert=\vert u(z)\vert$. \medskip

We define the fundamental cell of $\mathcal{L}_{\tau,\gamma}$ by
\begin{equation*}
    K_{\tau,\gamma}=\big\{z=\gamma(r_{1}+r_{2}\tau),\quad r_{1},r_{2}\in[0,1]\big\}.
\end{equation*}

\subsection{Multiplicative description of $\E_{\tau,\gamma}$}

\begin{prop}[\cite{AftaSerfa}]\label{prop41}
Let $\gamma>0$ and $\tau \in \C$ with $\tau_2=\Im \tau >0$.
    \begin{enumerate}[$(i)$]
\item     If $\gamma^2 \tau_2 \notin\pi \mathbb{N}$, then $\mathcal{E}_{\tau,\gamma} = \{ 0 \}$.
 \item   If $\gamma^2 \tau_2 = \pi N$ for some $N \in \mathbb{N}^*$, then $\mathcal{E}_{\tau,\gamma}$ is a complex vector space of dimension~$N$. It can be described as the set of functions vanishing $N$ times modulo~$\mathcal{L}_{\tau,\gamma}$ (counting multiplicity), which can be written under the form
    \begin{equation}\label{formprop}
        u(z)=\lambda \EE\Big(  z^2/2  + b z-   |z|^2/2\Big) \prod_{k=1}^N\Theta_{\tau}\Big(({z-z_k})/{\gamma} \Big)
    \end{equation}
where $\lambda \in \C$ and $(z_k)_{1\leq k\leq N}\in \C$ are representatives of the zeroes of $u$ modulo~$\mathcal{L}_{\tau,\gamma}$ and finally $b \in \mathbb{C}$ and the $(z_k)$ satisfy the relations
\begin{align}\label{somme}
& \gamma b = i(- N + 2j) \pi \nonumber \\
&\sum_{k=1}^N z_k = \frac{\gamma}{2} (\tau - 1) N - j \tau \gamma + \ell \gamma
\end{align}
for some $j,\ell \in \mathbb{Z}$.
        \end{enumerate}
\end{prop}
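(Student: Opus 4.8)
The plan is to identify $\mathcal{E}_{\tau,\gamma}$ with a space of entire functions satisfying prescribed quasi-periodicity, then invoke the classical theory of theta functions. The first step is the \emph{reduction to holomorphic data}: writing $u(z) = e^{\frac12 z^2 - \frac12|z|^2} f(z)$ with $f$ entire, one translates the two conditions $R_\gamma u = u$ and $R_{\gamma\tau} u = u$ into functional equations for $f$. A direct computation (using $\frac12 z^2 - \frac12|z|^2$ and the definition of $R_\alpha$) shows that $R_\gamma u = u$ becomes $f(z+\gamma) = e^{a_1 z + b_1} f(z)$ and $R_{\gamma\tau}u = u$ becomes $f(z+\gamma\tau) = e^{a_2 z + b_2} f(z)$, for explicit constants $a_i, b_i$ depending on $\gamma,\tau$. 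Thus $f$ is a theta-like function for the lattice $\mathcal{L}_{\tau,\gamma}$.

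The second step is the \emph{quantization condition}. Compatibility of the two quasi-periodicity relations — applying them in the two orders $z \mapsto z+\gamma \mapsto z+\gamma+\gamma\tau$ and $z \mapsto z+\gamma\tau \mapsto z+\gamma+\gamma\tau$ — forces a relation among $a_1,a_2$ which, after the dust settles, reads $\gamma^2\tau_2 \in \pi\mathbb{Z}$; since $\tau_2 > 0$ and the left side must be nonnegative for a nonzero $f$ to exist (this is where positivity of the "degree" enters), we get $\gamma^2\tau_2 = \pi N$ with $N \in \mathbb{N}$. When $N \notin \mathbb{N}$ the only solution is $f \equiv 0$, giving $(i)$. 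The third step, for $(ii)$, is the \emph{argument principle / divisor count}: integrating $f'/f$ around the boundary of the fundamental cell $K_{\tau,\gamma}$ and using the quasi-periodicity to evaluate the contributions of opposite sides, one finds that $f$ has exactly $N$ zeros (with multiplicity) in $K_{\tau,\gamma}$. Then one constructs the candidate $\prod_{j=1}^N \Theta_\tau\big(\frac{z-z_j}{\gamma}\big)$, checks via \eqref{star} that it has the right zeros and transforms under $z \mapsto z+\gamma$, $z\mapsto z+\gamma\tau$ by explicit exponential factors; dividing $f$ by this product leaves an entire, nowhere-vanishing function whose quasi-periodicity factors are pure exponentials, hence (by the standard Liouville-type argument — the logarithmic derivative is elliptic with no poles, so constant) it must be $\lambda e^{bz}$. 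Matching the residual exponential factors against the required $a_i,b_i$ yields precisely the constraints \eqref{somme} on $b$ and on $\sum_j z_j$ (the sum-of-zeros relation is the translation of the $b_2$-normalization, i.e.\ Abel's theorem for the lattice), and the dimension count: the $z_j$ range over $K_{\tau,\gamma}$ with one constraint on their sum, plus the scalar $\lambda$, giving an $N$-dimensional space.

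The main obstacle I expect is \emph{bookkeeping the exponential cocycles correctly} — keeping track of the automorphy factors of $u$, of $e^{\frac12 z^2 - \frac12 |z|^2}$, and of $\prod \Theta_\tau$ simultaneously, and extracting from their ratio the exact integers $k,\ell$ appearing in \eqref{somme}; sign and factor-of-$2\pi i$ errors are easy here. A secondary subtlety is justifying that one may pick the zeros $z_j \in \C$ freely subject only to \eqref{somme} — i.e.\ that the construction is surjective onto $\mathcal{E}_{\tau,\gamma}$ and that different choices of representatives mod $\mathcal{L}_{\tau,\gamma}$ give the same function up to the scalar $\lambda$ — which again reduces to the transformation law \eqref{star}. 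Since this proposition is quoted from \cite{AftaSerfa}, I would in the write-up either cite it directly or include only the short verification that their statement matches the normalization used here; the genuinely new content in later sections does not require reproving it.
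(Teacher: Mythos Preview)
Your approach is correct but genuinely different from the paper's. You work in the classical theta-function framework: derive the quantization $\gamma^2\tau_2\in\pi\mathbb{N}$ from the commutator identity $R_\gamma R_{\gamma\tau}=e^{-2i\gamma^2\tau_2}R_{\gamma\tau}R_\gamma$, use the argument principle on $\partial K_{\tau,\gamma}$ to count exactly $N$ zeros, divide by the theta product, and invoke Liouville on the elliptic function $h'/h$ to force $h=\lambda e^{bz}$. The paper instead proceeds without first counting zeros or pre-factoring $e^{\frac12 z^2}$: it divides $u$ by whatever theta factors its zeros give, obtains an entire nowhere-vanishing quotient $\varphi=e^\Psi$, uses the polynomial growth of $u\in\widetilde{\mathcal{E}}$ together with a lower bound $|\Theta_\tau|\ge Ce^{-C|z|^2}$ away from the lattice to get $\mathfrak{Re}\,\Psi(z)\le C|z|^2$, and then applies Borel--Carath\'eodory to conclude $\Psi$ is a polynomial of degree $\le 2$. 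Both the quantization condition and the zero count then fall out at the end when the periodicity relations are matched against the general form $\lambda e^{\alpha z^2+\beta z}\prod\Theta_\tau$.

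Your route is the textbook elliptic/Abel argument and is arguably cleaner conceptually; it also does not use the growth hypothesis $|u|\lesssim\langle z\rangle^M$ at all, only the quasi-periodicity. The paper's route trades the argument-principle and Liouville steps for a single growth estimate, which is more in the spirit of the Bargmann-space setting and avoids having to verify separately that the automorphy factors of the quotient are constants (a point where your write-up is slightly imprecise: ``pure exponentials'' should be ``constants'' once $e^{\frac12 z^2}$ has been factored out and the zero count matches $N$, which is precisely what makes $h'/h$ genuinely elliptic rather than merely quasi-periodic).
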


A few remarks are in order:
\begin{itemize}
\item This theorem appears in~\cite{AftaSerfa} with a minor misprint in the equation for $b$; we include the proof, which follows that in~\cite{AftaSerfa}, in Appendix \ref{appendix-B}.
\item Up to selecting the representatives modulo $\mathcal{L}_{\tau,\gamma}$ of the zeroes of $u$, we can ensure that $\dis \sum_{k=1}^N z_k = {\gamma N}(\tau-1)/2+\gamma \ell$, or in other words $j=0$.
\item The condition on $\tau$ can be understood in the following way: let $u \in \mathcal{E}_{\tau,\gamma}$, $u \neq 0$. Then clearly $\dis R_\gamma R_{\gamma \tau} u= R_{\gamma \tau}R_\gamma u =u$. Next, from \eqref{commut} we obtain the quantization condition $\gamma^2 \tau_2 \in\pi \mathbb{N}$.
\item Geometrically, this theorem is stating that $\mathcal{E}_{\tau,\gamma}$ is non empty when  the fundamental cell has area $N\pi$, and elements of the space vanish $N$ times on it.
\item By Lemma \ref{lem.transla},  for $N=1$, $\alpha=\beta=\pi$ in \eqref{def-d}, $\nu=\gamma^{-1}$, $h=1$ we recover the function defined in \cite[Proposition~4.1]{ABN}.
\end{itemize}

In the sequel we assume that 
\begin{equation*}
    \tau=\tau_1+\frac{i\pi N}{\gamma^2}\quad \mbox{for some}\quad N \in \N.
    \end{equation*}
    
\subsection{Additive description of $\E_{\tau,\gamma}$}

    For $0\leq k\leq N-1$  define
    \begin{multline*}
\mathcal{E}_{\tau,\gamma,k}=\Big\{u\in \widetilde{\mathcal{E}}: R_\gamma u=u,\quad   R_{{\gamma \tau}/N} u= \EE\big( {-{2ik\pi}/N}\big) u\Big\}\\[5pt]
=\Big\{u\in \widetilde{\mathcal{E}}: u(z+\gamma)=\EE\Big(\frac{\gamma}2(z-\ov{z})\Big) u(z),\quad \qquad \\[4pt]
  u(z+\frac{\gamma \tau}{N})= e^{-{2ik\pi}/N}  \EE\Big(\frac{\gamma}{2N}   (\ov{\tau}z-\tau\ov{z})\Big)u(z)\Big\}.
\end{multline*}

We observe immediately that
$$
\mathcal{E}_{\tau,\gamma,k} \subset \mathcal{E}_{\tau,\gamma},
$$
as follows by iterating the periodicity condition in the direction $\tau$ and using~\eqref{propRalpha}.
Define next
\begin{equation}\label{defPhi0}
 \Phi_{0}(z)=\EE\Big( {\frac{1}{2} z^2 -\frac{i\pi}{\gamma} z - \frac{1}{2} |z|^2} \Big) \Theta_{{\tau}/{N}}\Big((z-z_{0})/{\gamma}\Big) , \quad z_0=\frac{\gamma}2(\frac{\tau}N-1),
 \end{equation}
 which satisfies  $R_\gamma  \Phi_{0}= \Phi_{0}$ and    $R_{{\gamma \tau}/N}  \Phi_{0}= \Phi_{0}$. Then 
    for $0\leq k\leq N-1$  set
\begin{equation*} 
\Phi_{k}(z)=R_{{k \gamma}/N} \Phi_{0}(z)=e^{-{ik\pi}/N}\EE\Big( {\frac{1}{2} z^2 -\frac{i\pi}{\gamma} z - \frac{1}{2} |z|^2} \Big)\Theta_{{\tau}/{N}}\Big((z-z_{k})/\gamma\Big),
 \end{equation*}
 \begin{equation*} 
z_k=\frac{\gamma}2(\frac{\tau}N-1)-\frac{k}{N}\gamma,
 \end{equation*}
so that $R_\gamma  \Phi_{k}= \Phi_{k}$ and    $R_{{\gamma \tau}/N}  \Phi_{k}=\EE\big({-{2ik\pi}/N}\big) \Phi_{k}$. \medskip

The $\mathcal{E}_{\tau,\gamma,k}$ turn out to provide an orthogonal decomposition of $\mathcal{E}_{\tau,\gamma}$, as is stated in the following proposition; this result appeared in \cite{Nier2} in a slightly different guise.  See also \cite{Perice,Nguyen-Rougerie} for more results in this direction.

\begin{prop} \label{prop72} Recall that $\gamma > 0$ and $\gamma^2 \tau_2 = \pi N$ for some $N \in \mathbb{N}^*$.
    \begin{enumerate}[$(i)$]
        \item For any $k$, the space $\mathcal{E}_{\tau,\gamma,k}$ is generated by $\Phi_k$
 \begin{equation*}
 \mathcal{E}_{\tau,\gamma,k}= \operatorname{span}_{\C} \big\{\Phi_{k}\big\},
 \end{equation*}      
 and the space $\mathcal{E}_{\tau,\gamma}$ is the direct sum of the $\mathcal{E}_{\tau,\gamma,k}$
 \begin{equation*}
 \mathcal{E}_{\tau,\gamma}=\bigoplus_{k=0}^{N-1} \mathcal{E}_{\tau,\gamma,k}.
 \end{equation*}
 \item The family $(\Phi_{k})$ is  orthogonal  in $L^{2}(K_{\tau, \gamma})$ and furthermore for any $k \in \Z$,
$$
\int_{K_{\tau, \gamma}}|\Phi_{k}(z)|^{2} \, dL(z)=\gamma N \sqrt{\frac{\pi}{ {2}}} \EE\Big( \frac{\pi^2}{2\gamma^2}\Big).
$$
\item For any $k \in \Z$, 
\begin{equation*}
\int_{K_{\tau, \gamma}} |\Phi_{k}(z)|^4 \, dL(z)= \frac{N\gamma^2}2\EE\Big( \frac{\pi^2}{ \gamma^2}\Big) \sum_{j , \ell\in \Z}\EE\Big( {-\gamma^2 |j\frac{\tau}N-\ell|^{2}}\Big).
\end{equation*}
\end{enumerate}
\end{prop}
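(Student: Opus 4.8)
The plan is to establish the three items in order, since (ii) and (iii) essentially only require explicit integration once the multiplicative description \eqref{defPhi0} and the quasi-periodicity \eqref{star} are in hand, while (i) is the structural statement that requires a dimension count and the reduction of $R_{\gamma\tau/N}$ to a finite-order operator. For item (i), I would first observe that each $\mathcal{E}_{\tau,\gamma,k}$ contains $\Phi_k$ (this is already checked in the text via the stated transformation laws of $\Phi_0$ and $R_{k\gamma/N}$), hence $\operatorname{span}_\C\{\Phi_k\}\subset\mathcal{E}_{\tau,\gamma,k}$. For the reverse inclusion: if $u\in\mathcal{E}_{\tau,\gamma,k}$ then iterating the $R_{\gamma\tau/N}$ condition $N$ times and using the composition law \eqref{propRalpha} for magnetic translations shows $u\in\mathcal{E}_{\tau,\gamma}$, so by Proposition \ref{prop41}(ii), $\mathcal{E}_{\tau,\gamma}$ has dimension exactly $N$. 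It then suffices to show $\mathcal{E}_{\tau,\gamma,k}$ has dimension $\le 1$: I would argue that on $\mathcal{E}_{\tau,\gamma}$, the operator $T:=R_{\gamma\tau/N}$ acts, it commutes with $R_\gamma$ (up to the scalar coming from \eqref{commut}, which is $1$ here since $\gamma\cdot(\gamma\tau/N)$ has the right area), and $T^N$ acts as the identity (or a fixed scalar) on $\mathcal{E}_{\tau,\gamma}$; hence $T$ is diagonalizable with eigenvalues among the $N$-th roots of a fixed number, giving a decomposition of the $N$-dimensional space $\mathcal{E}_{\tau,\gamma}$ into at most $N$ eigenspaces. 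Since each of the $N$ distinct eigenvalues $e^{-2ik\pi/N}$, $0\le k\le N-1$, is realized (by $\Phi_k$), each eigenspace is exactly one-dimensional and equals $\operatorname{span}_\C\{\Phi_k\}$; the direct sum decomposition $\mathcal{E}_{\tau,\gamma}=\bigoplus_k\mathcal{E}_{\tau,\gamma,k}$ follows.

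For item (ii), orthogonality is immediate once (i) is known: distinct $\Phi_k$ lie in distinct eigenspaces of the unitary operator $R_{\gamma\tau/N}$ on $L^2(K_{\tau,\gamma})$ (magnetic translations are $L^2(K_{\tau,\gamma})$-unitary because $|R_\alpha u|$ is a translate of $|u|$ and the fundamental cell is invariant up to the lattice), so $\langle\Phi_k,\Phi_{k'}\rangle=e^{-2i\pi(k-k')/N}\langle\Phi_k,\Phi_{k'}\rangle$ forces vanishing when $k\ne k'$. For the norm, since $R_{k\gamma/N}$ is unitary on $L^2(K_{\tau,\gamma})$ all the $\|\Phi_k\|_{L^2(K_{\tau,\gamma})}$ are equal, so it is enough to compute $\|\Phi_0\|_{L^2(K_{\tau,\gamma})}^2$. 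I would write $|\Phi_0(z)|^2=e^{-|z|^2}e^{\Re(z^2)}e^{\frac{2\pi}{\gamma}\Im z}\,|\Theta_{\tau/N}(\tfrac1\gamma(z-z_0))|^2$, change variables $z=\gamma(r_1+r_2\tau/N)\cdot(\text{appropriate scaling})$ — actually parametrize the cell directly — and integrate. The cleanest route is probably to use the fact that $|\Phi_0|^2$, being the modulus-squared of an element of the one-dimensional space $\mathcal{E}_{\tau,\gamma,0}$, is a doubly periodic density whose integral can be computed by the Poisson-summation / heat-kernel identity for Theta functions, or by recognizing that $\int_{K}|\Phi_0|^2$ relates to the $L^2(\C)$-norm of the Fock-Bargmann profile $\psi_0$ on a strip — I expect the $\sqrt{\pi/2}\,e^{\pi^2/(2\gamma^2)}$ factor to drop out of a Gaussian integral in one variable times the periodicity in the other, with the factor $\gamma N$ accounting for the area.

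For item (iii), the strategy is the same unfolding: $|\Phi_k|^4$ is $L^2(K_{\tau,\gamma})$-translation-equivalent across $k$ via the unitary $R_{k\gamma/N}$, so compute $\int_{K_{\tau,\gamma}}|\Phi_0|^4$. Writing $|\Phi_0(z)|^4=e^{-2|z|^2}e^{2\Re z^2}e^{\frac{4\pi}\gamma\Im z}|\Theta_{\tau/N}(\tfrac1\gamma(z-z_0))|^4$, I would expand one factor of $|\Theta_{\tau/N}|^2$ using the defining series and its quasi-periodicity to produce the lattice sum $\sum_{j,\ell}e^{-\gamma^2|j\tau/N-\ell|^2}$; concretely, the standard identity $\int_{\text{cell}}|\Theta|^4$ reduces, after a Gaussian integration in the "continuous" variable over the cell, to a theta-constant evaluated at the lattice points, which is exactly $\sum_{j,\ell\in\Z}e^{-\gamma^2|j\tau/N-\ell|^2}$. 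The prefactor $\frac{N\gamma^2}{2}e^{\pi^2/\gamma^2}$ should again assemble from the area $N\gamma^2\tau_2/\dots$ and a one-dimensional Gaussian normalization. The main obstacle I anticipate is purely computational bookkeeping in item (iii): correctly combining the two copies of the Theta series, tracking the Gaussian completion of the square in the exponent (the cross terms between $e^{2\Re z^2}$, $e^{\frac{4\pi}\gamma\Im z}$ and the oscillatory Theta phases), and verifying that the surviving sum is precisely over $\mathcal{L}_{\tau/N,\gamma}^{}$-type points with the stated Gaussian weight — this is where sign and normalization errors are easy to make, and where I would be most careful to cross-check against the $N=1$ reduction and against item (ii) by a consistency check (e.g. comparing with the known $L^4$ optimization literature for the Abrikosov lattice).
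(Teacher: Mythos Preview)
Your plan is correct, and for item (i) it is actually cleaner than the paper's argument. You use the spectral decomposition of the operator $T=R_{\gamma\tau/N}$ acting on the $N$-dimensional space $\mathcal{E}_{\tau,\gamma}$: once you check that $T$ commutes with $R_\gamma$ (your commutator computation via \eqref{commut} gives exactly $e^{2i\pi}=1$) and with $R_{\gamma\tau}$ (collinearity), $T$ preserves $\mathcal{E}_{\tau,\gamma}$ and satisfies $T^N=R_{\gamma\tau}=\mathrm{Id}$ there, so it is diagonalizable with $N$-th roots of unity as eigenvalues; since each eigenvalue is hit by some $\Phi_k$, dimension counting finishes. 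The paper instead reruns the argument of Proposition \ref{prop41} on the smaller cell $K_{\tau/N,\gamma}$: it counts zeros of $u\in\mathcal{E}_{\tau,\gamma,k}$ in that cell, shows there is exactly one, and then matches parameters in the Theta-function form to identify $u$ with a multiple of $\Phi_k$. Your spectral argument is shorter and more conceptual; the paper's is constructive and pinpoints the zero location.

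For the orthogonality in (ii) your argument (eigenvectors of a unitary operator for distinct eigenvalues) is essentially what the paper does by tiling $K_{\tau,\gamma}$ into $N$ translates of $K_{\tau/N,\gamma}$ and producing the sum $\sum_{j=0}^{N-1}e^{2ij(k-\ell)\pi/N}$; the two are the same computation in different clothing. Where you diverge again is in the explicit values of $\|\Phi_k\|_{L^2}^2$ and $\|\Phi_k\|_{L^4}^4$: you propose to carry out the Gaussian/theta integration directly, whereas the paper shortcuts by translating via $R_{z_0}$ to remove the shift in the Theta argument and then quoting the exact integrals from \cite[pp.~681--682]{ABN}. Your direct route would work, but be aware the bookkeeping you flag in (iii) is real, and the paper avoids it entirely by citation.
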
 
  
\begin{proof}
$(i)$ The proof follows closely that of Proposition \ref{prop41}, given in Appendix \ref{App1}. Consider $u\in \mathcal{E}_{\tau,\gamma,k}$. Let $\{z_j\}_{1 \leq j \leq p}$ be the zeros of $u$ in the fundamental cell $K_{{\tau}/{N},\gamma}$ of the lattice $\mathcal{L}_{{\tau}/{N},\gamma}$. With the second periodicity condition, the $z_{j,\ell} = z_j + {\ell \gamma \tau}/{N}$ are the zeroes of $u$ in the fundamental cell $K_{\tau,\gamma}$ of the lattice $\mathcal{L}_{\tau,\gamma}$, so that~$u$ has $pN$ zeroes in $K_{\tau,\gamma}$. From Proposition~\ref{prop41} and $\mathcal{E}_{\tau,\gamma,k} \subset \mathcal{E}_{\tau,\gamma},$ we get $p=1$. Then $u$ has only one zero in $K_{{\tau}/{N},\gamma}$ and we write as in \eqref{eqA1}, 
    $$ u(z) = \lambda \EE \Big( {-\frac{|z|^2}{2} + \alpha z^2 + \beta z}\Big)  \Theta_{{\tau}/{N}}\Big((z-z_{1})/\gamma\Big), $$
where $\alpha,\beta, \lambda \in \mathbb{C}$. We take for simplicity $\lambda =1$ in the following. The first periodicity condition $R_\gamma u = u$ of~$\mathcal{E}_{\tau,\gamma,k}$ requires that $\alpha = {1}/{2}$ and $\beta = -{i\pi}/{\gamma} + {2i\ell\pi}/{\gamma}$, with $\ell \in \mathbb{Z}$.
The second periodicity condition of $\mathcal{E}_{\tau,\gamma,k}$ differs from Proposition \ref{prop41}: the relation contains a $R_{{\gamma\tau}/{N}}$ magnetic translation and an additional phase factor $\EE\big({-{2ik\pi}/{N}}\big)$. Since $\tau_2 = {\pi N}/{\gamma^2},$ the coefficient on $z$ in this periodicity condition already matches, and gives no further information. Nevertheless, by matching the constant terms, one gets 
$$ z_1 = \frac{\gamma}{2}\pa{\frac{\tau}{N}-1} - \frac{k}{N}\gamma + \pa{L-\ell\frac{\tau}{N}}\gamma = z_k + \pa{L-\ell\frac{\tau}{N}}\gamma,$$
where $L \in \Z$. Taking~\eqref{star} into account to get rid of $ \pa{L-\ell {\tau}/{N}}\gamma$, we obtain a multiplicative factor $\lambda \EE\big( {-{2i\pi \ell z}/{\gamma}}\big)$, where $\lambda \in \C$ is a constant. Overall, $$ \mathcal{E}_{\tau,\gamma,k} \subset \text{span}_\C\{\Phi_k\}.$$
The reverse inclusion is easily obtained from the definition of $\Phi_k$. 
It follows from the periodicity condition that the spaces $\mathcal{E}_{\tau,\gamma,k}$ are in direct sum; and this direct sum equals $\mathcal{E}_{\tau,\gamma}$ by comparing the dimensions.\medskip

\noindent $(ii)$ Denoting $K^1_{\tau,\gamma}=\big\{z=\gamma\big(r_{1}+r_{2}{\tau}/{N}\big),\;\, r_{1},r_{2}\in[0,1]\big\}$, for $0 \leq k, \ell \leq N-1$, we have
\begin{eqnarray*} 
 \int_{K_{\tau,\gamma}}\Phi_{k}(z)\ov{\Phi_{\ell}(z)} \, dL(z)&=& \sum_{j=0}^{N-1}\int_{\substack{z=\gamma(r_{1}+r_{2}{\tau}/N)\\0\leq r_{1}\leq1,j\leq r_{2}\leq j+1}}\Phi_{k}(z)\ov{\Phi_{\ell}(z)} \, dL(z) \\ 
&   = &\sum_{j=0}^{N-1}\int_{K^1_{\tau,\gamma}}R_{{\gamma \tau j}/{N}}\Phi_{k}(z)\ov{R_{{\gamma \tau j }/{N}}\Phi_{\ell}(z)} \, dL(z) \\
&=& \big(\sum_{j=0}^{N-1}e^{-{2ij(k-\ell)\pi}/ N}\big)\int_{K^1_{\tau,\gamma}}\Phi_{k}(z)\ov{\Phi_{\ell}(z)}\, dL(z)\\
&   =& \delta_{k,\ell}N\int_{K^1_{\tau,\gamma}}|\Phi_{k}(z)|^{2}dL(z).
\end{eqnarray*}
To compute the above right-hand side, matters reduce to the case $k=0$, by periodicity and since $\Phi_{k}=R_{{k \gamma}/ N} \Phi_{0}$. Next, we observe that 
$$R_{z_0} \Phi_0(z) = \EE \Big( {\frac{1}{2} z_0^2- \frac{i\pi}{\gamma} z_0- \frac{1}{2} |z_0|^2} \Big)\EE\Big( {\frac{1}{2} z^2 - \frac{1}{2} |z|^2}\Big) \Theta_{{\tau}/{N}}\big(\frac{z}{\gamma}\big).$$
Still using periodicity, and also that $\mathfrak{Re} \big(  z_0^2/2  - {i\pi} z_0/ {\gamma} -   |z_0|^2/2 \big) = {\pi^2}/({4 \gamma^2})$, 
\begin{eqnarray*}
\int_{K^1_{\tau, \gamma}} |\Phi_{k}(z)|^2dL(z) &=& \int_{K^1_{\tau, \gamma}} |\Phi_{0}(z)|^2dL(z) \\
& =&  \EE\Big( \frac{\pi^2}{2\gamma^2}\Big)    \int_{K^1_{\tau, \gamma}} \big| \EE\Big( { z^2/2 -   |z|^2/2}\Big)  \Theta_{{\tau}/{N}}\big(\frac{z}{\gamma}\big)\big|^2dL(z).
\end{eqnarray*}
From \cite[Last equality on page 681]{ABN}, we have 
$$\int_{K^1_{\tau,\gamma}} |\Phi_{k}(z)|^2dL(z)  =  \gamma  \sqrt{\frac{\pi}{ {2}}}  \EE\Big( \frac{\pi^2}{2\gamma^2}\Big).$$
\medskip

\noindent $(iii)$ Arguing as in item $(ii)$, we obtain
\begin{align*}
\int_{K_{\tau, \gamma}} |\Phi_{k}(z)|^4dL(z) &=N \int_{K^1_{\tau, \gamma}} |\Phi_{k}(z)|^4dL(z) = N \int_{K^1_{\tau, \gamma}} |\Phi_{0}(z)|^4dL(z) \\
& = N \EE\Big( \frac{\pi^2}{\gamma^2}\Big)    \int_{K^1_{\tau,\gamma}} \big| \EE\Big( { z^2/2 -   |z|^2/2}\Big)     \Theta_{{\tau}/{N}}\big(\frac{z}{\gamma}\big)\big|^4dL(z),
\end{align*}
and from \cite[First equality on page 682]{ABN}, we have 
$$\int_{K^1_{\tau,\gamma}} |\Phi_{k}(z)|^4dL(z)  = \frac{\gamma^2}2\EE\Big( \frac{\pi^2}{\gamma^2}\Big) \sum_{j , \ell\in \Z}\EE\Big( {-\gamma^2 |j\frac{\tau}N-\ell|^{2}}\Big),$$
as claimed.
\end{proof}

In the work \cite{ABN}, the authors consider functions $v \in  \widetilde{\mathcal{E}}$ such that $| v(z+\gamma)|=  | v(z+\gamma\tau)|= |v(z)|$. The next result, which is proved in \cite[Paragraph~3]{Nier2}, shows that we can reduce to the study of the space $\mathcal{E}_{\tau,\gamma}$.

\begin{lemme}\label{lem.transla}
Let $\gamma>0$ and $\tau \in \C$ such that $\tau_2=\Im \tau>0$. Assume that $v \in \widetilde{\mathcal{E}}$ and $v \not \equiv 0$ satisfies
$$| v(z+\gamma)|=  | v(z+\gamma\tau)|= |v(z)|.$$
Then there exists $\delta \in \C$ such that $u =R_{\delta} v \in \mathcal{E}_{\tau,\gamma}$.
\end{lemme}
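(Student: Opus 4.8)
\textbf{Proof strategy for Lemma \ref{lem.transla}.}

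The plan is to exploit the fact that the magnetic translations $R_\alpha$ satisfy a projective representation law, so that conjugating the periodicity relations of $v$ by $R_\delta$ shifts the ``phase cocycle'' attached to $v$ by a linear functional of $\delta$; then one chooses $\delta$ to kill that phase. Concretely, since $|v(z+\gamma)|=|v(z)|$ and $v\in\widetilde{\mathcal E}$ (so $e^{|z|^2/2}v$ is entire of polynomial growth), the function $z\mapsto e^{\frac\gamma2(\ov z-z)}v(z+\gamma)/v(z)$ — wherever $v$ does not vanish — has modulus $1$ and, being (locally) the ratio of two functions of the form $e^{-|z|^2/2}\times(\text{entire})$ times the Gaussian correction, is holomorphic; a holomorphic function of modulus $1$ on a connected set is a unimodular constant. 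Hence there is $a\in\R$ with $R_\gamma v = e^{ia} v$, and similarly $R_{\gamma\tau} v = e^{ib} v$ for some $b\in\R$. (One must check the zero set of $v$ does not obstruct this: the zeros of $v$ are translated to zeros of $v$ of the same multiplicity by the modulus condition, so the ratio extends holomorphically across them.)

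Next I would recall the commutation/composition identity for magnetic translations — equation \eqref{propRalpha} / \eqref{commut} in the paper — namely $R_\alpha R_\beta = e^{i\,\mathfrak{Im}(\alpha\ov\beta)}R_{\alpha+\beta}$ (up to the paper's normalization), which gives the conjugation formula $R_\delta R_\alpha R_{-\delta} = e^{2i\,\mathfrak{Im}(\delta\ov\alpha)}R_\alpha$. Therefore if $u=R_\delta v$, then
\begin{equation*}
R_\gamma u = R_\gamma R_\delta v = e^{2i\,\mathfrak{Im}(\gamma\ov\delta)} R_\delta R_\gamma v = e^{i a + 2i\,\mathfrak{Im}(\gamma\ov\delta)} u,
\end{equation*}
and likewise $R_{\gamma\tau} u = e^{i b + 2i\,\mathfrak{Im}(\gamma\tau\ov\delta)} u$. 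So it remains to solve the real linear system
\begin{equation*}
2\,\mathfrak{Im}(\gamma\ov\delta) \equiv -a, \qquad 2\,\mathfrak{Im}(\gamma\tau\ov\delta) \equiv -b \pmod{2\pi}
\end{equation*}
for $\delta\in\C$. Writing $\delta = \delta_1+i\delta_2$, the two left-hand sides are $-2\gamma\delta_2$ and $-2\gamma(\tau_2\delta_1+\tau_1\delta_2)$ (with the paper's sign conventions), a linear map $\C\cong\R^2\to\R^2$ whose determinant is a nonzero multiple of $\tau_2>0$; hence it is surjective onto $\R^2$ and one can pick $\delta$ realizing any prescribed pair $(-a,-b)$. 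With that choice, $R_\gamma u = R_{\gamma\tau} u = u$, and since $R_\delta$ preserves $\widetilde{\mathcal E}$, we get $u\in\mathcal E_{\tau,\gamma}$, as claimed.

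The main obstacle I anticipate is the first step: rigorously passing from ``$|v(z+\gamma)/v(z)|=1$ away from zeros'' to ``$R_\gamma v=e^{ia}v$ globally,'' because one must argue that the holomorphic function $g(z):=e^{\frac\gamma2(\ov z - z)}v(z+\gamma)/v(z)$ — a priori only defined off the discrete zero set of $v$ — extends holomorphically across those points. This follows because the modulus condition forces $v$ and $v(\cdot+\gamma)$ to have the same zeros with the same multiplicities (locally $|v|=|v(\cdot+\gamma)|$ implies the holomorphic parts agree up to a unimodular holomorphic factor, hence same divisor), so $g$ has only removable singularities; being bounded (modulus $1$) it extends to an entire nonvanishing function of modulus $1$, hence constant. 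A clean alternative, if \eqref{commut} already records it, is to cite the structural description of $\widetilde{\mathcal E}$ directly. Everything after that is the elementary linear algebra over $\R$ displayed above, where positivity of $\tau_2$ is exactly what makes the system solvable — the same quantization-free statement one expects, since here we only need one relation at a time rather than the compatibility condition $\gamma^2\tau_2\in\pi\N$.
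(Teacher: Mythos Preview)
Your proposal is correct and follows essentially the same approach as the paper's proof: first extract unimodular constants $e^{ia},e^{ib}$ such that $R_\gamma v=e^{ia}v$ and $R_{\gamma\tau}v=e^{ib}v$, then use the commutation relation \eqref{commut} to see how these phases transform under $R_\delta$, and finally solve for $\delta$. Two minor remarks: you are in fact more careful than the paper about why the ratio $R_\gamma v/v$ extends across the zero set (the paper just asserts ``$\alpha$ is entire''), and the paper additionally records that the quantization $\gamma^2\tau_2=\pi N$ is forced by compatibility of the two relations via \eqref{commut} --- then writes down an explicit $\delta=\frac{\gamma}{2\pi N}(\beta-\alpha\tau)$ rather than appealing to invertibility of your $2\times 2$ system, but this is the same computation.
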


\begin{proof}
We have for all $z \in \C$, $|R_{\gamma}u(z)|=|u(z)|$. Thus there exists $\alpha(z)\in  \R$ such that $R_{\gamma}u(z)=\EE\big(i \alpha(z)\big)u(z)$. Since $\alpha$ is  entire, $\alpha$ is constant. Similarly, there exists $\beta \in \R$ such that $R_{\gamma \tau}u(z)=\EE\big( {i \beta}\big) u(z)$. Now  apply $R_{\gamma \tau}$ to the first relation and $R_{\gamma}$ to the second, by \eqref{commut} we get that there exists $N \in \N$ such that $\tau_2={\pi N}/{\gamma^2}$. Set 
\begin{equation}\label{def-d}
\delta=\frac{\gamma}{2N\pi}(\beta-\alpha \tau_1)-i \frac{\alpha}{2 \gamma}=\frac{\gamma}{2\pi N} \big( \beta -\alpha   \tau\big),
\end{equation}
then $u=R_{\delta}v\in \mathcal{E}_{\tau,\gamma}$.
\end{proof}

\subsection{The nonlinear term}

The aim of this subsection is to understand the structure of the nonlinear term $\Pi \big(|u|^2 u\big)$ in the space $\mathcal{E}_{\tau,\gamma}$, more specifically in the basis provided by the $\Phi_k$. The first step is to realize that $\Pi$ can be interpreted as an orthogonal projector, as will now be explained. Define the space
\begin{multline*}
    \mathcal{F}_{\tau,\gamma}=\Big\{u\in L^2 (K_{\tau,\gamma}), \;\; u(z+\gamma)=\EE\Big( {\frac{\gamma}2(z-\ov{z})}\Big) u(z),\;\; \\
     u(z+\gamma\tau) =\EE\Big(  \frac{\gamma}2(\ov{\tau}z-\tau\ov{z})\Big) u(z)\Big\},
\end{multline*}
so that
$$
\mathcal{E}_{\tau,\gamma} =  \mathcal{F}_{\tau,\gamma} \cap \widetilde{\mathcal{E}}.
$$

\begin{lemme} 
\label{lemmaproj}
    For all $u,v\in \mathcal{F}_{\tau,\gamma}$
    \begin{equation*}
        \int_{K_{\tau,\gamma}} \Pi u(z) \ov{v(z)}dL(z)= \int_{K_{\tau,\gamma}}  u(z) \ov{\Pi(v(z))}dL(z).
    \end{equation*} 
\end{lemme}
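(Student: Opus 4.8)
The plan is to exploit the explicit kernel representation of $\Pi$ from \eqref{defPi} together with the quasi-periodicity built into $\mathcal{F}_{\tau,\gamma}$. Recall that on $\widetilde{\mathcal{E}}$, $\Pi$ acts by the integral kernel
$$
[\Pi u](z) = \frac{1}{\pi}\int_{\C} e^{-\frac{|z|^2}{2} + \ov{w}z - \frac{|w|^2}{2}}\, u(w)\, dL(w),
$$
so the kernel $\mathcal{K}(z,w) = \frac{1}{\pi} e^{-\frac{|z|^2}{2} + \ov w z - \frac{|w|^2}{2}}$ is manifestly Hermitian, i.e. $\mathcal{K}(z,w) = \ov{\mathcal{K}(w,z)}$. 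If the integral over $\C$ could be folded into an integral over the fundamental cell $K_{\tau,\gamma}$ against a \emph{periodized} kernel, the symmetry of the statement would follow immediately by Fubini. So the first step is to write, for $u \in \mathcal{F}_{\tau,\gamma}$ (extended quasi-periodically to all of $\C$ using the two functional equations defining $\mathcal{F}_{\tau,\gamma}$),
$$
[\Pi u](z) = \int_{K_{\tau,\gamma}} \Big( \sum_{\omega \in \mathcal{L}_{\tau,\gamma}} \mathcal{K}(z, w+\omega)\, c_\omega(w) \Big) u(w)\, dL(w),
$$
where $c_\omega(w)$ is the explicit unimodular cocycle coming from the magnetic-translation relations $R_\gamma u = u$, $R_{\gamma\tau}u = u$ (concretely, $u(w+\omega) = \overline{c_\omega(w)}^{\,-1}\, \cdots$ — one has to track the exponential prefactors in the definition of $\mathcal{E}_{\tau,\gamma}$, equivalently the phases $e^{\frac{\gamma}{2}(z-\ov z)}$ and $e^{\frac{\gamma(\ov\tau z - \tau\ov z)}{2}}$). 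The key algebraic fact to check is that the periodized kernel
$$
\mathcal{K}_{\tau,\gamma}(z,w) := \sum_{\omega \in \mathcal{L}_{\tau,\gamma}} \mathcal{K}(z,w+\omega)\, c_\omega(w)
$$
is again Hermitian as a kernel on $K_{\tau,\gamma}$, that is $\mathcal{K}_{\tau,\gamma}(z,w) = \ov{\mathcal{K}_{\tau,\gamma}(w,z)}$; this uses $\mathcal{K}(z,w) = \ov{\mathcal{K}(w,z)}$ together with the cocycle identity for the $c_\omega$ and the fact that $\Pi$ restricted to $\mathcal{F}_{\tau,\gamma}$ maps into $\mathcal{E}_{\tau,\gamma}$, so the result is independent of which representative in the cell one picks.

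Granting this, the proof of the lemma is a one-line computation: for $u,v \in \mathcal{F}_{\tau,\gamma}$,
$$
\int_{K_{\tau,\gamma}} \Pi u(z)\, \ov{v(z)}\, dL(z)
= \int_{K_{\tau,\gamma}}\int_{K_{\tau,\gamma}} \mathcal{K}_{\tau,\gamma}(z,w)\, u(w)\, \ov{v(z)}\, dL(w)\, dL(z),
$$
and applying Fubini (justified by the Gaussian decay of $\mathcal{K}$, which makes the periodized sum absolutely convergent and the double integral over the compact cell absolutely convergent for $u,v$ bounded on the cell) and then the Hermitian symmetry $\mathcal{K}_{\tau,\gamma}(z,w) = \ov{\mathcal{K}_{\tau,\gamma}(w,z)}$, this equals
$$
\int_{K_{\tau,\gamma}}\int_{K_{\tau,\gamma}} u(w)\, \ov{\mathcal{K}_{\tau,\gamma}(w,z)\, v(z)}\, dL(z)\, dL(w)
= \int_{K_{\tau,\gamma}} u(w)\, \ov{\Pi v(w)}\, dL(w),
$$
which is the claim.

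An alternative, possibly cleaner route avoids writing the periodized kernel explicitly: since $\Pi$ is the identity on $\widetilde{\mathcal{E}} \supset \mathcal{E}_{\tau,\gamma}$, and $\mathcal{E}_{\tau,\gamma} = \mathcal{F}_{\tau,\gamma} \cap \widetilde{\mathcal{E}}$ is a finite-dimensional subspace of $\mathcal{F}_{\tau,\gamma}$ (dimension $N$ by Proposition \ref{prop41}), it suffices to show that $\Pi$ restricted to $\mathcal{F}_{\tau,\gamma}$ is the \emph{orthogonal} projection onto $\mathcal{E}_{\tau,\gamma}$ with respect to the $L^2(K_{\tau,\gamma})$ inner product — an orthogonal projection is automatically self-adjoint, giving the identity. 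For this one checks: (a) $\Pi$ maps $\mathcal{F}_{\tau,\gamma}$ into $\mathcal{E}_{\tau,\gamma}$ (this is where quasi-periodicity of $\Pi u$ must be verified, again via the cocycle), (b) $\Pi$ fixes $\mathcal{E}_{\tau,\gamma}$ pointwise, and (c) $\Pi$ kills the orthogonal complement of $\mathcal{E}_{\tau,\gamma}$ in $\mathcal{F}_{\tau,\gamma}$ — equivalently $\int_{K_{\tau,\gamma}} \Pi u\, \ov{v} = \int_{K_{\tau,\gamma}} u\, \ov{\Pi v}$ when both are expanded against the $\Phi_k$ basis — but (c) is essentially the statement itself, so this route still funnels back to the symmetry of the periodized kernel. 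I expect the main obstacle to be purely bookkeeping: correctly identifying the unimodular cocycle $c_\omega(w)$ from the two defining functional equations of $\mathcal{F}_{\tau,\gamma}$ and verifying that the Gaussian prefactor $e^{-\frac{|z|^2}{2}+\ov w z - \frac{|w|^2}{2}}$, once multiplied by the quasi-periodicity phases of $u$, reassembles into a genuinely Hermitian periodized kernel — the real and imaginary parts of the exponents must be tracked carefully, and the convergence of the lattice sum (Gaussian in $|\omega|$, uniformly for $z,w$ in the compact cell) must be noted to license Fubini.
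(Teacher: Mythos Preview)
Your approach is correct and matches the paper's: fold the integral defining $\Pi u$ over $\C$ into a sum of translates of $K_{\tau,\gamma}$, use the quasi-periodicity of $u\in\mathcal{F}_{\tau,\gamma}$ to pull everything back to a single cell against a periodized kernel, and then observe that this kernel is Hermitian so Fubini gives the result. The paper simply carries out explicitly the step you flag as ``the key algebraic fact to check'': it writes the periodized kernel as $\frac{1}{\pi}\sum_{k,\ell}e^{-\frac12|z|^2-\frac{\gamma^2}{2}|k\tau+\ell|^2+(k\ov\tau+\ell)\gamma z+(z-k\gamma\tau-\ell\gamma)\ov w-\frac12|w|^2}$, from which the Hermitian symmetry (after reindexing $(k,\ell)\to(-k,-\ell)$) is immediate.
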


\begin{proof} With the change of variable $\xi=w-k\tau \gamma-\ell \gamma$ and the fact that 
$$u(\xi+k\tau \gamma+\ell \gamma)=(-1)^{k \ell N}\EE\Big({\ell \gamma(\xi-\ov\xi)/2+k\gamma(\ov{\tau}\xi-\tau\ov\xi)/2}\Big)u(\xi)$$
we get
\begin{align*}
\Pi u(z)&=\frac1{\pi}\EE\big({-|z|^{2}/2}\big)\int_{\C}\EE\Big(z\ov{w}-|w|^{2}/2\Big)u(w) \, dL(w)\\
&=\frac1{\pi}\EE\big({-|z|^{2}/2}\big)\sum_{k,\ell\in \Z}\int_{w\in K_{\tau,\gamma}+k\tau \gamma+\ell \gamma}\EE\Big(z\ov{w}-|w|^{2}/2\Big)u(w) \, dL(w)\\
&=\frac1{\pi}\int_{  K_{\tau,\gamma}}\sum_{k,\ell\in \Z}(-1)^{k \ell N} \EE\big({-|z|^{2}/2}\big) \EE\Big({-{\gamma^2}|k\tau+\ell|^{2}/2+(k\ov\tau+\ell)\gamma z}\Big)\\
&\hspace{4cm}\times \EE\Big({(z-k\gamma \tau-\ell\gamma)\ov{w}-|w|^{2}/2}\Big)u(w) \, dL(w).
\end{align*}
Then 
\begin{multline*}
 \int_{K_{\tau,\gamma}} \Pi u(z) \ov{v(z)}\,dL(z)=\\
 \begin{aligned}
 &=\frac1{\pi}\int_{  K_{\tau,\gamma}\times K_{\tau,\gamma}}\sum_{k,\ell\in \Z}(-1)^{k \ell N}\EE\big({-|z|^{2}/2}\big) \EE\Big({-{\gamma^2}|k\tau+\ell|^{2}/2+(k\ov\tau+\ell)\gamma z}\Big)\\
&\hspace{3.5cm}\times \EE\Big({\big(z-k\gamma \tau-\ell\gamma\big)\ov{w}-|w|^{2}/2}\Big)u(w)\ov{v(z)}\,dL(w) \, dL(z)\\
&=\int_{K_{\tau,\gamma}}  u(w) \ov{\Pi(v(w))}\,dL(w)
 \end{aligned}
\end{multline*}
hence the result.
\end{proof}

Given an element $u$ of $\mathcal{F}_{\tau,\gamma}$, it can be thought of as a function on $\mathbb{C}$, or as a function on $K_{\tau,\gamma}$. Therefore we introduce the following notations.

\begin{df}
Given an element $u$ of $\mathcal{F}_{\tau,\gamma}$, we write $u^{ext}$ (extended), if $u$ is considered as  a function on $\mathbb{C}$ and $u^{res}$ (restricted) if $u$    is considered as  a function on~$K_{\tau,\gamma}$.   
\end{df}

With these notations, we have the following result:

\begin{lemme} \label{lemmaproj2}
The projector $\Pi$ on $\mathcal{F}_{\tau,\gamma}$ can be interpreted as the orthogonal projector~$\Pi'$ in $L^2(K_{\tau,\gamma})$ on $\mathcal{E}_{\tau,\gamma}$. In other words,
$$
(\Pi u^{ext})\big|_{K_{\tau,\gamma}} = \Pi' u^{res}.
$$
\end{lemme}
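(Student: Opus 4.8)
The plan is to combine the two preceding lemmas. By Lemma~\ref{lemmaproj}, the operator $\Pi$ restricted to $\mathcal{F}_{\tau,\gamma}$ is self-adjoint for the $L^2(K_{\tau,\gamma})$ inner product, so once we know that $\Pi$ maps $\mathcal{F}_{\tau,\gamma}$ into itself and that $\Pi^2=\Pi$ on this space, we will know that $\Pi'$ (the restriction of $\Pi$ to $K_{\tau,\gamma}$) is an orthogonal projector in $L^2(K_{\tau,\gamma})$; it then remains only to identify its range with $\mathcal{E}_{\tau,\gamma}$.

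First I would check that $\Pi$ preserves the quasi-periodicity conditions defining $\mathcal{F}_{\tau,\gamma}$: since $R_\alpha$ is a symmetry of the Bargmann projector (stated in the text, $R_\alpha$ leaves $\mathcal{E}$ invariant and commutes with $\Pi$ on $\mathscr S'(\C)$), and since $u\in\mathcal{F}_{\tau,\gamma}$ means exactly $R_\gamma u^{ext}=u^{ext}$ and $R_{\gamma\tau}u^{ext}=u^{ext}$, we get $R_\gamma(\Pi u^{ext})=\Pi(R_\gamma u^{ext})=\Pi u^{ext}$ and likewise for $R_{\gamma\tau}$; hence $\Pi u^{ext}$, restricted to $K_{\tau,\gamma}$, again lies in $\mathcal{F}_{\tau,\gamma}$. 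Next, $\Pi u^{ext}$ is by construction an element of $\widetilde{\mathcal E}$ (the formula \eqref{defPi} produces $e^{-|z|^2/2}$ times an entire function of polynomial growth once $u^{ext}\in\widetilde{\mathcal E}$, which holds because $u^{res}\in L^2(K_{\tau,\gamma})$ forces $u^{ext}$ to have the polynomial growth built into $\mathcal{F}_{\tau,\gamma}$); therefore $\Pi u^{ext}\big|_{K_{\tau,\gamma}}\in \mathcal{F}_{\tau,\gamma}\cap\widetilde{\mathcal E}=\mathcal{E}_{\tau,\gamma}$. This shows $\Pi'$ has range contained in $\mathcal{E}_{\tau,\gamma}$.

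Then I would verify $\Pi$ is the identity on $\mathcal{E}_{\tau,\gamma}$: this is immediate since $\mathcal{E}_{\tau,\gamma}\subset\widetilde{\mathcal E}$ and $\Pi$ is the identity on $\widetilde{\mathcal E}$ by the remark following \eqref{defPi}. Consequently $\Pi'\circ\Pi'=\Pi'$ (for $u\in\mathcal{F}_{\tau,\gamma}$, $\Pi u^{ext}\big|_{K_{\tau,\gamma}}\in\mathcal{E}_{\tau,\gamma}$, and applying $\Pi$ again changes nothing), and $\Pi'$ fixes every element of $\mathcal{E}_{\tau,\gamma}$, so its range is exactly $\mathcal{E}_{\tau,\gamma}$. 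Combined with the self-adjointness from Lemma~\ref{lemmaproj}, $\Pi'$ is the orthogonal projector of $L^2(K_{\tau,\gamma})$ onto $\mathcal{E}_{\tau,\gamma}$, which is the assertion.

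The only genuinely delicate point is a functional-analytic one hiding behind the manipulation: one must make sure that for $u^{res}\in L^2(K_{\tau,\gamma})$ the extended function $u^{ext}$ really belongs to a class on which $\Pi$ acts and is the identity on $\widetilde{\mathcal E}$, i.e.\ that the quasi-periodic extension has at most polynomial growth (this is where the Gaussian weight in the definition of $\mathcal{F}_{\tau,\gamma}$ is used — $|u^{ext}(z)|$ grows like $e^{c|z|^2/2}$ before the weight but the weight $e^{-|z|^2/2}$ is already included in the membership of $\widetilde{\mathcal E}$, giving the needed control) and that the convolution integral in \eqref{defPi} converges, which is precisely what the rearrangement in the proof of Lemma~\ref{lemmaproj} already establishes. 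I expect the write-up to be short: assemble $\Pi$ commutes with $R_\gamma,R_{\gamma\tau}$ $\Rightarrow$ range in $\mathcal{F}_{\tau,\gamma}$; range in $\widetilde{\mathcal E}$; hence range in $\mathcal{E}_{\tau,\gamma}$; $\Pi=\mathrm{Id}$ on $\mathcal{E}_{\tau,\gamma}$ $\Rightarrow$ idempotent with range $\mathcal{E}_{\tau,\gamma}$; self-adjoint by Lemma~\ref{lemmaproj} $\Rightarrow$ orthogonal projector.
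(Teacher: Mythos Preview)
Your argument is correct and uses exactly the same ingredients as the paper: commutation of $\Pi$ with $R_\gamma,R_{\gamma\tau}$ to land in $\mathcal{F}_{\tau,\gamma}$, the fact that $\Pi$ takes values in $\widetilde{\mathcal E}$ and is the identity there, and the self-adjointness from Lemma~\ref{lemmaproj}. The only organizational difference is that you package these as ``$\Pi$ is a self-adjoint idempotent with range $\mathcal{E}_{\tau,\gamma}$'', whereas the paper checks directly that $\Pi$ and $\Pi'$ agree on $\mathcal{E}_{\tau,\gamma}$ and on its orthogonal complement in $L^2(K_{\tau,\gamma})$; these are equivalent formulations of the same proof.
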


\begin{proof} On the one hand, if $u \in \mathcal{E}_{\tau,\gamma}$, then both $\Pi$ and $\Pi'$ act as the identity. It is clear as far as $\Pi'$ is concerned. Turning to $\Pi$, we have that $\Pi u^{ext} \in \widetilde{\mathcal{E}}$ and $\Pi u^{ext} \in \mathcal{F}_{\tau,\gamma}$ (since $R_\alpha$ and $\Pi$ commute), hence $\Pi u$ belongs to $\widetilde{\mathcal{E}} \cap \mathcal{F}_{\tau,\gamma} = \mathcal{E}_{\tau,\gamma}$. Furthermore, by the previous lemma, $\langle \Pi u^{ext}, v \rangle_{L^2(K_{\tau,\gamma})} = \langle u^{res}, v \rangle_{L^2(K_{\tau,\gamma})}$ for any $v \in \mathcal{E}_{\tau,\gamma}$, hence $u = \Pi u$.

On the other hand, choose $u$ in the orthogonal set to $\mathcal{E}_{\tau,\gamma}$ in $L^2(K_{\tau,\gamma})$. By definition of $\Pi'$, we get $\Pi' u^{res} = 0$. Turning to $\Pi u^{ext}$, it is in $\mathcal{E}_{\tau,\gamma}$ by the same argument as above, and satisfies $\langle \Pi u^{ext}, v \rangle_{L^2(K_{\tau,\gamma})} = \langle u^{res}, v \rangle_{L^2(K_{\tau,\gamma})}=0$ for any $v \in \mathcal{E}_{\tau,\gamma}$; hence $\Pi u^{ext} = 0$.
\end{proof}

By the previous lemma, we will identify henceforth $\Pi$ and $\Pi'$, and we will also ignore the distinction between $u^{ext}$ and $u^{res}$.

\begin{prop}
If $u_j \in \mathcal{E}_{\tau,\gamma,j}$ for $j=k, \ell, m$, then $\Pi(u_k \ov{u_{\ell}} {u_m})\in \mathcal{E}_{\tau,\gamma,n} $ with $n=k-\ell+m$ mod $N$.
Furthermore,
$$
\Pi (\Phi_{k} \ov{\Phi_{\ell}} \Phi_{m}) = \lambda \Phi_{n} \quad \mbox{with} \quad \lambda = \frac{1}{\| \Phi_0 \|_{L^2}^2} \int_{K_{\tau, \gamma}} \Phi_{k}(z) \ov{\Phi_{\ell}(z)} \Phi_{m}(z)\ov{\Phi_{n}(z)} \,dL(z).
$$
\end{prop}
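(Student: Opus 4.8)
The plan is to separate the statement into two parts: first the ``selection rule'' stating that $\Pi(u_1u_2\ov{u_3})\in\mathcal{E}_{\tau,\gamma,k_4}$ with $k_4=k_1+k_2-k_3\bmod N$, and then the explicit computation of the constant $\lambda$ in the one-dimensional case. For the first part, I would simply track how the magnetic translations $R_\gamma$ and $R_{\gamma\tau/N}$ act on the product $u_1u_2\ov{u_3}$. Since each $u_i\in\mathcal{E}_{\tau,\gamma,k_i}$ satisfies $R_\gamma u_i=u_i$ and $R_{\gamma\tau/N}u_i=e^{-2ik_i\pi/N}u_i$, and since $R_\alpha(fg)$ and $R_\alpha(\ov f)$ transform in a controlled multiplicative way (the phase factors in the definition of $R_\alpha$ combine additively on products, and conjugation flips the sign of the phase), one checks that $u_1u_2\ov{u_3}$ lies in $\mathcal{F}_{\tau,\gamma}$ and moreover picks up the phase $e^{-2i(k_1+k_2-k_3)\pi/N}$ under $R_{\gamma\tau/N}$ (reduced mod $N$). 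The key point is then that $\Pi=\Pi'$ commutes with all $R_\alpha$ (as used already in Lemma \ref{lemmaproj2}), so $\Pi(u_1u_2\ov{u_3})$ inherits exactly these periodicity/quasi-periodicity properties, hence belongs to $\mathcal{E}_{\tau,\gamma,k_4}$.

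For the second part, I specialize $u_i=\Phi_{k_i}$. By the selection rule just proved, $\Pi(\Phi_{k_1}\ov{\Phi_{k_2}}\Phi_{k_3})$ lies in $\mathcal{E}_{\tau,\gamma,k_4}$, which by Proposition \ref{prop72}$(i)$ is the one-dimensional span of $\Phi_{k_4}$; therefore $\Pi(\Phi_{k_1}\ov{\Phi_{k_2}}\Phi_{k_3})=\lambda\Phi_{k_4}$ for a scalar $\lambda$. To identify $\lambda$, I take the $L^2(K_{\tau,\gamma})$ inner product of both sides against $\Phi_{k_4}$: the right-hand side gives $\lambda\|\Phi_{k_4}\|_{L^2}^2=\lambda\|\Phi_0\|_{L^2}^2$ (the norm is independent of $k$ by Proposition \ref{prop72}$(ii)$). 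For the left-hand side, I use Lemma \ref{lemmaproj} (self-adjointness of $\Pi$ on $\mathcal{F}_{\tau,\gamma}$) together with the fact that $\Pi\Phi_{k_4}=\Phi_{k_4}$ since $\Phi_{k_4}\in\mathcal{E}_{\tau,\gamma}$, to rewrite $\langle\Pi(\Phi_{k_1}\ov{\Phi_{k_2}}\Phi_{k_3}),\Phi_{k_4}\rangle=\langle\Phi_{k_1}\ov{\Phi_{k_2}}\Phi_{k_3},\Phi_{k_4}\rangle=\int_{K_{\tau,\gamma}}\Phi_{k_1}\ov{\Phi_{k_2}}\Phi_{k_3}\ov{\Phi_{k_4}}\,dL$. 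Dividing by $\|\Phi_0\|_{L^2}^2$ yields the claimed formula.

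The only subtle point — and the main thing to be careful about — is the bookkeeping in the first part: verifying precisely that the product $u_1u_2\ov{u_3}$ actually lands in $\mathcal{F}_{\tau,\gamma}$ with the right quasi-periodicity factor, since the exponential prefactors in $R_\gamma$ and $R_{\gamma\tau/N}$ must combine correctly (three holomorphic-type factors and one conjugated factor). Concretely, one writes out $u_1(z+\gamma)u_2(z+\gamma)\ov{u_3(z+\gamma)}$ and $u_1(z+\tfrac{\gamma\tau}{N})u_2(z+\tfrac{\gamma\tau}{N})\ov{u_3(z+\tfrac{\gamma\tau}{N})}$ and checks the prefactors telescope to exactly the defining factors of $\mathcal{F}_{\tau,\gamma}$ (for $R_\gamma$) and of the $k_4$-eigenspace condition (for $R_{\gamma\tau/N}$), using that the quadratic-in-$z$ parts of the three prefactors, two added and one subtracted, cancel appropriately because the exponents are real-linear in $z,\ov z$ of a fixed form. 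Once this is confirmed, everything else is a direct application of the already-established Lemmas \ref{lemmaproj} and \ref{lemmaproj2} and Proposition \ref{prop72}, so no real obstacle remains.
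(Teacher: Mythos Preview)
Your proposal is correct and follows essentially the same approach as the paper. The paper's proof is more concise: it packages your ``bookkeeping'' step into the single identity $R_\alpha\big(\Pi(u_1u_2\ov{u_3})\big)=\Pi(R_\alpha u_1\,R_\alpha u_2\,\ov{R_\alpha u_3})$ (which holds because the unimodular phase $e^{\frac12(\alpha\ov z-\ov\alpha z)}$ appears twice and once conjugated, leaving exactly one copy), and then evaluates at $\alpha=\gamma\tau/N$; your derivation of $\lambda$ via the inner product with $\Phi_{k_4}$, using Lemma~\ref{lemmaproj} and Proposition~\ref{prop72}, is exactly what the paper does as well.
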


\begin{proof} By~\eqref{propRalpha}, for all $\alpha \in \C$, 
$$R_\alpha\big( \Pi(u_k \ov{u_\ell} {u_m})\big) =\Pi(R_\alpha u_k \ov{R_\alpha u_\ell} {R_\alpha u_m}).$$
Thus, 
\begin{eqnarray*}
R_{{\gamma \tau}/{N}} \Pi(u_k \ov{u_\ell} {u_m})&=& \Pi\big(R_{{\gamma \tau}/{N}} u_k \ov{ R_{{\gamma \tau}/{N}} u_\ell }{R_{{\gamma \tau}/{N}} u_m}\big)\\
&=& e^{-{2i(k-\ell+m)\pi}/N}\Pi\big(u_k \ov{u_\ell} {u_m}\big),
\end{eqnarray*}
which gives the first assertion. The formula for $\lambda$ follows from the fact that $(\Phi_k)$ is an orthogonal basis and Lemma \ref{lemmaproj}.
\end{proof}

\subsection{Dynamical consequences}

    \begin{prop}\label{prop34}
Recall that $\gamma > 0$ and $\gamma^2 \tau_2 = \pi N$ for some $N \in \mathbb{N}^*$. Then, for all $\kappa \in \C$ and all $k \in \C$, the function
$$ u(t,z) = \kappa  \EE\big( {-i\lambda_0 |\kappa|^2t}\big)\Phi_k(z)$$
is a stationary solution of \eqref{LLL}, where 
\begin{equation}\label{def-lambda}
\lambda_0=\frac{\int_{K_{\tau, \gamma}} |\Phi_{k}(z)|^4dL(z)}{\int_{K_{\tau, \gamma}} |\Phi_{k}(z)|^2dL(z)}= \frac{\gamma}{\sqrt{2\pi}}  \EE\Big( \frac{\pi^2}{2\gamma^2}\Big)\sum_{j , \ell\in \Z}\EE\Big( {-\gamma^2 |j\frac{\tau}N-\ell|^{2}}\Big).
\end{equation}
In particular,
\begin{itemize}
\item  If $N=1$ and $\tau={i \pi }/{\gamma^2}$, which corresponds to the rectangular lattice, then
\begin{equation}\label{lambda-carre}
\lambda_0= \frac{1}{\sqrt{2}} \EE\Big( \frac{\pi^2}{2\gamma^2}\Big)\Big(\sum_{q\in\Z}\EE\big({-\frac{\pi^2q^2}{\gamma^2}}\big)\Big)^2.  
\end{equation}
\item If $N=1$, $\tau=\EE\big({{2 i\pi}/3}\big)$ and $\gamma={\sqrt{2\pi}}/ {3^{1/4}}$, which corresponds to the hexagonal lattice, then
\begin{equation}\label{lambda-hexa}
\lambda_0=\frac{1}{\sqrt{2}} \EE\Big( \frac{\pi^2}{2\gamma^2}\Big)\big( I^2+2IJ-J^2\big),
\end{equation}
with 
$$I =  \sum_{j \in \Z}\EE\Big(-  \frac{\pi^2(2j)^2}{\gamma^2}  \Big) \quad\mbox{and} \quad J=\sum_{j \in \Z}\EE\Big(-  \frac{\pi^2(2j+1)^2}{\gamma^2}  \Big).$$
\end{itemize}
\end{prop}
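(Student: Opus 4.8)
The plan is to plug the ansatz into \eqref{LLL}, reduce the claim to the statement that $\Phi_k$ is an ``eigenfunction'' of $v\mapsto\Pi(|v|^2v)$, obtain this from the preceding Proposition giving $\Pi(\Phi_{k_1}\ov{\Phi_{k_2}}\Phi_{k_3})=\lambda\Phi_{k_4}$, read off the closed form of $\lambda_0$ from Proposition~\ref{prop72}, and then deduce the two displayed evaluations by Poisson summation. Inserting $u(t,z)=c\,e^{-i\lambda_0|c|^2t}\Phi_k(z)$ one computes $i\partial_t u=\lambda_0|c|^2c\,e^{-i\lambda_0|c|^2t}\Phi_k$ and $\Pi(|u|^2u)=|c|^2c\,e^{-i\lambda_0|c|^2t}\,\Pi(|\Phi_k|^2\Phi_k)$, so the equation is equivalent to $\Pi(|\Phi_k|^2\Phi_k)=\lambda_0\Phi_k$. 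Since $|\Phi_k|^2\Phi_k=\Phi_k\ov{\Phi_k}\Phi_k$, the preceding Proposition applied with $k_1=k_2=k_3=k$ (so $k_4=k$) yields $\Pi(|\Phi_k|^2\Phi_k)=\lambda\Phi_k$ with $\lambda=\|\Phi_0\|_{L^2(K_{\tau,\gamma})}^{-2}\int_{K_{\tau,\gamma}}|\Phi_k|^4\,dL$. By Proposition~\ref{prop72}$(ii)$ the quantity $\int_{K_{\tau,\gamma}}|\Phi_k|^2\,dL$ is independent of $k$ and equals $\|\Phi_0\|_{L^2(K_{\tau,\gamma})}^2$, hence $\lambda=\lambda_0$ as in \eqref{def-lambda}; substituting the explicit values of $\int_{K_{\tau,\gamma}}|\Phi_k|^2\,dL$ and $\int_{K_{\tau,\gamma}}|\Phi_k|^4\,dL$ from Proposition~\ref{prop72}$(ii)$--$(iii)$ and simplifying $\frac{N\gamma^2/2}{\gamma N\sqrt{\pi/2}}=\frac{\gamma}{\sqrt{2\pi}}$ gives the closed form of $\lambda_0$.

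\textbf{The two special cases.} For the rectangular lattice $N=1$, $\tau=\frac{i\pi}{\gamma^2}$, so $|j\tau-\ell|^2=\ell^2+\frac{\pi^2}{\gamma^4}j^2$ and the double sum factors as $\big(\sum_{\ell\in\Z}e^{-\gamma^2\ell^2}\big)\big(\sum_{j\in\Z}e^{-\pi^2j^2/\gamma^2}\big)$; the Jacobi transformation (Poisson summation) $\sum_{n\in\Z}e^{-\gamma^2n^2}=\frac{\sqrt\pi}{\gamma}\sum_{n\in\Z}e^{-\pi^2n^2/\gamma^2}$ turns this into $\frac{\sqrt\pi}{\gamma}\big(\sum_{q\in\Z}e^{-\pi^2q^2/\gamma^2}\big)^2$, and the prefactor $\frac{\gamma}{\sqrt{2\pi}}$ collapses the constant to $\frac1{\sqrt2}$, which is \eqref{lambda-carre}. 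For the hexagonal lattice $N=1$, $\tau=e^{2i\pi/3}$, $\gamma^2=\frac{2\pi}{\sqrt3}$, one has $|j\tau-\ell|^2=j^2+j\ell+\ell^2=(j+\tfrac\ell2)^2+\tfrac34\ell^2$; Poisson summation in $j$ with the half-integer shift $\tfrac\ell2$ gives $\sum_{j\in\Z}e^{-\gamma^2(j+\ell/2)^2}=\frac{\sqrt\pi}{\gamma}\sum_{k\in\Z}(-1)^{k\ell}e^{-\pi^2k^2/\gamma^2}$, hence
\begin{equation*}
\sum_{j,\ell\in\Z}e^{-\gamma^2(j^2+j\ell+\ell^2)}=\frac{\sqrt\pi}{\gamma}\sum_{k\in\Z}e^{-\pi^2k^2/\gamma^2}\sum_{\ell\in\Z}(-1)^{k\ell}e^{-\frac34\gamma^2\ell^2}.
\end{equation*}
Splitting the $k$-sum by parity ($\sum_{k\ \mathrm{even}}e^{-\pi^2k^2/\gamma^2}=I$, $\sum_{k\ \mathrm{odd}}e^{-\pi^2k^2/\gamma^2}=J$, while $(-1)^{k\ell}=1$ for $k$ even and $(-1)^\ell$ for $k$ odd), and using that $\gamma^2=\frac{2\pi}{\sqrt3}$ is precisely the relation $\frac34\gamma^2=\frac{\pi^2}{\gamma^2}$, so that $\sum_{\ell}e^{-\frac34\gamma^2\ell^2}=\sum_{\ell}e^{-\pi^2\ell^2/\gamma^2}=I+J$ and $\sum_{\ell}(-1)^\ell e^{-\frac34\gamma^2\ell^2}=I-J$, we get $\sum_{j,\ell\in\Z}e^{-\gamma^2(j^2+j\ell+\ell^2)}=\frac{\sqrt\pi}{\gamma}\big(I(I+J)+J(I-J)\big)=\frac{\sqrt\pi}{\gamma}(I^2+2IJ-J^2)$. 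Multiplying by $\frac{\gamma}{\sqrt{2\pi}}e^{\pi^2/(2\gamma^2)}$ gives $\lambda_0=\frac1{\sqrt2}e^{\pi^2/(2\gamma^2)}(I^2+2IJ-J^2)$, which is \eqref{lambda-hexa}.

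\textbf{Main difficulty.} The verification of the equation and the identification of $\lambda_0$ are immediate consequences of the earlier results, and the rectangular evaluation is a one-line Poisson summation. The only delicate point is the hexagonal computation: one must apply Poisson summation in the variable $j$ first (so as to produce the phase $(-1)^{k\ell}$), then split by parity, and recognize the arithmetic identity $\frac34\gamma^2=\pi^2/\gamma^2$ — valid exactly when $\gamma^2=\frac{2\pi}{\sqrt3}$ — which is what makes the two inner theta series collapse onto $I\pm J$ and, in particular, produces the minus sign in front of $J^2$ from the alternating ($k$ odd) contribution.
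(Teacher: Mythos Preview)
Your proof is correct and follows essentially the same approach as the paper: reduce to $\Pi(|\Phi_k|^2\Phi_k)=\lambda_0\Phi_k$ via the preceding Proposition, read off the closed form of $\lambda_0$ from Proposition~\ref{prop72}$(ii)$--$(iii)$, and then evaluate the two special cases by Poisson summation (the paper completes the square as $(\ell+\tfrac{j}{2})^2+\tfrac34 j^2$ and sums in $\ell$ first, whereas you use the symmetric decomposition $(j+\tfrac{\ell}{2})^2+\tfrac34\ell^2$ and sum in $j$, which is equivalent since the form $j^2+j\ell+\ell^2$ is symmetric).
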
 

\begin{proof}
The result of Proposition \ref{prop34} directly follows from Proposition \ref{prop72} $(ii)$ and~$(iii)$. Let us detail the cases of interest.  

\medskip

$\bullet$ If $N=1$ and $\tau={i \pi }/{\gamma^2}$, then by \eqref{def-lambda}
\begin{eqnarray*}
 \sum_{j , \ell\in \Z}\EE\Big({-\gamma^2 |j{\tau}-\ell|^{2}}\Big)&=& \sum_{j , \ell\in \Z}\EE\Big({-\gamma^2 (\frac{j^2  \pi^2}{\gamma^4}+ \ell^2)}\Big) \\
 &=& \Big(\sum_{j\in\Z}\EE\big(-\frac{\pi^2j^2}{\gamma^2}\big)\Big) \Big(\sum_{\ell\in\Z}\EE\big(- \gamma^2 \ell^2\big)\Big).
 \end{eqnarray*}
By the Poisson summation formula \eqref{PF1} with $\alpha=\gamma^2$ and $z=0$, 
$$\sum_{\ell\in\Z}\EE\big(- \gamma^2 \ell^2\big)=\frac{\sqrt{\pi}}\gamma  \sum_{\ell\in\Z}\EE\big(-\frac{\pi^2\ell^2}{\gamma^2}\big),$$
which implies \eqref{lambda-carre}.

\medskip

$\bullet$ If $N=1$, $\tau= \EE\big( {2 i\pi/3}\big)=-1/2+ i \sqrt{3}/2$ and $\gamma={\sqrt{2\pi}}/{3^{1/4}}$, then 
$$A:=  \sum_{j , \ell\in \Z}\EE\big(-\gamma^2 |j{\tau}-\ell|^{2}\big)= \sum_{j , \ell\in \Z}\EE\Big(-\gamma^2 \big((\ell+\frac12 j)^2+ \frac 34 j^2\big)\Big) $$
From \eqref{PF1} with $\alpha =\gamma^2$ and $z=j/2$, we deduce that 
$$ \sum_{ \ell\in \Z}\EE\Big(-\gamma^2 (\ell+\frac12 j)^2\Big) =\frac{\sqrt{\pi}}\gamma \sum_{ n\in \Z}\EE\Big(-\frac{\pi^2n^2}{\gamma^2}+i \pi n j\Big). $$
On the other hand, using that $3 \gamma^2/4= {\pi^2}/{\gamma^2}$, we find 
\begin{eqnarray*}
 A &=& \frac{\sqrt{\pi}}\gamma   \sum_{j \in \Z}\Big(\EE\big(-  \frac{\pi^2j^2}{\gamma^2 }\big)     \sum_{ n\in \Z}\EE\big(-\frac{\pi^2n^2}{\gamma^2}+i \pi n j \big)\Big) \\
 &=& \frac{\sqrt{\pi}}\gamma  \big(I(I+J)+J(I-J)\big) \\
 &=& \frac{\sqrt{\pi}}\gamma  \big(I^2+2IJ-J^2\big),
\end{eqnarray*}
which was the claim.
\end{proof}

    \begin{theorem}\label{prop42}
    The equation   \eqref{LLL} is globally well-posed in $ \mathcal{E}_{\tau,\gamma}$: for all $u_0 \in  \mathcal{E}_{\tau,\gamma}$ there exists a unique $u \in \mathcal{C}^{\infty}(\R ; \mathcal{E}_{\tau,\gamma})$ which depends smoothly on $u_0$. Moreover, the following quantities are conserved: for all $t\in \R$
    \begin{equation*}
\int_{K_{\tau,\gamma}}|u(t,z)|^{2}dL(z)= \int_{K_{\tau,\gamma}}|u_0(z)|^{2}dL(z), 
\end{equation*}
    and 
        \begin{equation*}
\int_{K_{\tau,\gamma}}|u(t,z)|^{4}dL(z)=\int_{K_{\tau,\gamma}}|u_0(z)|^{4}dL(z).
\end{equation*}
  Furthermore, the   solution can be represented in any of the following forms:
  \begin{enumerate}[$(i)$]
    \item Assume that $(z_{j,0})_{1\leq j\leq N}$ satisfy \eqref{somme} for some $k, \ell \in \Z$, then  the solution of~\eqref{LLL} with  data 
    $$
    u_0(z) = \mu_0 \EE\Big(  \frac12 z^2  +\frac{i\pi  } {\gamma} (-N+2 k ) z  -  \frac12   |z|^2\Big)  \prod_{j=1}^N\Theta_{\tau}\Big(\frac{1}{\gamma}(z-z_{j,0})\Big),
    $$
    takes the form
    $$
    u(t,z) = \mu(t) \EE\Big(  \frac12 z^2  +\frac{i\pi  } {\gamma} (-N+2 k ) z  -  \frac12   |z|^2\Big)\prod_{j=1}^N\Theta_{\tau}\Big(\frac{1}{\gamma}(z-z_{j}(t))\Big),
    $$
    and we have for all $t \in \R$
    \begin{equation}\label{somme1}
        \sum_{j=1}^Nz_j(t)=\sum_{j=1}^Nz_{j,0} = \frac{\gamma}{2} (\tau - 1) N - k \tau \gamma + \ell \gamma.
    \end{equation}
       \item Assume that $u_0 \in  \mathcal{E}_{\tau,\gamma}$ reads $\dis u_0(z) =\sum_{j=0}^{N-1} \lambda_{j,0} \Phi_j(z),$  with $\dis (\lambda_{j,0})_{0 \leq j \leq N-1} \in \C^N$, then  
               $$u(t,z) =\sum_{j=0}^{N-1} \lambda_j(t) \Phi_j(z),$$ 
               where $\dis (\lambda_{j}(t))_{0 \leq j \leq N-1}$ satisfy for all $0\leq j\leq N-1$
                  \begin{equation}\label{systL}
 i\dot{\lambda}_j=C_N\sum_{\substack {0\leq k, \ell, m\leq N-1\\ k-\ell+m=j\;\;[N]}}\Big(\int_{K_{\tau,\gamma}} \Phi_{k}(z)\ov{\Phi_{\ell}(z)} \Phi_{m}(z)\ov{\Phi_{j}(z)}\,dL(z)\Big) \lambda_{k}\ov{\lambda_{\ell}}\lambda_{m},
 \end{equation}         
   with 
   $$\dis C_N= \Big(\int_{K_{\tau, \gamma}}|\Phi_{j}(z)|^{2}dL(z)\Big)^{-1}=\frac1{\gamma N} \sqrt{\frac{2}{ {\pi}}} \EE\Big(- \frac{\pi^2}{2\gamma^2}\Big)$$         and we have for all $t \in \R$
    \begin{equation*} 
        \sum_{j=0}^{N-1}|\lambda_j(t)|^2=  \sum_{j=0}^{N-1}|\lambda_{j,0}|^2.
    \end{equation*}
          \end{enumerate}

          \end{theorem}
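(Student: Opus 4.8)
The plan is to reduce the PDE \eqref{LLL} on $\mathcal{E}_{\tau,\gamma}$ to a finite-dimensional ODE system via Proposition \ref{prop72}, solve that ODE by standard Cauchy--Lipschitz theory, and then recover the conservation laws and the two representation formulas from what was already established earlier. First I would fix a Hilbert space structure: by Proposition \ref{prop72}$(ii)$ the family $(\Phi_k)_{0\le k\le N-1}$ is orthogonal in $L^2(K_{\tau,\gamma})$ with explicit norm, so $\mathcal{E}_{\tau,\gamma}$ is an $N$-dimensional Hilbert space and every $u\in\mathcal{E}_{\tau,\gamma}$ is uniquely $u=\sum_{j=0}^{N-1}\lambda_j\Phi_j$. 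By Lemma \ref{lemmaproj2} we may replace $\Pi$ by the orthogonal projector $\Pi'$ onto $\mathcal{E}_{\tau,\gamma}$, so \eqref{LLL} becomes $i\partial_t u = \Pi'(|u|^2u)$ inside this finite-dimensional space. Expanding $|u|^2u = \sum_{j_1,j_2,j_3}\lambda_{j_1}\ov{\lambda_{j_2}}\lambda_{j_3}\,\Phi_{j_1}\ov{\Phi_{j_2}}\Phi_{j_3}$ and applying the Proposition preceding Subsection 2.5 (which gives $\Pi(\Phi_{j_1}\ov{\Phi_{j_2}}\Phi_{j_3})=\lambda\Phi_{j_4}$ with $j_4=j_1-j_2+j_3$ mod $N$ and $\lambda$ the stated overlap integral divided by $\|\Phi_0\|_{L^2}^2$), one reads off exactly the cubic ODE system \eqref{systL}, with $C_N=\|\Phi_0\|_{L^2}^{-2}$ computed from Proposition \ref{prop72}$(ii)$. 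This establishes that $u\in\mathcal C^1(I;\mathcal{E}_{\tau,\gamma})$ solves \eqref{LLL} if and only if its coordinates solve \eqref{systL}.

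Next I would invoke the Cauchy--Lipschitz theorem: the right-hand side of \eqref{systL} is a cubic polynomial in $(\lambda_j,\ov{\lambda_j})$, hence smooth and locally Lipschitz, so there is a unique maximal solution depending smoothly (indeed analytically) on the initial data $(\lambda_{j,0})$. For global existence and smoothness in time, the standard bootstrap applies: the vector field is smooth, so any $\mathcal C^1$ solution is automatically $\mathcal C^\infty$ in $t$; and an a priori bound on $\sum_j|\lambda_j|^2$ prevents blow-up, giving $I=\R$. That a priori bound is the $L^2$ conservation, which I would prove directly from \eqref{systL}: compute $\frac{d}{dt}\sum_j|\lambda_j|^2 = 2\Re\sum_j\ov{\lambda_j}\dot\lambda_j$; using $i\dot\lambda_j = \partial_{\ov{\lambda_j}}\mathcal H$ where $\mathcal H = \tfrac{C_N}{2}\sum \big(\int\Phi_{j_1}\ov{\Phi_{j_2}}\Phi_{j_3}\ov{\Phi_{j_4}}\big)\lambda_{j_1}\ov{\lambda_{j_2}}\lambda_{j_3}\ov{\lambda_{j_4}}$ is the (real) Hamiltonian, the sum telescopes to $0$ by the symmetry of the coefficients under $(j_1,j_3)\leftrightarrow(j_2,j_4)$ together with conjugation. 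Equivalently, and more cleanly, I would observe $\frac{d}{dt}\|u\|_{L^2(K)}^2 = 2\Re\langle \partial_t u, u\rangle = 2\Re\langle -i\Pi(|u|^2u),u\rangle = -2\Im\langle |u|^2u,u\rangle = -2\Im\int|u|^4 = 0$, using that $\Pi$ is self-adjoint on $\mathcal F_{\tau,\gamma}$ (Lemma \ref{lemmaproj}) and $u\in\mathcal{E}_{\tau,\gamma}$. The $L^4$ conservation $\frac{d}{dt}\int_{K}|u|^4 = 0$ follows the same way: $\frac{d}{dt}\int|u|^4 = 4\Re\int|u|^2\ov u\,\partial_t u = -4\Im\int|u|^2\ov u\,\Pi(|u|^2u) = -4\Im\int \ov{\Pi(|u|^2u)}\,\Pi(|u|^2u) = -4\Im\|\Pi(|u|^2u)\|_{L^2}^2 = 0$, again by self-adjointness of $\Pi$. (Conservation of $\sum_j|\lambda_j|^2$ is then just $L^2$ conservation read in the orthogonal basis, up to the constant $\|\Phi_0\|_{L^2}^2$, so the stated normalized form follows.)

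Finally I would derive the two representation formulas. For $(ii)$ there is nothing more to do: it is precisely the coordinate form of the solution in the basis $(\Phi_j)$, and the system satisfied by $(\lambda_j(t))$ is \eqref{systL} as established above. For $(i)$, I would argue that the solution stays in $\mathcal{E}_{\tau,\gamma}$ for all time (by global well-posedness), so by Proposition \ref{prop41} it keeps the multiplicative form \eqref{formprop} with some time-dependent scalar $\lambda(t)$, zeros $z_j(t)$, and parameter $b(t)$ with $\gamma b(t) = i(-N+2k(t))\pi$; since $t\mapsto u(t)$ is continuous and the value $k$ mod $N$ is locked by the quantization relation \eqref{somme} (it is an integer-valued continuous function of $t$ once we fix a branch), $k$ is constant, hence $b$ is constant, which justifies keeping $\frac{i\pi}{\gamma}(-N+2k)z$ in the exponent. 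The constraint \eqref{somme1} on $\sum_j z_j(t)$ is then the second line of \eqref{somme}, which holds at every time because $u(t)\in\mathcal{E}_{\tau,\gamma}$; that it equals the initial value $\sum_j z_{j,0}$ is immediate since both sides equal the same fixed constant $\frac{\gamma}{2}(\tau-1)N - k\tau\gamma + \ell\gamma$. I expect the main (though modest) obstacle to be the bookkeeping in item $(i)$: one must make sure that as the zeros $z_j(t)$ move continuously, the relevant integers $k,\ell$ in \eqref{somme} — which are only defined modulo choices of representatives and modulo $N$ — can be chosen once and for all in a way consistent with continuity of $u(t)$, so that the exponential prefactor genuinely does not depend on $t$; everything else is a routine transcription of results proved earlier in the section together with the elementary ODE theory.
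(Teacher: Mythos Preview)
Your proposal is correct and follows essentially the same approach as the paper: reduce to a finite-dimensional system using the basis $(\Phi_k)$ of Proposition~\ref{prop72}, invoke a fixed-point/Cauchy--Lipschitz argument, and read off the multiplicative representation from Proposition~\ref{prop41}. The paper's own proof is extremely terse (four lines) and omits the derivation of the conservation laws and of the system~\eqref{systL}; your version fills in these details carefully, including the self-adjointness argument for the $L^2$ and $L^4$ conservation and the continuity argument fixing the integers $k,\ell$ in item~$(i)$, which the paper defers to a remark.
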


\begin{remark}\label{rem14}
$\bullet$ Notice that we have  the stationary solutions 
$$u(t,z) =  \kappa \EE( -i \mu t)\EE\big( {z^2}/2+\frac{2i \pi k}{\gamma}z    - {|z|^2}/2\big), \qquad \kappa \in \C, \;\; k \in \Z, $$
for some $\mu >0$, found in \cite[Proposition\;6.1]{GGT}, which  satisfies $R_{\gamma}u=u$. This would correspond to the case $N=0$ in Theorem \ref{prop42}.\smallskip

$\bullet$  When $N=1$, we obtain the stationary solution
\begin{equation*}
    u(t,z)=  e^{i\mu t}\Phi_0(z) =e^{i\mu t}\EE\Big( {  \frac{z^2}2 -\frac{i\pi}{\gamma} z     -  \frac{|z|^2}2}\Big) \Theta_{\tau}\Big(\frac{1}{\gamma}(z-z_0)\Big) , \quad z_0 = \frac{\gamma}{2}(\tau-1).
\end{equation*}
This stationary solution has already been obtained in \cite{ABN}. Indeed, set $\gamma=1/{\nu}$, then the function $f_\tau$ which is defined in \cite[Theorem 1.4]{ABN} is the function  $v(z)=\EE\big(z^2/2-|z|^2/2\big)\Theta_\tau({z}/{\gamma})$. Set $u:=R_{-z_0}v$, where $z_0 = {\gamma}(\tau-1)/2$, then $u \in \mathcal{E}_{\tau,\gamma}$ with $\tau_2=\pi/\gamma^2$. Notice however that $v \notin \mathcal{E}_{\tau,\gamma}$ because $ R_\gamma v=-v$ and $ R_{\gamma \tau} v=-v$.\smallskip

$\bullet$ The conservation law \eqref{somme1} is a condition such that $u$ belongs to the space $\E_{\tau,\gamma}$. By the previous result, for all $t\in \R$, any solution of \eqref{LLL} has exactly $N$ zeros (counted with multiplicity) in a fundamental cell $K_{\tau,\gamma}$. We can understand this fact in the following way: by the contour formula, the number of zeros of $u$ is locally preserved, until one zero crosses $\partial K_{\tau,\gamma}$, but then in this case, the number of zeros in $K_{\tau,\gamma}$ has to be conserved by the quasi-periodicity condition $\vert u(z+\gamma)\vert= \vert u(z+\gamma\tau)\vert=\vert u(z)\vert$.\smallskip

$\bullet$  Assume that $v\in \E_{\tau,\gamma}$, then $\dis R_{{\gamma}/N}v \in \E_{\tau,\gamma}$. As a consequence we can show that if $u$ is solution to~\eqref{LLL}, then $R_{{\gamma}/N}u$ is also solution with initial condition $R_{{\gamma }/N}u_0$. This can be reformulated for the system \eqref{systL} as: if $(\lambda_0(t),\lambda_1(t), \dots, \lambda_{N-1}(t))$ satisfies~\eqref{systL}, then $(\lambda_1(t), \dots, \lambda_{N-1}(t),\lambda_{0}(t))$ also.
\end{remark}

\begin{proof}[Proof of Theorem \ref{prop42}]
This is a direct consequence of part $(ii)$ of Proposition \ref{prop72}. For $u_{0}\in \mathcal{E}_{\tau, \gamma}$ we deduce that  the  map $\dis u\longmapsto u_{0}-i\int_{0}^{t} \Pi(\vert u\vert^2u)(s)ds$ sends $\mathcal{E}_{\tau, \gamma}$ into itself, and the fixed point argument implies that for all $t\in \R$, $u(t)\in \mathcal{E}_{\tau, \gamma}$.

The second part of the result follows from the characterisation of $\mathcal{E}_{\tau,\gamma}$ given in Proposition \ref{prop41}.
\end{proof}

\section{(LLL) on simply periodic domains (strips)}
\label{section3}

\subsection{Periodic functions in the Fock-Bargmann space}
Denote by $\mathcal{F}_{\gamma}$  the space
\begin{equation*}
    \mathcal{F}_{\gamma}=\Big\{u\in \widetilde{{\mathcal{E}}}:   \, R_{\gamma}u=u\Big\}.
\end{equation*}
 A subspace of $\mathcal{F}_\gamma$ is given by functions which are $L^p$-integrable on a fundamental domain associated to the translation $z \mapsto z+\gamma$. For convenience, we fix the vertical strip
$$
S_\gamma = \big\{ z \in \mathbb{C}, \, -{\gamma}/{2} < \mathfrak{Re} z \leq {\gamma}/{2} \big\},
$$
and we consider the natural $L^p(S_\gamma)$ norm. Similarly, $L^{p,\alpha}(S_\gamma) \cap \mathcal{F}_{\gamma} $ is the subspace of $\mathcal{F}_\gamma$ for which the following norm is finite
$$
\| u \|_{L^{p,\alpha}(S_\gamma)} = \| \langle z \rangle^\alpha u \|_{L^p(S_\gamma)}.
$$

Denote
$$\psi_0(z):=\left(\frac{2}{\pi \gamma^2}\right)^{1/4}\EE\Big({\frac{z^2}2-\frac{|z|^2}2}\Big),$$
and for $n\in \Z$
\begin{equation*} 
\psi_n(z) = R_{{i n \pi}/ \gamma}\psi_0(z) = \EE\Big({-\frac{\pi^2 n^2}{\gamma^2}}\Big) \EE\Big(\frac{2in\pi}{\gamma} z\Big)\psi_0(z).
\end{equation*}

  \begin{remark}
  Alternatively, we could have defined
$$
\wt{\psi}_n(z) =R_{{n\tau  \gamma}/ N}\psi_0(z)
$$
with the usual quantization condition on $\tau$ and $N$. This results in
$$
\wt{\psi}_n(z) =R_{{n\tau  \gamma}/  N}\psi_0(z)= \EE\Big( \frac{i n^2 \pi \tau_1}{N} -\frac{\pi^2 n^2}{\gamma^2}+ \frac{2in\pi}{\gamma} z\Big) \psi_0(z).
$$
So the only dependence on $\tau$ is through the number $\EE\big( {i {n^2 \pi \tau_1}/ {N}}\big)$ which has modulus 1 and is thus irrelevant (here we denote by $\tau_1=\Re \tau $).
  \end{remark}


\subsection{On the restriction of the projector $\Pi$ on   $\mathcal{F}_\gamma$}

\begin{lemme}\label{lemme_extension} Let $\Pi$ be the orthogonal projector from the space of $R_\gamma$-invariant functions to ${L^2(S_\gamma) \cap \mathcal{F}_{\gamma}}$, and denote $K(z,w)$ its kernel, for $(z,w) \in S_\gamma \times S_\gamma$.
  \begin{enumerate}[$(i)$]
\item The kernel $K$ is given by the formula
\begin{multline}\label{perio}
K(z,w)=\sqrt{\frac{2}{\pi \gamma^2}} \EE\Big( {-\frac{\pi^2}{2\gamma^2}} \Big)\EE\Big( {\frac{1}{2} z^2 - \frac{1}{2} |z|^2+\frac{1}{2} \ov{w}^2  - \frac{1}{2} |w|^2 -\frac{i\pi}{\gamma}  (z-  \ov{w})}\Big)\\
\times \Theta_{{2i\pi}/{\gamma^2}}\Big(\frac{1}{\gamma}(z-\ov{w}-\frac{i\pi}{\gamma}+\frac{\gamma}{2})\Big).
\end{multline}
\item  It enjoys the bound
\begin{equation}\label{borneK}
|K(z, w)| \lesssim \EE\Big(-\frac{(\mathfrak{Im}z-\mathfrak{Im}w)^2}2\Big).
\end{equation}
\item  For all $u,v\in \mathcal{F}_{\gamma} \cap L^2(S_{\gamma})$
\begin{equation}\label{aa}
        \int_{S_{\gamma}} \Pi u(z) \ov{v(z)}\, dL(z)= \int_{S_{\gamma}}  u(z) \ov{\Pi v(z)}\, dL(z).
\end{equation} 
\item The projector $\Pi$ on $\{ u, \; R_\gamma u = u \} \cap L^2(S_\gamma)$ can be identified with the orthogonal projector from $L^2(S_\gamma)$ to $\mathcal{F}_\gamma \cap L^2(S_\gamma)$.
\item For any $p \geq 2$ and $\alpha \geq 0$, the orthogonal projector $\Pi$ has a unique extension to $L^p(S_\gamma)$ and $L^{p,\alpha}(S_\gamma)$.
\end{enumerate}
\end{lemme}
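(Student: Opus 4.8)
The plan is to prove the five items roughly in the order stated, since each relies on the previous. For item $(i)$, I would start from the definition $[\Pi u](z) = \frac{1}{\pi} e^{-|z|^2/2}\int_\C e^{\ov w z - |w|^2/2} u(w)\,dL(w)$ and, as in the proof of Lemma \ref{lemmaproj}, fold the integral over $\C$ onto the strip $S_\gamma$ by writing $w = \xi + n\gamma$, $n \in \Z$, and using the quasi-periodicity $u(\xi + n\gamma) = e^{n\gamma(\xi - \ov\xi)/2} u(\xi)$. This produces $K(z,w) = \frac{1}{\pi} e^{-|z|^2/2 - |w|^2/2} \sum_{n\in\Z} e^{-n^2\gamma^2/2 + n\gamma z} e^{(z - n\gamma)\ov w}$ (up to the bookkeeping of the holomorphic factors). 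The sum over $n$ is a Gaussian-type theta series; completing the square in $n$ and comparing with the series defining $\Theta_{2i\pi/\gamma^2}$ — whose modular parameter is forced by the ratio $e^{-\gamma^2/2}$ of consecutive exponential rates — should yield exactly \eqref{perio}. This is a routine but slightly delicate algebraic identification; the main care is tracking the shift $-i\pi/\gamma + \gamma/2$ inside the theta argument, which comes from the linear-in-$n$ terms.

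For item $(ii)$, I would extract from \eqref{perio} the Gaussian decay: the prefactor $e^{\frac12 z^2 - \frac12|z|^2 + \frac12\ov w^2 - \frac12|w|^2}$ has modulus $e^{-(\Im z)^2 - (\Im w)^2 + \cdots}$ after expanding, and combined with the theta factor (which is bounded above and below on horizontal strips by its quasi-periodicity, once one reduces $\Re(z - \ov w)$ to a bounded fundamental interval) one gets a bound of the form $e^{-\frac12(\Im z - \Im w)^2}$ times a bounded function. Concretely, since $\Theta_{2i\pi/\gamma^2}$ satisfies the quasi-periodicity \eqref{star}, $|\Theta_{2i\pi/\gamma^2}(\zeta)|$ is comparable to a Gaussian $e^{-c(\Im\zeta)^2}$ in $\Im\zeta$; plugging $\Im\zeta = \frac1\gamma(\Im z + \Im w - \tfrac{\pi}{\gamma})$ (note $\ov w$ flips the sign of $\Im w$) and combining the exponents, the cross terms should collapse to give precisely $-\tfrac12(\Im z - \Im w)^2$. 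I expect this exponent-matching to be the one step where one must be genuinely careful with signs and constants, though the quantization $\gamma^2\tau_2 = \pi$ (here with $\tau = 2i\pi/\gamma^2$) is exactly what makes it work out cleanly.

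Items $(iii)$ and $(iv)$ are then formal. For $(iii)$, self-adjointness, one repeats verbatim the computation of Lemma \ref{lemmaproj}: with $K$ now absolutely convergent (by $(ii)$, $\int_{S_\gamma}|K(z,w)|\,dL(w) < \infty$ uniformly in $z$, since $\int_\R e^{-\frac12(\Im z - s)^2}\,ds$ converges), Fubini applies and the symmetry $\ov{K(w,z)} = K(z,w)$ — visible from \eqref{perio} since swapping $z \leftrightarrow w$ and conjugating interchanges the roles of $z$ and $\ov w$ — gives \eqref{aa}. For $(iv)$, one argues exactly as in Lemma \ref{lemmaproj2}: on $\mathcal F_\gamma \cap L^2(S_\gamma)$, $\Pi$ acts as the identity (because $\Pi u \in \wt{\mathcal E} \cap \mathcal F_\gamma$ and $\langle \Pi u, v\rangle = \langle u, v\rangle$ for all $v$ in the target by $(iii)$), while on its orthogonal complement $\Pi u = 0$ by the same pairing argument; hence $\Pi$ is the orthogonal projector onto $\mathcal F_\gamma \cap L^2(S_\gamma)$.

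Finally, for item $(v)$, the extension to $L^p(S_\gamma)$ and $L^{p,\alpha}(S_\gamma)$: I would use the kernel bound $(ii)$ directly. Since $|K(z,w)| \lesssim e^{-\frac12(\Im z - \Im w)^2}$ and $K$ depends on the horizontal variables only through the bounded quantity $\Re(z - \ov w)$ (periodically), Schur's test applies — $\sup_z \int_{S_\gamma}|K(z,w)|\,dL(w) < \infty$ and $\sup_w \int_{S_\gamma}|K(z,w)|\,dL(z) < \infty$ — giving boundedness of the integral operator on every $L^p(S_\gamma)$, $1 \le p \le \infty$, hence in particular $p \ge 2$; for the weighted spaces one notes $\langle z\rangle^\alpha \lesssim \langle w\rangle^\alpha \langle \Im z - \Im w\rangle^\alpha$ on $S_\gamma$ and the extra polynomial factor is absorbed by the Gaussian, so Schur's test still closes. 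Uniqueness of the extension follows because $\mathcal E \cap L^2(S_\gamma)$ (or the Schwartz-type functions in $\wt{\mathcal E}$) is dense in $L^{p}(S_\gamma) \cap \mathcal F_\gamma$ and $L^{p,\alpha}(S_\gamma)\cap\mathcal F_\gamma$ in the relevant topology, so the bounded operator is determined by its values there. The only genuine obstacle in the whole lemma is the bookkeeping in $(i)$–$(ii)$; everything downstream is a Schur-test-plus-density argument modeled on Lemmas \ref{lemmaproj}–\ref{lemmaproj2}.
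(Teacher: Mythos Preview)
Your overall strategy matches the paper's, and items $(iii)$--$(v)$ are handled exactly as you describe. Two points deserve comment.

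In $(i)$, folding the integral onto $S_\gamma$ indeed produces the kernel $\frac{1}{\pi}e^{-\frac12|z|^2-\frac12|w|^2+\ov w z}\sum_{k\in\Z}e^{k\gamma(z-\ov w)-\frac12 k^2\gamma^2}$, but ``completing the square in $n$'' will not turn this into the theta series in~\eqref{perio}: the folded sum has Gaussian rate $e^{-\gamma^2/2}$ per term, while $\Theta_{2i\pi/\gamma^2}$ has rate $e^{-2\pi^2/\gamma^2}$. These are dual widths, related by the modular transformation $\tau\mapsto -1/\tau$, i.e.\ by the Poisson summation formula~\eqref{PF1} with $\alpha=\gamma^2/2$. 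This is exactly what the paper invokes, and it is the one genuinely non-algebraic step; without it the identification does not close.

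In $(ii)$, your direct approach via the quasi-periodicity~\eqref{star} of $\Theta_{2i\pi/\gamma^2}$ works but requires iterating the functional equation and tracking a telescoping product of exponentials. The paper takes a slicker route: it recognizes that $e^{\frac12 z^2-\frac{2i\pi}{\gamma}z-\frac12|z|^2}\,\Theta_{2i\pi/\gamma^2}\big(\tfrac{1}{\gamma}(z-z_1)\big)^2$ is a function of the form~\eqref{formprop} with $N=2$, $\tau=\tfrac{2i\pi}{\gamma^2}$, hence an element of $\mathcal{E}_{\tau,\gamma}$ and therefore bounded. This immediately controls the theta-times-exponential combination, and the remaining factor $e^{\frac14(z-\ov w)^2-\frac14|z-\ov w|^2+\cdots}$ is computed by hand to give exactly $e^{-\frac12(\Im z-\Im w)^2}$ in modulus. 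Your approach reaches the same conclusion; the paper's shortcut just avoids the iteration.
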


\begin{proof} 
$(i)$ We show that the periodized projection operator in the whole space agrees with the projection operator given by \eqref{perio}. To get a formula for the periodized projection operator, we write, if $u$ is $R_\gamma$-invariant,
$$
u(z+k\gamma) = \EE\Big({{k\gamma}(z-\ov{z})/2 }\Big) u(z).
$$
The change of variable $w = w'+ k \gamma$ results in
$$
\EE\Big( {\ov{w} z - \frac{1}{2}|w|^2}\Big)  u (w) = \EE\Big(   \overline{w'} z - \frac{1}{2} |w'|^2 + k \gamma (z-\overline{w'}) - \frac{1}{2} k^2 \gamma^2\Big)  u(w').
$$
Making this change of variable in the integral after splitting it into vertical strips gives
\begin{align*}
\Pi u(z) & = \frac{1}{\pi} \EE\big( -{|z|^2}/2\big)  \int_\mathbb{C}  \EE\Big( \ov  w z - |w|^2/2\Big) u(w) \,dL(w) \\
&=  \frac{1}{\pi} \EE\big( -{|z|^2}/2\big) \sum_{ k \in \Z} \int_{w \in S_{\gamma} + k\gamma} \dots \\
& = \frac{1}{\pi} \EE\big( -{|z|^2}/2\big) \int_{S_{\gamma}}  \EE\big( {\ov w z - |w|^2/2 }\big) \\
&\hspace{4cm} \times \left[ \, \sum_{ k \in \Z}  \EE\big( {k \gamma (z-\overline{w}) -  k^2 \gamma^2/2}\big) \right] u(w) dL(w).
\end{align*}
Comparing with the above formula, we need to check that
\begin{multline*}
\sqrt{\frac{2}{\pi \gamma^2}} \EE\Big( {z^2/2  +\ov{w}^2/2 }\Big)\sum_{   k \in \Z }  \EE\Big( {\frac{2ik\pi}{\gamma}(z-\ov{w}) -\frac{2\pi^2k^2}{\gamma^2}}\Big)= \\
= \frac{1}{\pi} e^{\ov w z } \sum_{k \in \Z}  \EE\Big(  {k \gamma (z-\overline{w}) - \frac{1}{2} k^2 \gamma^2}\Big),
\end{multline*}
 but this follows from the Poisson formula \eqref{PF1} with $x={(z-\ov{w})}/ \gamma$ and $\alpha={\gamma^2}/2$. This proves \eqref{perio}.

\medskip

\noindent $(ii)$ Let $N=2$, $\tau = {2i\pi}/{\gamma^2}$, and $z_1 = z_2 = {\gamma}(\tau-1)/2$. Then, the function 
$$ u(z) = \EE\Big( {z^2/2-{2i\pi}z/ {\gamma}-|z|^2/2}\Big) \Theta^2_{{2i\pi}/{\gamma^2}}\Big( (z-z_1)/\gamma\Big) $$
is of the form \eqref{formprop}. Therefore, $u$ is periodic over the lattice $\mathcal{L}_{\tau,\gamma}$, so that it is bounded.  This implies that
$$ \abs{\Theta_{{2i\pi}/{\gamma^2}}\pa{\frac{1}{\gamma}(z-\ov{w}-\frac{i\pi}{\gamma}+\frac{\gamma}{2})}\EE \Big(  {-\frac{i\pi}{\gamma}(z-\ov{w})+\frac14(z-\ov{w})^2-\frac14|z-\ov{w}|^2}   \Big)} \lesssim 1.$$

Then, using $(i)$ and 
$$ \frac14(z-\ov{w})^2-\frac14|z-\ov{w}|^2 = \frac14z^2-\frac14|z|^2+\frac14\ov{w}^2-\frac14|w|^2-\frac14(2z\ov{w}-zw-\ov{z}\ov{w}), $$
we get 
$$ \abs{K(z,w)} \lesssim \abs{\EE \Big(  \frac14z^2-\frac14|z|^2+\frac14\ov{w}^2-\frac14|w|^2+\frac14(2z\ov{w}-zw-\ov{z}\ov{w}) \Big)}.$$
We write, for $z=x+iy, w=a+ib \in S_{\gamma}$,
\begin{multline*}
\frac14z^2-\frac14|z|^2+\frac14\ov{w}^2-\frac14|w|^2+\frac14(2z\ov{w}-zw-\ov{z}\ov{w})\\
\begin{aligned}
&  = \frac{i}{2}xy-\frac12y^2-\frac{i}{2}ab-\frac{1}{2}b^2 + yb +\frac12i(ya-xb) \\
& = -\frac12(y-b)^2 + \frac{i}2(y-b)(x+a),
\end{aligned}
\end{multline*}
and obtain the bound
$$ \abs{K(z,w)} \lesssim \EE\Big(-\frac{(\mathfrak{Im}z-\mathfrak{Im}w)^2}2\Big).$$
 
\medskip

\noindent $(iii)$ Can be proved like Lemma \ref{lemmaproj}.

\medskip

\noindent $(iv)$ Can be proved like Lemma \ref{lemmaproj2}.

\noindent $(v)$	 By the assertion $(ii)$, we can write for $u\in  L^p_{\gamma}$
		\begin{align*}
		\|\Pi u\|_{L^{p,\alpha}(S_{\gamma})} &= \absabs{\bra{z}^\alpha\int_{S_{\gamma}}K(z,w)u(w)dL(w)}_{L^p(S_{\gamma})}\\
			& \lesssim \absabs{\int_{S_{\gamma}}    \EE\Big(-\frac{(\mathfrak{Im}z-\mathfrak{Im}w)^2}2\Big) \big(\bra{w}^\alpha+\bra{z-w}^\alpha\big)u(w)dL(w)}_{L^p(S_{\gamma})} \\ 
			& \lesssim \absabs{\bra{z}^\alpha u}_{L^p(S_{\gamma})} = \|u\|_{L^{p,\alpha}(S_{\gamma})},
		\end{align*}
		and the result follows. 
\end{proof}

We shall now prove hypercontractivity estimates for all $L^p-L^q$. 
\begin{lemme}
	Let $u\in \mathcal{F}_{\gamma}$. Then for any $1 \leq p \leq q \leq +\infty$,
 	\begin{equation}\label{Carlen_bande}
 		\|u\|_{L^{q}(S_{\gamma})} \lesssim \|u\|_{L^{p}(S_{\gamma})}.
 	\end{equation}
\end{lemme}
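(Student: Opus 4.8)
The plan is to reduce the whole statement to the endpoint estimate $\|u\|_{L^\infty(S_\gamma)}\lesssim_{p}\|u\|_{L^p(S_\gamma)}$ for each fixed $p\in[1,\infty)$, and then to recover the general range $1\le p\le q\le\infty$ by a one-line interpolation. Indeed, we may assume $\|u\|_{L^p(S_\gamma)}<\infty$ (otherwise there is nothing to prove); if $q<\infty$ then
\[
\|u\|_{L^q(S_\gamma)}^q=\int_{S_\gamma}|u|^{p}\,|u|^{q-p}\,dL\le \|u\|_{L^\infty(S_\gamma)}^{\,q-p}\,\|u\|_{L^p(S_\gamma)}^{\,p}\lesssim \|u\|_{L^p(S_\gamma)}^{\,q},
\]
while the case $q=\infty$ is the endpoint estimate itself. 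So everything comes down to proving that endpoint estimate.

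The key ingredient is a \emph{local sub-mean-value inequality with a constant uniform in the base point}. Write $u=e^{-|z|^2/2}f$ with $f$ entire. Fixing $z_0\in\mathbb C$, set $g_{z_0}(w)=f(w)\,e^{-\overline{z_0}w}$, which is again entire, so that $|g_{z_0}|^p$ is subharmonic for every $p\ge 1$. Apply the sub-mean-value inequality on $D(z_0,1)$. Using $|g_{z_0}(z_0)|^p=|f(z_0)|^p e^{-p|z_0|^2}$ and $|g_{z_0}(w)|^p=|f(w)|^p e^{-p\,\mathfrak{Re}(\overline{z_0}w)}$, together with the identity $\tfrac12|w|^2-\mathfrak{Re}(\overline{z_0}w)=-\tfrac12|z_0|^2+\tfrac12|w-z_0|^2$ (this is exactly why the linear factor $e^{-\overline{z_0}w}$ is the right one: a different constant in the exponent would leave a term linear in $|z_0|$ and destroy uniformity), one gets after cancelling the common factor $e^{-\frac p2|z_0|^2}$:
\[
|u(z_0)|^p\le \frac{1}{\pi}\int_{D(z_0,1)}|u(w)|^p\,e^{\frac{p}{2}|w-z_0|^2}\,dL(w)\ \le\ \frac{e^{p/2}}{\pi}\int_{D(z_0,1)}|u(w)|^p\,dL(w).
\]
This is the only step requiring genuine care; the rest is bookkeeping.

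It remains to periodize. Since $R_\gamma u=u$ forces $|u(z+\gamma)|=|u(z)|$, the function $|u|^p$ is $\gamma$-periodic in the $\mathfrak{Re}$-direction. The disc $D(z_0,1)$ sits inside a box of width $2$ in $\mathfrak{Re}$ and height $2$ in $\mathfrak{Im}$, hence is covered by at most $\lceil 2/\gamma\rceil+1$ translates by integer multiples of $\gamma$ of a subset of $S_\gamma$ whose $\mathfrak{Im}$-extent is $\le 2$; by periodicity each such piece contributes at most $\|u\|_{L^p(S_\gamma)}^p$. Therefore $|u(z_0)|^p\lesssim_{p,\gamma}\|u\|_{L^p(S_\gamma)}^p$ uniformly in $z_0$, which is the endpoint estimate and completes the argument. (Alternatively, for $p\ge 2$ one could bypass subharmonicity, using $u=\Pi u$ with the kernel bound $|K(z,w)|\lesssim e^{-\frac12(\mathfrak{Im}z-\mathfrak{Im}w)^2}$ from Lemma~\ref{lemme_extension}$(ii)$ and Hölder, since $\|K(z,\cdot)\|_{L^{p'}(S_\gamma)}\lesssim_{p,\gamma}1$; but this does not cover $1\le p<2$, where Lemma~\ref{lemme_extension}$(v)$ does not apply, so the self-contained sub-mean-value route above is preferable.)
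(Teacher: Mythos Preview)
Your proof is correct and complete. It is, however, a genuinely different argument from the paper's.

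The paper proceeds via the reproducing kernel: since $u\in\mathcal{F}_\gamma\subset\widetilde{\mathcal E}$, one has $u=\Pi u=\int_{S_\gamma}K(z,w)u(w)\,dL(w)$ with the periodized kernel $K$ of Lemma~\ref{lemme_extension}. The Gaussian bound $|K(z,w)|\lesssim e^{-\frac12(\mathfrak{Im}z-\mathfrak{Im}w)^2}$ gives $\|K(z,\cdot)\|_{L^{p'}(S_\gamma)}\lesssim 1$ uniformly in $z$ for every $p'\in[1,\infty]$, and H\"older finishes the endpoint $q=\infty$ in one line. Your route instead is the Carlen-type subharmonicity argument: it is self-contained, avoids the Theta-function computation behind the kernel bound, and makes explicit the local nature of the estimate (a disc of radius $1$ suffices). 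The paper's route is shorter once the kernel machinery is in place; yours is more portable.

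One small correction to your closing parenthetical: the kernel approach in fact covers all $p\ge 1$, not only $p\ge 2$. You invoke item~$(v)$ of Lemma~\ref{lemme_extension}, but that item concerns the \emph{boundedness of $\Pi$ as an operator on $L^p(S_\gamma)$}, which is not what is needed here. What is needed is merely the reproducing identity $u=\int_{S_\gamma}K(z,w)u(w)\,dL(w)$ for $u\in\mathcal{F}_\gamma$; this holds because the whole-space projector is the identity on $\widetilde{\mathcal E}\supset\mathcal{F}_\gamma$, and the periodization in the proof of item~$(i)$ is valid for any polynomially bounded $R_\gamma$-invariant function. Once that identity is granted, H\"older with exponent $p'$ works for every $p\ge 1$. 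So the two approaches are equally general here; your caveat can be dropped.
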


\begin{proof}
By interpolation, it is sufficient to prove the result for $p \geq 1$ and $q= +\infty$. If $u\in \mathcal{F}_\gamma$,
 $$u(z)= \Pi u(z)=\int_{S_\gamma} K(z,w) u(w) dL(w).$$
By \eqref{borneK}, the kernel $K$ and the H\"older inequality, for all $z \in S_{\gamma}$,
    $$|u(z)|\leq   \| K(z, \cdot)\|_{L^{p'}(S_{\gamma})} \| u\|_{L^p(S_{\gamma})} \lesssim  \|u\|_{L^p(S_{\gamma})},$$
which is the desired result. 
\end{proof}

 \subsection{The family $(\psi_n)_{n \in \Z}$ is a  Hilbertian basis of $L^2(S_\gamma) \cap \mathcal{F}_\gamma$}

\begin{lemme}\label{lem98}
The family $(\psi_n)_{n \in \Z}$ is  orthonormal and forms a  Hilbertian basis of $L^2(S_\gamma) \cap \mathcal{F}_\gamma$.
\end{lemme}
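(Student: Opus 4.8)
The plan is to establish orthonormality by a short explicit Gaussian computation, and then to obtain totality by showing that any element of $L^2(S_\gamma)\cap\mathcal F_\gamma$ orthogonal to every $\psi_n$ must vanish.

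For orthonormality, writing $z=x+iy$ and using the identity $e^{\frac12 z^2+\frac12\ov z^2-|z|^2}=e^{-2y^2}$, one finds
$$
\psi_n(z)\ov{\psi_m(z)}=\sqrt{\tfrac{2}{\pi\gamma^2}}\;e^{-\frac{\pi^2(n^2+m^2)}{\gamma^2}}\;e^{\frac{2i\pi(n-m)}{\gamma}x}\;e^{-\frac{2\pi(n+m)}{\gamma}y}\;e^{-2y^2}.
$$
Integrating over $S_\gamma=[-\tfrac\gamma2,\tfrac\gamma2]\times\R$, the $x$-integral gives $\gamma\,\delta_{n,m}$, and for $n=m$ the remaining Gaussian $\int_\R e^{-\frac{4\pi n}{\gamma}y-2y^2}\,dy=\sqrt{\tfrac\pi2}\,e^{\frac{2\pi^2 n^2}{\gamma^2}}$ combines with the prefactors to yield $\langle\psi_n,\psi_m\rangle_{L^2(S_\gamma)}=\delta_{n,m}$. (One may instead note that $\psi_n=R_{\frac{in\pi}{\gamma}}\psi_0$ and that magnetic translations in a purely imaginary direction are unitary on $L^2(S_\gamma)$ -- they preserve the vertical strip and have unimodular prefactor -- so only the case $n=0$ needs to be computed.) This also records that each $\psi_n$ lies in $L^2(S_\gamma)\cap\mathcal F_\gamma$, the membership in $\mathcal F_\gamma$ being the identity $R_\gamma\psi_0=\psi_0$, immediate from the formula for $\psi_0$.

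For totality, let $u\in L^2(S_\gamma)\cap\mathcal F_\gamma$ satisfy $\langle u,\psi_n\rangle=0$ for all $n\in\Z$; I claim $u\equiv0$. Since $\psi_0$ is a nowhere-vanishing entire exponential and since $u$ and $\psi_0$, both in $\mathcal F_\gamma$, satisfy the same twisted periodicity $v(z+\gamma)=e^{\frac\gamma2(z-\ov z)}v(z)$, the quotient $g:=u/\psi_0$ is entire and \emph{genuinely} $\gamma$-periodic. Hence $g$ is holomorphic in $q=e^{2i\pi z/\gamma}$ on $\C\setminus\{0\}$ and has a Laurent expansion $g(z)=\sum_{n\in\Z}a_n e^{2in\pi z/\gamma}$ converging absolutely and uniformly on each truncated strip $\{z\in S_\gamma:\,|\Im z|\le R\}$, whose image under $z\mapsto q$ is a compact annulus. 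Using $e^{2in\pi z/\gamma}\psi_0=e^{\pi^2 n^2/\gamma^2}\psi_n$, I would interchange this uniformly convergent series with the integral over the truncated strip (on which $\psi_0\ov{\psi_m}$ is bounded), invoke the orthogonality computation above, and then let $R\to\infty$ -- dominated convergence on the left since $u\,\ov{\psi_m}\in L^1(S_\gamma)$, and an explicit Gaussian on the right -- to get $\langle u,\psi_m\rangle=a_m\,e^{\pi^2 m^2/\gamma^2}$. The hypothesis then forces $a_m=0$ for all $m$, hence $g\equiv0$ and $u\equiv0$; so $(\psi_n)$ is total and, being orthonormal, a Hilbertian basis of $L^2(S_\gamma)\cap\mathcal F_\gamma$.

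The main difficulty lies in the totality part, and within it two technical points: that $u/\psi_0$ is \emph{truly} periodic rather than merely quasi-periodic (this is exactly what makes the Fourier series available), and the exchange of sum and integral, which I handle by truncating $S_\gamma$ to a finite-height substrip where the Laurent series converges uniformly before letting the height go to infinity. The underlying Gaussian integrals are routine.
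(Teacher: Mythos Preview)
Your proof is correct. The orthonormality computation is essentially identical to the paper's. For completeness, however, you take a genuinely different route from the paper.

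The paper proceeds in a real-variable fashion: writing $v(x,y)=u(z)e^{-ixy}$, it obtains a function that is genuinely $\gamma$-periodic in $x$, expands it in Fourier series $v(x,y)=\sum_k c_k(y)e^{2i\pi kx/\gamma}$, then expands each $c_k\in L^2(\R)$ in a basis of translated Hermite functions. This yields a double series for $u$ whose summands are mutually orthogonal, and the paper then computes $\Pi$ applied to each summand using the explicit kernel, showing that the result is a multiple of $\psi_k$ (via a generating-function identity involving a Gaussian integral). Hence the closed span of the $\psi_k$ contains $\Pi u=u$.

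Your argument instead exploits the holomorphic structure directly: dividing by the nowhere-vanishing $\psi_0$ converts the twisted periodicity into genuine periodicity, so $u/\psi_0$ is an entire $\gamma$-periodic function and therefore has a Laurent expansion in $e^{2i\pi z/\gamma}$. The coefficients are then read off from $\langle u,\psi_m\rangle$ via the truncation-and-limit argument, which is sound because the $x$-integral already enforces orthogonality termwise on every truncated strip. This is shorter and avoids both the Hermite expansion and the explicit use of $\Pi$; the paper's approach, on the other hand, has the byproduct of showing precisely how $\Pi$ acts on the individual Fourier--Hermite modes, which is of some independent interest.
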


In particular, the function $\Phi_0$ defined in \eqref{defPhi0} has a natural expansion in the family $(\psi_n)_{n \in \Z}$, namely a direct computation shows that, with $z_0={\gamma}({\tau}/N-1)/2$,

\begin{eqnarray}\label{exp1}
  \Phi_{0}(z)&=&\EE\Big(  \frac{z^2}2 -\frac{i\pi z} {\gamma} -  \frac{|z|^2}2\Big)  \Theta_{{\tau}/{N}}\Big((z-z_{0})/\gamma\Big) \nonumber \\
  &=&   e^{-i{\pi \tau}/{(4N)}}\pa{{\pi\gamma^2}/{2}}^{1/4}\sum_{n=-\infty}^{+\infty} e^{i{n^2\pi\tau_1}/{N}}\psi_n(z).   
  \end{eqnarray}

\begin{proof} It is clear that $\psi_0 \in \mathcal{F}_\gamma$, and this remains true for $\psi_n$ since $\big[R_{{i n \pi}/\gamma},R_\gamma\big] = 0$. Furthermore,
\begin{multline*}
 \int_{S_{\gamma}} \psi_{k}(z)\ov{\psi_{\ell}} dL(z) =\\=\left(\frac{2}{\pi \gamma^2}\right)^{1/2}\EE\Big({-\frac{\pi^2}{\gamma^2} (k^2+\ell^2)}\Big)
        \int_{-\infty}^\infty  \int_0^\gamma \EE\Big(-2y^2+\frac{2i\pi}{\gamma}(k-\ell)x-\frac{2\pi}{\gamma}(k+\ell)y\Big) dx  dy \\
        = \delta_{k,\ell} \Big(\frac{2}{\pi }\Big)^{1/2} \EE\Big({-\frac{2\pi^2}{\gamma^2} k^2}\Big)  \int_{-\infty}^\infty \EE\Big(-2y^2-\frac{4\pi}{\gamma}ky\Big) dy = \delta_{k,\ell},
\end{multline*}
which shows that the family is orthonormal. \medskip

We now show that the family is complete. Let $u \in L^2(S_\gamma) \cap \mathcal{F}_\gamma$. For $z=x+iy$, we set $v(x,y)=u(z)\EE\big(-ixy\big)$. Then for all $y \in \R$, the function  $x \mapsto v(x,y)$ is $\gamma-$periodic. We expand it in Fourier series $\dis v(x,y)=\sum_{k \in \Z} c_k(y)\EE\big( {{2i \pi k x}/\gamma}\big)$ and we have 
$$\|u \|^2_{L^2(S_\gamma)}=\|v\|^2_{L^2(S_\gamma)}=\sum_{k \in \Z} \int_{\R}|c_k(y)|^2 dy <\infty. $$
Therefore, for all $k \in \Z$, $c_k \in L^2(\R)$. We now expand $c_k$ in the Hilbertian basis of~$L^2(\R)$ given by  (translated) Hermite functions. Namely, for all $k\in \Z$, there exists a family of polynomial $(P_{j,k})_{j \geq 0}$ where $P_{j,k}$ has degree $j$ and such that 
$$c_k(y)=\EE\Big( -(y+\frac{k \pi}\gamma)^2\Big)\sum_{j=0}^{+\infty}P_{j,k}(y).$$

Coming back to $u$, it can be written as the following convergent series
$$u(z)=\sum_{k \in \Z} \sum_{j\geq 0} \EE\Big( {ixy+\frac{2i \pi k x}\gamma -(y+\frac{k \pi}\gamma)^2} \Big) P_{j,k}(y),$$
where the summands are orthogonal. To show that the family $(\psi_k)$ is complete, using the continuity of $\Pi$ on $L^2(S_\gamma)$ by Lemma \ref{lemme_extension} $(v)$,  it suffices to show that the orthogonal projection of each summand can be written as a linear combination of these functions.

We will do so by showing that, for all $k \in \Z$ and $j \geq 0$, there exists $C_{j,k} \in \C$ such that 
\begin{equation}\label{claim1}
A_{j,k}=\Pi\Big(    \EE\Big( {-(y+\frac{k \pi}\gamma)^2}\Big)   (y+\frac{k \pi}\gamma)^j \EE\Big( {ixy+\frac{2ik\pi x}{\gamma} }  \Big)\Big)= C_{j,k}  \psi_k(z).
\end{equation}
By the formula for the projection 
\begin{multline*}
A_{j,k} =\\
\begin{aligned}
&= \frac{1}{\pi} e^{-|z|^2/2} \int_\mathbb{C} \EE\Big( {\ov  w z - |w|^2/2}\Big)  \EE\Big( {-(\frac{w -\ov w}{2i}+\frac{k \pi}\gamma)^2} \Big)(\frac{w -\ov w}{2i}+\frac{k \pi}\gamma)^j   \\
&\hspace{6cm}\times  \EE\Big( {\frac14(w^2-\ov  w^2)  +\frac{i \pi k}{\gamma}(w+\ov w)}   \Big) \,dL(w)\\
&= \frac{1 }{\pi (2i)^j}  \EE\Big(-\frac{\pi^2 k^2}{\gamma^2}-\frac{|z|^2}2\Big)   \int_\mathbb{C} \EE \Big( {-|w|^2 + \ov  w z +\frac{2i \pi k}{\gamma} w+\frac12 w^2 } \Big)   ({w -\ov w}+\frac{2i k \pi}\gamma)^j       \,dL(w)\\
&= \frac{2^{j/2+1}}{\pi (2i)^j}  \EE\Big(-\frac{\pi^2 k^2}{\gamma^2}-\frac{|z|^2}2\Big)\int_\mathbb{C} \EE\Big( {-2|w|^2 + \sqrt{2}\ov  w z +\frac{2 \sqrt{2}i \pi k}{\gamma} w+ w^2 }  \Big) \\
&\hspace{6cm}\times   ({w -\ov w}+\frac{\sqrt{2}i k \pi}\gamma)^j       \,dL(w).
\end{aligned}
\end{multline*}
Let $t\in \R$ and let us compute 
\begin{multline*}
 \EE\Big(\frac{\pi^2 k^2}{\gamma^2} \Big)\sum_{j=0}^{+\infty}  \frac{\pi (2i)^j}{2^{j/2+1}}\frac{t^j A_{j,k}}{j!}=\EE\Big(   -\frac{|z|^2}2 + \frac{\sqrt{2}i k \pi t } \gamma  \Big)\\
\times \int_\mathbb{C} \EE\Big( {-2|w|^2 +(\frac{2 \sqrt{2}i \pi k}{\gamma} +t)w + (\sqrt{2}z-t)\ov w   + w^2 }      \Big) \,dL(w).
\end{multline*}
From \cite[Section 6]{GGT} we recall the formula
\begin{equation*}
 \int \EE\Big( -2 |w|^2 + aw + b\overline{w} + cw^2 \Big)\,dL(w) 
=  \frac{\pi}{2} \EE\Big({b(cb+2a)}/{4}\Big).
\end{equation*}
We set $a={2 \sqrt{2}i \pi k}/{\gamma} +t$, $b=\sqrt{2}z-t$ and $c=1$, thus 
\begin{multline}\label{series}
 \EE\Big(\frac{\pi^2 k^2}{\gamma^2} \Big)\sum_{j=0}^{+\infty}  \frac{\pi (2i)^j}{2^{j/2+1}}\frac{t^j A_{j,k}}{j!}=\frac{\pi}{2}  \EE\Big(   -\frac{|z|^2}2 + \frac{\sqrt{2}i k \pi t } \gamma  \Big)\\
\times \int_\mathbb{C} \EE\Big( {-2|w|^2 +(\frac{2 \sqrt{2}i \pi k}{\gamma} +t)w + (\sqrt{2}z-t)\ov w   + w^2 }  \Big)     \,dL(w) \\
=  \frac{\pi}{2} e^{- t^2/4} \EE\Big( {\frac{z^2}{2}-\frac{|z|^2}2+\frac{2i k\pi z}{\gamma}}\Big) = C_k e^{-t^2/4}\psi_k(z).
\end{multline}
Finally, by identifying the powers of $t$ in the expansion \eqref{series} we get  \eqref{claim1}.
\end{proof}

\begin{remark}

 Since the $(\psi_k)$ form a Hilbertian basis of $L^2(S_\gamma) \cap \mathcal{F}_\gamma$, we can recover the expression of the kernel $K$ as follows:
   \begin{multline*}
   K(z,w)=\\
  \begin{aligned}
 &= \sum_{   k \in \Z } \psi_k(z)\ov{\psi_k(w)}\\
&= \sqrt{\frac{2}{\pi \gamma^2}} \EE\Big( {\frac12z^2-\frac12|z|^2+\frac12\ov{w}^2-\frac12|w|^2}\Big) \sum_{   k \in \Z }  \EE\Big( {\frac{2ik\pi}{\gamma}(z-\ov{w}) -\frac{2\pi^2k^2}{\gamma^2}}\Big)\\
&=\sqrt{\frac{2}{\pi \gamma^2}} \e^{-{\pi^2}/{(2\gamma^2)}} \Theta_{{2i\pi}/{\gamma^2}}\Big(\frac{1}{\gamma}(z-\ov{w}-\frac{i\pi}{\gamma}+\frac{\gamma}{2})\Big) \\
&\hspace{4cm} \times \EE\Big( {-\frac{i\pi}{\gamma}(z-\ov{w}) +\frac12z^2-\frac12|z|^2+\frac12\ov{w}^2-\frac12|w|^2}\Big).
\end{aligned}
 \end{multline*}

\end{remark}

 \subsection{The equation (LLL) on $\mathcal{F}_{\gamma}$: basic structure}

\subsubsection{The equation in physical space}

\begin{lemme}\label{lem_conserv}
The following quantities are formally conserved by the flow of \eqref{LLL}
\begin{align*}
& \mathcal{H}(u) = \int_{S_{\gamma}}|u(z)|^{4} \, dL(z) \\
& M(u) = \int_{S_{\gamma}}|u(z)|^{2}\, dL(z) \\
&  P(u) =  \int_{S_{\gamma}}\ov{u(z)} \Gamma_1 u(z)\, dL(z)=  i \int_{S_{\gamma}}(z-\ov{z})|u(z)|^{2}\, dL(z)
\end{align*}
(Hamiltonian, mass and momentum respectively).
\end{lemme}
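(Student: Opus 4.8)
The plan is to verify the three conservation laws directly from the equation \eqref{LLL} posed on $\mathcal{F}_\gamma$, using that $\Pi$ is self-adjoint on $L^2(S_\gamma)\cap\mathcal{F}_\gamma$ (Lemma \ref{lemme_extension} $(iii)$) and that $u(t)\in\mathcal{F}_\gamma$ so that $\Pi u = u$. Throughout I would write $\langle f,g\rangle = \int_{S_\gamma} f\ov{g}\,dL$ and freely use $\langle \Pi f,g\rangle = \langle f,\Pi g\rangle$ together with the fact that if $u$ solves \eqref{LLL} then so does a suitable time-derivative identity; all the manipulations below are at the formal level (the solutions being smooth, these can be justified a posteriori, e.g.\ along the lines of Theorem \ref{prop42}).

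\emph{Mass.} Differentiating, $\frac{d}{dt}M(u) = 2\,\mathfrak{Re}\,\langle \partial_t u, u\rangle = 2\,\mathfrak{Re}\,\langle -i\,\Pi(|u|^2u), u\rangle$. Since $u\in\mathcal{F}_\gamma$ and $\Pi$ is self-adjoint on that space, $\langle \Pi(|u|^2u),u\rangle = \langle |u|^2u, \Pi u\rangle = \langle |u|^2 u, u\rangle = \int_{S_\gamma}|u|^4\,dL$, which is real; multiplying by $-2\,\mathfrak{Re}(i\cdot\text{real})=0$ gives $\frac{d}{dt}M(u)=0$.

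\emph{Hamiltonian.} Here $\frac{d}{dt}\mathcal{H}(u) = \frac{d}{dt}\langle |u|^2u, u\rangle = 4\,\mathfrak{Re}\,\langle |u|^2\partial_t u, u\rangle = 4\,\mathfrak{Re}\,\langle \partial_t u, |u|^2 u\rangle$ (using that $|u|^2$ is real). Substituting $\partial_t u = -i\,\Pi(|u|^2u)$ and moving $\Pi$ onto the other factor, which lies in $\mathcal{F}_\gamma$ after applying $\Pi$ — more precisely $\langle \Pi(|u|^2u),|u|^2u\rangle = \langle \Pi(|u|^2u),\Pi(|u|^2u)\rangle + \langle \Pi(|u|^2u),(1-\Pi)(|u|^2u)\rangle$, and the second term vanishes since $\Pi$ is an orthogonal projector — we get $\langle \partial_t u,|u|^2u\rangle = -i\|\Pi(|u|^2u)\|_{L^2(S_\gamma)}^2$, which is purely imaginary, so its real part is zero.

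\emph{Momentum.} The operator $\Gamma_1$ should be the generator of the magnetic translations $R_{\alpha}$ in the real direction (so $R_{t}= e^{t\Gamma_1/?}$ for real $t$, with $\Gamma_1 u = i(z-\ov z)u$ on $\mathcal{E}$ after the relevant reduction); the key structural input is that $R_\alpha$ is a one-parameter group of symmetries of \eqref{LLL} commuting with $\Pi$, so $\Gamma_1$ commutes with $\Pi$ and $\Gamma_1$ is (formally) skew-adjoint with respect to $\langle\cdot,\cdot\rangle$. Then $\frac{d}{dt}P(u) = 2\,\mathfrak{Re}\,\langle \partial_t u, \Gamma_1 u\rangle = 2\,\mathfrak{Re}\,\langle -i\Pi(|u|^2u),\Gamma_1 u\rangle = 2\,\mathfrak{Re}\big({-i}\langle |u|^2u,\Pi\Gamma_1 u\rangle\big) = 2\,\mathfrak{Re}\big({-i}\langle |u|^2 u, \Gamma_1 u\rangle\big)$. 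It remains to check that $\langle |u|^2u,\Gamma_1u\rangle$ is real: since $\Gamma_1 u = i(z-\ov z)u$ and $(z-\ov z)$ is purely imaginary, $i(z-\ov z)$ is real, so $\langle |u|^2u,\Gamma_1u\rangle = \int_{S_\gamma} i(z-\ov z)|u|^4\,dL$ is real, whence the derivative vanishes. Equivalently one may use Noether's theorem for the symmetry group $R_\alpha$.

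The only genuinely delicate point is the \emph{momentum} computation: one must identify $\Gamma_1$ correctly as the generator of the magnetic translation group and check both that it commutes with $\Pi$ (which follows from $R_\alpha$ being a symmetry, already recorded in the paper) and that the relevant boundary terms at $\mathfrak{Re}\,z = \pm\gamma/2$ cancel by the quasi-periodicity $R_\gamma u = u$ — this cancellation is exactly what makes $\Gamma_1$ skew-adjoint on $\mathcal{F}_\gamma\cap L^2(S_\gamma)$ and is the analogue here of the integration-by-parts step in the standard NLS momentum conservation. The mass and Hamiltonian conservations, by contrast, are immediate once self-adjointness of $\Pi$ on $\mathcal{F}_\gamma$ is invoked.
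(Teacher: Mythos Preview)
Your mass and Hamiltonian arguments are correct and essentially match the paper's (the paper dismisses these as ``similar to what we did previously''). The momentum argument, however, has a genuine gap.

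You assert the pointwise identity $\Gamma_1 u = i(z-\ov z)u$, and from it conclude that $\langle |u|^2u,\Gamma_1 u\rangle = \int_{S_\gamma} i(z-\ov z)|u|^4\,dL$ is real. But the pointwise identity is false: from the definition $\Gamma_1=i(z-\partial_z-\ov z/2)$ one computes, for $u=f\,e^{-|z|^2/2}$ with $f$ holomorphic,
\[
\Gamma_1 u = i\big(zu - (\partial_z f)e^{-|z|^2/2}\big) = i\big(zu - \Pi(\ov z u)\big),
\]
which is not $i(z-\ov z)u$. (The equality $\int_{S_\gamma}\ov u\,\Gamma_1 u = i\int_{S_\gamma}(z-\ov z)|u|^2$ in the statement of the lemma is only an \emph{integrated} identity, itself requiring an integration by parts.) Relatedly, you call $\Gamma_1$ skew-adjoint, but since $R_\alpha=e^{i(\alpha\cdot\Gamma)}$ is unitary, $\Gamma_1$ is self-adjoint; this is what you actually use when writing $\tfrac{d}{dt}P(u)=2\,\Re\langle\partial_t u,\Gamma_1 u\rangle$.

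The conclusion you want, namely that $\langle |u|^2u,\Gamma_1 u\rangle$ is real, is true, but proving it is exactly the nontrivial step. Using $\Gamma_1 u=i(zu-\Pi(\ov z u))$ one is reduced to showing $\int_{S_\gamma}|u|^2 u\,\ov{\Pi(\ov z u)} = \int_{S_\gamma} z|u|^4$, which the paper obtains by writing $\Pi(\ov z u)=(\partial_z f)e^{-|z|^2/2}$ and integrating by parts in $z$ (using holomorphicity of $f$ and the quasi-periodicity $R_\gamma u=u$ to kill the boundary terms on $\partial S_\gamma$). Once that identity is in hand, $\langle|u|^2u,\Gamma_1 u\rangle = i\int_{S_\gamma}(z-\ov z)|u|^4$ follows and the rest of your argument goes through. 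In short, the Noether-style framework you sketch is fine, but the specific algebraic step you flagged as ``the only genuinely delicate point'' is indeed delicate, and your justification of it is the part that fails.
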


 \begin{proof}
The proof of the first point and the two first conservation laws are similar to what we did previously. 

  Let us  check the equality of the two different expressions of $P(u)$. We write $u(z)=f(z)\EE\big({-{|z|^2}/2}\big)$, where $f$ is an holomorphic function. Then 
       \begin{eqnarray*}
 \int_{S_{\gamma}}\ov{u(z)} \Gamma_1 u(z)\, dL(z)&=& i  \int_{S_{\gamma}}\ov{f(z)}\big(z f(z) -\partial_z f(z)\big)  e^{-|z|^2} \, dL(z) \\
 &=& i  \int_{S_{\gamma}}    z    |f(z)|^2 e^{-|z|^2}   dL(z)+   i  \int_{S_{\gamma}}   |f(z)|^2    \partial_z  \big(   e^{-|z|^2} \big) \, dL(z) \\
 &=&   i \int_{S_{\gamma}}(z-\ov{z})|u(z)|^{2}\, dL(z).
       \end{eqnarray*}

The conservation of $P(u)$ follows from the action of $R_{\alpha}$ on $\mathcal{F}_{\gamma}$, but we can give a direct proof: Set $\partial_{z}=(\partial_{x}-i\partial_{y})/2$, $\partial_{\overline{z}}=(\partial_{x}+i\partial_{y})/2$. First we check that for all $u,v\in \mathcal{F}_{\gamma}$
 \begin{equation*}
 \int_{S_{\gamma}} \partial_z u(z) \ov{v(z)}dL(z)= -\int_{S_{\gamma}}  u(z) \ov{\partial_{\ov{z}}v(z)}dL(z).
 \end{equation*} 
Writing $u(z)=f(z)\EE\big( {-\vert z\vert^2/2}\big)$, we have  $\Pi(\ov{z}u )=\partial_z f(z)\EE\big( {-\vert z\vert^2/2}\big)$, which implies that
\begin{multline*}
\int_{S_{\gamma}} \vert {u(z)} \vert^2\ov{u(z)}\Pi(\ov{z}u )dL(z) = \int_{S_{\gamma}} \ov{f(z)}^2f(z) \partial_z f(z) e^{-2\vert z\vert^2}dL(z)\nonumber \\
  =\frac12 \int_{S_{\gamma}} \ov{f(z)}^2  \partial_z\big( f^2(z)\big) e^{-2\vert z\vert^2}dL(z) = \int_{S_{\gamma}}\ov{z}\vert f(z)\vert^4 e^{-2\vert z\vert^2}dL(z)\nonumber \\
  = \int_{S_{\gamma}}\ov{z}\vert u(z)\vert^4  dL(z).
\end{multline*}
Together with \eqref{aa} , this implies that 
\begin{eqnarray*}
  \frac{d}{dt}\int_{S_{\gamma}}(z-\ov{z})|u(z)|^{2}dL(z) &=& -2i \Re \int_{S_{\gamma}}(z-\ov{z})\ov{u(z)} \Pi(|u|^{2}u)(z)dL(z) \\
& =& -2i \Re \int_{S_{\gamma}}\ov{\Pi\big((\ov{z}-z)u(z)\big)}|u|^{2}u(z)\,dL(z) \\
& =&-2i \Re \int_{S_{\gamma}}(z-\ov{z})|u(z)|^{4}\,dL(z)=0
\end{eqnarray*}
hence the result.
\end{proof}

The symmetries of the equation \eqref{LLL} on a strip are as follows:
\begin{itemize}
\item Phase rotation: $u \mapsto \EE\big({i \theta}\big) u$, for $\theta\in \mathbb{R}$.
\item Horizontal magnetic translation: $u \mapsto R_\theta u$, for $\theta\in \mathbb{R}$.
\item Vertical magnetic translation: $\dis u \mapsto R_{{i\pi}/{\gamma}} u$.
\item Symmetry around the origin: $u(z) \mapsto u(-z)$.
\end{itemize}

By Noether's theorem, the two continuous symmetries (phase rotation and horizontal magnetic translation) are related to the conserved quantities (mass and momentum respectively).

\subsubsection{The equation in the Hilbertian basis $\dis (\psi_k)_{k \in \Z}$}

From now on, we denote
\begin{equation}\label{def-A}
A_{k,\ell,m} =  \frac{1}{\gamma\sqrt{\pi}}\EE\Big( -\frac{\pi^2}{\gamma^2}\big((\ell-k)^2+(\ell-m)^2\big)\Big).
\end{equation}
Then, the following result holds true.

\begin{lemme}\label{lem-ortho} 
If $k,\ell,m,n \in \mathbb{Z}$, then
\begin{equation*}
\int_{S_{\gamma}} \psi_{k}(z)\ov{\psi_{\ell}(z)} \psi_{m}(z)\ov{\psi_{n}(z)}\,dL(z)
=\begin{cases} 
\displaystyle A_{k,\ell,m} & \mbox{if $k - \ell + m - n = 0$} \\
0 & \mbox{if $k - \ell + m - n \neq 0$}.
\end{cases}
\end{equation*}

In other words, when $n = k - \ell + m$, we have 
        \begin{equation}\label{proj3}
            \Pi\big(\psi_{k}\ov{\psi_{\ell}} \psi_{m}\big)
            =A_{k,\ell,m}   \psi_{n}.
        \end{equation}
\end{lemme}

 \begin{proof} We write
    \begin{multline*}
 \int_{S_{\gamma}} \psi_{k}\ov{\psi_{\ell}} \psi_{m}\ov{\psi_{n}}(z)dL(z)=\\
 = \frac{2}{\pi \gamma^2}\int_{0}^{\gamma}\int_{\R}  
 \EE\Big( \frac{2i \pi}{\gamma}(k-\ell+m-n)x -\frac{2\pi}{\gamma}(k+\ell+m+n)y\Big)\\
\times  \EE\Big(-4y^2-\frac{\pi^2}{\gamma^2}(k^2+\ell^2+m^2+n^2)\Big) dy \, dx
  \end{multline*}
which vanishes if $k-\ell+m-n \neq 0$. Assuming that $k-\ell+m-n =0$, 
    \begin{multline*}
 \int_{S_{\gamma}} \psi_{k}\ov{\psi_{\ell}} \psi_{m}\ov{\psi_{n}}(z)dL(z)=\\
     \begin{aligned}
 &= \frac{2}{\pi \gamma}  \int_{\R}  \EE\Big(  -\frac{2\pi}{\gamma}(k+\ell+m+n)y-4y^2-\frac{\pi^2}{\gamma^2}(k^2+\ell^2+m^2+n^2)\Big) dy\\
 &= \frac{2}{\pi \gamma}\EE\Big( \frac{\pi^2}{4 \gamma^2}(k+\ell+m+n)^2-\frac{\pi^2}{\gamma^2}(k^2+\ell^2+m^2+n^2)\Big)  \int_{\R}  \e^{ -4y^2} dy,
     \end{aligned}
       \end{multline*}
which gives the desired formula.
  \end{proof}

Expanding a solution of~\eqref{LLL} in the Hilbertian basis
$$
u(t,z) = \sum_{k \in \mathbb{Z}} \lambda_k(t) \psi_k(z),
$$
it follows from the above proposition that the coordinates $(\lambda_n)$ satisfy the equation
\begin{equation}\label{eq_lambda_k}
i\frac{d}{dt} \lambda_n(t) = \sum_{k - \ell + m = n} A_{k,\ell,m} \lambda_{k}(t) \ov{\lambda_{\ell}(t)} \lambda_{m}(t), \quad n \in \Z, \quad t\in \R.
\end{equation}

The Hamiltonian, the mass and the momentum can be expressed in the Hilbertian basis as
\begin{align*}
& \mathcal{H}(u) = \frac14 \sum_{k - \ell + m = n} A_{k,\ell,m} \lambda_{k} \ov{\lambda_{\ell}} \lambda_{m} \ov{\lambda_{n}} \\
& M(u) = \sum_{k \in \mathbb{Z}} | \lambda_k|^2 \\
& P(u) =\frac{2\pi}{\gamma} \sum_{k \in \mathbb{Z}} k | \lambda_k|^2,
\end{align*}
as follows by expressing these quantities in the Hilbertian basis $(\psi_k)_{}$.  \medskip

As for symmetries in this new coordinate system, they can be expressed as follows:
\begin{itemize}
\item Phase rotation: $\lambda_k \mapsto  \EE\big({i\theta}\big) \lambda_k$, for $\theta\in \mathbb{R}$.
\item Horizontal magnetic translation: $\dis \lambda_k \mapsto \EE\big({2\pi i k \theta}/{\gamma}\big) \lambda_k$, for $\theta \in \mathbb{R}$ \big(since $R_\theta \psi_k = \EE\big( {2\pi i k \theta/ \gamma}\big) \psi_k$\big).
\item Vertical magnetic translation: $\lambda_k \mapsto \lambda_{k+1}$ (since $R_{{i\pi}/{\gamma}} \psi_k = \psi_{k+1}$).
\item Symmetry around the origin: $\lambda_k \mapsto \lambda_{-k}$ (since $\psi_k(-z) = \psi_{-k}(z)$).
\end{itemize}

\subsection{Local and global solutions of periodic LLL}

For any $\alpha \geq 0$, we define the Banach space~$\ell^{\infty,\alpha}$ constituted of sequences, given by the norm
$$ \|(\lambda_k)\|_{\ell^{\infty,\alpha}} = \sup_{k\in\mathbb{Z}} \bra{k}^\alpha |\lambda_k|.$$
We also define $L^{p, \alpha}(S_\gamma)$ the weighted Lebesgue space by the norm
$$ \|f\|_{L^{p,\alpha}(S_\gamma)} =   \|\< z\>^\alpha f\|_{L^p(S_\gamma)}.$$
In this section, we will prove well-posedness results. We also refer to~\cite[Section 3]{GGT} where similar results are obtained for~\eqref{LLL} in the whole space.

\begin{prop}\label{LWP}
	We have local well-posedness results on the following spaces:
	\begin{enumerate}[$(i)$]
		\item For any $p \in [1,\infty],$ the equation \eqref{LLL} is locally well-posed in $\mathcal{F}_{\gamma} \cap L^p(S_{\gamma})$: for any data $u_0 \in \mathcal{F}_{\gamma} \cap L^p(S_{\gamma})$ there exists $T > 0$ and a unique solution $u \in \mathcal{C}\big([0, T], \mathcal{F}_{\gamma} \cap L^p(S_{\gamma})\big)$, which depends smoothly on $u_0$.
		\item For any $p \in [1,\infty], \alpha \geq 0,$ the equation \eqref{LLL} is locally well-posed in $L^{p,\alpha}(S_{\gamma})$: for any data $u_0 \in \mathcal{F}_{\gamma}  \cap L^{p,\alpha}\big(S_{\gamma}\big)$ there exists $T > 0$ and a unique solution $u \in \mathcal{C}\big([0, T], \mathcal{F}_{\gamma} \cap L^{p,\alpha}(S_\gamma)\big)$, which depends smoothly on $u_0$.
		\item By writing $\dis u= \sum_{k\in\Z} \lambda_k\psi_k$, the equation \eqref{LLL} is locally well-posed in $\ell^{\infty,\alpha}$ for $\alpha \geq 0$.
	\end{enumerate}
\end{prop}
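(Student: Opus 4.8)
The plan is to run a standard contraction-mapping (fixed point) argument on the Duhamel formulation. Since the linear part of \eqref{LLL} is trivial (the evolution is $i\partial_t u = \Pi(|u|^2u)$ with no linear term), a solution is a fixed point of the map
$$
\Psi(u)(t) = u_0 - i \int_0^t \Pi\big(|u(s)|^2 u(s)\big)\, ds.
$$
The key analytic inputs are: (a) each of the spaces in question is a Banach algebra, or at least the trilinear map $(u,v,w)\mapsto uvw$ (or its symmetrized version $u\bar v w$) is bounded on it; and (b) $\Pi$ is bounded on each of these spaces. For $(i)$ and $(ii)$, input (b) is exactly Lemma \ref{lemme_extension} $(v)$, which gives that $\Pi$ extends boundedly to $L^p(S_\gamma)$ and $L^{p,\alpha}(S_\gamma)$ for $p\ge 2$, $\alpha \ge 0$; for the range $1\le p<2$ one combines this with the hypercontractivity estimate \eqref{Carlen_bande}, which lets one bound the $L^p$ norm of $\Pi(\cdot)$ by an $L^2$ or $L^\infty$ norm and absorb the loss. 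For input (a): elements of $\mathcal{F}_\gamma$ are of the form $f(z)e^{-|z|^2/2}$ with $f$ entire, so a product of three such elements is $f_1 f_2 f_3 e^{-3|z|^2/2} = (f_1f_2f_3 e^{-|z|^2/2})e^{-|z|^2}$; the Gaussian factor $e^{-|z|^2}$ is bounded, and hypercontractivity \eqref{Carlen_bande} again controls the $L^p(S_\gamma)$ norm of the product by the product of $L^\infty$ norms, which in turn are controlled by the individual $L^p$ norms. The weight $\langle z\rangle^\alpha$ is handled by $\langle z\rangle^\alpha \lesssim \langle z\rangle^\alpha$ distributed among factors exactly as in the proof of Lemma \ref{lemme_extension} $(v)$. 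Thus on a ball of radius $2\|u_0\|$ in the relevant space, $\Psi$ maps the ball into itself and is a contraction provided $T$ is small (depending on $\|u_0\|$), which gives existence, uniqueness, and — since $\Psi$ depends smoothly (indeed polynomially) on the data and the argument — smooth dependence on $u_0$ by the implicit function theorem.

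For $(iii)$, the argument is the discrete analogue. Writing $u=\sum_k \lambda_k\psi_k$, the equation becomes the ODE system \eqref{eq_lambda_k} with the explicit coefficients $A_{k_1,k_2,k_3} = \frac{1}{\gamma\sqrt\pi} e^{-\frac{\pi^2}{\gamma^2}((k_2-k_1)^2+(k_2-k_3)^2)}$ from Lemma \ref{lem-ortho}. The point is that these coefficients decay Gaussianly in $|k_1-k_2|$ and $|k_2-k_3|$, so the trilinear convolution-type operator
$$
(\lambda,\mu,\nu) \mapsto \Big(\sum_{k_1-k_2+k_3=k} A_{k_1,k_2,k_3}\, \lambda_{k_1}\mu_{k_2}\nu_{k_3}\Big)_k
$$
is bounded on $\ell^{\infty,\alpha}$: one bounds $\langle k\rangle^\alpha$ by a constant times $\langle k_1\rangle^\alpha + \langle k_2\rangle^\alpha + \langle k_3\rangle^\alpha$ (using $k=k_1-k_2+k_3$ and $\langle a+b+c\rangle^\alpha \lesssim \langle a\rangle^\alpha+\langle b\rangle^\alpha+\langle c\rangle^\alpha$), distributes the weight onto the corresponding factor, and sums the remaining Gaussian weights $\sum_{k_1,k_2} A_{k_1,k_2,k_3} \langle \cdot\rangle^\alpha < \infty$. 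This again makes the Duhamel map a contraction on a small time interval, giving local well-posedness in $\ell^{\infty,\alpha}$.

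The main obstacle — really the only place any care is needed — is the low-integrability range $1 \le p < 2$ in parts $(i)$ and $(ii)$, where $\Pi$ is \emph{not} a priori bounded on $L^p(S_\gamma)$ (Lemma \ref{lemme_extension} $(v)$ only covers $p\ge 2$). The remedy is to exploit that the nonlinearity, before projection, is a product of three elements of $\mathcal{F}_\gamma$ and hence (by the Gaussian-smoothing observation above together with \eqref{Carlen_bande}) already lies in $L^\infty(S_\gamma)\subset L^2(S_\gamma)$ regardless of $p$; applying $\Pi$ to it is then harmless, and one closes the estimate in $L^p$ by a final application of hypercontractivity $\|\Pi(|u|^2u)\|_{L^p}\lesssim \|\Pi(|u|^2u)\|_{L^\infty}\lesssim \|u\|_{L^p}^3$. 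Everything else is routine Banach fixed-point bookkeeping, and the conservation laws of Lemma \ref{lem_conserv} (together with hypercontractivity to pass from $L^4$ and $L^2$ control to $L^\infty$ control) can be invoked separately to upgrade the local solutions to global ones where the relevant norms are controlled.
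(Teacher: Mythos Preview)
Your overall strategy is correct and matches the paper's: reduce to boundedness of the trilinear map $(f,g,h)\mapsto \Pi(f\ov g h)$ on the relevant space, then run the contraction argument on the Duhamel formulation. For parts $(i)$--$(ii)$ with $p\ge 2$, and for part $(iii)$, your sketch is essentially the paper's proof (the paper handles $(iii)$ via the substitution $S=n+m$ rather than your distribute-the-weight trick, but both arguments work).

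There is, however, a genuine error in your treatment of the range $1\le p<2$. You write
\[
\|\Pi(|u|^2u)\|_{L^p(S_\gamma)} \lesssim \|\Pi(|u|^2u)\|_{L^\infty(S_\gamma)}
\]
and call this hypercontractivity. But \eqref{Carlen_bande} goes the \emph{other} way: it gives $\|v\|_{L^q}\lesssim\|v\|_{L^p}$ only for $p\le q$, and since $S_\gamma$ has infinite measure, $L^\infty(S_\gamma)$ does not embed into $L^p(S_\gamma)$ for any finite $p$. So the displayed inequality is simply false, and your closing step does not go through.

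The correct fix is to notice that the kernel bound \eqref{borneK}, $|K(z,w)|\lesssim e^{-\frac12(\Im z-\Im w)^2}$, yields a kernel which (on the strip of bounded width) lies in $L^1(S_\gamma)$ uniformly in either variable; by Schur's test (or Young's inequality) this makes $\Pi$ bounded on $L^{p,\alpha}(S_\gamma)$ for \emph{every} $1\le p\le\infty$, not just $p\ge 2$. Indeed, the proof of Lemma~\ref{lemme_extension}~$(v)$ already works verbatim for all $p\ge 1$; the restriction $p\ge 2$ in its statement is not used. With this in hand, the paper's chain
\[
\|\langle z\rangle^\alpha\Pi(f\ov g h)\|_{L^p}\lesssim\|\langle z\rangle^\alpha f\ov g h\|_{L^p}\lesssim\|\langle z\rangle^\alpha f\|_{L^p}\|g\|_{L^\infty}\|h\|_{L^\infty}\lesssim\|f\|_{L^{p,\alpha}}\|g\|_{L^{p,\alpha}}\|h\|_{L^{p,\alpha}}
\]
(using \eqref{Carlen_bande} in the correct direction $L^{p}\hookrightarrow L^\infty$ on $\mathcal{F}_\gamma$ for $g,h$) closes for all $p\ge 1$, and the rest of your argument goes through unchanged.
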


\begin{proof}
$(i)$ and $(ii)$: It suffices to prove that the map $ u \mapsto \Pi(|u|^2u)$ is smooth on the weighted spaces $L^{p,\alpha}(S_\gamma)$, for $2 \leq p \leq +\infty$ and $\alpha \geq 0$, with a differential bounded on bounded subsets. This follows from both Lemma \ref{lemme_extension}, item $(v)$, and the hypercontractivity estimates \eqref{Carlen_bande}:
\begin{align*}
\|\bra{z}^\alpha\Pi(f\ov{g}h)\|_{L^{p}(S_{\gamma})} &\lesssim \|\bra{z}^\alpha f\ov{g}h\|_{L^{p}(S_{\gamma})} \lesssim \|\bra{z}^\alpha f\|_{L^{p}(S_{\gamma})} \|g\|_{L^{\infty}(S_{\gamma})}\|h\|_{L^{\infty}(S_{\gamma})} \\
& \lesssim \|\bra{z}^\alpha f\|_{L^{p}(S_{\gamma})}\|\bra{z}^\alpha g\|_{L^{p}(S_{\gamma})}\|\bra{z}^\alpha h\|_{L^{p}(S_{\gamma})} \\
& = \| f\|_{L^{p,\alpha}(S_{\gamma})} \| g\|_{L^{p,\alpha}(S_{\gamma})} \| h\|_{L^{p,\alpha}(S_{\gamma})}.
\end{align*}

$(iii)$ The equation in $(\lambda_n)$ coordinates can be written
\begin{align*}
i \frac{d}{dt} \lambda_n &=  \frac{1}{\gamma\sqrt{\pi}}\sum_{k-\ell+m=n}\EE \Big({-\frac{\pi^2}{\gamma^2}\pa{(\ell-k)^2+(\ell-m)^2}}\Big) \lambda_{k} \ov{\lambda_{\ell}} \lambda_{m} \\
&= \frac{1}{\gamma\sqrt{\pi}}\sum_{p\in\Z}\sum_{m\in\Z}\EE \Big(  {-\frac{2\pi^2}{\gamma^2}\Big(\big(n-\frac{p}{2}\big)^2+\big(m-\frac{p}{2}\big)^2}\Big)\Big) \lambda_{p-m}  \ov{\lambda_{p-n}}\lambda_{m}\\
&  =: \mathcal{R}(\lambda,\lambda,\lambda).
\end{align*}
We need to prove that $\mathcal{R}$ maps $(\ell^{\infty,\alpha})^3$ to $\ell^{\infty,\alpha}$, that is to say 
\begin{align*}
&\sigma_n = \sum_{p\in\Z}  \EE\Big( {-\frac{2\pi^2}{\gamma^2}\big( {n-\frac{p}{2}}}\big)^2\Big) \frac{1}{\bra{p-n}^\alpha}  \sum_{m\in\Z} \EE\Big( {-\frac{2\pi^2}{\gamma^2}\big( {m-\frac{p}{2}}}\big)^2\Big)     \frac{1}{\bra{m}^\alpha\bra{p-m}^\alpha} \\
 &\lesssim \frac{1}{\bra{n}^\alpha}.
\end{align*}
We first deal with the sum over $m \in \Z$: 
\begin{eqnarray*}
\sum_{m\in\Z}\EE\Big( {-\frac{2\pi^2}{\gamma^2}\big( {m-\frac{p}{2}}}\big)^2\Big)  \frac{1}{\bra{m}^\alpha\bra{p-m}^\alpha} &\lesssim& \frac{1}{\bra{p}^\alpha} \sum_{m\in\Z}\EE\Big( {-\frac{2\pi^2}{\gamma^2}\big( {m-\frac{p}{2}}}\big)^2\Big)   \\
&\lesssim &\frac{1}{\bra{p}^\alpha}.
\end{eqnarray*}
It remains the sum over $p  \in \Z$. It is similarly bounded:
$$ \sigma_n \lesssim \sum_{p\in\Z} \EE\Big( {-\frac{2\pi^2}{\gamma^2}\big( {n-\frac{p}{2}}}\big)^2\Big)    \frac{1}{\bra{p-n}^\alpha\bra{p}^\alpha} \lesssim \frac{1}{\bra{n}^\alpha}, $$
which is the result.
\end{proof}

Finally, we state a global well-posedness result for \eqref{LLL} on the strip, which is very similar to the case of the equation posed in the whole space, therefore we refer to \cite[Proposition 3.7]{GGT} for the proof.
\begin{prop}
Assume that $2\leq p\leq 4$. The equation~\eqref{LLL} is globally well-posed for data $u_0 \in \mathcal{F}_{\gamma} \cap L^p(S_{\gamma})$ and such data lead to solutions in $\mathcal{C}^\infty \big(\R, \mathcal{F}_{\gamma} \cap L^p(S_{\gamma}) \big)$, depending smoothly on~$u_0$. 
\end{prop}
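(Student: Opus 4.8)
The plan is to follow \cite[Proposition 3.7]{GGT}, combining the local theory of Proposition \ref{LWP}, the conservation laws of Lemma \ref{lem_conserv}, and the hypercontractivity estimate \eqref{Carlen_bande}. The first observation is that, by \eqref{Carlen_bande}, for $u_0 \in \mathcal{F}_\gamma$ and $2 \leq p \leq 4$ one has $\|u_0\|_{L^4(S_\gamma)} \lesssim \|u_0\|_{L^p(S_\gamma)}$, so that $\mathcal{F}_\gamma \cap L^p(S_\gamma) \subset \mathcal{F}_\gamma \cap L^4(S_\gamma)$. It therefore suffices to first prove global well-posedness in the endpoint space $\mathcal{F}_\gamma \cap L^4(S_\gamma)$ and then to propagate the $L^p$ regularity for $2 \leq p \leq 4$.

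For the global theory in $L^4$, I would first record the blow-up alternative coming from Proposition \ref{LWP}$(i)$: inspecting the fixed-point argument, the nonlinearity $v \mapsto \Pi(|v|^2v)$ is, by Lemma \ref{lemme_extension}$(v)$ together with \eqref{Carlen_bande} (which gives $\||v|^2v\|_{L^4}=\|v\|_{L^{12}}^3\lesssim\|v\|_{L^4}^3$), locally Lipschitz on $\mathcal{F}_\gamma\cap L^4(S_\gamma)$ with constant $\lesssim \|v\|_{L^4(S_\gamma)}^2$, so the local existence time can be taken to be $\gtrsim \|u_0\|_{L^4(S_\gamma)}^{-2}$; consequently the maximal solution $u \in \mathcal{C}((T_-,T_+), \mathcal{F}_\gamma \cap L^4(S_\gamma))$ either is global or satisfies $\|u(t)\|_{L^4(S_\gamma)} \to +\infty$ as $t \to T_\pm$. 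But Lemma \ref{lem_conserv} gives $\|u(t)\|_{L^4(S_\gamma)}^4 = \mathcal{H}(u(t)) = \mathcal{H}(u_0)$ for all $t$, so the $L^4$ norm is constant along the flow and the solution is global. The lower endpoint $p=2$ is even simpler: mass conservation $\|u(t)\|_{L^2(S_\gamma)}^2 = M(u_0)$ together with the analogous blow-up alternative in $L^2(S_\gamma)$ gives a global solution directly.

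To upgrade to general $p \in [2,4]$, take $u_0 \in \mathcal{F}_\gamma \cap L^p(S_\gamma)$; Proposition \ref{LWP}$(i)$ yields a local solution in $\mathcal{F}_\gamma \cap L^p(S_\gamma)$, which by uniqueness coincides with the global $L^4$ solution constructed above, so only a finite-time $L^p$ bound is needed. Assuming $t\geq 0$ without loss of generality, from the Duhamel formula $u(t) = u_0 - i\int_0^t \Pi(|u|^2u)(s)\,ds$, the boundedness of $\Pi$ on $L^p(S_\gamma)$ (Lemma \ref{lemme_extension}$(v)$), and the hypercontractivity $\|v\|_{L^{3p}(S_\gamma)} \lesssim \|v\|_{L^4(S_\gamma)}$ for $v \in \mathcal{F}_\gamma$ (valid since $3p \geq 6 \geq 4$), one gets
\[
\|u(t)\|_{L^p(S_\gamma)} \leq \|u_0\|_{L^p(S_\gamma)} + C\int_0^{t}\|u(s)\|_{L^{3p}(S_\gamma)}^3\,ds \leq \|u_0\|_{L^p(S_\gamma)} + C\,t\,\mathcal{H}(u_0)^{3/4},
\]
which is finite for every $t \in \R$; hence the $L^p$ solution is global as well. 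Smoothness in time follows by bootstrapping the equation $\partial_t u = -i\Pi(|u|^2 u)$ (the right-hand side being a smooth map of $u$, one gets $u \in \mathcal{C}^1$, then $\mathcal{C}^2$, and so on), and smooth dependence on $u_0$ is inherited from the local theory and extended to all of $\R$ because the solution is global.

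The only genuinely delicate point — and the reason for the restriction $2 \leq p \leq 4$ — is that on the unbounded strip $S_\gamma$ the spaces $L^p(S_\gamma)$ are not nested, so for intermediate $p$ one cannot bound $\|u(t)\|_{L^p(S_\gamma)}$ directly by a conserved quantity; the argument above circumvents this by using hypercontractivity \emph{within $\mathcal{F}_\gamma$} to reduce all estimates to the conserved $L^2$ and $L^4$ norms. Everything else is a routine adaptation of the whole-space argument in \cite{GGT}.
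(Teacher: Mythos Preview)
Your proposal is correct and follows exactly the route the paper has in mind: the paper gives no self-contained proof here but simply refers to \cite[Proposition 3.7]{GGT}, and your argument is precisely the strip analogue of that proof, using the local theory (Proposition~\ref{LWP}), the conservation of $M$ and $\mathcal H$ (Lemma~\ref{lem_conserv}), and the hypercontractivity estimate~\eqref{Carlen_bande} to close the blow-up alternative and then propagate the $L^p$ norm via Duhamel.
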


We stress that the global well-posedness for data $u_0 \in  \mathcal{F}_{\gamma} \cap L^{\infty}(S_{\gamma}) $ is an open problem.

\subsection{Classification of stationary solutions with a finite number of zeros in a strip}

\begin{df}
An $M$-stationary wave is a solution of~\eqref{LLL} of the form 
$$u(t) = e^{-i a t} u_0, \quad  \mbox{where $a \in \mathbb{R}$, $u_0 \in \mathcal{F}_{\gamma}$}.$$

An $MQ$-stationary wave is  a solution of~\eqref{LLL} of the form
$$u(t) = e^{-i a t} R_{-\alpha t}u_0, \quad \mbox{where $a \in \mathbb{R}$, $\alpha=\alpha_1+i\alpha_2 \in \mathbb{C}$, $u_0 \in \mathcal{F}_{\gamma}$}.$$
\end{df}

They are given, respectively, by the solutions of
\begin{equation}
\label{M} \tag{$M$}
a u = \Pi \big( |u|^2 u\big)
\end{equation}
\begin{equation}
\label{MQ} \tag{$MQ$}
a u + \alpha  \cdot  \Gamma u = \Pi \big( |u|^2 u\big),
\end{equation}
where, by \eqref{infinitesimal}, $\alpha  \cdot  \Gamma =\alpha_1    \Gamma_1+\alpha_2   \Gamma_2 $ is the operator on $\mathcal{F}_\gamma$ defined by 
  \begin{equation*} 
\Gamma_1=i(z -\partial_z-\frac{\ov z}2), \qquad \Gamma_2=(z +\partial_z+\frac{\ov z}2).
\end{equation*}
Since $\Gamma_1 \psi_k = \frac{2 k \pi}\gamma \psi_k$, the operator $\Gamma_1$ can be expressed in the Hilbertian basis as
$$
\Gamma_1 \big((\lambda_k)_k\big) = \left(\big( {2\pi k}/{\gamma} \big) \lambda_k\right)_k.
$$

We define the set 
\begin{equation*}
\mathcal{G}=\Big\{u\in \mathcal{F}_\gamma : u\;\text{has a finite number of zeros modulo $\gamma$} \Big\}.
\end{equation*}
 
To begin with, we provide a complete characterization of the space $\mathcal{G}$.
\begin{lemme} \label{lem77}
We have the following descriptions: 
\begin{enumerate}[$(i)$]
\item (multiplicative description) The function $u \in \mathcal{G}$ has $N$ zeros modulo $\gamma$ if and only if it can be written under the form  
\begin{equation*} 
u(z)=\kappa \EE\Big( {\frac{1}{2} z^2 +\frac{2i\pi L}{\gamma}z- \frac{1}{2} |z|^2} \Big) \prod_{k=1}^N\Big(e^{{i\pi z}/{\gamma} }\sin\big(\frac{\pi}{\gamma}(z-z_k)\big)\Big)
\end{equation*}
where $L \in\mathbb{Z}$, $\kappa \in \C$ and $(z_k)_{1\leq k\leq N}$ in $S_\gamma$ modulo $\gamma$.
\item (additive description)  The function $u \in \mathcal{G}$ has $N$ zeros if it can be written under the form
\begin{equation*} 
u(z)=\kappa  \EE\Big(  \frac{z^2}2 -   \frac{|z|^2}2\Big) Z^L P(Z), 
\end{equation*}
where $Z = \EE\big({{2\pi i z}/{\gamma}}\big)$ and $P$ is a complex polynomial of degree $N$ such that ${P(0) \neq 0}$.
\end{enumerate}
\end{lemme}

Notice that this proves the embedding $\mathcal{G} \subset L^2(S_{\gamma})$.

\begin{proof}
The proof of the multiplicative description is similar to the proof of Proposition~\ref{prop41}: denote by $(z_k)_{1\leq k\leq N}$  the zeroes of $u$ in the strip $\dis -\gamma/2< \Re z\leq  \gamma/2$ and write
$$
\dis u(z) = \EE\big(-|z|^2/2\big) \varphi(z) \prod_{k=1}^N\Big(e^{{i\pi z }/{\gamma}}\sin\big(\frac{\pi}{\gamma}(z-z_k)\big)\Big)
.
$$
    By construction, $\varphi$ does not vanish on $\mathbb{C}$, and is entire; thus, it can be written $\varphi = \EE\big( \Psi\big)$, with $\Psi$ entire. Furthermore, denoting $\dis A = \mathbb{C} \setminus \cup_{\substack{k \in \{1,\dots,N\} \\ n \in \mathbb{Z}}} B(z_k+n\gamma,\eps)$ for $\eps>0$ small enough, it is easy to see that $|\varphi(z)| \leq C \EE\big( C|z|^2\big)$ on~$A$, hence on~$\mathbb{C}$ by the maximum modulus principle. This implies that $\mathfrak{Re} \Psi (z) \leq C |z|^2$ on $\mathbb{C}$, and  applying the Borel-Caratheodory theorem gives that $\Psi$ is a polynomial of degree at most 2. Therefore, we can write
$$
 u(z) = \kappa \EE\Big(   {-|z|^2/2 + \alpha z^2 + \beta z}\Big) \prod_{k=1}^N\Big(e^{{i\pi z}/ {\gamma} }\sin\big(\frac{\pi}{\gamma}(z-z_k)\big)\Big).
$$
Finally, the periodicity condition $R_\gamma u = u$ leads to $\alpha = {1}/2$ and $\beta = {2i\pi L}/{\gamma}$.

To prove the additive description, notice that $\dis \EE\big(  {{i\pi z}/{\gamma} }\big)\sin\big({\pi}(z-z_k)/\gamma\big)$ can be written under the form $aZ+b$, with $b \neq 0$. This proves that any function which in the multiplicative form above can also be expressed in the additive form. Conversely, if a function can be put in the additive form, it belongs to $\mathcal{G}$ and has $N$ zeros modulo~$\gamma$ since $z \mapsto Z$ is a bijection from $S_\gamma$ to $\mathbb{C} \setminus \{ 0 \}$.
\end{proof}

\begin{prop}
The $M$-stationary solutions to \eqref{LLL} in $\mathcal{G}$ are given by the formula
$$
u(t,z)=\kappa \psi_k(z) \EE\Big( -i \frac{|\kappa|^2 t }{\gamma\sqrt{\pi}}\Big), \quad\quad k \in \Z , \; \kappa \in \C.
$$
There are no $MQ$-stationary solutions beyond the above examples.
\end{prop}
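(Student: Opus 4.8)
The plan is to pass to the Hilbertian basis $(\psi_n)$ and to exploit that membership in $\mathcal{G}$ forces the coefficient sequence to be finitely supported. By Lemma~\ref{lem.classi}, a function $u_0 \in \mathcal{G}$ with $N$ zeros modulo $\gamma$ is of the form $u_0(z) = c\,e^{\frac12 z^2 - \frac12|z|^2}Z^L P(Z)$ with $Z = e^{2\pi i z/\gamma}$, $\deg P = N$ and $P(0) \neq 0$. Since $e^{\frac12 z^2 - \frac12|z|^2} = (\pi\gamma^2/2)^{1/4}\psi_0$ and $Z^m\psi_0 = e^{\pi^2 m^2/\gamma^2}\psi_m$, this says precisely that $u_0 = \sum_k \lambda_k \psi_k$ with $\lambda_k = 0$ for $k \notin \{L, L+1,\dots,L+N\}$ and $\lambda_L \neq 0$, $\lambda_{L+N}\neq 0$; conversely every finitely supported sequence yields an element of $\mathcal{G}$. (The expansion converges in $L^2(S_\gamma)$, as $\mathcal{G}\subset L^2(S_\gamma)$.)

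Next I would write the stationary equations in these coordinates. For an $MQ$-stationary wave $u(t) = e^{-iat}R_{-\alpha t}u_0$ to remain in $\mathcal{F}_\gamma$ for all $t$ one needs $\mathfrak{Im}\,\alpha = 0$: otherwise \eqref{MQ} would force $\Gamma_2 u_0 \in \mathcal{F}_\gamma$, which is impossible for $u_0\neq 0$ since $\Gamma_2$ sends $u_0$ to $z u_0$ plus an element of $\mathcal{F}_\gamma$ and $z u_0 \notin \mathcal{F}_\gamma$. Hence $\alpha\cdot\Gamma = \alpha_1 \Gamma_1$ with $\Gamma_1 \psi_k = \frac{2\pi k}{\gamma}\psi_k$, and using \eqref{proj3} both \eqref{M} and \eqref{MQ} take the form
\begin{equation*}
(a + \beta k)\,\lambda_k = \sum_{k_1 - k_2 + k_3 = k} A_{k_1,k_2,k_3}\,\lambda_{k_1}\ov{\lambda_{k_2}}\lambda_{k_3}, \qquad k \in \Z,
\end{equation*}
with $\beta = \frac{2\pi\alpha_1}{\gamma} \in \mathbb{R}$, and $\beta = 0$ in the $M$-case. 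Now suppose $N \geq 1$ and evaluate this identity at $k = L + 2N$: since $L + 2N > L + N$ the left-hand side is $0$, while the only triple with entries in $\{L,\dots,L+N\}$ and $k_1 - k_2 + k_3 = L+2N$ is $(k_1,k_2,k_3) = (L+N, L, L+N)$. Thus $A_{L+N,L,L+N}\,\lambda_{L+N}^2\,\ov{\lambda_L} = 0$, and since $A_{L+N,L,L+N} = \frac{1}{\gamma\sqrt\pi}e^{-2\pi^2 N^2/\gamma^2} \neq 0$ this contradicts $\lambda_L\lambda_{L+N}\neq 0$. Therefore $N = 0$, i.e. $u_0 = c\,\psi_k$ for some $k\in\Z$, $c\in\C$.

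Finally, substituting $u_0 = c\psi_k$ into the equation at index $k$ and using $A_{k,k,k} = \frac{1}{\gamma\sqrt\pi}$ gives $a + \beta k = |c|^2/(\gamma\sqrt\pi)$; for $\beta = 0$ this is the asserted relation $a = |c|^2/(\gamma\sqrt\pi)$, and since $R_{-\alpha_1 t}\psi_k = e^{-2\pi i k\alpha_1 t/\gamma}\psi_k$ is merely a phase times $\psi_k$, the $MQ$-wave equals $u(t) = c\psi_k\,e^{-i(a+\beta k)t} = c\psi_k\,e^{-i|c|^2 t/(\gamma\sqrt\pi)}$, so no solution beyond the listed ones appears. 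The only point needing a little care is the first paragraph, namely the dictionary between the two descriptions of $\mathcal{G}$ in Lemma~\ref{lem.classi} and the support of $(\lambda_k)$ (together with the short reduction $\mathfrak{Im}\,\alpha = 0$); the core of the argument, the evaluation at the top frequency $k = L+2N$, is immediate.
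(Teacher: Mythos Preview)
Your proof is correct and follows essentially the same route as the paper's: pass to the Hilbertian basis, use Lemma~\ref{lem.classi} to see that elements of $\mathcal{G}$ correspond exactly to finitely supported coefficient sequences, and then evaluate the stationary equation at the top frequency $k=L+2N$ (the paper first translates to $L=0$ and looks at $k=2N$, which is the same idea). One small correction: your justification that $\Gamma_2 u_0\notin\mathcal{F}_\gamma$ via the decomposition ``$zu_0$ plus an element of $\mathcal{F}_\gamma$'' is not quite right, since the remainder $(\partial_z+\tfrac{\ov z}{2})u_0=f'(z)e^{-|z|^2/2}$ is \emph{also} not in $\mathcal{F}_\gamma$; the clean argument is the commutation $R_\gamma\Gamma_2=(\Gamma_2+2\gamma)R_\gamma$ (from~\eqref{r-1}), which gives $R_\gamma(\Gamma_2 u_0)=\Gamma_2 u_0+2\gamma u_0\neq\Gamma_2 u_0$ whenever $u_0\neq 0$.
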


Notice that the previous waves belong to $L^2(S_{\gamma})$ and that for 
$$u(t,z)=\kappa \psi_k(z) \EE\Big( -i \frac{|\kappa|^2 t }{\gamma\sqrt{\pi}}\Big)$$
$$M(u)=|\kappa|^2, \qquad P(u)=\frac{2\pi k}{\gamma}|\kappa|^2, \qquad \mathcal{H}(u)=\frac14 \|u\|^4_{L^4(S_{\gamma})}=\frac{|\kappa|^4}{4\gamma\sqrt{\pi}}.$$
\begin{proof}
Let $u \in \mathcal{G}$ be a $MQ$-stationary solution to \eqref{LLL}, namely 
$$u(t,z)=e^{-i at}R_{-\alpha t}U(z)$$  for some  $a \in \R$, $\alpha \in \C$ and  $U \in \mathcal{G}$. Thus there exists $L \in \Z$ such that 
$$\dis U(z) =c \EE\big( {\frac{1}{2} z^2 - \frac{1}{2} |z|^2}\big) Z^L P(Z)$$
 where the polynomial $P$ satisfies the hypotheses of Lemma \ref{lem77}.  Next, $U$ satisfies the equation  $a U+\alpha \cdot     \Gamma U =  \Pi \big( |U|^2 U\big)$. Let $\beta \in \C$, then by \eqref{com0} and \eqref{r-1}, 
$$\big(a+2 \Im (\alpha \ov{\beta})\big) R_\beta U+(\alpha \cdot     \Gamma) R_\beta U  =  \Pi \big( |R_\beta U|^2 R_\beta U\big).$$
After that, observe that 
$$
R_{\pm {i\pi}/{\gamma}} \left[ e^{  z^2/2 -   |z|^2/2} Z^L P(Z) \right] = \mu^L Z^{L\pm 1} P(\mu Z) e^{- {\pi^2}/{\gamma^2}} e^{  z^2/2 -   |z|^2/2}, 
$$
with $\mu = \EE\big({\mp {2\pi^2}/{\gamma^2}}\big)$, thus choosing $\beta=- {i\pi}L/\gamma$ and setting $V=R_{\beta}U$, it suffices to treat the case $L=0$. Moreover $[R_{\gamma}, R_{\beta}]=0$, so that $V \in \mathcal{G}$. Then, $V$ can be expanded as 
\begin{equation*}
V(z)= e^{ z^2/2-   |z|^2/2} \sum_{j=0}^Nc_j e^{{2i j \pi z }/{\gamma} }= \sum_{j=0}^Nd_j  \psi_j(z),
\end{equation*}
with $d_0 \neq 0$ and $d_N \neq 0$. Therefore
\begin{equation*}
|V|^2V(z)=  \sum_{0 \leq k, \ell, m \leq N}d_{k}\ov{d_{\ell}}d_{m}  \psi_{k}\ov{\psi_{\ell}}\psi_{m}(z).
\end{equation*}
We compute the coefficient of highest degree in $\Pi(|V|^2V)$: it is given by  $k=m=N$, $\ell=0$. By~\eqref{proj3}, 
$$\Pi\big(\psi^2_N\ov{\psi_0}\big)=\frac{1}{\gamma \sqrt{\pi}}e^{-{2\pi^2N^2}/{\gamma^2}}\psi_{2N}.$$ On the other hand, we observe that $\Gamma_2 \psi_n=(2z + {2i n\pi}/{\gamma}) \psi_n    $. Then, if $V$ satisfies $\big(a+2 \Im (\alpha \ov{\beta})\big)  V +(\alpha \cdot     \Gamma)    V = \Pi \big( |V|^2 V\big)$, then we must have $N=0$, and $\alpha_2=0$. Finally, $U=\kappa R_{{i\pi L }/{\gamma }}\psi_0=\kappa \psi_{L}$, hence the result.
\end{proof}

\section{Linearized analysis around rectangular lattices}\label{rectangular_lattice}

From now on and in the following sections, we assume that $N=1$. In the present section, we assume furthermore that the lattice is rectangular, namely that $\tau_1=0$ and $\tau_2={ \pi}/{\gamma^2}$, so that $\tau= { i\pi}/{\gamma^2}$.

Recall the definition of the strip
$$
S_\gamma = \big\{ z =x+iy: \;\; -{\gamma}/{2} < x \leq {\gamma}/{2} \big\},
$$
which we regard as a fundamental domain for functions in $\mathcal{F}_\gamma$.

Also recall the Hilbert basis
 $$
 \psi_0(z)=\big(\frac{2}{\pi \gamma^2}\big)^{1/4}\EE\Big( \frac{z^2}2-\frac{|z|^2}2\Big), $$
 $$   \psi_k(z)=R_{{ik\pi}/{\gamma}}\psi_0(z)=\big(\frac{2}{\pi \gamma^2}\big)^{1/4}\EE\Big( \frac{2ik\pi} {\gamma} z  -\frac{\pi^2k^2}{\gamma^2}+\frac{z^2}{2}-\frac{|z|^2}2\Big), \;\; k \in \mathbb{Z}.
 $$
It is such that $R_{\alpha}\psi_k=\EE\big({{2ik\pi \alpha}/ {\gamma}}\big) \psi_k$ if $\alpha \in \mathbb{R}$ and $\Gamma_1 \psi_k=\big( {2 \pi k}/{\gamma} \big)\psi_k$. 

By \eqref{exp1} we have the expansion 
\begin{eqnarray*}
\Phi_0(z)&=& \big(\frac{\pi \gamma^2}{2}\big)^{1/4}  \EE\Big(  \frac {\pi^2}{4\gamma^2}\Big)  \sum_{n=-\infty}^{+\infty}   \psi_n (z)\\
&=&\EE\Big(  \frac{z^2}2 -\frac{i\pi z} {\gamma} -  \frac{|z|^2}2\Big) \Theta_{{i\pi}/{\gamma^2}}\Big( { \big( {z-\frac{{i\pi}/{\gamma}-\gamma}2}}\big)/\gamma\Big) ,
\end{eqnarray*}
and it was observed in  Remark \ref{rem14} that this  function gives the $M$-stationary wave
$$ \Phi(t,z) = e^{-i\lambda t}\Phi_0(z).$$
Ignoring its phase, this solution is periodic with respect to translations in $\gamma \mathbb{Z}$ and~$\gamma \tau \mathbb{Z}$, and thus corresponds physically to a rectangular lattice.
The aim of this section is to linearize \eqref{LLL} around the rectangular lattice, and describe the spectrum of this linearization.

Linearizing  equation \eqref{LLL} around $\Phi_0$ gives
\begin{equation}
 i\partial_t u = \Pi\big[2|\Phi|^2u+\Phi^2\ov{u}\big] = \Pi\pac{2|\Phi_0|^2u+e^{-2i\lambda t}\Phi_0^2\ov{u}},
\end{equation}
so that the function $v = \EE\big({i\lambda t}\big)u$ satisfies the equation
\begin{equation}\label{Lin_LLL}
 i\partial_t v + \lambda v = \Pi\pac{2|\Phi_0|^2v+\Phi_0^2\ov{v}}.
\end{equation}

\begin{theorem}
\label{thmunstable}
Writing
$$
v(t) = \sum_{n \in \Z} c_n(t) \psi_n \qquad \mbox{and} \qquad 
f(t,\xi) = \sum_{n \in \Z} c_n(t) e^{-2\pi i n\xi},
$$
the equation \eqref{Lin_LLL} becomes
$$
i \partial_t \begin{pmatrix} {f}(t,\xi) \\[2pt] \overline{{f}}(t,-\xi) \end{pmatrix} = A_{\text{rect}}(\xi) \begin{pmatrix} {f}(t,\xi) \\[2pt] \overline{{f}}(t,-\xi) \end{pmatrix},
$$
for a $2\times 2$ matrix $A_{\text{rect}}(\xi)$. The matrix $A_{\text{rect}}(\xi) $ can be diagonalized for $\xi \neq 0$, resulting in
$$
A_{\text{rect}}(\xi) = P(\xi) \begin{pmatrix} \mu(\xi) & 0 \\[2pt] 0 & -\mu(\xi)  \end{pmatrix} P^{-1}(\xi).
$$

Furthermore,
\begin{itemize}
\item For any $\gamma>0$, $\mu(\xi) \in i \mathbb{R} \setminus \{ 0 \}$ if $\xi$ is close to $0$, but $\xi \neq 0$.
\item If $\tau = i$ (square lattice), then $\mu(\xi)  \in i \mathbb{R} \setminus \{ 0 \}$ for any $\xi \neq 0$.
\end{itemize}
In particular, all rectangular lattices are exponentially unstable in $L^2(S_\gamma) \cap \mathcal{F}_\gamma$.
\end{theorem}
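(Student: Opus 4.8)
The plan is to diagonalize \eqref{Lin_LLL} fibrewise in Fourier space after exhibiting a convolution structure in the basis $(\psi_n)$. First I would write $v=\sum_n c_n\psi_n$ and use \eqref{exp1}, which gives $\Phi_0=c_\ast\sum_n\psi_n$ with $c_\ast=(\pi\gamma^2/2)^{1/4}e^{\pi^2/(4\gamma^2)}$. The key algebraic fact is that the coefficient in Lemma \ref{lem-ortho} factors: $A_{k_1,k_2,k_3}=\tfrac1{\gamma\sqrt\pi}\,g(k_2-k_1)\,g(k_2-k_3)$ with $g(j)=e^{-\pi^2 j^2/\gamma^2}$. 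Carrying out the sums over the indices carried by $\Phi_0$, the $k$-th coordinate of $\Pi\big(2|\Phi_0|^2v\big)$ is $\tfrac{2c_\ast^2 S}{\gamma\sqrt\pi}\sum_n g(n-k)c_n$ and that of $\Pi\big(\Phi_0^2\overline v\big)$ is $\tfrac{c_\ast^2}{\gamma\sqrt\pi}\sum_n (g\ast g)(n-k)\overline{c_n}$, where $S:=\sum_j g(j)$ and $g\ast g$ is the discrete self-convolution. Passing to $f(t,\xi)=\sum_n c_n(t)e^{-2\pi in\xi}$ turns these convolutions into multiplication by the real $1$-periodic function $H(\xi):=\sum_j g(j)e^{-2\pi ij\xi}$ and by $H(\xi)^2$, while $\sum_n\overline{c_n}e^{-2\pi in\xi}=\overline{f(t,-\xi)}$; using in addition \eqref{lambda-carre}, that is $\lambda=\tfrac{1}{\sqrt2}e^{\pi^2/(2\gamma^2)}H(0)^2$, and then conjugating and reflecting the equation at $-\xi$ (legitimate since $H$, hence $\alpha$ and $\beta$ below, are real and even), one reaches the announced system with $A_{\mathrm{rect}}(\xi)=\left(\begin{smallmatrix}-\alpha(\xi)&\beta(\xi)\\-\beta(\xi)&\alpha(\xi)\end{smallmatrix}\right)$, where, writing $\kappa:=e^{\pi^2/(2\gamma^2)}/\sqrt2$,
\[
\alpha(\xi)=\kappa\big(H(0)^2-2H(0)H(\xi)\big),\qquad \beta(\xi)=\kappa\,H(\xi)^2.
\]
The diagonalizing matrix $P(\xi)$ is then read off from the eigenvectors of $A_{\mathrm{rect}}(\xi)$.

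Next I would note that $A_{\mathrm{rect}}(\xi)$ is traceless with determinant $\beta(\xi)^2-\alpha(\xi)^2$, so its eigenvalues are $\pm\mu(\xi)$ with $\mu(\xi)^2=\alpha(\xi)^2-\beta(\xi)^2$, and it is diagonalizable wherever $\mu(\xi)\neq0$. By Parseval, $v\mapsto f$ is unitary from $L^2(S_\gamma)\cap\mathcal F_\gamma$ onto $L^2([0,1])$, and the evolution of $\big(f(t,\xi),\overline{f(t,-\xi)}\big)$ is fibred over $\xi$; hence exponential instability in $L^2(S_\gamma)\cap\mathcal F_\gamma$ will follow as soon as $\mu(\xi)^2<0$ for some $\xi$ — on a small interval around such a $\xi$ the growth rate $\sqrt{\beta^2-\alpha^2}$ is bounded below, and data Fourier-localized there in the growing eigendirection of $A_{\mathrm{rect}}$ grow like $e^{ct}$ in $L^2$.

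It remains to determine the sign of $\mu(\xi)^2$. From the formulas above,
\[
\mu(\xi)^2=\big(\alpha(\xi)-\beta(\xi)\big)\big(\alpha(\xi)+\beta(\xi)\big)=\kappa^2\,\big(H(0)-H(\xi)\big)^2\,\big(H(0)^2-2H(0)H(\xi)-H(\xi)^2\big),
\]
using $H(0)^2-2H(0)H+H^2=(H(0)-H)^2$. Now $H(0)-H(\xi)=2\sum_{j\ge1}g(j)\big(1-\cos2\pi j\xi\big)>0$ for every $\xi\in(0,1)$, the $j=1$ term alone being strictly positive, while the last factor tends to $-2H(0)^2<0$ as $\xi\to0$; therefore $\mu(\xi)^2<0$ for $\xi$ close to $0$, $\xi\neq0$, for every $\gamma>0$, which already gives the instability of all rectangular lattices. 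For the square lattice $\gamma^2=\pi$ I would moreover bound $H(0)=1+2\sum_{j\ge1}e^{-\pi j^2}<1+\tfrac{2e^{-\pi}}{1-e^{-\pi}}<\sqrt2$ and, since $H(\xi)=1+2\sum_{j\ge1}e^{-\pi j^2}\cos2\pi j\xi\ge2-H(0)$, deduce $H(0)+H(\xi)\ge2$; then $H(0)^2-2H(0)H(\xi)-H(\xi)^2=2H(0)^2-\big(H(0)+H(\xi)\big)^2\le2H(0)^2-4<0$ for all $\xi$, and combined with $\big(H(0)-H(\xi)\big)^2>0$ on $(0,1)$ this yields $\mu(\xi)^2<0$ for every $\xi\in(0,1)$.

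I expect the only genuine difficulty to lie in the bookkeeping of the first step — verifying the Gaussian factorization of $A_{k_1,k_2,k_3}$, tracking the Fourier transform, and handling the $\overline v$ term, which is what couples $f(t,\xi)$ to $\overline{f(t,-\xi)}$ and makes the symbol $2\times2$. Once that is in place, recognizing the perfect-square factor $\big(H(0)-H(\xi)\big)^2$ in $\mu(\xi)^2$ makes both the band crossing $\mu(0)=0$ and the sign of $\mu(\xi)^2$ transparent, and the remaining estimates are elementary.
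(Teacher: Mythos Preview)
Your proposal is correct and follows essentially the same route as the paper: expand in the basis $(\psi_n)$, recognize the convolution structure coming from the Gaussian factorization of $A_{k_1,k_2,k_3}$, pass to Fourier to obtain the $2\times2$ symbol, and read off the eigenvalues from the factored determinant. Your $H$ is the paper's $\ell$, your $(-\alpha,\beta)$ is the paper's $(a,b)$, and your perfect-square factor $(H(0)-H(\xi))^2$ is exactly the paper's $(\ell(\xi)-\ell(0))^2$. The only substantive difference is the square-lattice estimate: the paper further factors the quadratic $\ell^2+2\ell(0)\ell-\ell(0)^2$ into linear factors and bounds $\ell(\xi)-(\sqrt2-1)\ell(0)$ from below by a geometric-series argument, whereas you rewrite $H(0)^2-2H(0)H(\xi)-H(\xi)^2=2H(0)^2-(H(0)+H(\xi))^2$ and use $H(0)<\sqrt2$ together with $H(0)+H(\xi)\ge2$. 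Both are elementary; your rewriting is arguably a touch slicker since it avoids computing the roots. One small point worth making explicit in your instability step is that the constraint $g_0(\xi)=\overline{f_0(-\xi)}$ does not force the data into the stable eigendirection---the paper notes this is generic and gives the concrete example $v_0=\psi_0$.
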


We refer to the proof for exact formulas for $A_{\text{rect}}$, $P$ and $\mu$, which are somewhat lengthy and therefore omitted in the statement.

\begin{remark}
	It turns out that for a general rectangular lattice, $\det(A_{\text{rect}}(\xi))$ \emph{can} take negative values in $(0,1)$, leading to the eigenvalues of $A$ being real. Numerical simulations show that this happens for values of $\gamma$ exceeding $\gamma_0 \approx 2.51.$ In any case, we have $\det(A_{\text{rect}}(\xi)) > 0$ for $\xi >0$ close enough to~$0$, leading to exponential growth for some initial data being non-zero close to $\xi = 0$. 
\end{remark}

\begin{proof} \underline{The linearized operator in the Hilbert basis $(\psi_n)$.}
Setting 
$$\dis v(t,z)=\sum_{n=-\infty}^{+\infty} c_n(t)\psi_n(z),$$
 the equation \eqref{Lin_LLL} becomes
\begin{equation*}
	i\partial_t c_n + \lambda c_n = \pa{\frac{\pi}{2}}^{1/2}\gamma \EE\Big( \frac{\pi^2}{2\gamma^2}\Big)\sum_{\substack {k, \ell, m \in \Z\\ k-\ell+m=n}}A_{k,\ell,m}(2c_{k} + \ov{c_{\ell}}),
\end{equation*}
thanks to the  equation \eqref{proj3}, recalling the notation \eqref{def-A}
$$A_{k,\ell,m} = \frac{1}{\gamma\sqrt{\pi}}\EE\Big({-\frac{\pi^2}{\gamma^2}\pa{(\ell-k)^2+(\ell-m)^2}}\Big).$$
 For $n\in \Z$,
\begin{align*}
	\sum_{\substack {k, \ell, m\in \Z\\ k-\ell+m=n}}A_{k,\ell,m}c_{k} &= \frac{1}{\gamma\sqrt{\pi}}\sum_{k \in \Z} \EE\Big( -\frac{\pi^2}{\gamma^2}(n-k)^2\Big)c_{k} \sum_{\substack {\ell,m \in \Z\\ m-\ell=n-k}}       \EE\Big(  -\frac{\pi^2}{\gamma^2}(\ell-k)^2\Big) \\
		&=\frac{1}{\gamma\sqrt{\pi}}\sum_{k \in \Z} \EE\Big(-\frac{\pi^2}{\gamma^2}(n-k)^2\Big)c_{k}\sum_{q\in\Z}\EE\Big(-\frac{\pi^2q^2}{\gamma^2}\Big),
\end{align*}
and 
\begin{align*}
	\sum_{\substack {k, \ell, m\in \Z\\ k-\ell+m=n}}A_{k,\ell,m}\ov{c_{\ell}} &=  \frac{1}{\gamma\sqrt{\pi}}\sum_{\ell \in \Z} \ov{c_{\ell}} \sum_{\substack {k,m \in \Z\\ k+m=n+\ell}}\EE\Big({-\frac{\pi^2}{\gamma^2}\big((n-k)^2+(\ell-k)^2\big)}\Big) \\
	&=  \frac{1}{\gamma\sqrt{\pi}}\sum_{\ell \in \Z} \ov{c_{\ell}} \sum_{k\in\Z}\EE\Big({-\frac{\pi^2k^2}{\gamma^2}}\Big)\EE\Big({-\frac{\pi^2}{\gamma^2}(n-\ell-k)^2}\Big). 
\end{align*}
In other words,
\begin{equation*}
	i\partial_t \begin{pmatrix} c_n \\[2pt] d_n \end{pmatrix} = \begin{pmatrix} \mathcal{L}^{\text{rect}}-\lambda Id & \mathcal{M}^{\text{rect}} \\[2pt] -\mathcal{M}^{\text{rect}} & - (\mathcal{L}^{\text{rect}}-\lambda Id) \end{pmatrix} \begin{pmatrix} c_n \\[2pt] d_n \end{pmatrix},
\end{equation*}
where $(d_n) := (\ov{c_n})$ and
\begin{equation*}  
	\mathcal{L}^{\text{rect}} : u \longmapsto C_L L * u, \quad C_L =\sqrt{2}\EE\Big({\frac{\pi^2}{2\gamma^2}}\Big)\sum_{q\in\Z}\EE\Big(-\frac{\pi^2q^2}{\gamma^2}\Big),  
\end{equation*}	
\begin{equation} \label{def_op_L_rect} 
L_n = \EE\Big(-\frac{\pi^2n^2}{\gamma^2}\Big)
\end{equation}	
	\begin{equation*}
	\mathcal{M}^{\text{rect}} : u \longmapsto C_M M * u, \quad C_M =\frac{1}{\sqrt{2}}\EE\Big({\frac{\pi^2}{2\gamma^2}}\Big), 
		\end{equation*}
		\begin{equation}\label{def_op_M_rect}
	M_n = \sum_{p=-\infty}^{+\infty}\EE\Big({-\frac{\pi^2p^2}{\gamma^2}}\Big) \EE\Big({-\frac{\pi^2}{\gamma^2}(n-p)^2}\Big) = [L * L](n)
\end{equation}	
where $*$ stands for the discrete convolution of sequences.

\medskip 

\noindent \underline{The linearized operator in the Fourier variable $\xi$.}
 Define the discrete Fourier transform $\dis \mathcal{F}:\ell^2(\Z) \to L^2([0,1])$ \big(respectively the inverse Fourier transform $\mathcal{F}^{-1}$\big) for a sequence $\dis u = (u_n)_{n\in\Z} \in \ell^2(\Z)$ \big(respectively a function $f \in L^2([0,1])$\big) by 
\begin{equation}
\label{deffourier}
\mathcal{F}(u)(\xi) = \sum_{k= -\infty}^{+\infty} u_ke^{- 2i\pi k \xi}, \qquad \mathcal{F}^{-1}(f)(k) = \int_0^1 e^{2i\pi k \xi} f(\xi) d\xi.
\end{equation}
As is well-known, $\mathcal{F}(u*v) = \mathcal{F}(u) \mathcal{F}(v)$, so that
\begin{align*}
& \mathcal{F}\mathcal{L}^{\text{rect}}\mathcal{F}^{-1}f=C_L\mathcal{F}(L) f \\
& \mathcal{F}\mathcal{M}^{\text{rect}}\mathcal{F}^{-1}f=C_M\mathcal{F}(M) f = C_M\mathcal{F}(L*L) f = C_M \mathcal{F}(L)^2 f.
\end{align*}
The Fourier transform of $L$ is given by
\begin{multline}\label{def_fct_l}
	\mathcal{F}(L)(\xi) =  \sum_{k= -\infty}^{+\infty} L_ke^{-{2i\pi k}\xi} = \sum_{k = -\infty}^{+\infty} \EE\Big({-{2i\pi k}\xi-\frac{\pi^2 k^2}{\gamma^2}}\Big)= \\
	=1 + 2 \sum_{k=1}^{+\infty}\EE\Big({-\frac{\pi^2k^2}{\gamma^2}}\Big)\cos\pa{2\pi k\xi}=: \ell(\xi).
\end{multline}

All functions and constants above can now be expressed through $\ell$ and $C_M$:
$$
C_L = 2 C_M \ell(0), \quad \lambda = C_M \ell^2(0), \quad \mathcal{F}(M)(\xi) = \ell^2(\xi).
$$

Overall, we find the expression
\begin{equation*}
	\pac{\mathcal{F}\begin{pmatrix} \mathcal{L}^{\text{rect}}-\lambda Id & \mathcal{M}^{\text{rect}} \\[2pt]  -\mathcal{M}^{\text{rect}} & - (\mathcal{L}^{\text{rect}}-\lambda Id) \end{pmatrix}\mathcal{F}^{-1}(X)}\pa{\xi} =:A_{\text{rect}}(\xi)X,
\end{equation*}
where
\begin{equation*}
	A_{\text{rect}}(\xi) = \begin{pmatrix} a(\xi) & b(\xi) \\[2pt] -b(\xi) & -a(\xi) \end{pmatrix} =C_M \begin{pmatrix} \ell(0)(2\ell(\xi)-\ell(0)) & \ell^2(\xi) \\[2pt]  -\ell^2(\xi) & -\ell(0)(2\ell(\xi)-\ell(0)) \end{pmatrix}.
\end{equation*}

\medskip

\noindent \underline{Diagonalizing the matrix $A_{\text{rect}}$.}
The characteristic polynomial of $A_{\text{rect}}(\xi)$ is 
$$
P_{A_{\text{rect}}(\xi)}(X) = X^2 + D(\xi), \quad \mbox{where} \quad D(\xi) = \det(A_{\text{rect}}(\xi)) = b^2(\xi) - a^2(\xi).
$$
Whether the eigenvalues at frequency $\xi$ are real or imaginary (and therefore, stable or unstable) depends on the sign of $D(\xi)$, which can be factorized as follows
\begin{align*}
 D(\xi)  &= C_M^2\pac{\ell^2(\xi)+\ell(0)(2\ell(\xi)-\ell(0))}\pac{\ell^2(\xi)-\ell(0)(2\ell(\xi)-\ell(0))} \\
    &= \underbrace{C_M^2\pac{\ell(\xi)-\ell(0)}^2\pac{\ell(\xi)+(1+\sqrt{2})\ell(0)}}_{\geq 0}\pac{\ell(\xi)-(\sqrt{2}-1)\ell(0)},
\end{align*}
where the above term is non-negative since $|\ell(\xi)|$ is maximum at $\xi=0$. We will denote
$$
\mu(\xi) = \sqrt{-D(\xi)},
$$
with the convention that $\mathfrak{Im} \sqrt x \geq 0$ if $x \leq 0$. The eigenvalues of $A_{\text{rect}}(\xi)$ are $\mu_{\pm} (\xi)= \pm \sqrt{-D(\xi)}$, and the corresponding eigenvectors $e_{\pm}(\xi) = (b(\xi), \mu_{\pm}(\xi) - a(\xi))$. Denoting~$P$ for the change of basis matrix $P(\xi) = (e_+(\xi) | e_-(\xi))$, we obtain the diagonalization formula
$$
A_{\text{rect}}(\xi) = P(\xi) \begin{pmatrix} \mu(\xi) & 0 \\ 0 & -\mu(\xi) \end{pmatrix} P^{-1}(\xi).
$$
 
\medskip

\noindent \underline{The sign of the determinant.}
As demonstrated above, $D(\xi)$ and 
$$F(\xi) :=\ell(\xi) -(\sqrt{2}-1)\ell(0)$$
 have the same sign. By continuity of $F$, it appears that $F(\xi)$, and hence $D(\xi)$, is positive in a neighborhood of $\xi=0$.

To obtain a more precise bound, we resort to the geometric series formula to bound~$F$ from below:
\begin{multline*}
F(\xi) =2 - \sqrt{2} + 2 \sum_{k=1}^{+\infty}\EE\Big(  -\frac{\pi^2 k^2}{\gamma^2}\Big) \pa{\cos\pa{2\pi k\xi}-\sqrt{2}+1}\\
 \geq 2 - \sqrt{2} - 2 \sum_{k=1}^{+\infty}\sqrt{2}\EE\Big(  -\frac{\pi^2 k^2}{\gamma^2}\Big) \geq 2 - \sqrt{2} - 2\sqrt{2}\frac{q}{1-q},
\end{multline*}
where $q := \EE\big(-{\pi^2}/{\gamma^2}\big)$. Then, $F(\xi) > 0$ as soon as $$q < \frac{2-\sqrt{2}}{2\sqrt{2}+2-\sqrt{2}} \approx 0.1716,$$
which is the case for the square lattice, for which $\gamma^2 = \pi$, so that we have \linebreak ${q = \EE\big( {-\pi} \big)\approx 0.0432\dots}$

\medskip

\noindent \underline{Linear instability.} We claim that the equation \eqref{Lin_LLL} is unstable in $L^2(S_\gamma) \cap \mathcal{F}_\gamma$. We note first that
$$
\| v(t,z) \|_{L^2(S_\gamma)} = \| c_n(t) \|_{\ell^2(\mathbb{Z})} = \|  f(t,\xi) \|_{L^2([0,1])}
$$
(where the first equality is a consequence of the fact that $(\psi_n)$ is a Hilbert basis, and the second equality is Parseval's theorem).

As a consequence, it suffices to establish instability in the $ {f}$ variable. Set $f_0(\xi)=f(0,\xi)$. By the above diagonalization formula, $ {f}$ satisfies
$$
\begin{pmatrix} {f}(t,\xi) \\[2pt] \overline{{f}}(t,-\xi) \end{pmatrix} = P(\xi) \begin{pmatrix} e^{-i t \mu(\xi)} & 0 \\[2pt] 0 & e^{i t \mu(\xi)}\end{pmatrix} P^{-1}(\xi) \begin{pmatrix} {f}_0(\xi) \\[2pt] \overline{{f}_0}(-\xi) \end{pmatrix}.
$$

As we saw above, $\mathfrak{Re} (-i \mu(\xi)) > 0$ for $\xi$ close to zero. Therefore, exponential growth will be observed as soon as the first coordinate of $P^{-1}(\xi) \begin{pmatrix} {f}_0(\xi) \\[2pt] \overline{{f}_0}(-\xi) \end{pmatrix}$ is non-zero. This is easy to arrange, and it is even the generic situation!  As an example, we give $v(0) = \psi_0$, leading to $ {f}_0(\xi) = 1$, and $$P^{-1}(\xi) \begin{pmatrix} {f}_0(\xi) \\[2pt] \overline{{f}_0}(-\xi) \end{pmatrix} = \begin{pmatrix}\frac{b(\xi)+a(\xi)}{\mu(\xi)(b(\xi)+a(\xi)-\mu(\xi))} \\[2pt] \cdots \end{pmatrix},$$
whose first coordinate is non-zero. 
\end{proof}
 
\section{Linearized analysis around the hexagonal lattice} 
\label{section5}

\subsection{Writing the problem in the orthonormal basis}

\label{subsect6.1}

We consider the lattice $\mathcal{L}_{\tau,\gamma} = \gamma(\Z \oplus \tau \Z)$ for $N=1$ and $\tau = j = \EE\big({{2i\pi}/{3}}\big)$.  This is the hexagonal lattice (or Abrikosov lattice) $\mathcal{L}_{j,\gamma}$ for which ${\sqrt{3}}/{2} = \tau_2 = {\pi}/{\gamma^2}$ so that $$ \gamma^2 = \frac{2\pi}{\sqrt{3}} \approx 3.6276.$$

We denote again
$$\psi_0(z):=\big(\frac{2}{\pi \gamma^2}\big)^{1/4}\EE\Big( {z^2/2-|z|^2/2}\Big),$$
but for  $k\in \Z$, the family $(\psi_k)$ will be given by
\begin{equation}\label{newdef}
\psi_k(z):=R_{k\tau \gamma}\psi_0(z)=\big(\frac{2}{\pi \gamma^2}\big)^{1/4}\EE\Big( {{2ik\pi z}/ {\gamma} +i\pi \tau k^2+ z^2/2-|z|^2/2}\Big).
\end{equation}

Also recall the definition of the vertical strip of width $\gamma$:
$$
S_{\gamma} = \big\{z \in \mathbb{C}, \quad -{\gamma}/{2} < \Re  z \leq {\gamma}/{2}  \big\}.
$$

The new definition of the $(\psi_k)$ is meant to simplify computations hereafter. With this new definition, lemmas \ref{lem98} and \ref{lem-ortho} need to be adapted, which is the purpose of the following lemma.  

From now on, we denote
\begin{equation}\label{def-B}
B_{k,\ell,m} =  \frac{1}{\gamma\sqrt{\pi}}\EE\Big( -\frac{\pi^2}{\gamma^2}\big((\ell-k)^2+(\ell-m)^2\big)\Big)(-1)^{(\ell-k)(\ell-m)} .
\end{equation}
Then, the following result holds true.

\begin{lemme}\label{lem-ortho-hexa}
\begin{enumerate}[$(i)$]
\item The family $(\psi_k)_{k \in \Z}$ is orthonormal and forms a  Hilbertian basis of $\mathcal{F}_{\gamma} \cap L^{2}(S_{ \gamma})$.
\item If $k,\ell,m, n \in \mathbb{Z}$,
\begin{equation*}
  \int_{S_{\gamma}} \psi_{k}(z)\ov{\psi_{\ell}(z)} \psi_{m}(z)\ov{\psi_{n}(z)} \,dL(z) =
  \begin{cases}
B_{k,\ell,m}   & \mbox{if $k - \ell + m - n = 0$} \\
 0 & \mbox{if $k - \ell + m - n \neq 0$}
 \end{cases}.
\end{equation*}
In other words,
\begin{equation}\label{proj4}
            \Pi\big(\psi_{k}\ov{\psi_{\ell}} \psi_{m}\big)=B_{k,\ell,m}   \psi_{n},
        \end{equation}
with $n = k - \ell + m$.
    \end{enumerate}
\end{lemme}

\begin{proof} $(i)$ For all $k,\ell \in \Z$, and $z=x+iy \in \C$,
    \begin{multline*}
     \psi_{k}(z)\ov{\psi_{\ell}(z)}=\\
   =   \pa{\frac{2}{\pi \gamma^2}}^{1/2}\EE\Big({i\pi(\tau k^2-\overline{\tau}\ell^2)}\Big)  \EE\Big( {-2y^2+\frac{2i\pi}{\gamma}(k-\ell)x-\frac{2\pi}{\gamma}(k+\ell)y}\Big),
          \end{multline*}
    which gives
              \small
        \begin{multline*}
          \int_{S_{\gamma}} \psi_{k}(z)\ov{\psi_{\ell}(z)} dL(z) = \\
    \begin{aligned}
&    =    \pa{\frac{2}{\pi \gamma^2}}^{1/2} e^{i\pi(\tau k^2-\overline{\tau}\ell^2)} \int_{y\in\R}\int_{- \frac \gamma 2 < x < \frac \gamma 2} \EE\Big(  {-2y^2+\frac{2i\pi}{\gamma}(k-\ell)x-\frac{2\pi}{\gamma}(k+\ell)y}\Big)\,dx\, dy \\
        &= \pa{\frac{2}{\pi \gamma^2}}^{1/2}e^{i\pi k^2(\tau -\overline{\tau})} \delta_{k,\ell} \gamma \int_{y\in\R}\EE\Big( {-2y^2-\frac{4\pi}{\gamma}ky}\Big) \, dy \\
        &= \delta_{k,\ell}
    \end{aligned}
            \end{multline*}
               \normalsize
    since $\tau - \ov{\tau} = 2i \tau_2 = 2i{\pi}/{\gamma^2}$ and by the Gaussian integral \eqref{GaussianIntegral}. We can prove that the family is a Hilbertian basis as in Lemma \ref{lem98}. 
    
\medskip

\noindent $(ii)$ For $k,\ell,m,n \in \Z$, and $z=x+iy \in \C$,
    \begin{multline*}
     \psi_{k}(z)\ov{\psi_{\ell}(z)}\psi_{m}(z)\ov{\psi_{n}(z)} = \\
     =\frac{2}{\pi \gamma^2} \EE\Big(    {i\pi\tau (k^2+m^2)-i\pi\overline{\tau}(\ell^2+n^2)}\Big) \qquad  \qquad  \qquad  \\
    \times  \EE\Big(  {-4y^2+\frac{2i\pi}{\gamma}(k-\ell+m-n)x-\frac{2\pi}{\gamma}(k+\ell+m+n)y}\Big).
        \end{multline*}
If $k-\ell+m-n \neq 0$, integrating in $x$ gives $0$. Otherwise, $k-\ell+m-n = 0$ and we obtain 
\small
    \begin{multline*}
            \int_{S_{\gamma}} \psi_{k}(z)\ov{\psi_{\ell}(z)} \psi_{m}(z)\ov{\psi_{n}(z)} \, dL(z) =  \\
    \begin{aligned}
&=\frac{2}{\pi \gamma}  \EE\Big(    i\pi\tau (k^2+m^2)-i\pi\overline{\tau}(\ell^2+n^2) \Big) \int_{y\in\R} \EE\Big(  -4y^2-\frac{2\pi}{\gamma}(k+\ell+m+n)y  \Big)\,dy \\
        &= \frac{2}{\pi \gamma} \EE\Big(    {i\pi\tau (k^2+m^2)-i\pi\overline{\tau}(\ell^2+n^2)} \Big) \frac{\sqrt{\pi}}{2}  \EE\Big( {\frac{\pi^2(k+\ell+m+n)^2}{4\gamma^2}} \Big) \\
        &= \frac{1}{\sqrt{\pi}\gamma}e^{i\pi Q(k,\ell,m,n)}
    \end{aligned}
     \end{multline*}  
     \normalsize 
by \eqref{GaussianIntegral}, where 
$$Q(k,\ell,m,n) := \tau (k^2+m^2) - \overline{\tau}(\ell^2+n^2) -i \dfrac{\pi(k+\ell+m+n)^2}{4\gamma^2}.$$
    We use that $\tau - \ov{\tau} = 2i {\pi}/{\gamma^2}$ and $k+\ell+m+n = 2(k+m) = 2(\ell+n)$ to derive
    \begin{align*}
        Q(k,\ell,m,n) &= \tau (k^2+m^2) - \overline{\tau}(\ell^2+n^2) - \frac{1}{8}\big(k+\ell+m+n\big)^2(\tau-\ov{\tau})\\
        &= \tau\Big({k^2+m^2-\frac12(k+m)^2}\Big) - \ov{\tau}\Big({\ell^2+n^2-\frac12(\ell+n)^2} \Big)\\ 
        &= \frac{\tau}{2}(k-m)^2 - \frac{\ov{\tau}}{2}(\ell-n)^2.
    \end{align*}
 This gives
\begin{equation}\label{proj_4psi_general}
        Q(k,\ell,m,n) = \frac{1}{2}(\tau-\ov{\tau})\big({(\ell-k)^2 + (\ell-m)^2}\big) + \pa{\tau+\ov{\tau}}(k-\ell)(\ell-m),
    \end{equation}
where we used the identities
\begin{align*}
& (k-m)^2 = \big((k-\ell)+(\ell-m)\big)^2 = (\ell-k)^2 + (\ell-m)^2 -2(\ell-k)(\ell-m) \\
& (\ell-n)^2 = \big((\ell-k)+(\ell-m)\big)^2 = (\ell-k)^2 + (\ell-m)^2 +2(\ell-k)(\ell-m).
\end{align*}
We now rely on the specific value $\tau = \EE\big({{2i\pi}/{3}}\big)$, giving $\tau + \ov{\tau} = -1,$ which had not been used so far.

Overall, we proved that
    \begin{multline*}
            \int_{S_{\gamma}} \psi_{k}(z)\ov{\psi_{\ell}(z)} \psi_{m}(z)\ov{\psi_{n}(z)}\,dL(z) =\\
            \begin{aligned}
&= \frac{1}{\sqrt{\pi}\gamma}   \EE\Big( { \frac{i\pi}{2}(\tau-\ov{\tau})\pa{(\ell-k)^2 + (\ell-m)^2}} \Big) \EE \Big({i\pi(\ell-k)(\ell-m)}\Big) \\
        & = \frac{1}{\sqrt{\pi}\gamma} \EE\Big(  -\frac{\pi^2}{\gamma^2}\pa{(\ell-k)^2 + (\ell-m)^2} \Big) (-1)^{(\ell-k)(\ell-m)},
                   \end{aligned}
        \end{multline*}
    which was the claim.
\end{proof}

\begin{remark}\label{rk_tau_general}
	The equation \eqref{proj_4psi_general} does not rely on any specific value of $\tau$. The fact that the projection~\eqref{proj4} is real-valued for any values of the integers $k, \ell, m,n$ is specific to the two cases of rectangular and hexagonal lattices, at least for $|\tau| = 1$. Our conjecture is that other lattices would still give a convolution structure (see hereafter), but with much more complexity in the computations, leading to a complex, non-real matrix~$A_\tau(\xi)$, and in the end, to instability.
\end{remark}

With the help of the above lemma, we can write \eqref{LLL} in the orthonormal basis: if $u$ is expanded as $\sum c_n \psi_n$, then the coefficients $(c_n)$ satisfy the equation
\begin{equation}
\label{eqinck}
i \partial_t c_n = \sum_{\substack{k,\ell,m \in \mathbb{Z} \\ k - \ell + m = n}} B_{k,\ell,m} c_{k} \ov{c_{\ell}} c_{m}, \quad n \in \Z.
\end{equation}

\subsection{Linearizing around the Abrikosov lattice}

By Proposition \ref{prop34}, the following function is a stationary solution of \eqref{LLL}:
\begin{equation*}
\Psi(t,z) = e^{-i\lambda t}\Psi_0(z)
\end{equation*}
\begin{equation}\label{def1.1}
 \Psi_0(z) := \kappa \Phi_0(z)= \kappa  \EE\Big({z^2/2-|z|^2/2-{i\pi z}/{\gamma}} \Big)\Theta_\tau\big({ \pa{z-z_0}/\gamma}\big),
\end{equation}
with $z_0 = {\gamma}(\tau-1)/2$, and where the constants are $\kappa=  \EE\big( {i{\pi}/{4}\tau}\big) \pa{{2}/({\pi\gamma^2})}^{1/4}$ and $\lambda = \lambda_0 |\kappa|^2$. By \eqref{exp1}, $\Phi_0$ can be expanded in the orthonormal basis as
$$
\Psi_0(z) = \sum_{n=-\infty}^{+\infty} \psi_n(z)
$$
(and the reason for the choice of $\kappa$ is now apparent: in the basis $(\psi_k)$,  all the coefficients are $1$). Abusing the terminology slightly, we call this solution the \emph{Abrikosov lattice}, or \emph{hexagonal lattice}.

Linearizing \eqref{LLL} around the Abrikosov lattice $\Psi$ gives
\begin{equation}
 i\partial_t u = \Pi\pac{2|\Psi|^2u+\Psi^2\ov{u}} = \Pi\pac{2|\Psi_0|^2u+e^{-2i\lambda t}\Psi_0^2\ov{u}}.
\end{equation}
Then the function $v = e^{i\lambda t}u$ satisfies the equation
\begin{equation}\label{lin_LLL_hexa}
\tag{LLLhexa}
 i\partial_t v + \lambda v = \Pi\pac{2|\Psi_0|^2v+\Psi_0^2\ov{v}}.
\end{equation}

\begin{lemme}
\label{lemmaformula}
 Writing
$$
v(t) = \sum_{n \in \Z} c_n(t) \psi_n \qquad \mbox{and} \qquad 
f(t,\xi) = \sum_{n \in \Z} c_n(t) e^{-2\pi i n\xi},
$$
the equation \eqref{lin_LLL_hexa} becomes
$$
i \partial_t \begin{pmatrix} {f}(t,\xi) \\[3pt] \overline{{f}}(t,-\xi) \end{pmatrix} =  \begin{pmatrix} a(\xi) & b(\xi) \\[3pt] -b(\xi) & -a(\xi) \end{pmatrix} \begin{pmatrix} {f}(t,\xi) \\[3pt] \overline{{f}}(t,-\xi) \end{pmatrix} = A_{\text{hexa}}(\xi) \begin{pmatrix} {f}(t,\xi) \\[3pt] \overline{{f}}(t,-\xi) \end{pmatrix},
$$
with
$$
\begin{cases}
\dis a(\xi) := \frac{2}{\gamma\sqrt{\pi}}\Big({\ell(\xi)\ell(0)-2h(\xi)h(0)}\Big) - \frac{1}{\gamma\sqrt{\pi}}\pa{\ell^2(0)-2h^2(0)} \\[9pt]
\dis b(\xi) := \frac{1}{\gamma\sqrt{\pi}}\pa{\ell^2(\xi)-2h^2(\xi)}
\end{cases}
$$
and where the functions $\ell$ and $h$ are defined by
$$
\begin{cases}
\dis \ell(\xi) := 1 + 2 \sum_{k=1}^{+\infty}\EE\Big(-\frac{\pi^2k^2}{\gamma^2}\Big)\cos\big({2\pi k\xi}\big) \\[6pt]
\dis h(\xi) := 2 \sum_{k=0}^{+\infty}\EE\Big(-\frac{\pi^2(2k+1)^2}{\gamma^2}\Big)\cos\big({2(2k+1)\pi \xi}\big).
\end{cases}
$$
\end{lemme}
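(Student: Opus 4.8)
The plan is to substitute the basis expansion $v=\sum_n c_n\psi_n$ into \eqref{lin_LLL_hexa} and use the projection formula \eqref{proj4} of Lemma \ref{lem-ortho-hexa} to derive the system satisfied by the coefficients $(c_k)$. Since $\Psi_0=\sum_n\psi_n$ has all its coefficients equal to $1$, the term $\Pi(2|\Psi_0|^2v)$ contributes $2\sum_{k_1-k_2+k_3=k}A_{k_1,k_2,k_3}c_{k_3}$ and the term $\Pi(\Psi_0^2\overline v)$ contributes $\sum_{k_1-k_2+k_3=k}A_{k_1,k_2,k_3}\overline{c_{k_2}}$, where $A_{k_1,k_2,k_3}=\frac{1}{\sqrt\pi\gamma}e^{-\frac{\pi^2}{\gamma^2}((k_2-k_1)^2+(k_2-k_3)^2)}(-1)^{(k_2-k_1)(k_2-k_3)}$. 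The first step is the change of summation variables $m=k_2-k_1$, $n=k_2-k_3$, which turns $A_{k_1,k_2,k_3}$ into $\frac{1}{\sqrt\pi\gamma}e^{-\frac{\pi^2}{\gamma^2}(m^2+n^2)}(-1)^{mn}$ and exhibits the convolution structure of both sums; note that in the first sum the surviving coefficient is $c_{k+m}$, whereas in the second it is $\overline{c_{k+m+n}}$, a difference that matters below.

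Next I would apply the discrete Fourier transform \eqref{deffourier}. Transforming the first sum and shifting the summation index to absorb $c_{k+m}$ produces $f(\xi)$ multiplied by $\frac{1}{\sqrt\pi\gamma}\sum_m e^{-\pi^2m^2/\gamma^2}e^{2\pi im\xi}\big(\sum_n e^{-\pi^2n^2/\gamma^2}(-1)^{mn}\big)$; the inner sum equals $\ell(0)$ if $m$ is even and $\ell(0)-2h(0)$ if $m$ is odd, using $\sum_n(-1)^n e^{-\pi^2n^2/\gamma^2}=\ell(1/2)=\ell(0)-2h(0)$. Summing the outer series over even and odd $m$ separately, with $\sum_{m\,\mathrm{even}}e^{-\pi^2m^2/\gamma^2}e^{2\pi im\xi}=\ell(\xi)-h(\xi)$ and $\sum_{m\,\mathrm{odd}}e^{-\pi^2m^2/\gamma^2}e^{2\pi im\xi}=h(\xi)$, collapses this to $\ell(\xi)\ell(0)-2h(\xi)h(0)$. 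For the second sum, the index shift absorbs $\overline{c_{k+m+n}}$ and produces $\overline{f(-\xi)}$ multiplied by $\frac{1}{\sqrt\pi\gamma}\sum_{m,n}e^{-\pi^2(m^2+n^2)/\gamma^2}(-1)^{mn}e^{2\pi i(m+n)\xi}$; splitting the double sum over the four parity classes of $(m,n)$ and using the same two identities yields $\ell^2(\xi)-2h^2(\xi)=\gamma\sqrt\pi\,b(\xi)$.

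It then remains to identify the constant $\lambda$. From Proposition \ref{prop34} in the form \eqref{lambda-hexa}, writing $I=\ell(0)-h(0)$ and $J=h(0)$ gives $I^2+2IJ-J^2=\ell^2(0)-2h^2(0)$, hence $\lambda_0=\frac{1}{\sqrt2}e^{\pi^2/(2\gamma^2)}(\ell^2(0)-2h^2(0))$; multiplying by $|c|^2=e^{-\pi^2/(2\gamma^2)}\big(\frac{2}{\pi\gamma^2}\big)^{1/2}$ (from \eqref{def1.1}) gives $\lambda=\frac{1}{\gamma\sqrt\pi}(\ell^2(0)-2h^2(0))$, precisely the term subtracted in $a(\xi)$. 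Collecting the pieces yields the scalar identity $i\partial_t f(\xi)=a(\xi)f(\xi)+b(\xi)\overline{f(-\xi)}$. Finally, evaluating this identity at $-\xi$, taking complex conjugates, and using that $\ell$ and $h$ are real and even (so $a$ and $b$ are real and even) gives $i\partial_t\overline{f(-\xi)}=-b(\xi)f(\xi)-a(\xi)\overline{f(-\xi)}$, and the two scalar equations assemble into the announced $2\times2$ system with matrix $A_{\text{hexa}}(\xi)$.

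The computations are elementary, but the part requiring care is the bookkeeping: keeping track of which summation index survives the Fourier transform in each of the two nonlinear terms (yielding $f(\xi)$ for the $|\Psi_0|^2v$ term and $\overline{f(-\xi)}$ for the $\Psi_0^2\overline v$ term), and correctly handling the signs $(-1)^{mn}$ and the multiplicities in the parity decompositions.
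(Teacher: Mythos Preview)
Your argument is correct and reaches the same formulas as the paper, but your organization is cleaner than the paper's own proof. The paper writes $(-1)^{(k_2-k_1)(k_2-k_3)}$ as $1-2\,\mathbf{1}_{\{H\text{ odd}\}}$, splits every sum into a ``rectangular'' piece and an ``odd'' correction, and then carries out a case analysis on the parity of $k$ itself (checking separately that the sums $S_1,S_3$ and $S_2,S_4$ agree for $k$ even and $k$ odd), eventually introducing intermediate convolution kernels $L^{\text{odd}}$ and $M^{\text{odd}}=L^{\text{odd}}*L^{\text{odd}}$ before taking the Fourier transform. Your change of variables $m=k_2-k_1$, $n=k_2-k_3$ makes the factor $(-1)^{mn}$ depend only on the new indices, so the parity decomposition is on $(m,n)$ directly and the Fourier multiplier drops out without any case split on $k$; the identities $\sum_n(-1)^n e^{-\pi^2n^2/\gamma^2}=\ell(1/2)=\ell(0)-2h(0)$ and $\sum_{m\text{ even}}e^{-\pi^2m^2/\gamma^2}e^{2\pi im\xi}=\ell(\xi)-h(\xi)$ then do all the work. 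Both routes are the same idea (parity splitting of a Gaussian sum with an alternating sign), but yours bypasses the auxiliary operators and the $k$ even/odd verification, at the modest cost of having to be careful about which index survives the Fourier transform in each of the two nonlinear terms --- a point you flag explicitly and handle correctly.
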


\begin{proof}
Expanding $v$ in the orthonormal basis $\dis v(t,z)=\sum_{n=-\infty}^{+\infty} c_n(t)\psi_n(z)$, the coordinates satisfy the equation
\begin{equation}\label{Lin_coeff_hexa}
	i\partial_t c_n + \lambda c_n = \sum_{\substack {k, \ell, m \in \Z\\ k-\ell+m=n}}B_{k,\ell,m}(2c_{k} + \ov{c_{\ell}}),
\end{equation}
thanks to \eqref{eqinck}.

We will now compute the sums
$$ \sum_{\substack {k, \ell, m\in \Z\\ k-\ell+m=n}}B_{k,\ell,m}c_{k} \qquad \text{and} \qquad  \sum_{\substack {k, \ell, m \in \Z\\ k-\ell+m=n}}B_{k,\ell,m}\ov{c_{\ell}}.$$
by considering separately the cases $H=(\ell-k)(\ell-m)$ even and $H$ odd. Note that~$H$ is odd if and only if ($\ell$ is even and $k,m$ are odd) or ($\ell$ is odd and $k,m$ are even). We have 
\begin{multline*}
\sum_{\substack {k, \ell, m \in \Z\\ k-\ell+m=n}}B_{k,\ell,m}c_{k} = \\
\begin{aligned}
	&=  \frac{1}{\gamma\sqrt{\pi}}\sum_{\substack {k, \ell, m \in \Z\\ k-\ell+m=n, \; H\; \text{even}}}\EE\Big({-\frac{\pi^2}{\gamma^2}\pa{(\ell-k)^2+(\ell-m)^2}}\Big)c_{k} \\
	&\hspace{3cm}- \frac{1}{\gamma\sqrt{\pi}}\sum_{\substack {k, \ell, m \in \Z\\ k-\ell+m=n, \;  H\; \text{odd}}}\EE\Big({-\frac{\pi^2}{\gamma^2}\pa{(\ell-k)^2+(\ell-m)^2}}\Big)c_{k} \\
 &= \frac{1}{\gamma\sqrt{\pi}}\sum_{\substack {k, \ell, m \in \Z \\ k-\ell+m=n }}\EE\Big({-\frac{\pi^2}{\gamma^2}\pa{(\ell-k)^2+(\ell-m)^2}}\Big)c_{k} \\
&\hspace{3cm} - \frac{2}{\gamma\sqrt{\pi}}\sum_{\substack {k, \ell, m \in \Z\\ k-\ell+m=n,  \; H\; \text{odd}}}\EE\Big({-\frac{\pi^2}{\gamma^2}\pa{(\ell-k)^2+(\ell-m)^2}}\Big)c_{k}.
\end{aligned}
\end{multline*}
The first sum is the same as in Section \ref{rectangular_lattice} above, and can be computed accordingly. We then focus on the second sum. In the case $H$ odd, $k$ and $m$ have same parity, so that $\ell$ and $n=k-\ell+m$ also have the same parity.

\smallskip 

\noindent  \underline{First case: $\ell$ even.} The condition $H$ odd means that~$k$ and $m$ are both odd. We compute 
\begin{align*}
	S_1(n) & := \sum_{\substack {\ell \in 2\Z \\k,  m \in \Z \\ k-\ell+m=n,  \; H\; \text{odd}}}\EE\Big({-\frac{\pi^2}{\gamma^2}\pa{(\ell-k)^2+(\ell-m)^2}}\Big)c_{k} \\
	&= \sum_{k\in 2\Z+1}\EE\Big({-\frac{\pi^2}{\gamma^2}(n-k)^2}\Big)c_{k} \sum_{\substack {\ell \in 2\Z \\ m \in 2\Z+1 \\ m-\ell = n-k}}\EE\Big({-\frac{\pi^2}{\gamma^2}(\ell-k)^2}\Big) \\
	&= \sum_{k\in 2\Z+1}\EE\Big({-\frac{\pi^2}{\gamma^2}(n-k)^2}\Big)c_{k} \sum_{q \in 2\Z+1}\EE\Big({-\frac{\pi^2}{\gamma^2}q^2}\Big) \\
	& = \big(T^{\text{odd}}L^{\text{odd}} * c\big)(n),
\end{align*}
where 
\begin{equation*}
T^{\text{odd}} = \sum_{q \in 2\Z+1}\EE\Big({-\frac{\pi^2q^2}{\gamma^2}} \Big)
\end{equation*}
and 
\begin{equation}\label{def-Lodd}
    L_m^{\text{odd}} = \delta_{m\in2\Z+1}\EE\Big({-\frac{\pi^2m^2}{\gamma^2}} \Big) = \left\{
        \begin{array}{ll}
        \EE\Big({-\frac{\pi^2m^2}{\gamma^2}} \Big) & \text{if} \; m \in 2\Z+1, \\
        0 \quad & \text{if} \; m \in 2\Z.
        \end{array}
    \right.
\end{equation}

Similarly, we have
\begin{multline*}
	\sum_{\substack {k, \ell, m \in \Z\\ k-\ell+m=n}}B_{k,\ell,m}\ov{c_{\ell}} 
	= \frac{1}{\gamma\sqrt{\pi}}\sum_{\substack {  k, \ell, m \in \Z \\ k-\ell+m=n }}\EE\Big({-\frac{\pi^2}{\gamma^2}\pa{(\ell-k)^2+(\ell-m)^2}}\Big)\ov{c_{\ell}} \\- \frac{2}{\gamma\sqrt{\pi}}\sum_{\substack { k, \ell, m \in \Z \\ k-\ell+m=n,  \;H\; \text{odd}}}\EE\Big({-\frac{\pi^2}{\gamma^2}\pa{(\ell-k)^2+(\ell-m)^2}}\Big)\ov{c_{\ell}},
\end{multline*}
the first sum being the same as in Section \ref{rectangular_lattice} above. We compute 
\begin{align*}
	S_2(n) &:= \sum_{\substack {\ell \in 2\Z\\ k,  m \in \Z \\ k-\ell+m=n,  \; H\; \text{odd}}}\EE\Big({-\frac{\pi^2}{\gamma^2}\pa{(\ell-k)^2+(\ell-m)^2}}\Big)\ov{c_{\ell}} \\
	&= \sum_{\ell\in 2\Z}\ov{c_{\ell}} \sum_{\substack {k, m  \in 2\Z+1   \\ m-\ell = n-k}}\EE\Big({-\frac{\pi^2}{\gamma^2}(\ell-k)^2}\Big) \EE\Big({-\frac{\pi^2}{\gamma^2}(n-k)^2}\Big) \\
	&= \sum_{\ell\in 2\Z}\ov{c_{\ell}} \sum_{k \in 2\Z+1}\EE\Big({-\frac{\pi^2}{\gamma^2}(\ell-k)^2}\Big) \EE\Big({-\frac{\pi^2}{\gamma^2}(n-k)^2} \Big),
	\end{align*}
Then, with a change of variables, we get 
\begin{align*}
	S_2 (n)&= \sum_{\ell\in 2\Z}\ov{c_{\ell}} \sum_{m \in 2\Z+1}\EE\Big({-\frac{\pi^2}{\gamma^2}m^2}\Big) \EE\Big({-\frac{\pi^2}{\gamma^2}(n-m-\ell)^2}\Big)  \\  
	&= \sum_{\ell\in \Z}\ov{c_{\ell}}\delta_{\ell\in2\Z} \sum_{m \in 2\Z+1}\EE\Big({-\frac{\pi^2}{\gamma^2}m^2}\Big) \EE\Big({-\frac{\pi^2}{\gamma^2}(n-m-\ell)^2} \Big)\\
	&= \sum_{\ell\in \Z}\ov{c_{\ell}} \sum_{m \in 2\Z+1}\delta_{(n-\ell)\in2\Z}\EE\Big({-\frac{\pi^2}{\gamma^2}m^2}\Big) \EE\Big({-\frac{\pi^2}{\gamma^2}(n-m-\ell)^2} \Big)\\
	&=\big( M^{\text{odd}} * \ov{c}\big)(n),
\end{align*}

with 
\begin{align}\label{defModd}
	M^{\text{odd}}(n) &:=\sum_{k \in 2\Z+1} \EE\Big(-\frac{\pi^2}{\gamma^2}k^2\Big) \delta_{n\in2\Z}\EE\Big(-\frac{\pi^2}{\gamma^2}(n-k)^2\Big) \nonumber \\
	&= \sum_{k \in \Z} \delta_{k\in2\Z+1}\EE\Big(-\frac{\pi^2}{\gamma^2}k^2\Big) \delta_{(n-k)\in2\Z+1}\EE\Big(-\frac{\pi^2}{\gamma^2}(n-k)^2\Big) \nonumber \\
	&= \big(L^{\text{odd}} * L^{\text{odd}}\big) (n).
\end{align}

\medskip
\noindent  \underline{Second case: $\ell$ odd.} The condition $H$ odd means that~$k$ and $m$ are both even. We compute just as $S_1$:
\begin{align*}
	S_3(n) &:= \sum_{\substack {  \ell \in 2\Z+1\\k,  m \in \Z \\ k-\ell+m=n,  \;H\; \text{odd}}}\EE\Big(-\frac{\pi^2}{\gamma^2}\pa{(\ell-k)^2+(\ell-m)^2}\Big)c_{k} \\
	&= \sum_{k\in 2\Z}\EE\Big(-\frac{\pi^2}{\gamma^2}(n-k)^2\Big)c_{k} \sum_{\substack {\ell \in 2\Z+1 \\ m \in 2\Z \\ m-\ell = n-k}}\EE\Big(-\frac{\pi^2}{\gamma^2}(\ell-k)^2\Big) \\
	&= T^{\text{odd}}\sum_{k\in \Z}\delta_{(n-k)\in2\Z+1}\EE\Big(-\frac{\pi^2}{\gamma^2}(n-k)^2\Big)c_{k} \\
	&= \big( T^{\text{odd}}L^{\text{odd}} * c\big)(n),
\end{align*}
which is the same expression as in the case $\ell$ even. Now, we compute the last sum:
\begin{align*}
	S_4(n) &:= \sum_{\substack {\ell \in 2\Z+1\\k,  m \in \Z \\ k-\ell+m=n,  \;H\; \text{odd}}}\EE\Big(-\frac{\pi^2}{\gamma^2}\pa{(\ell-k)^2+(\ell-m)^2}\Big)\ov{c_{\ell}} \\
	&= \sum_{\ell\in 2\Z+1}\ov{c_{\ell}} \sum_{\substack {k,m  \in 2\Z   \\ m-\ell = n-k}}\EE\Big(-\frac{\pi^2}{\gamma^2}(\ell-k)^2\Big) \EE\Big(-\frac{\pi^2}{\gamma^2}(n-k)^2\Big) \\
	&= \sum_{\ell\in 2\Z+1}\ov{c_{\ell}}  \sum_{j \in 2\Z+1}\EE\Big(-\frac{\pi^2}{\gamma^2}j^2\Big) \EE\Big(-\frac{\pi^2}{\gamma^2}(n-j-\ell)^2\Big)   
\end{align*}
Therefore we get 
\begin{align*}
	S_4(n) &= \sum_{\ell\in \Z}\ov{c_{\ell}}\delta_{\ell\in2\Z+1} \sum_{j \in 2\Z+1}\EE\Big(-\frac{\pi^2}{\gamma^2}j^2\Big)\EE\Big(-\frac{\pi^2}{\gamma^2}(n-j-\ell)^2\Big) \\
	&= \sum_{\ell\in \Z}\ov{c_{\ell}} \sum_{j \in 2\Z+1}\delta_{(n-\ell)\in2\Z}\EE\Big(-\frac{\pi^2}{\gamma^2}j^2\Big) \EE\Big(-\frac{\pi^2}{\gamma^2}(n-j-\ell)^2\Big)\\
	& =\big( M^{\text{odd}} * \overline{c}\big)(n),
\end{align*}
with $M^{\text{odd}}$ defined in \eqref{defModd}, and this is the same expression as $S_2$ above.

\medskip
Overall, we get 
\begin{equation}\label{convol}
	i\partial_t \begin{pmatrix} c_n \\[2pt] d_n \end{pmatrix} = \begin{pmatrix} \mathcal{L}^{\text{hexa}}-\lambda Id & \mathcal{M}^{\text{hexa}} \\[2pt] -\mathcal{M}^{\text{hexa}} & - (\mathcal{L}^{\text{hexa}}-\lambda Id) \end{pmatrix} \begin{pmatrix} c_n \\[2pt] d_n \end{pmatrix},
\end{equation}
where $(d_n) := (\ov{c_n})$, and
\begin{align*}
	\mathcal{L}^{\text{hexa}} &: u \longmapsto \pa{C_L' L- C_L^{\text{odd}}L^{\text{odd}}}* u, &C'_L=\frac{2}{\gamma\sqrt{\pi}}T, \qquad  &C_L^{\text{odd}}=\frac{4}{\gamma\sqrt{\pi}}T^{\text{odd}},\\
	\mathcal{M}^{\text{hexa}} &: u \longmapsto \pa{C_M' M- C_M^{\text{odd}}M^{\text{odd}}}* u,   &C'_M=\frac{1}{\gamma\sqrt{\pi}}, \qquad  &C_M^{\text{odd}}=\frac{2}{\gamma\sqrt{\pi}};
\end{align*}
here $L$, $M$  are defined in  \eqref{def_op_L_rect} and \eqref{def_op_M_rect} and we introduced the final notation
$$
T=  \sum_{q \in \Z} \EE\Big( -\frac{\pi^2 q^2} {\gamma^2}\Big).
$$ 

Recall that the Fourier transform was defined in \eqref{deffourier}; applied to the convolution kernel $L^{\text{odd}}$, it gives
\begin{align}\label{def_fct_h}
	\mathcal{F}(L^{\text{odd}})(\xi) &= \sum_{k \in 2\Z+1} \EE\Big( -2ik\pi \xi-\frac{\pi^2 k^2}{\gamma^2}\Big) \nonumber \\
	&=2 \sum_{k=0}^{+\infty}\EE\Big(-\frac{\pi^2(2k+1)^2}{\gamma^2}\Big)\cos\big({2(2k+1)\pi \xi}\big)=: h(\xi), 
\end{align}
and we recall \eqref{def_fct_l}
\begin{align}\label{def_fct_lb}
	\mathcal{F}(L)(\xi) &= \sum_{k = -\infty}^{+\infty} \EE\Big(-2ik\pi \xi-\frac{\pi^2 k^2} {\gamma^2}\Big)\nonumber \\
	&=1 + 2 \sum_{k=1}^{+\infty}\EE\Big({-\frac{\pi^2k^2}{\gamma^2}}\Big)  \cos\big({2\pi k\xi}\big)= \ell(\xi).
\end{align}

Then,
\begin{multline}\label{defA}
	\pac{\mathcal{F}\begin{pmatrix} \mathcal{L}^{\text{hexa}}-\lambda Id & \mathcal{M}^{\text{hexa}} \\[2pt] -\mathcal{M}^{\text{hexa}} & - (\mathcal{L}^{\text{hexa}}-\lambda Id) \end{pmatrix}\mathcal{F}^{-1}(X)}\pa{\xi} = \\=\begin{pmatrix} a(\xi) & b(\xi) \\[2pt] -b(\xi) & -a(\xi)  \end{pmatrix}X
	 =:A_{\text{hexa}}(\xi)X,
\end{multline}
with 
\begin{equation}\label{def_a_hexa}
	a(\xi) = \mathcal{F} \pa{C'_L L- C_L^{\text{odd}}L^{\text{odd}}} -\lambda
		= \frac{2}{\gamma\sqrt{\pi}}\big(\ell(\xi)\ell(0)-2h(\xi)h(0)\big) -\lambda
\end{equation}
and
\begin{equation}\label{def_b_hexa}
	b(\xi) = \mathcal{F} \pa{C'_M M- C_M^{\text{odd}}M^{\text{odd}}} 
		= \frac{1}{\gamma\sqrt{\pi}}\big({\ell^2(\xi)-2h^2(\xi)}\big)
\end{equation}
(here, we used that $M^{\text{odd}} = L^{\text{odd}} * L^{\text{odd}}$, see \eqref{defModd}).

The value of $\lambda$ follows from \eqref{lambda-hexa}, taking into account the normalization factor $|\kappa|= \pa{2/{(\pi\gamma^2)}}^{1/4} \EE\big({-{\pi \sqrt{3}}/{8}}\big)$:
\begin{equation}\label{def_lambda_hexa}
\lambda = \lambda_0 |\kappa|^2=\frac{1}{\gamma\sqrt{\pi}}\pa{I^2+2IJ-J^2}=\frac{1}{\gamma\sqrt{\pi}}\pa{\ell^2(0)-2h^2(0)}.
\end{equation}
Gathering the above elements yields the statement of the lemma. \end{proof}

\subsection{Analysis of the matrix $A_{\text{hexa}}(\xi)$}

It follows from the previous lemma that the stability of the linearized equation is equivalent to the stability of the matrix $iA_{\text{hexa}}(\xi)$ for any $\xi$.
To understand the spectrum of $A_{\text{hexa}}(\xi)$, it suffices to find the sign of its determinant; this is the object of the following proposition.
\begin{prop}\label{prop54}
For any $\xi \in (0,1)$, $\det A_{\text{hexa}}(\xi) < 0$.
\end{prop}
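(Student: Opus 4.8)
The plan is to diagonalize the indefinite quadratic form hidden in $a,b$ by introducing
$$ p(\xi) = \ell(\xi) - \sqrt 2\, h(\xi), \qquad q(\xi) = \ell(\xi) + \sqrt 2\, h(\xi), $$
and writing $p_0 := p(0)$, $q_0 := q(0)$. Using the identities $\ell^2-2h^2 = pq$, $\ \ell(\xi)\ell(0)-2h(\xi)h(0) = \tfrac12\big(p(\xi)q_0+p_0 q(\xi)\big)$ and $\ell^2(0)-2h^2(0)=p_0q_0$ (all expressing the factorization $X^2-2Y^2=(X-\sqrt2 Y)(X+\sqrt2 Y)$), the formulas of Lemma~\ref{lemmaformula} become
$$ b(\xi)-a(\xi) = \tfrac{1}{\gamma\sqrt\pi}\,(p(\xi)-p_0)(q(\xi)-q_0), \qquad b(\xi)+a(\xi) = \tfrac{1}{\gamma\sqrt\pi}\big[(p(\xi)+p_0)(q(\xi)+q_0) - 2p_0 q_0\big], $$
so that
$$ \det A_{\text{hexa}}(\xi) = b^2(\xi)-a^2(\xi) = \frac{1}{\pi\gamma^2}\,(p(\xi)-p_0)(q(\xi)-q_0)\big[(p(\xi)+p_0)(q(\xi)+q_0) - 2p_0 q_0\big]. $$
It then suffices to show that for $\xi\in(0,1)$ the three factors have signs $+$, $-$, $+$ respectively.

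As a cosine series $q$ has all Fourier coefficients nonnegative (the coefficient of $\cos(2\pi m\xi)$ being $2e^{-\pi^2m^2/\gamma^2}$ for even $m\ge1$ and $2(1+\sqrt2)e^{-\pi^2m^2/\gamma^2}$ for odd $m$), so $q(\xi)\le q_0$ with strict inequality for $\xi\in(0,1)$ (the $m=1$ coefficient is positive): this is the factor $q(\xi)-q_0<0$. The same nonnegativity, together with $\ell(\xi)\ge 1-2\sum_{m\ge1}e^{-\pi^2m^2/\gamma^2}$ and $h(\xi)\ge -h_0$, gives $q(\xi)\ge 1-O(e^{-\pi\sqrt3/2})>0$ for every $\xi$, hence $q_0>0$; likewise $p_0=\ell(0)-\sqrt2\,h(0)\ge 1-2\sqrt2\sum_{k\ge0}e^{-\pi^2(2k+1)^2/\gamma^2}>0$. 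Granting the last fact $p(\xi)>p_0$, one gets $p(\xi)+p_0>2p_0>0$ and $q(\xi)+q_0>q_0>0$ on $(0,1)$, whence $(p(\xi)+p_0)(q(\xi)+q_0)>2p_0q_0$, i.e.\ the middle factor is positive.

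The crux — and the step I expect to be the main obstacle — is $p(\xi)>p_0$ on $(0,1)$: here the ``maximum at $0$'' argument used for $q$ fails, since $p=\ell-\sqrt2\,h$ has Fourier coefficients of both signs (even ones positive, odd ones $1-\sqrt2<0$). I would instead write, via $1-\cos\theta=2\sin^2(\theta/2)$ and splitting $\ell$ into even and odd harmonics,
$$ p(\xi)-p_0 = 4(\sqrt2-1)\sum_{k\ge0}e^{-\frac{\pi^2(2k+1)^2}{\gamma^2}}\sin^2\!\big((2k+1)\pi\xi\big) \;-\; 4\sum_{n\ge1}e^{-\frac{4\pi^2n^2}{\gamma^2}}\sin^2(2\pi n\xi), $$
keep only the $k=0$ term of the (nonnegative) first sum, and use $|\sin(2n\phi)|\le 2n|\sin\phi|$ to bound $\sin^2(2\pi n\xi)\le 4n^2\sin^2(\pi\xi)$, obtaining
$$ p(\xi)-p_0 \;\ge\; 4\sin^2(\pi\xi)\Big[(\sqrt2-1)\,r \;-\; 4\sum_{n\ge1} n^2 r^{4n^2}\Big], \qquad r := e^{-\pi^2/\gamma^2} = e^{-\pi\sqrt3/2}. $$
Since $\sum_{n\ge1}n^2 r^{4n^2}\le 2r^4$ for $r\le\tfrac12$, the bracket is at least $r\big((\sqrt2-1)-8r^3\big)$, which is positive: the elementary bound $\pi\sqrt3/2>2.7$ forces $r<e^{-2.7}<0.072$, hence $r^3<(\sqrt2-1)/8\approx 0.052$. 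As $\sin^2(\pi\xi)>0$ on $(0,1)$, this yields $p(\xi)>p_0$ there. The only quantitative input is $\pi\sqrt3/2>2.7$; everything else is a geometric-tail estimate. Assembling the three signs gives $\det A_{\text{hexa}}(\xi)=\tfrac{1}{\pi\gamma^2}\cdot(+)\cdot(-)\cdot(+)<0$ for all $\xi\in(0,1)$, which is the claim.
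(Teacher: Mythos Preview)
Your proof is correct. The factorization via $p=\ell-\sqrt2 h$ and $q=\ell+\sqrt2 h$ is a genuinely different organization from the paper's. The paper proves separately that $a(\xi)>0$, $b(\xi)>0$, and $a(\xi)>b(\xi)$, each by geometric-tail estimates, and then concludes $b^2-a^2<0$; you instead factor $b^2-a^2$ directly as $\tfrac{1}{\pi\gamma^2}(p-p_0)(q-q_0)\big[(p+p_0)(q+q_0)-2p_0q_0\big]$ and read off three signs. This is cleaner: your route makes the positivity of $a$ and $b$ unnecessary, and the sign of the third factor follows for free once you know $p>p_0>0$ and $q>0$. The crux step, however, is literally the same quantity: your $p(\xi)-p_0$ equals the paper's $f(\xi)=\sqrt2(h(0)-h(\xi))-(\ell(0)-\ell(\xi))$, and both proofs establish its positivity by the identity $1-\cos\theta=2\sin^2(\theta/2)$ together with $|\sin(k\phi)|\le k|\sin\phi|$ and a numerical tail bound using $e^{-\pi\sqrt3/2}\approx 0.066$. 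So the hard analytic content is shared; what you gain is a more transparent algebraic packaging that avoids two of the paper's three lemma-level inequalities.
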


\begin{proof}
Since $\det A_{\text{hexa}}(\xi) = b(\xi)^2 - a(\xi)^2$, it suffices to prove that $a(\xi),b(\xi)>0$ and that $a(\xi) > b(\xi)$. This will be achieved in the two following lemmas.\end{proof}

\begin{lemme} For any $\xi \in [0,1]$, $a(\xi) > 0$ and $b(\xi)>0$.
\end{lemme}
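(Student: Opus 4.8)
The plan is to bound the two functions $\ell$ and $h$ on $[0,1]$ by their values at the origin, and thereby reduce both inequalities $a(\xi)>0$ and $b(\xi)>0$ to a handful of elementary numerical estimates in the single parameter $q:=e^{-\pi^2/\gamma^2}$.

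First I would record the pointwise bounds coming directly from the (real, cosine) Fourier series $\ell(\xi)=\sum_{k\in\Z}q^{k^2}e^{2\pi ik\xi}$ and $h(\xi)=\sum_{k\in2\Z+1}q^{k^2}e^{2\pi ik\xi}$: for every $\xi\in[0,1]$,
\[
0<2-\ell(0)\le\ell(\xi)\le\ell(0),\qquad |h(\xi)|\le h(0).
\]
Next I would make $q$, $\ell(0)$ and $h(0)$ explicit. Since $\gamma^2=\tfrac{2\pi}{\sqrt3}$ we have $\pi^2/\gamma^2=\tfrac{\pi\sqrt3}{2}\in(2.72,2.73)$, hence $q\in(0.065,0.066)$; bounding the tails of the theta-type series by geometric series (for instance via $q^{k^2}\le q^4q^{\,k-2}$ for $k\ge2$) yields $1.130<\ell(0)<1.133$ and $0.130<h(0)<0.132$. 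In particular $2-\ell(0)>0$.

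For $b$ the conclusion is then immediate: $\gamma\sqrt\pi\,b(\xi)=\ell^2(\xi)-2h^2(\xi)\ge(2-\ell(0))^2-2h^2(0)>(0.867)^2-2(0.132)^2>0.7$, so $b(\xi)>0$. For $a$, I would first use the identity $\lambda\gamma\sqrt\pi=\ell^2(0)-2h^2(0)$ from \eqref{def_lambda_hexa} to write
\[
\gamma\sqrt\pi\,a(\xi)=2\ell(\xi)\ell(0)-4h(\xi)h(0)-\ell^2(0)+2h^2(0),
\]
and then apply $\ell(\xi)\ge2-\ell(0)$, $\ell(0)>0$ and $h(\xi)\le h(0)$ to bound the right-hand side below by $4\ell(0)-3\ell^2(0)-2h^2(0)=\ell(0)\bigl(4-3\ell(0)\bigr)-2h^2(0)$. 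Since $t\mapsto t(4-3t)$ is decreasing for $t>\tfrac23$ and $\ell(0)<1.133$, the first term is at least $1.133\cdot(4-3\cdot1.133)>0.68$, while $2h^2(0)<0.035$; hence $\gamma\sqrt\pi\,a(\xi)>0.6$, so $a(\xi)>0$.

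The only point requiring any care — and the nearest thing to an obstacle — is turning the numerical inputs ($\pi\sqrt3/2$, and hence $q$, $\ell(0)$, $h(0)$) into rigorous two-sided bounds. Because $q$ is small this is comfortable: once the geometric tail bound is written down, every decisive quantity is bounded away from $0$ by more than $0.6$, so only crude estimates are needed and no delicate analysis is involved.
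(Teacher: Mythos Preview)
Your proof is correct, but the route is genuinely different from the paper's.

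You reduce everything to the two crude pointwise bounds $\ell(\xi)\ge 2-\ell(0)$ and $|h(\xi)|\le h(0)$, then feed in explicit numerical enclosures for $q$, $\ell(0)$, $h(0)$. For $b$ this gives $\ell^2(\xi)-2h^2(\xi)\ge(2-\ell(0))^2-2h^2(0)>0$ directly; for $a$ it gives the quadratic lower bound $4\ell(0)-3\ell^2(0)-2h^2(0)$, which you then evaluate. The paper instead argues more structurally: for $a$ it introduces $g=\ell-h$ and proves the three inequalities $\ell(0)\ge 2h(0)$, $2\ell(\xi)-\ell(0)>0$ and $2g(\xi)-g(0)>0$ separately via geometric-series tail bounds; for $b$ it uses the factorisation $\ell^2-2h^2=(\ell-2h)(\ell+2h)+2h^2$ and shows each factor $\ell\pm 2h$ is positive. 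Your approach is shorter and more elementary (no auxiliary function, no factorisation), at the price of committing to explicit decimals from the outset; the paper's approach makes the margins slightly more visible as functions of $q$, but in the end both proofs hinge on $q=e^{-\pi\sqrt3/2}$ being comfortably small, and neither is ``soft''.
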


\begin{proof} \underline{Sign of $a(\xi)$.} By definition of $a(\xi)$, it is $>0$ if and only if
$$ \ell(0)\pac{2\ell(\xi)-\ell(0)} > 2h(0)\pac{2h(\xi)-h(0)}.$$
This inequality holds if $\ell(0) \geq 2h(0) >0$, $2\ell(\xi)-\ell(0) > 0$ and $2\ell(\xi)-\ell(0) > 2h(\xi)-h(0).$ It is clear from the definition of $h$ that $h(0) > 0$. We define then 
$$ g(\xi) := 1 + 2 \sum_{k=1}^{+\infty}\EE\Big( {-\frac{\pi^2(2k)^2}{\gamma^2}}\Big) \cos\big({4\pi k\xi}\big),$$
so that $\ell(\xi) = g(\xi) + h(\xi).$ Then, 
\begin{multline*}
	\ell(0) - 2h(0) = g(0) - h(0)\\
= 1-2\EE\Big(-\frac{\pi^2}{\gamma^2}\Big) + 2\sum_{k=1}^{+\infty}\pa{\EE\Big({-\frac{\pi^2(2k)^2}{\gamma^2}}\Big)-\EE\Big({-\frac{\pi^2(2k+1)^2}{\gamma^2}}\Big)}.
\end{multline*}
Since $$q:= \EE\Big(-\frac{\pi^2}{\gamma^2}\Big) \approx 0.0658,$$
it is clear that the sum in the above right-hand side is positive, and hence that $\ell(0) > 2 h(0)$.

Next, we compute 
\begin{align*}
2\ell(\xi) - \ell(0) &= 1 + 2 \sum_{k=1}^{+\infty} \EE\Big({-\frac{\pi^2k^2}{\gamma^2}}\Big)\underbrace{\pa{2\cos\big({2\pi k \xi}\big)-1}}_{\geq -3} \geq 1 - 6\sum_{k=1}^{+\infty}\EE\Big({-\frac{\pi^2k^2}{\gamma^2}}\Big) \\
	& \geq 1 - 6\sum_{k=1}^{+\infty}\pa{     \EE\Big(  -\frac{\pi^2}{\gamma^2} \Big)    }^k = 1 - 6\frac{q}{1-q} = \frac{1-7q}{1-q} > 0.
\end{align*}

There remains to prove that $$2h(\xi)-h(0) < 2\ell(\xi)-\ell(0),$$
which is equivalent to $2g(\xi)-g(0) > 0$. By a similar argument,
$$2g(\xi) - g(0) = 1 + 2 \sum_{k=1}^{+\infty}  \EE\Big(  -\frac{4\pi^2k^2}{\gamma^2}\Big) \big({2\cos\big( 4\pi k \xi \big)-1}\big) \geq  \frac{1-7q^4}{1-q^4}> 0,$$
which completes the proof that $a(\xi) > 0.$

\medskip \noindent \underline{Sign of $b(\xi)$.} By definition of $b(\xi)$,
\begin{eqnarray}
b(\xi) &=& \frac{1}{\gamma\sqrt{\pi}}\pac{\ell^2(\xi)-2h^2(\xi)} \nonumber  \\
&=&  \frac{1}{\gamma\sqrt{\pi}}\pac{\pa{\ell(\xi)-2h(\xi)}\pa{\ell(\xi)+2h(\xi)}+2h^2(\xi)}.\label{form_bxi}
\end{eqnarray}
Still denoting $q= \EE\big( {-{\pi^2}/{\gamma^2}}\big)$,
\begin{align*}
	\ell(\xi) + 2 h(\xi) &= 1 + 4q\cos\pa{2\pi\xi} + 2\sum_{k=1}^{+\infty}\pac{q^{k^2}\cos\big({2\pi k\xi}\big)+2q^{(2k+1)^2}\cos\big(2\pi\xi(2k+1)\big)} \\
	&\geq 1 -4q -6\frac{q}{1-q} = \frac{1-11q+4q^2}{1-q} >0.
\end{align*}
To justify the last inequality, we observe that the polynomial $1-11q+4q^2$ is positive as soon as $q < q_- := ({11-\sqrt{105}})/{8}$ or equivalently $\gamma^2 < {\pi^2}/\ln(1/q_-) \approx 4.2$, which can be checked numerically. 

Next, we write 
\begin{align*}
	\ell(\xi) - 2 h(\xi) &= 1 - 4q\cos\pa{2\pi\xi} + 2\sum_{k=1}^{+\infty}\pac{q^{k^2}\cos\big({2\pi k\xi}\big)-2q^{(2k+1)^2}\cos\big(2\pi\xi(2k+1)\big)} \\
	&\geq 1 -4q -6\frac{q}{1-q} = \frac{1-11q+4q^2}{1-q} >0
\end{align*}
by the same token. Gathering the above inequalities, we obtain that $b(\xi)>0$.
\end{proof}

\begin{lemme} 
For any $\xi \in (0,1)$, $a(\xi) > b(\xi)$. 
\end{lemme}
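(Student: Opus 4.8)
The plan is to reduce the inequality $a(\xi)>b(\xi)$ to an elementary comparison of two cosine series through a difference-of-squares factorization. Substituting the formulas for $a,b$ from Lemma~\ref{lemmaformula} (together with $\lambda=\frac{1}{\gamma\sqrt\pi}(\ell^2(0)-2h^2(0))$ from \eqref{def_lambda_hexa}) and collecting terms, the $\ell$-contributions assemble into $-(\ell(0)-\ell(\xi))^2$ and the $h$-contributions into $+2(h(0)-h(\xi))^2$, so that
$$
a(\xi)-b(\xi)=\frac{1}{\gamma\sqrt\pi}\Big[\,2\big(h(0)-h(\xi)\big)^2-\big(\ell(0)-\ell(\xi)\big)^2\,\Big].
$$
Writing $H(\xi)=h(0)-h(\xi)$ and $G(\xi)=g(0)-g(\xi)$, where $g$ is the function introduced in the previous lemma with $\ell=g+h$, we have $\ell(0)-\ell(\xi)=G(\xi)+H(\xi)$, and the bracket factors as
$$
a(\xi)-b(\xi)=\frac{1}{\gamma\sqrt\pi}\Big[(\sqrt2-1)H(\xi)-G(\xi)\Big]\Big[(\sqrt2+1)H(\xi)+G(\xi)\Big].
$$
Since each term of the cosine series defining $H$ and $G$ has the form (positive coefficient)$\times(1-\cos(\cdot))\ge0$, both $H$ and $G$ are nonnegative, and in fact $H(\xi)>0$ for $\xi\in(0,1)$ because the $k=0$ term of $H$ equals $4q\sin^2(\pi\xi)>0$ with $q=e^{-\pi^2/\gamma^2}$. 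Hence the second bracket is strictly positive on $(0,1)$, and the whole statement reduces to proving the pointwise inequality $(\sqrt2-1)H(\xi)>G(\xi)$ for $\xi\in(0,1)$.

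For this I would bound $H$ from below and $G$ from above while keeping the common vanishing factor $\sin^2(\pi\xi)$, so that the estimate does not degenerate as $\xi\to0$ (where $a-b$ in fact vanishes). Keeping only the $k=0$ term gives $H(\xi)\ge 4q\sin^2(\pi\xi)$. For $G$, I would write $G(\xi)=4\sum_{k\ge1}q^{4k^2}\sin^2(2\pi k\xi)$ and use the elementary bound $|\sin(2\pi k\xi)|=|\sin(2k\cdot\pi\xi)|\le 2k\,|\sin(\pi\xi)|$ to obtain $G(\xi)\le 16\sin^2(\pi\xi)\sum_{k\ge1}k^2q^{4k^2}$. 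Dividing by $\sin^2(\pi\xi)>0$, the desired inequality follows from the numerical fact
$$
(\sqrt2-1)\,q\;>\;4\sum_{k\ge1}k^2q^{4k^2}.
$$
Bounding the right-hand side by $4\sum_{k\ge1}k^2q^{4k}=\dfrac{4q^4(1+q^4)}{(1-q^4)^3}$ via the standard identity $\sum_{k\ge1}k^2x^k=x(1+x)/(1-x)^3$, and recalling that for the Abrikosov lattice $q=e^{-\pi\sqrt3/2}\approx0.0658<\tfrac{1}{10}$, the left side exceeds $0.37$ while the right side is smaller than $10^{-2}$, so the inequality holds with a wide margin. Chaining the bounds gives $(\sqrt2-1)H(\xi)>G(\xi)$ on $(0,1)$, hence $a(\xi)>b(\xi)$.

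The only genuinely delicate point is extracting the factor $\sin^2(\pi\xi)$ from both estimates — via $|\sin(2k\theta)|\le 2k|\sin\theta|$ — which is what makes the comparison uniform all the way down to $\xi=0$; without it, crude geometric-series bounds on $G$ would leave a nonvanishing upper bound while $H\to0$. The term-collection leading to the difference-of-squares identity and the summation of $\sum k^2x^k$ are routine, and the final numerical check is very loose.
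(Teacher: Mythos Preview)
Your proof is correct and follows essentially the same route as the paper: both reduce $a(\xi)>b(\xi)$ to the inequality $(\sqrt2-1)\big(h(0)-h(\xi)\big)>g(0)-g(\xi)$, extract the common factor $\sin^2(\pi\xi)$ via $|\sin(k\theta)|\le k|\sin\theta|$, and close with a geometric-series estimate on the tail; your explicit difference-of-squares factorization and the paper's passage to square roots are equivalent. One small slip: the left side $(\sqrt2-1)q$ is about $0.027$, not $0.37$, but the inequality still holds comfortably since the right side is below $10^{-4}$.
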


\begin{proof} The inequality $a(\xi) > b(\xi)$ holds if and only if for any $\xi \in (0,1)$
$$ \big(\ell(0) - \ell(\xi)\big)^2 <2\big(h(0)-h(\xi)\big)^2.$$
Since $\ell(0) \geq \ell(\xi)$ and $h(0) \geq h(\xi)$, it suffices to prove that
$$ f(\xi) = \sqrt{2}\pa{h(0)-h(\xi)}-\pa{\ell(0) - \ell(\xi)}  > 0.$$
It follows from the formulas giving $h$ and $\ell$ that
	\begin{align*}
		f(\xi) = \sum_{k=1}^{+\infty}\alpha_k\big(1-\cos({2k\pi\xi})\big), \qquad \mbox{with} \quad
		\begin{cases} \alpha_{2n+1} =2(\sqrt{2}-1)q^{(2n+1)^2}  \\[6pt]    \alpha_{2n} = - 2q^{(2n)^2}. \end{cases}
	\end{align*}
Now we write 
	\begin{equation*}
		f(\xi) =  \sum_{k=1}^{+\infty}\alpha_k\big(1-\cos({2k\pi\xi})\big) = 2 \sum_{k=1}^{+\infty}\alpha_k\sin^2\pa{{k\pi\xi}}
		= 2\sin^2({\pi\xi}) F(\xi)  ,
	\end{equation*}
	where 
	$$\dis F(\xi): = \sum_{k=1}^{+\infty}\alpha_k\frac{\sin^2\pa{{k\pi\xi}}}{\sin^2({\pi\xi})}$$
	and we then need to prove that $F(\xi) \geq 0$. We claim that
	
	$$ \forall x \in \R, \quad \abs{\frac{\sin(kx)}{\sin(x)}} \leq k.$$
Indeed, 
$$\left| \frac{\sin(kx)}{\sin(x)} \right| = \left| \frac{e^{ikx}-e^{-ikx}}{e^{ix}-e^{-ix}} \right| = |e^{i(k-1)x} + e^{i(k-3)x} + \dots + e^{-i(k-1)x}| \leq k.$$
 Therefore,
$$ F(\xi) \geq \alpha_1 - \sum_{k=2}^{+\infty} |\alpha_k|k^2 \geq 2(\sqrt{2}-1)q- \sum_{k=2}^{+\infty}2q^{k^2}k^2.$$
We compute 
$$\sum_{k=2}^{+\infty}q^{k^2}k^2 \leq \sum_{j=4}^{+\infty}jq^{j} = q^3\sum_{j=1}^{+\infty}(j+3)q^j = q^3\pa{\frac{q}{(1-q)^2}+\frac{3q}{1-q}} \leq \frac{4q^4}{(1-q)^2}.$$
Furthermore, we know that $0 < q = \EE\big({-{\pi^2}/{\gamma^2}}\big) < 10^{-1}$ so that 
$$ F(\xi) \geq 2q\pa{\sqrt{2}-1 - \frac{4q^3}{(1-q)^2}} > 0,$$
which completes the proof. 
\end{proof}

\begin{prop} \label{propeitA}
We have
\begin{equation}\label{expr_expo_A}
            e^{-itA_{\text{hexa}}(\xi)} = \begin{pmatrix} \cos(t\mu(\xi)) - i\frac{a(\xi)}{\mu(\xi)}\sin(t\mu(\xi)) & -i\frac{b(\xi)}{\mu(\xi)}\sin(t\mu(\xi)) \\[6pt] i\frac{b(\xi)}{\mu(\xi)}\sin(t\mu(\xi)) & \cos(t\mu(\xi)) + i\frac{a(\xi)}{\mu(\xi)}\sin(t\mu(\xi)) \end{pmatrix},
\end{equation}
where 
$$\mu(\xi)=\big(a^2(\xi)-b^2(\xi)\big)^{1/2}.$$
As a result,
\begin{equation}\label{expr_sg_fg}
    e^{-it A_{\text{hexa}}(\xi)}\begin{pmatrix} {f}_0 \\ {g}_0 \end{pmatrix} = \begin{pmatrix} (k_1^{+}{f}_0+k_{2}^{+}{g}_0)e^{it\mu(\xi)} +  (k_{1}^{-}{f}_0+k_2^{-}{g}_0)e^{-it\mu(\xi)}   \\[6pt]  (k_2^{-}{f}_0+k_1^{-}{g}_0)e^{it\mu(\xi)} +  (k_{2}^{+}{f}_0+k_{1}^{+}{g}_0)e^{-it\mu(\xi)}\end{pmatrix},
\end{equation}
with 
\begin{equation}\label{defK}
 k_1^{\pm}(\xi) = \frac{1}{2\mu(\xi)}\big(\mu(\xi)\mp a(\xi)\big), \qquad k_2^{\pm}(\xi) = \mp\frac{b(\xi)}{2\mu(\xi)}.
 \end{equation}
\end{prop}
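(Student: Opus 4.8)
The plan is to compute $e^{-itA_{\text{hexa}}(\xi)}$ directly from the structure of the matrix
$$A_{\text{hexa}}(\xi) = \begin{pmatrix} a(\xi) & b(\xi) \\ -b(\xi) & -a(\xi) \end{pmatrix},$$
exploiting the fact that it is traceless. First I would observe that, writing $A = A_{\text{hexa}}(\xi)$ and $\mu = \mu(\xi) = (a^2-b^2)^{1/2}$, the Cayley--Hamilton theorem gives $A^2 = -\det(A)\, \mathrm{Id} = (a^2-b^2)\,\mathrm{Id} = \mu^2 \,\mathrm{Id}$ (using the computation $\det A = b^2 - a^2$ from the previous proposition, valid for $\xi \in (0,1)$, while the endpoint cases $\xi = 0,1$ can be treated either by the same formula with $\mu = 0$ interpreted via the limit $\sin(t\mu)/\mu \to t$, or directly). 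Consequently the exponential series splits into even and odd powers:
$$e^{-itA} = \sum_{n\ge 0} \frac{(-it)^{2n}}{(2n)!} A^{2n} + \sum_{n\ge 0} \frac{(-it)^{2n+1}}{(2n+1)!} A^{2n+1} = \cos(t\mu)\,\mathrm{Id} - i\,\frac{\sin(t\mu)}{\mu}\, A,$$
where I used $A^{2n} = \mu^{2n}\mathrm{Id}$ and $A^{2n+1} = \mu^{2n}A$. Substituting the explicit entries of $A$ yields precisely \eqref{expr_expo_A}; this is a one-line matrix computation once the scalar identity is in place. Since $\mu(\xi)^2 = a^2(\xi) - b^2(\xi) > 0$ on $(0,1)$ by the preceding proposition, $\mu(\xi)$ is real there, so $\cos(t\mu)$ and $\sin(t\mu)$ are genuinely oscillatory — this is the qualitative content (stability) that \eqref{expr_expo_A} encodes.

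For the second formula \eqref{expr_sg_fg}, I would simply rewrite $\cos(t\mu) = \tfrac12(e^{it\mu}+e^{-it\mu})$ and $\sin(t\mu) = \tfrac{1}{2i}(e^{it\mu}-e^{-it\mu})$ in each entry of \eqref{expr_expo_A}, collect the coefficients of $e^{it\mu}$ and $e^{-it\mu}$, and check that they match the claimed coefficients $k_1^\pm, k_2^\pm$ defined in \eqref{defK}. Concretely, the $(1,1)$ entry becomes $\tfrac12(1 - a/\mu)e^{it\mu} + \tfrac12(1+a/\mu)e^{-it\mu} = k_1^- e^{it\mu} + k_1^+ e^{-it\mu}$, wait — one must be careful with signs: $\cos(t\mu) - i\tfrac{a}{\mu}\sin(t\mu) = \tfrac12(1 - \tfrac{a}{\mu})e^{it\mu} + \tfrac12(1+\tfrac{a}{\mu})e^{-it\mu}$, and since $k_1^{\pm} = \tfrac{1}{2\mu}(\mu \mp a)$ we get $k_1^+ = \tfrac12(1 - \tfrac a\mu)$ multiplying... so the coefficient of $e^{it\mu}$ in the $(1,1)$ entry is $k_1^+$ and of $e^{-it\mu}$ is $k_1^-$; applying the matrix to $(f_0,g_0)^\top$ and reading off row by row then gives exactly \eqref{expr_sg_fg} after also using $k_2^\pm = \mp b/(2\mu)$ for the off-diagonal contributions. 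This is bookkeeping with no real obstacle.

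There is essentially no hard step here: the entire proposition is an elementary consequence of $A$ being a $2\times 2$ traceless matrix with $A^2 = \mu^2\,\mathrm{Id}$. The only point requiring a small amount of care is the degenerate case $\mu(\xi) = 0$ (which does not occur for $\xi \in (0,1)$ by the previous proposition, but does at $\xi = 0$), where the formulas are understood in the limiting sense $\sin(t\mu)/\mu \to t$, $\cos(t\mu)\to 1$; alternatively one notes that both sides of \eqref{expr_expo_A} are entire functions of $\mu$, so the identity extends by continuity. I would mention this briefly and otherwise present the computation as above.
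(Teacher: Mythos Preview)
Your proposal is correct and, if anything, more direct than the paper's own argument. The paper first triangularizes $A_{\text{hexa}}(\xi)$ via an explicit change-of-basis matrix $P(\xi)$ (introduced to handle the degenerate point $\xi=0$ uniformly), obtains $A = P B P^{-1}$ with $B$ upper triangular, applies the scalar identity $e^{-itB} = \cos(t\mu)I_2 - \tfrac{i}{\mu}\sin(t\mu)\,B$ to the triangular matrix, and then conjugates back. You bypass the triangularization entirely by observing $A^2 = \mu^2 I_2$ from Cayley--Hamilton and applying the same scalar identity directly to $A$; this yields \eqref{expr_expo_A} in one line. Both routes hinge on the same fact ($A$ is traceless with $\det A = -\mu^2$), but yours avoids introducing the auxiliary matrices $P,B$ that are not used elsewhere. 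Your treatment of the degenerate case $\mu=0$ by continuity is also adequate and matches in spirit what the paper does. The bookkeeping for \eqref{expr_sg_fg} is fine once you settle the signs, as you did.
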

\begin{proof} 
In this proof we write $A=A_{\text{hexa}}$.  From Proposition \ref{prop54} we deduce that for all ${0 < \xi < 1}$, the matrix~$A(\xi)$ has two real eigenvalues with opposite signs, $\mu$ and $-\mu$, and thus is diagonalizable. For $\xi =0$ or $\xi = 1$, $\mu$ vanishes, so that $0$ has  algebraic multiplicity $2$. The corresponding matrix is not diagonalizable, and for this reason, we will  trigonalize~$A(\xi)$ for all $\xi \in [0,1]$. 

The eigenspace corresponding to the eigenvalue $\pm \mu(\xi)$ is $$\text{Span}\begin{pmatrix} \delta_{\pm}(\xi)  \\[2pt] 1 \end{pmatrix} \qquad \mbox{with} \qquad 
 \delta_{\pm}(\xi) = -\frac{1}{b(\xi)}(a(\xi) \pm \mu(\xi)) .$$
Note that we have $\delta_{+}(0) = \delta_{-}(0) = -1$. The same relation holds for $\xi = 1$.
We consider the matrix 
$$ P(\xi) = \begin{pmatrix} \delta_-(\xi) & 0 \\[2pt] 1 & 1 \end{pmatrix} \in GL_2(\R),$$
whose inverse is 
$$P^{-1}(\xi) = \frac{1}{\delta_-(\xi)} \begin{pmatrix} 1 & 0  \\[2pt] -1 & \delta_-(\xi) \end{pmatrix}.$$
Then we have the following relation: 
\begin{equation}\label{tridiag_A}
    A(\xi) = P(\xi)B(\xi) P^{-1}(\xi) \qquad \mbox{where} \qquad 
 B(\xi) = \begin{pmatrix} -\mu(\xi) & -(\mu(\xi)+a(\xi)) \\[2pt] 0 & \mu(\xi) \end{pmatrix}.
 \end{equation}
Exponentiating gives
\begin{equation*}
    \forall \xi\in[0,1], \quad e^{-itA(\xi)} = P(\xi)e^{-itB(\xi)} P^{-1}(\xi).
\end{equation*}
Since
\begin{equation*}
    e^{-itB(\xi)} = \cos(t\mu(\xi)) I_2 - \frac{i}{\mu(\xi)}\sin(t\mu(\xi)) B(\xi),
\end{equation*}
we find that
\begin{equation*}
    e^{-itA(\xi)} = \begin{pmatrix} \cos(t\mu(\xi)) - i\frac{a(\xi)}{\mu(\xi)}\sin(t\mu(\xi)) & -i\frac{b(\xi)}{\mu(\xi)}\sin(t\mu(\xi)) \\[6pt] i\frac{b(\xi)}{\mu(\xi)}\sin(t\mu(\xi)) & \cos(t\mu(\xi)) + i\frac{a(\xi)}{\mu(\xi)}\sin(t\mu(\xi)) \end{pmatrix}
\end{equation*}
as claimed.
\end{proof}

In the sequel, we will use the expression \eqref{expr_sg_fg} both for $f$ and $g$ functions of the space variable $w$ and frequency variable $\xi$.
The four functions $k_j^\pm, j=1,2$ are not defined for $\xi=0$, since $\mu$ vanishes at $0$ but the two functions $a$ and $b$ do not. The following proposition gives a finer understanding of the vanishing of $\mu$.

\begin{prop}\label{prop_equiv_mu}
The following holds true: 
\begin{enumerate}[$(i)$]
   \item The functions $a,b$, $\mu$ and $k_j^\pm$ are even.
   \item There exists $c_0 \in \R$ such that
   $$a(\xi)= \lambda +c_0 \xi^2+  \mathcal{O}(\xi^4), \qquad b(\xi)= \lambda +c_0 \xi^2+ \mathcal{O}(\xi^4).$$
   \item Setting $$ C = \sqrt{{\lambda}\big(a^{(4)}(0)-b^{(4)}(0)\big)/12},$$
   there holds
    \begin{equation}\label{equiv_mu}
        \mu(\xi) \simx{\xi}{0} C\xi^2,
    \end{equation}
and 
    \begin{equation}\label{equiv_fcts_k}
        k_j^+(\xi) \simx{\xi}{0} -\frac{\lambda}{2C\xi^2}, \qquad k_j^-(\xi) \simx{\xi}{0} \frac{\lambda}{2C\xi^2}.
    \end{equation}
    \end{enumerate}
\end{prop}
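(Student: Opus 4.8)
The plan is to handle the three items in turn, using throughout the closed formulas for $a,b,\ell,h,\lambda$ from Lemma~\ref{lemmaformula}. Item $(i)$ is immediate: $\ell$ and $h$ are even functions, being series of terms $\cos(2\pi k\xi)$, and the formulas \eqref{def_a_hexa}--\eqref{def_b_hexa} express $a$ and $b$ as fixed polynomials in $\ell(\xi),h(\xi)$ with constant coefficients; hence $a,b$ are even, and then so are $\mu=(a^2-b^2)^{1/2}$ and, by \eqref{defK}, $k_1^{\pm}=\tfrac12\big(1\mp a/\mu\big)$ and $k_2^{\pm}=\mp b/(2\mu)$.

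For $(ii)$, I would first evaluate at $\xi=0$. Writing \eqref{def_lambda_hexa} as $\lambda\gamma\sqrt{\pi}=\ell^2(0)-2h^2(0)$, the formulas give $a(0)=\tfrac{2}{\gamma\sqrt{\pi}}\big(\ell^2(0)-2h^2(0)\big)-\lambda=\lambda$ and $b(0)=\tfrac{1}{\gamma\sqrt{\pi}}\big(\ell^2(0)-2h^2(0)\big)=\lambda$. Since $a$ and $b$ are smooth and even by $(i)$, their Taylor expansions at $0$ carry only even powers, so all that remains is to check that the $\xi^2$-coefficients of $a$ and $b$ coincide. This I would do by substituting $\ell(\xi)=\ell(0)+\tfrac{\ell''(0)}{2}\xi^2+\mathcal{O}(\xi^4)$ and $h(\xi)=h(0)+\tfrac{h''(0)}{2}\xi^2+\mathcal{O}(\xi^4)$ into \eqref{def_a_hexa}--\eqref{def_b_hexa}: a short computation shows that both $a(\xi)$ and $b(\xi)$ equal $\lambda+c_0\xi^2+\mathcal{O}(\xi^4)$ with the same constant $c_0=\tfrac{1}{\gamma\sqrt{\pi}}\big(\ell(0)\ell''(0)-2h(0)h''(0)\big)$, which is $(ii)$.

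For $(iii)$, the starting point is the identity, already implicit in the proof that $a(\xi)>b(\xi)$,
\[
a(\xi)-b(\xi)=\frac{1}{\gamma\sqrt{\pi}}\Big(2\big(h(\xi)-h(0)\big)^2-\big(\ell(\xi)-\ell(0)\big)^2\Big),
\]
obtained from \eqref{def_a_hexa}--\eqref{def_b_hexa} by expanding the squares and using $\lambda\gamma\sqrt{\pi}=\ell^2(0)-2h^2(0)$. By $(ii)$ the right-hand side is $\mathcal{O}(\xi^4)$; inserting the Taylor expansions of $\ell$ and $h$ gives $a(\xi)-b(\xi)=\tfrac{2h''(0)^2-\ell''(0)^2}{4\gamma\sqrt{\pi}}\xi^4+\mathcal{O}(\xi^6)=\tfrac{a^{(4)}(0)-b^{(4)}(0)}{24}\xi^4+\mathcal{O}(\xi^6)$, while $a(\xi)+b(\xi)=2\lambda+\mathcal{O}(\xi^2)$. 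Therefore $\mu^2(\xi)=\big(a(\xi)-b(\xi)\big)\big(a(\xi)+b(\xi)\big)=\tfrac{\lambda}{12}\big(a^{(4)}(0)-b^{(4)}(0)\big)\xi^4+\mathcal{O}(\xi^6)=C^2\xi^4\big(1+\mathcal{O}(\xi^2)\big)$, which is \eqref{equiv_mu}. Then \eqref{equiv_fcts_k} follows from \eqref{defK}: since $a(\xi),b(\xi)\to\lambda$ and $\mu(\xi)\sim C\xi^2$ we have $\tfrac{a(\xi)}{2\mu(\xi)}\sim\tfrac{b(\xi)}{2\mu(\xi)}\sim\tfrac{\lambda}{2C\xi^2}\to+\infty$, whence $k_1^{\pm}=\tfrac12\mp\tfrac{a}{2\mu}\sim\mp\tfrac{\lambda}{2C\xi^2}$ and $k_2^{\pm}=\mp\tfrac{b}{2\mu}\sim\mp\tfrac{\lambda}{2C\xi^2}$.

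The step I expect to be the real obstacle is checking that $C$ is a positive real number, i.e. that $2h''(0)^2>\ell''(0)^2$, so that the equivalence $\mu(\xi)\sim C\xi^2$ is genuine rather than a mere upper bound (a priori, from the preceding proposition we only know $\mu^2\ge0$ near $0$, hence $a^{(4)}(0)-b^{(4)}(0)\ge0$). Differentiating the defining series twice gives $\ell''(0)=-8\pi^2\sum_{m\ge1}m^2q^{m^2}$ and $h''(0)=-8\pi^2\sum_{m\ \mathrm{odd}}m^2q^{m^2}$ with $q=e^{-\pi^2/\gamma^2}\approx0.066$, so the claim reduces to $(\sqrt{2}-1)\sum_{m\ \mathrm{odd}}m^2q^{m^2}>\sum_{m\ \mathrm{even}}m^2q^{m^2}$; this follows from the crude bounds $\sum_{m\ \mathrm{odd}}m^2q^{m^2}\ge q$ and $\sum_{m\ \mathrm{even}}m^2q^{m^2}=4\sum_{j\ge1}j^2q^{4j^2}\le 4\sum_{j\ge1}j^2q^{4j}=\tfrac{4q^4(1+q^4)}{(1-q^4)^3}$, in the same spirit as the estimates of the preceding lemmas. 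Everything else in the argument is bookkeeping with even Taylor expansions and control of the remainder terms.
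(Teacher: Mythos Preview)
Your argument is correct and follows the same route as the paper: evenness of $\ell,h$ gives $(i)$, matching the second-order Taylor coefficients of $a$ and $b$ gives $(ii)$, and the factorization $\mu^2=(a-b)(a+b)$ together with the fourth-order expansion of $a-b$ gives $(iii)$. Your use of the algebraic identity $a-b=\tfrac{1}{\gamma\sqrt{\pi}}\big(2(h-h(0))^2-(\ell-\ell(0))^2\big)$ is a tidy shortcut; the paper instead differentiates $a$ and $b$ four times directly and reads off $b^{(4)}(0)-a^{(4)}(0)=\tfrac{6}{\gamma\sqrt{\pi}}(\ell''(0)^2-2h''(0)^2)$, which is the same content. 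Where you actually go further than the paper is in verifying $C>0$: the paper simply asserts $\ell''(0)^2-2h''(0)^2<0$, whereas you supply the explicit estimate $(\sqrt{2}-1)q>4q^3(1+q^4)/(1-q^4)^3$ with $q=e^{-\pi^2/\gamma^2}$, which is a genuine (and welcome) addition.
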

 
\begin{proof}
Recall the expressions \eqref{def_lambda_hexa}, \eqref{def_a_hexa} and \eqref{def_b_hexa} giving $\lambda$, $a$ and $b$ respectively. In particular, $a(0) = b(0) = \lambda$, and $\mu(0) = 0$. Recall also the expressions \eqref{def_fct_h} and \eqref{def_fct_lb} of $h$ and $\ell$. To compute the derivatives of $a$ and~$b$ at $0$, we need the derivatives of $\ell$ and $h$. Since these are even functions, the odd derivatives vanish at $0$ and furthermore
    \begin{align*}
 & \ell(\xi) = 1 + 2\sum_{k=1}^{+\infty} \EE\Big(-\frac{\pi^2k^2}{\gamma^2}\Big) \cos\big({2\pi k \xi} \big) \\ 
 &  \ell''(\xi) = -2\sum_{k=1}^{+\infty} (2\pi k)^2\EE\Big(-\frac{\pi^2k^2}{\gamma^2}\Big) \cos\big({2\pi k \xi} \big) \\
&        \ell^{(4)}(\xi) = 2\sum_{k=1}^{+\infty} (2\pi k)^4\EE\Big(-\frac{\pi^2k^2}{\gamma^2}\Big) \cos\big({2\pi k \xi} \big).
    \end{align*}
This implies that $0< h(0) < \ell(0)$, $\ell''(0) < h''(0) < 0$ and $0 < h^{(4)}(0) < \ell^{(4)}(0)$. We now compute the derivatives of $a$ and $b$ at $0$; once again, since these are even functions, the odd derivatives vanish and
    \begin{align*}
 &       a''(\xi) = \frac{2}{\gamma\sqrt{\pi}} \pa{\ell''(\xi)\ell(0)-2h''(\xi)h(0)}, \\
 &       a^{(4)}(\xi) = \frac{2}{\gamma\sqrt{\pi}} \pa{\ell^{(4)}(\xi)\ell(0)-2h^{(4)}(\xi)h(0)}, \\
 &       b''(\xi) = \frac{2}{\gamma\sqrt{\pi}} \pa{\ell''(\xi)\ell(\xi)+\ell'(\xi)^2-2h''(\xi)h(\xi)-2h'(\xi)^2}, 
    \end{align*}
      \begin{multline*}
     b^{(4)}(\xi) =  \frac{2}{\gamma\sqrt{\pi}} \Big(\ell^{(4)}(\xi)\ell(\xi)+4\ell^{(3)}(\xi)\ell'(\xi)+3\ell''(\xi)^2  \\
         -2h^{(4)}(\xi)h(\xi)-8h^{(3)}(\xi)h'(\xi)-6h''(\xi)^2\Big), 
    \end{multline*}
    giving 
 \begin{align*}
& a''(0) = b''(0) = \frac{2}{\gamma\sqrt{\pi}} \big(\ell''(0)\ell(0)-2h''(0)h(0)\big) \\
&b^{(4)}(0) - a^{(4)}(0) = \frac{6}{\gamma\sqrt{\pi}} \big(\ell''(0)^2 - 2h''(0)^2\big) < 0.
\end{align*}
    Inserting the expansions of $a$ and $b$ up to the fourth order in $\xi$ in the expression of $\mu(\xi)$ gives the result. 
\end{proof}

We recall that $\gamma \approx 1.90$, and we plot the function $\mu$ and its first three derivatives in Figure \ref{fig:graphes_mu}.    
\begin{figure}
    \centering
    \includegraphics[width=\linewidth]{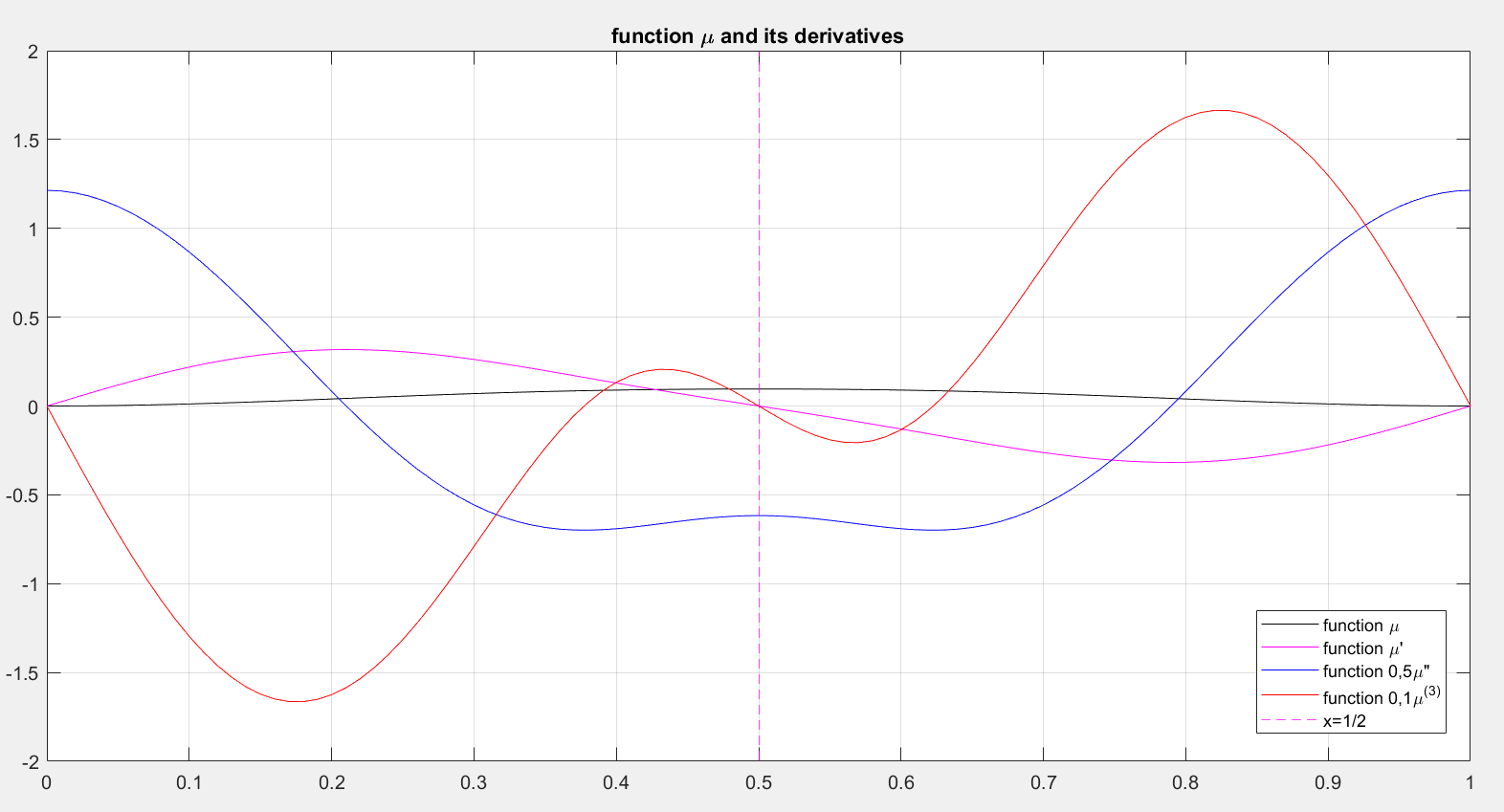}
    \caption{Plot of the function $\mu$ and its derivatives, with multiplicative constants.}
    \label{fig:graphes_mu}
\end{figure}

The equation 
$$i \partial_t {U}(\xi) = A_{\text{hexa}}(\xi)  {U}(\xi)$$
solves in 
$${U}(t,\xi) = e^{-itA_{\text{hexa}}(\xi) }{U_0}(\xi).$$
Denote by $\dis v(t,z)=\sum_{n\in \Z}c_n(t) \psi_n(z)$, $\dis \widetilde{v}(t,z)=\sum_{n\in \Z}\ov{c_n(t)} \psi_n(z)$ and 
$ V= \begin{pmatrix} v\\ \widetilde{v} \end{pmatrix} $, then  
\begin{multline}\label{expr_Utz}
 V(t,z) = \sum_{k=-\infty}^{+\infty} \pa{ \int_0^1 e^{2ik\pi\xi}e^{-it A_{\text{hexa}}(\xi) } {V_0}(\xi)  d\xi }\psi_k(z)   \\
\begin{aligned}
    &= \sum_{k=-\infty}^{+\infty} \pa{ \int_0^1 e^{2ik\pi\xi}e^{-it A_{\text{hexa}}(\xi) } \pac{\sum_{n=-\infty}^{+\infty} e^{-2in\pi\xi}\int_{S_{\gamma}} V_0(w)\ov{\psi_n(w)}dL(w)}  d\xi }\psi_k(z)  \\
    &= \int_{S_{\gamma}} \mathcal{K}_t(w,z) V_0(w) dL(w), 
\end{aligned}
\end{multline}
where
\begin{equation}\label{noyau_LLL}
    \mathcal{K}_t(w,z) = \sum_{k,n\in\Z} \int_0^1 e^{2i\pi(k-n)\xi}\psi_k(z)\ov{\psi_n(w)}e^{-it A_{\text{hexa}}(\xi) }d\xi.
\end{equation}

\subsection{Stability and decay for the linearized equation}

As in Lemma \ref{lemmaformula}, we write
$$
v(t,z) = \sum_{n \in \Z} c_n(t) \psi_n(z) \qquad \mbox{and} \qquad 
f(t,\xi) = \sum_{n \in \Z} c_n(t) e^{-2\pi i n\xi},
$$
and we set furthermore
$$
g(t,\xi) = \overline{f}(t,-\xi) =  \sum_{n \in \Z} \ov{c_n(t)} e^{-2\pi i n\xi}.
$$
 
We  state a first stability result

\begin{theorem}\label{prop_decay_l2_iR} 
	We write 
$$v(t,z) = \sum_{n\in\Z} c_n(t) \psi_n(z), $$ 
and suppose that $ \dis \sum_{n \in \Z} |c_n(0)|^2 < +\infty,$ meaning $v_0 \in L^2(S_{\gamma}).$ We assume that   $ \forall n \in \Z, \, c_n(0) \in i\R, $ 
then 
$$ \|v(t)\|_{L^2(S_\gamma)} \leq C \|v_0\|_{L^2(S_\gamma)}.$$
\end{theorem}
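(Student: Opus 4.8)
The plan is to use the explicit diagonalization of $e^{-itA_{\text{hexa}}(\xi)}$ from Proposition \ref{propeitA}: the point is that the hypothesis $c_n(0)\in i\R$ translates into a symmetry of the Fourier data which confines the evolution to a direction on which $e^{-itA_{\text{hexa}}(\xi)}$ acts as a pointwise multiplier of modulus $\le 1$.

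\emph{Step 1: reformulating the hypothesis.} Set $f_0(\xi)=\sum_{n}c_n(0)e^{-2\pi i n\xi}$ and $g_0(\xi)=\overline{f_0}(-\xi)=\sum_{n}\overline{c_n(0)}\,e^{2\pi i n\xi}$. If every $c_n(0)$ is purely imaginary, then $\overline{c_n(0)}=-c_n(0)$, so $g_0(\xi)=-\sum_{n}c_n(0)e^{-2\pi i n\xi}=-f_0(\xi)$. Thus the hypothesis is exactly $f_0+g_0\equiv 0$.

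\emph{Step 2: an explicit multiplier for $f$.} By Lemma \ref{lemmaformula}, the pair $\bigl(f(t,\xi),\overline{f}(t,-\xi)\bigr)$ evolves as $e^{-itA_{\text{hexa}}(\xi)}$ applied to $\bigl(f_0(\xi),g_0(\xi)\bigr)$. Reading off the first row of \eqref{expr_expo_A} and using $g_0=-f_0$, we get for a.e.\ $\xi\in[0,1]$
\begin{equation*}
f(t,\xi)=\Bigl[\cos\bigl(t\mu(\xi)\bigr)-i\,\tfrac{a(\xi)-b(\xi)}{\mu(\xi)}\sin\bigl(t\mu(\xi)\bigr)\Bigr]f_0(\xi),\qquad \mu(\xi)=\sqrt{a(\xi)^2-b(\xi)^2}.
\end{equation*}
For $\xi\in(0,1)$, the lemmas of this section give $0<b(\xi)<a(\xi)$, hence $\mu(\xi)>0$ and
\begin{equation*}
0<\frac{a(\xi)-b(\xi)}{\mu(\xi)}=\sqrt{\frac{a(\xi)-b(\xi)}{a(\xi)+b(\xi)}}<1,
\end{equation*}
so that $|f(t,\xi)|^2=\bigl(\cos^2(t\mu(\xi))+\tfrac{(a(\xi)-b(\xi))^2}{\mu(\xi)^2}\sin^2(t\mu(\xi))\bigr)|f_0(\xi)|^2\le|f_0(\xi)|^2$.

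\emph{Step 3: conclusion.} Integrating in $\xi$ gives $\|f(t)\|_{L^2([0,1])}\le\|f_0\|_{L^2([0,1])}$; since $(\psi_n)$ is a Hilbert basis of $\mathcal{F}_\gamma\cap L^2(S_\gamma)$ (Lemma \ref{lem-ortho-hexa}$(i)$) and $f(t,\cdot)$ is the Fourier series of $(c_n(t))$, Parseval yields $\|v(t)\|_{L^2(S_\gamma)}=\|(c_n(t))\|_{\ell^2(\Z)}=\|f(t)\|_{L^2([0,1])}$, and the same at $t=0$ for $v_0$. Combining, $\|v(t)\|_{L^2(S_\gamma)}\le\|v_0\|_{L^2(S_\gamma)}$, i.e.\ the claim with $C=1$. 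There is no substantial obstacle: the mechanism is simply that $f_0+g_0=0$ kills the component of the data responsible for the exponential growth $e^{\pm t|\mu|}$, leaving a purely oscillatory multiplier; the inequality $(a-b)/\mu<1$ used above is nothing but $0<b$, already known from this section's lemmas. The only mild point is the degeneracy of $\mu$ at $\xi=0$ (and $\xi=1$), but this is a null set for the $L^2([0,1])$ norm, and in any case Proposition \ref{prop_equiv_mu} shows $a-b$ vanishes there faster than $\mu$, so the multiplier extends continuously by the value $1$.
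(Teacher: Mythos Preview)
Your proof is correct and follows essentially the same route as the paper: translate the hypothesis into $g_0=-f_0$, apply the explicit formula for $e^{-itA_{\text{hexa}}(\xi)}$ from Proposition~\ref{propeitA}, and observe that the resulting scalar multiplier on $f_0$ is bounded. Your bound is in fact slightly sharper---you obtain $C=1$ by using $(a-b)/\mu=\sqrt{(a-b)/(a+b)}<1$ directly, whereas the paper bounds the smooth coefficients $k_1^{\pm}-k_2^{\pm}=\frac12\mp\frac{\mu}{2(a+b)}$ separately and gets an unspecified constant.
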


\begin{proof}
	Recall that $ U = \begin{pmatrix} f\\ g\end{pmatrix}$. We write 
    \begin{equation*}
        \|{U}(t,\xi) \|^2_{L_\xi^2} =  2\sum_{k\in\Z} |c_k(t)|^2 =  2 \|v(t,z)\|^2_{L^2_z}
    \end{equation*}
    and recall:
    $${U}(t,\xi) = e^{-itA_{\text{hexa}}(\xi) }{U_0}(\xi).$$
    We denote 
    $$ {U_0}(\xi) = \begin{pmatrix} {f}_0(\xi) \\ {g}_0(\xi)\end{pmatrix}, \qquad {f}_0(\xi) = \sum_{k\in\Z} c_k(0)e^{- 2i\pi k \xi}, \quad {g}_0(\xi) = \sum_{k\in\Z} \overline{c_k(0)}e^{- 2i\pi k \xi}.$$
    We suppose that $\forall n \in \Z, \, c_n(0) \in i\R$, that is to say ${f}_0(\xi) = -{g}_0(\xi).$ Then, from equality \eqref{expr_sg_fg}, we have 
    \begin{multline*}
       {U}(t,\xi)= e^{-itA_{\text{hexa}}(\xi) }{U_0}(\xi) = e^{-it A_{\text{hexa}}(\xi) }\begin{pmatrix} {f}_0(\xi) \\[2pt] -{f}_0(\xi) \end{pmatrix} = \\
       =\begin{pmatrix} (k_1^{+}-k_{2}^{+})e^{it\mu(\xi)} +  (k_{1}^{-}-k_2^{-})e^{-it\mu(\xi)}  \\[4pt]  (k_2^{-}-k_1^{-})e^{it\mu(\xi)} +  (k_{2}^{+}-k_{1}^{+})e^{-it\mu(\xi)}\end{pmatrix}{f}_0(\xi).
    \end{multline*}
    We have
    \begin{align*}
        k_1^{+}(\xi)-k_{2}^{+}(\xi) & = \frac12 + \frac{b(\xi)-a(\xi)}{2\mu(\xi)} \\[4pt]
        & = \frac12 - \frac{\mu(\xi)}{2(b(\xi)+a(\xi))} \in \mathcal{C}_{\xi}^\infty(\R).
    \end{align*}
    Similarly, $k_1^{-}-k_{2}^{-}\in \mathcal{C}_{\xi}^\infty(\R).$
    We deduce that:
    \begin{equation*}
        \| v(t,z)\|_{L_z^2} \leq  \|{U}(t,\xi) \|_{L_\xi^2} \leq C\|{f}_0(\xi)\|_{L_\xi^2} = C \|v_0(z)\|_{L_z^2},
    \end{equation*}
    where $C > 0$ is some absolute constant.
\end{proof}

 The statement of Theorem \ref{prop_decay_l2_iR} is elementary, but it has the drawback that the condition  $ \forall n \in \Z$, $c_n(0) \in i\R, $ is not preserved by the flow of \eqref{lin_LLL_hexa}. A more natural stability condition is given in the result below.

\begin{theorem} 
\label{thmL2stable}
For all $t \geq 0$, the solution $f(t)$ of \eqref{lin_LLL_hexa} satisfies
        \begin{equation*} 
        \Big\|\frac{{f}+{g}}{\mu} \Big\|_{L^2([0, 1])} + \|{f}\|_{L^2([0, 1])} \lesssim  \Big\|\frac{{f}_0+{g}_0}{\mu} \Big\|_{L^2([0, 1])}+  \|{f}_0\|_{L^2([0, 1])} ,
              \end{equation*}
        and more generally, for all $j \in \R$
           \begin{equation}\label{en2}
           \Big\|\frac{{f}+{g}}{\mu^{j+1}} \Big\|_{L^2([0, 1])}+\Big\|\frac{{f}-{g}}{\mu^j} \Big\|_{L^2([0, 1])}  \lesssim \Big\|\frac{{f}_0+{g}_0}{\mu^{j+1}} \Big\|_{L^2([0, 1])}+ \Big\|\frac{{f}_0-{g}_0}{\mu^j} \Big\|_{L^2([0, 1])} .
           \end{equation}     
   \end{theorem}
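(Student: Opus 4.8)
The plan is to establish conserved (or almost-conserved) quadratic quantities for the evolution $U(t,\xi) = e^{-itA_{\text{hexa}}(\xi)}U_0(\xi)$ at each fixed frequency $\xi$, and then integrate over $\xi \in [0,1]$. The key observation is that the matrix $A_{\text{hexa}}(\xi)$ has the form $\begin{pmatrix} a & b \\ -b & -a \end{pmatrix}$, which is $J$-symmetric for a suitable indefinite ``symplectic'' form; concretely, writing $U = (f, g)^T$, the quantity $\mathfrak{Im}(a|f|^2 - a|g|^2 + b\ov{f}g - b f\ov{g})$ type bilinear forms, or more simply the combination $|f-g|^2$ and $\tfrac{|f+g|^2}{\mu^2}$, will be controlled. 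Since $f$ and $g$ are linked by $g(t,\xi)=\ov{f}(t,-\xi)$, and since $a,b,\mu$ are even in $\xi$ by Proposition~\ref{prop_equiv_mu}(i), it suffices to track how the pair $(f(t,\xi), g(t,\xi))$ evolves at each $\xi$ and use that the same relations hold at $-\xi$.

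First I would use the explicit formula \eqref{expr_expo_A} for $e^{-itA_{\text{hexa}}(\xi)}$, together with the fact (from the Proposition preceding Proposition~\ref{propeitA}) that $\det A_{\text{hexa}}(\xi) = b^2(\xi)-a^2(\xi) < 0$ on $(0,1)$, so that $\mu(\xi) = (a^2(\xi)-b^2(\xi))^{1/2}$ is real and positive there. From \eqref{expr_expo_A} one reads off that $f(t) = (\cos(t\mu) - i\tfrac{a}{\mu}\sin(t\mu))f_0 - i\tfrac{b}{\mu}\sin(t\mu)\, g_0$ and a symmetric formula for $g(t)$. Then I would compute $f(t)-g(t)$ and $f(t)+g(t)$ directly: one finds
$$
f(t)-g(t) = \cos(t\mu)(f_0-g_0) - i\frac{a+b}{\mu}\sin(t\mu)(f_0+g_0)/1 \cdot(\text{check signs}),
$$
and similarly $f(t)+g(t) = \cos(t\mu)(f_0+g_0) - i\frac{a-b}{\mu}\sin(t\mu)(f_0-g_0)$. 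The crucial point is that $\tfrac{a-b}{\mu} = \tfrac{a-b}{\sqrt{(a-b)(a+b)}} = \sqrt{\tfrac{a-b}{a+b}}$ is \emph{bounded} (indeed $a>b>0$ on $(0,1)$ by the two lemmas, and near $\xi=0$ both $a-b = O(\xi^4)$ while $a+b \to 2\lambda$, so $\tfrac{a-b}{\mu} \to 0$), whereas $\tfrac{a+b}{\mu} = \sqrt{\tfrac{a+b}{a-b}}$ blows up like $\mu^{-1}$ near $\xi = 0$. Therefore, dividing the $f+g$ equation by $\mu^{j+1}$ and the $f-g$ equation by $\mu^j$, the coupling coefficients become $\tfrac{a-b}{\mu}\cdot\tfrac{1}{\mu^j} \lesssim \tfrac{1}{\mu^j}$ and $\tfrac{a+b}{\mu}\cdot\tfrac{1}{\mu^{j+1}} = \sqrt{\tfrac{a+b}{a-b}}\cdot\tfrac{1}{\mu^{j+1}} \lesssim \tfrac{1}{\mu^{j+2}}\cdot\mu \cdot(\ldots)$ — more carefully, $\tfrac{a+b}{\mu^{j+2}} = \tfrac{a+b}{\mu}\tfrac{1}{\mu^{j+1}}$, and since $\tfrac{(a+b)}{\mu}\cdot\mu = a+b$ is bounded one gets the right pairing with $\tfrac{f-g}{\mu^j}$ after noting $\tfrac{1}{\mu^{j+1}}\cdot\tfrac{1}{\mu} \cdot \mu = \tfrac{1}{\mu^j}$. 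Thus the vector $\big(\tfrac{f+g}{\mu^{j+1}},\, \tfrac{f-g}{\mu^j}\big)$ is acted on, pointwise in $\xi$, by a matrix with entries $\cos(t\mu)$ and bounded multiples of $\sin(t\mu)$, hence of bounded operator norm uniformly in $t$ and $\xi$. Taking $L^2_\xi$ norms and using that $\|f\|_{L^2} \lesssim \|f+g\|_{L^2} + \|f-g\|_{L^2}$ — and the base case $j=0$ where $\tfrac{f+g}{\mu}$ and $f$ themselves must be extracted, using $\|f\|\leq \tfrac12\|f+g\| + \tfrac12\|f-g\|$ and $\|f-g\|\lesssim \|f_0-g_0\| + \|\tfrac{f_0+g_0}{\mu}\|\cdot\sup|a+b|$ — yields the stated estimates.

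The main obstacle, and the point that needs the most care, is the behavior near $\xi = 0$ (and $\xi=1$), where $\mu$ vanishes: one must verify that the quotients $\tfrac{a-b}{\mu}$, $\tfrac{b}{\mu^2}\cdot\mu = \tfrac{b}{\mu}$ appearing implicitly, and especially the mixed coefficient $\tfrac{a+b}{\mu}$ after division by the correct powers of $\mu$, are genuinely integrable/bounded, which relies precisely on the expansions in Proposition~\ref{prop_equiv_mu}(ii)--(iii): $a(\xi)-b(\xi) = O(\xi^4)$ so $a-b \sim \mathrm{const}\cdot\mu^2$, giving $\tfrac{a-b}{\mu}\sim \mathrm{const}\cdot\mu \to 0$, while $a+b \to 2\lambda \neq 0$. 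A secondary technical point is that $f_0+g_0$ must vanish to order $\mu$ (i.e. order $\xi^2$) at $\xi=0$ for $\tfrac{f_0+g_0}{\mu}\in L^2$, which is a genuine constraint on admissible data; one should note that the right-hand side norms encode exactly this compatibility, and that it \emph{is} propagated by the flow (unlike the condition $c_n(0)\in i\R$ of Theorem~\ref{prop_decay_l2_iR}), because the evolution of $\tfrac{f+g}{\mu}$ closes on itself as shown. I would also remark that the quadratic quantity $\|\tfrac{f-g}{\mu^j}\|^2 + \|\tfrac{f+g}{\mu^{j+1}}\|^2$ is, up to the bounded-coefficient corrections, essentially conserved — it corresponds to the indefinite ``energy'' $\langle U, JU\rangle$ restricted to the relevant invariant subspace — which is why one gets a clean two-sided bound rather than just an upper bound.
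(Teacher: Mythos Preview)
Your approach is correct and is essentially the same as the paper's: both compute $f\pm g$ explicitly from $e^{-itA_{\text{hexa}}(\xi)}$, obtain $f+g=(f_0+g_0)\cos(t\mu)-i\tfrac{a-b}{\mu}(f_0-g_0)\sin(t\mu)$ and $f-g=(f_0-g_0)\cos(t\mu)-i\tfrac{a+b}{\mu}(f_0+g_0)\sin(t\mu)$, and then use that $\tfrac{a-b}{\mu}=O(\mu)$ and $\tfrac{a+b}{\mu}=O(\mu^{-1})$ near $\xi=0$ (from Proposition~\ref{prop_equiv_mu}) so that after dividing by $\mu^{j+1}$ and $\mu^j$ respectively the coupling coefficients are uniformly bounded. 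Your intermediate algebra is a bit muddled (the passage ``$\tfrac{a+b}{\mu^{j+2}}=\ldots$'' wanders), but the clean way to write it is $\tfrac{f+g}{\mu^{j+1}}=\cos(t\mu)\tfrac{f_0+g_0}{\mu^{j+1}}-i\tfrac{F}{\mu}\sin(t\mu)\tfrac{f_0-g_0}{\mu^j}$ and $\tfrac{f-g}{\mu^{j}}=\cos(t\mu)\tfrac{f_0-g_0}{\mu^{j}}-i(G\mu)\sin(t\mu)\tfrac{f_0+g_0}{\mu^{j+1}}$ with $F=\tfrac{a-b}{\mu}$, $G=\tfrac{a+b}{\mu}$, and $\tfrac{F}{\mu},\,G\mu$ both bounded.
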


    Actually, the condition  $\dis \big\|({f}_0+{g}_0)/{\mu} \big\|_{L^2([0, 1])} <\infty$ is equivalent to 
$$\Re\Big(\sum_{n\in\Z}c_n(0)\Big) = \Re\Big(\sum_{n\in\Z}nc_n(0)\Big)=0,$$
 which is propagated by the flow of \eqref{lin_LLL_hexa}, using Lemma \ref{lemme_conserv_hyp1}. More generally, since $\mu$ vanishes at second order at both endpoints and the data are $1$-periodic, for any integer $j \geq 0$ the right-hand side of \eqref{en2} is finite if and only if
$$ \Re\Big(\sum_{n\in\Z}n^p c_n(0)\Big)=  \Im\Big(\sum_{n\in\Z}n^q c_n(0)\Big)=0, $$
for all  $0\leq p\leq 2j+1$ and all $0\leq q\leq 2j-1$.

    \begin{proof}
We deduce from Proposition \ref{propeitA} that
     $${f}\pm{g} =\big ((k_1^{+}\pm k_{2}^{-}) {f}_0+ (k_2^{+}\pm k_{1}^{-}) {g}_0 \big)e^{it\mu(\xi)} + \big ((k_1^{-} \pm k_{2}^{+}) {f}_0+ (k_2^{-} \pm k_{1}^{+}) {g}_0 \big)e^{-it\mu(\xi)}. $$
Setting $\dis F=(a-b)/\mu$ and $\dis G=(a+b)/\mu$, we have
\begin{align*}
& k_1^{+}+k_{2}^{-}=\frac12(1-F), \qquad k_1^{-}+k_{2}^{+}=\frac12(1+F) \\
 & k_1^{+}-k_{2}^{-}=\frac12(1-G), \qquad k_1^{-}-k_{2}^{+}=\frac12(1+G),
 \end{align*}
          so that 
            $${f}+{g} =  ({f}_0+{g}_0)\cos(t \mu)-i ({f}_0-{g}_0) F \sin(t \mu), $$
                     $${f}-{g} =  ({f}_0-{g}_0)\cos(t \mu)- i({f}_0+{g}_0) G \sin(t \mu). $$   
Close to $\xi=0$, Proposition \ref{prop_equiv_mu} gives that $F(\xi) =\mathcal{O}(\xi^2)$ and $G(\xi) =\mathcal{O}(\xi^{-2})$ with a symmetrical behavior at $\xi=1$. As a consequence, for all $j \in \mathbb{R}$,
\begin{align*}
& \Big\|\frac{{f}-{g}}{\mu^j} \Big\|_{L^2([0, 1])}   \lesssim \Big\|\frac{{f}_0-{g}_0}{\mu^j} \Big\|_{L^2([0, 1])}+  \Big\|\frac{{f}_0+{g}_0}{\mu^{j+1}} \Big\|_{L^2([0, 1])} \\[4pt]
& \Big\|\frac{{f}+{g}}{\mu^{j+1}} \Big\|_{L^2([0, 1])}   \lesssim \Big\|\frac{{f}_0+{g}_0}{\mu^{j+1}} \Big\|_{L^2([0, 1])}+  \Big\|\frac{{f}_0-{g}_0}{\mu^j} \Big\|_{L^2([0, 1])},
\end{align*}
hence the result.
 \end{proof}

\begin{theorem}\label{decay_linf}
    Consider a function $v_0 \in \mathcal{F}_\gamma \cap \ET$. We write
    $$v(t,z) = \sum_{n\in\Z} c_n(t) \psi_n(z). $$ 
   Then,
    \begin{enumerate}[$(i)$]
        \item If $\forall n\in \Z, c_n \in i\R$,
        $$ \|v(t)\|_{L^\infty(S_\gamma)} \lesssim {t^{-1/3}} \|v_0\|_{L^1(S_\gamma)}.$$
 \item We have
	\begin{equation*}
		\|v(t)\|_{L^\infty(S_\gamma)} 
	\lesssim {t^{-1/3}} \pac{\absabs{{f}_0}_{H^1([0,1])}+ \absabs{\frac{{f}_0+{g}_0}{\mu} }_{H^1([0,1])}}.
	\end{equation*} 
	 \end{enumerate}
\end{theorem}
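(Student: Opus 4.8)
The strategy is to express $v(t,z)$ through the kernel representation \eqref{expr_Utz}--\eqref{noyau_LLL} and reduce the $L^\infty$ bound to a one-dimensional oscillatory integral in the frequency variable $\xi$, governed by the phase $\mu(\xi)$. Starting from $V(t,z) = \int_{S_\gamma} \mathcal{K}_t(w,z) V_0(w)\,dL(w)$, I would use Proposition \ref{propeitA}, i.e. formula \eqref{expr_sg_fg}, to write $f(t,\xi)$ as a combination of $e^{\pm it\mu(\xi)}$ multiplied by the coefficient functions $k_j^\pm(\xi)$ acting on $f_0,g_0$. Then $v(t,z) = \sum_k \left(\int_0^1 e^{2ik\pi\xi} f(t,\xi)\,d\xi\right)\psi_k(z)$, and since $\|v(t)\|_{L^\infty(S_\gamma)} \lesssim \|v(t)\|_{L^2(S_\gamma)}$-type control is too weak, I instead pass through the hypercontractivity estimate \eqref{Carlen_bande}: $\|v(t)\|_{L^\infty(S_\gamma)} \lesssim \|v(t)\|_{L^2(S_\gamma)}$ is \emph{not} enough for decay, so the real point is to bound $\|v(t)\|_{L^\infty}$ by $\|(c_k(t))\|_{\ell^1}$ or similar, and then estimate each $c_k(t)$ as an oscillatory integral $\int_0^1 e^{2\pi i k\xi} m(\xi) e^{\pm it\mu(\xi)}\,d\xi$ with an amplitude $m$ coming from $k_j^\pm$ and the transform of $v_0$.

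For part $(i)$, under the assumption $c_n(0)\in i\R$ (equivalently $f_0=-g_0$), the computation in the proof of Theorem \ref{prop_decay_l2_iR} shows that the dangerous combinations $k_1^\pm \mp k_2^\pm$ are \emph{smooth} (no $\mu^{-1}$ singularity at $\xi=0$), so $f(t,\xi) = m_+(\xi) e^{it\mu(\xi)} f_0(\xi) + m_-(\xi) e^{-it\mu(\xi)} f_0(\xi)$ with $m_\pm \in \mathcal{C}^\infty$. The decay then comes from stationary phase / van der Corput applied to the phase $\mu$: by Proposition \ref{prop_equiv_mu}, $\mu(\xi)\sim C\xi^2$ near $\xi=0$ (and symmetrically near $\xi=1$), hence $\mu'(\xi)$ vanishes linearly at the endpoints and $\mu''$ is bounded below there, while away from $\{0,1\}$ the phase $2\pi k\xi \pm t\mu(\xi)$ has nonvanishing derivative controlled suitably — the worst case is the degenerate critical point where $\mu'$ vanishes, giving the van der Corput exponent $1/3$ rather than $1/2$. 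I would split the $\xi$-integral into a neighborhood of the degenerate points and its complement, apply the second-derivative van der Corput lemma ($|\int e^{it\phi}\psi| \lesssim t^{-1/2}$ when $\phi''\gtrsim 1$) on the bulk and the cubic-type van der Corput estimate near the degeneracy, obtaining $|c_k(t)| \lesssim t^{-1/3}\|v_0\|_{L^1}$ uniformly in $k$ (with enough decay in $k$ from the smoothness/decay of $m_\pm$ and $\langle z\rangle^M$-control of $v_0\in\ET$ to sum the $\psi_k$), and finally $\|v(t)\|_{L^\infty(S_\gamma)} \lesssim \sup_k |c_k(t)| \cdot \|(\psi_k)\|$-type bound via \eqref{borneK}.

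For part $(ii)$, without the symmetry assumption the coefficient functions $k_j^\pm$ genuinely blow up like $\mu^{-1}\sim \xi^{-2}$ at the endpoints, as recorded in \eqref{equiv_fcts_k}. Here I would use the decomposition from the proof of Theorem \ref{thmL2stable}: $f+g = (f_0+g_0)\cos(t\mu) - i(f_0-g_0)F\sin(t\mu)$ and $f-g = (f_0-g_0)\cos(t\mu) - i(f_0+g_0)G\sin(t\mu)$ with $F=(a-b)/\mu = \mathcal{O}(\xi^2)$ and $G=(a+b)/\mu = \mathcal{O}(\xi^{-2})$. The combination $f_0+g_0$ always appears multiplied either by a bounded function or divided by $\mu$, so the natural norm is exactly $\|f_0/\,\cdot\,\|_{H^1} + \|(f_0+g_0)/\mu\|_{H^1}$; I would rewrite $v(t,z)$'s Fourier coefficients using $f = \frac12((f+g)+(f-g))$, expand the $\cos,\sin$ into $e^{\pm it\mu}$, and in each resulting oscillatory integral integrate by parts once in $\xi$ (this is why $H^1$ rather than $L^1$ is needed — one derivative lands on the smooth amplitude and on $\sin(t\mu)/(t\mu')$), then apply the same van der Corput analysis as in $(i)$. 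The main obstacle — and where care is needed — is handling the endpoint degeneracy of $\mu$ simultaneously with the $\mu^{-1}$ singularity of the amplitude: one must check that after the integration by parts the boundary terms vanish (they do, by the vanishing of $\sin(t\mu)$ at $\xi=0,1$ where $\mu=0$, or by periodicity since everything is $1$-periodic in $\xi$) and that the singular factor $1/\mu$ is tamed by the hypothesis, so that the $t^{-1/3}$ rate survives; the cubic van der Corput near the degenerate critical point, combined with the one derivative spent on the amplitude, is exactly what produces the $H^1$ norm and the optimal $t^{-1/3}$.
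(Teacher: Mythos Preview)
Your overall architecture matches the paper's: express $c_k(t)$ as oscillatory integrals in $\xi$ with phase $\Gamma_\pm(\xi)=\pm\mu(\xi)+\frac{2\pi k}{t}\xi$, use that under $f_0=-g_0$ the amplitude is smooth, and in the general case absorb the $\mu^{-1}$ singularity into the hypothesis $\|(f_0+g_0)/\mu\|_{H^1}<\infty$. Two concrete points, however, would block your execution as written.

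\medskip

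\textbf{Where the $t^{-1/3}$ comes from.} You locate the degenerate point at $\xi=0,1$ where $\mu'\!=\!0$. But there $\mu(\xi)\sim C\xi^2$ forces $\mu''(0)=2C>0$, so the second-derivative van der Corput lemma already gives $t^{-1/2}$ near the endpoints. The loss to $t^{-1/3}$ comes from \emph{interior inflection points} $\xi_1\in(0,\tfrac12)$, $\xi_2\in(\tfrac12,1)$ where $\mu''$ vanishes; one needs $\mu^{(3)}\neq 0$ there (the paper checks this numerically via Figure~\ref{fig:graphes_mu}) and applies the third-derivative van der Corput lemma on small neighborhoods of $\xi_1,\xi_2$. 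Your splitting ``endpoints vs.\ bulk'' is the wrong dichotomy; the correct one is ``$\{\mu''=0\}$ vs.\ its complement''.

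\medskip

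\textbf{The integration-by-parts device.} You propose to integrate by parts by writing $e^{\pm it\mu}=\frac{1}{\pm it\mu'}\partial_\xi e^{\pm it\mu}$, landing a derivative on ``$\sin(t\mu)/(t\mu')$''. This introduces a $1/\mu'$ singularity at the maximum of $\mu$ \emph{and} at the endpoints, which you would then have to cure separately. The paper avoids this entirely: it sets $G_\pm(\xi)=\int_0^\xi e^{it\Gamma_\pm(\eta)}\,d\eta$, uses the uniform van der Corput bound $|G_\pm(\xi)|\lesssim t^{-1/3}$ for all $\xi\in[0,1]$, and then integrates by parts $\int_0^1 m(\xi)\,G_\pm'(\xi)\,d\xi = m(1)G_\pm(1)-\int_0^1 m'(\xi)G_\pm(\xi)\,d\xi$. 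One derivative lands on the amplitude $m$ (this is exactly why $H^1$ appears in (ii)), and no spurious $1/\mu'$ factor is created. The boundary term $m(1)G_\pm(1)$ does not vanish in general; it is kept and bounded by $\|m\|_{L^\infty}\cdot t^{-1/3}$, which is controlled via Sobolev embedding by the $H^1$ norm.

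\medskip

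With these two corrections your plan becomes precisely the paper's proof.
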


\begin{proof}
$(i)$ 
Going back to \eqref{expr_Utz} and \eqref{noyau_LLL}, we study $\EE\big(  {-itA_{\text{hexa}}(\xi) }\big)V_0(w)$. Recall that $ V_0= \begin{pmatrix} v_0\\ \widetilde{v}_0 \end{pmatrix} $. Using the equation \eqref{expr_sg_fg}, we have 
\begin{equation*}
    \mathcal{K}_t(w,z)V_0(w) = \sum_{k,n\in\Z}\psi_k(z) \ov{\psi_n(w)} \begin{pmatrix} I_1^{k,n}(t,w) \\[2pt] I_2^{k,n}(t,w) \end{pmatrix},
\end{equation*}
where for $t \geq 0$ and $w \in \C$
 \begin{multline*}
    I_1^{k,n}(t,w) =  \int_0^1 e^{ {2i\pi(k-n)} \xi}\Big[\big(k_1^+(\xi)v_0(w)+ k_2^+(\xi)\widetilde{v}_0(w)\big)e^{it\mu(\xi)}+\\ +\big(k_1^-(\xi)v_0(w)+   k_2^-(\xi)\widetilde{v}_0(w)\big)e^{-it\mu(\xi)}\Big]d\xi,
\end{multline*}
\begin{multline*}
    I_2^{k,n}(t,w) = \int_0^1 e^{ {2i\pi(k-n)} \xi}\Big[\big(k_2^-(\xi)v_0(w)+ k_1^-(\xi)\widetilde{v}_0(w)\big)e^{it\mu(\xi)}+\\
    +   \big(  k_2^+(\xi)v_0(w)+ k_1^+(\xi)\widetilde{v}_0(w)\big)   e^{-it\mu(\xi)}\Big]d\xi.
\end{multline*}
In the sequel, we will only study the integral $I_1^{k,n}$. We have:
\begin{align*}
    I_1^{k,n}(t,w) &= \frac{1}{2}\int_0^1 e^{2i\pi(k-n)\xi}\Big(e^{it\mu(\xi)}+e^{-it\mu(\xi)}\Big)v_0(w) d\xi  \\ 
    &\quad \qquad - i  \int_0^1 e^{{2i\pi(k-n)}\xi}\Big(b(\xi)\widetilde{v}_0(w)+a(\xi)v_0(w)\Big) \frac{\sin(t\mu(\xi))}{\mu(\xi)} d\xi.
\end{align*}

We define $\Gamma_+(\xi) = \mu(\xi) + {2\pi}(k-n)\xi/t$ and $\Gamma_-(\xi) = -\mu(\xi) + {2\pi}(k-n)\xi/t$. We have $\Gamma_+''(\xi) = \mu''(\xi)$, $\Gamma_-''(\xi) = -\mu''(\xi)$, and $\mu''$ only has two zeros in $[0,1]$, that we denote $\xi_1 \in (0,1/2)$ and $\xi_2\in(1/2,1).$ Close to $\xi_1$ and $\xi_2$, the function $\mu^{(3)}$ is not close to zero, as can be seen by a graphical study (see Figure \ref{fig:graphes_mu}). Then  by the van der Corput lemma (see for example \cite{Stein}), we have 
$$ \abs{ \int_{[\xi_1-\delta, \xi_1+\delta]} \EE\big( {it\Gamma_\pm(\xi)}\big)d\xi } \leq {C_1}{t^{-1/3}} $$ and 
$$ \abs{ \int_{[\xi_2-\delta, \xi_2+\delta]} \EE\big( {it\Gamma_\pm(\xi)}\big)d\xi } \leq {C_1}{t^{-1/3}}, $$
with $C_1 > 0$ a constant independent of $k$ and $n$. Outside the two intervals $[\xi_1-\delta, \xi_1+\delta]$ and $[\xi_2-\delta, \xi_2+\delta]$, the function $\mu''$ is not zero, so that 
$$ \abs{ \int_{[0, \xi_1-\delta]} \EE\big( {it\Gamma_\pm(\xi)}\big)d\xi } \leq {C_2}{t^{-1/2}}, $$
with $C_2 > 0$ a constant independent of $k$ and $n$. We have the same bound on $[\xi_1+\delta, \xi_2-\delta]$ and $[\xi_2+\delta, 1]$. Finally, adding up those five inequalities:
\begin{equation}\label{estimation2}
	\abs{ \int_{0}^1 \EE\big( {it\Gamma_\pm(\xi)}\big)d\xi } \leq {C_3}{t^{-1/3}}.
\end{equation}

We now make the assumption, in the spirit of Theorem \ref{prop_decay_l2_iR}, that $\widetilde{v}_0(w) = -v_0(w)$, meaning that for all $n \in \Z, \, c_n(0) \in i\R$. With this, we obtain 
\begin{equation*}
    - i   \int_0^1 e^{2i\pi(k-n)\xi}\big(b(\xi)\widetilde{v}_0(w)+a(\xi)v_0(w)\big) \frac{\sin(t\mu(\xi))}{\mu(\xi)} d\xi = -v_0(w) \pa{J_{+}^{k,n}(t)-J_{-}^{k,n}(t)},
\end{equation*}
where 
$$ J_\pm^{k,n}(t) :=   \int_0^1 F(\xi)\EE\big( {it\Gamma_\pm(\xi)}\big)d\xi, \qquad F(\xi) := \frac{a(\xi) - b(\xi)}{2\mu(\xi)}.$$
From the computations of Proposition \ref{prop_equiv_mu}, we have 
$$ F(\xi) \simx{\xi}{0} \frac{C}{4\lambda} \xi^2,$$
the constant $C >0$ being defined in Proposition \ref{prop_equiv_mu}. Then $F \in \mathcal{C}^1\big([0,1]\big)$ with $F(0) = F'(0) = 0$. We denote 
$$  G_\pm(\xi) := \int_0^\xi \EE\big( {it\Gamma_\pm(\eta)}\big) d\eta,$$
so that 
$$ J_\pm^{k,n}(t) =  \int_0^1 F(\xi)\frac{d}{d\xi}G_\pm(\xi)d\xi. $$
We recall that 
$$ \forall \xi \in [0,1], \quad |G_\pm(\xi)| \leq {C_3}{t^{-1/3}}.$$
Then, an integration by parts and $F(1) = 0$ give
$$ |J_\pm^{k,n}(t)| =\abs{ F(1)G_\pm(1) -  \int_0^1 F'(\xi)G_\pm(\xi)d\xi} \leq {C_3}{  t^{-1/3}}\int_0^1 |F'(\xi)|d\xi,$$
that is to say
\begin{equation}\label{estimation3}
    |J_\pm^{k,n}(t)| \leq {c}{t^{-1/3}},
\end{equation}
the constant $c>0$ being absolute. Finally, combining \eqref{estimation2} and \eqref{estimation3}: 
\begin{equation*} 
    |I_1^{k,n}(t,w)| \leq {c}{t^{-1/3}}|v_0(w)|,
\end{equation*}
so that we have asymptotic stability:
\begin{equation}\label{estimation_u_l_infty}
    |v(t,z)| \lesssim  {t^{-1/3}}\|\Psi\|_{L^\infty(S_{\gamma}\times S_{\gamma})}\|v_0\|_{L^1(S_{\gamma})},
\end{equation}
where,
\begin{equation*}
    \Psi(z,w) = \sum_{k,n\in\Z}|\psi_k(z)\ov{\psi_n(w)}|.
\end{equation*}
We compute 
$$ \Psi(z,w) =|\psi_0(z)\ov{\psi_0(w)}|  \sum_{k,n\in\Z} \EE\Big( {-{\pi^2(n^2+k^2)}/ {\gamma^2}-{2\pi}(k\mathfrak{Im}(z)+n\mathfrak{Im}(w))/ {\gamma}}\Big), $$
which is a function of $L^\infty(S_{\gamma}\times S_{\gamma})$.

\noindent $(ii)$ As before, we write 
$$ {f}(t,\xi) = \sum_{n\in \Z} c_n(t) e^{-2in\pi\xi}, \qquad {g}(t,\xi) = \sum_{n\in \Z} \ov{c_n(t)} e^{-2in\pi\xi},$$
and ${f}_0(\xi) = {f}(0,\xi)$, ${g}_0(\xi) = {g}(0,\xi).$ Then, 
$$ {f}(t,\xi) = \cos\big(t\mu(\xi)\big) {f}_0(\xi) -i\big(a(\xi){f}_0(\xi)+b(\xi){g}_0(\xi)\big)\frac{\sin(t\mu(\xi))}{\mu(\xi)},$$
so that for all $k\in\Z$,
\begin{equation}\label{expr_ck}
c_k(t) = \int_0^1 e^{2ik\pi\xi}\cos(t\mu){f}_0(\xi) d\xi - i\int_0^1e^{2ik\pi\xi}\big(a{f}_0(\xi)+b{g}_0(\xi)\big)\frac{\sin(t\mu)}{\mu}d\xi. 
\end{equation}
The second integral is well-defined because, for all $t \geq 0, \abs{ \mu^{-1}{\sin(t\mu)}} \leq t.$ Denoting $\dis \Gamma_\pm(\xi) = \pm \mu(\xi) + {2\pi}k\xi/t$ for the phase functions,   
\begin{equation}\label{Integrale1}
\int_0^1 e^{2ik\pi\xi}\cos\big(t\mu(\xi)\big){f}_0(\xi)d\xi = \frac12\int_0^1 \Big(  \EE\big(it\Gamma_+(\xi)\big)+   \EE\big(it\Gamma_-(\xi)\big)   \Big){f}_0(\xi)d\xi.
\end{equation}
Integrating by parts and denoting $\dis G_\pm(\xi) = \int_0^\xi   \EE\big(it\Gamma_\pm(\eta)\big) d\eta$,
\begin{equation}\label{IPP1}
\int_0^1  \EE\big(it\Gamma_\pm(\xi)\big){f}_0(\xi)d\xi = \int_0^1G_\pm'(\xi){f}_0(\xi)d \xi = {f}_0(1)G_\pm(1) - \int_0^1G_\pm(\xi){f}_0'(\xi)d\xi.
\end{equation}
Furthermore, $\Gamma_\pm''(\xi) = \pm\mu''(\xi)$, so that $\abs{\Gamma_\pm''(\xi) } + \abs{\Gamma_\pm'''(\xi) } \geq c > 0, \forall \xi \in [0,1]$. Then, the van der Corput lemma gives the bound 
\begin{equation}\label{estimation01}
	\forall \xi \in [0,1], \qquad |G_\pm(\xi)| \lesssim {t^{-1/3}}.
\end{equation}
Combining \eqref{Integrale1}, \eqref{IPP1} and the estimate \eqref{estimation01}, we get
\begin{equation}\label{Int1}
	\abs{\int_0^1 e^{2ik\pi\xi}\cos(t\mu){f}_0d\xi} \lesssim {t^{-1/3}} \pac{|{f}_0(0)| + \int_0^1|{f}_0'(\xi)|d\xi},
\end{equation}
the implicit constant being independent of $k \in \Z$, and using that ${f}_0(1)={f}_0(0)$. We now turn to the second integral in \eqref{expr_ck}. We write $c_k(0) = \alpha_k + i\beta_k, \alpha_k,\beta_k \in \R$, so that 
\begin{equation*}
\left\{
\begin{aligned}
 {f}_0(\xi) & = {f}_1(\xi) + i{f}_2(\xi), \\[2pt]
 {g}_0(\xi) & = {f}_1(\xi) - i{f}_2(\xi),
\end{aligned}
\right.
\quad \text{with} \quad
\left\{
\begin{aligned}
 {f}_1(\xi) & := \sum_{n\in \Z} \alpha_n e^{-2in\pi\xi}, \\[2pt]
 {f}_2(\xi) & := \sum_{n\in \Z} \beta_n e^{-2in\pi\xi},
\end{aligned}
\right.
\end{equation*}
(observe that $f_1$ and $f_2$ are not real-valued).  In the sequel, we assume that  
$$\absabs{\frac{{f}_0+{g}_0}{\mu} }_{H^1([0,1])}<+\infty$$ which is equivalent to  the condition ${f}_1(0) = {f}'_1(0) = 0$.  Then, 
$$a{f}_0+b{g}_0 = (a+b){f}_1 +i(a-b){f}_2.$$
We then split the second integral in \eqref{expr_ck} into four integrals:
$$
i\int_0^1e^{2ik\pi\xi}(a{f}_0+b{g}_0)\frac{\sin(t\mu)}{\mu}d\xi = \frac12(I_1^+ - I_1^- +I_2^+ - I_2^-),
$$
where 
$$ I_1^\pm = \int_0^1  \EE\big(it\Gamma_\pm \big)    \frac{a+b}{\mu}{f}_1 d\xi, \qquad \text{and} \qquad I_2^\pm = i\int_0^1  \EE\big(it\Gamma_\pm\big)   \frac{a-b}{\mu}{f}_2 d\xi.$$
From Proposition \ref{prop_equiv_mu}, the function $({a-b})/{\mu}$ is bounded with a derivative being also bounded. Then, we get the same decay we obtained in \eqref{Int1} with an analogous method:
\begin{equation*} 
	\abs{I_2^\pm} \leq {c}{t^{-1/3}} \pac{|{f}_2(0)| + \int_0^1|{f}_2'(\xi)|d\xi} \leq {c}{t^{-1/3}} \pac{|{f}_0(0)| + \int_0^1|{f}_0'(\xi)|d\xi}.
\end{equation*}
Similarly, writing 
$$F(\xi) := \frac{a(\xi)+b(\xi)}{\mu(\xi)}{f}_1(\xi),$$
 we show that
\begin{equation}\label{Int3}
	\abs{I_1^\pm} \leq {c}{t^{-1/3}} \pac{|F(0)| + \int_0^1|F'(\xi)|d\xi},
\end{equation}
when those quantities are defined. From $(a+b)(0) = 2\lambda > 0$, and $\mu(\xi) \sim C\xi^2$, the condition ${f}_1(0) = {f}'_1(0) = 0$ (or equivalently $\dis \sum_{n\in\Z}\alpha_n =  \sum_{n\in\Z}n\alpha_n=0$) is necessary. Furthermore, this condition implies that  the function $F$ is well-defined on $[0,1]$, and of class $\mathcal{C}^1$, so that the right hand side of \eqref{Int3} is defined and finite. Then, 
\begin{multline*}
	\int_0^1|F'(\xi)|d\xi \leq \|(a+b)'\|_{L^\infty}\int_0^1\abs{{f}_1/{\mu}}d\xi + \|a+b\|_{L^\infty}\int_0^1\abs{\pa{{{f}_1}/{\mu}}'}d\xi\\
		 \leq c \absabs{\frac{{f}_0+{g}_0}{\mu} }_{W^{1,1}([0,1])}.
\end{multline*}
Overall, 
\begin{equation}\label{Int4}
	\abs{I_1^\pm} \leq {c}{t^{-1/3}} \pac{\absabs{\frac{{f}_0+{g}_0}{\mu} }_{L^\infty([0,1])} + \absabs{\frac{{f}_0+{g}_0}{\mu} }_{W^{1,1}([0,1])}},
\end{equation}
and we conclude that 
\begin{multline*}
	|c_k(t)|  \leq \\
\begin{aligned}
&\leq  {c}{t^{-1/3}} \pac{\absabs{{f}_0 }_{L^\infty([0,1])} + \absabs{{f}_0'}_{L^1([0,1])}+\absabs{\frac{{f}_0+{g}_0}{\mu} }_{L^\infty([0,1])} + \absabs{\frac{{f}_0+{g}_0}{\mu} }_{W^{1,1}([0,1])}} \\
&\leq {c}{t^{-1/3}} \pac{\absabs{{f}_0}_{H^1([0,1])}+ \absabs{\frac{{f}_0+{g}_0}{\mu} }_{H^1([0,1])}}
\end{aligned}
\end{multline*}
where the last line is obtained by Sobolev embeddings and the Cauchy-Schwarz inequality. Now, for all $z \in S_{\gamma},$ we have 
\begin{equation*}
	|v(t,z)| \leq \sum_{k\in\Z}|c_k(t)\psi_k(z)| \leq {C({f}_0,{g}_0)}{t^{-1/3}}\sum_{k\in\Z}|\psi_k(z)|,
\end{equation*}
which implies the result because the function $\dis z \mapsto \sum_{k\in\Z}|\psi_k(z)|$ is bounded.
\end{proof}

\subsection{Growth for the linearized equation}

Recall the equation \eqref{lin_LLL_hexa}
\begin{equation*} 
 i\partial_t v + \lambda v = \Pi\big[2|\Psi_0|^2v+\Psi_0^2\ov{v}\big].
\end{equation*}
 In this paragraph, we will prove the following result which show the possible growth of the solution in the case where $ \dis \big\|    ( {{f}_0+{g}_0})/{\mu} \big\|_{L^2([0, 1])}=+\infty$.

\begin{theorem}\label{thm_borne}
    We write 
    $$v(t,z) = \sum_{n\in\Z} c_n(t) \psi_n(z), $$ 
    with   $\dis v_0(z) = \sum_{n\in\Z} c_n(0) \psi_n(z)$ and suppose that $ \dis \sum_{n \in \Z} |c_n(0)|^2 < +\infty$, meaning that $v_0 \in L^2(S_{\gamma})$. Then, the equation \eqref{lin_LLL_hexa} is globally well-posed in the space $L^2(S_{\gamma})$. \medskip

    Moreover, we have the polynomial bound on the possible growth of the $L^2$-norm:   for all $t\geq 0$
    \begin{equation}\label{bound00}
        \|v(t)\|_{L^2(S_\gamma)}  \leq C(1+t)   \|v_0\|_{L^2(S_\gamma)}. 
            \end{equation}
            The previous bound is optimal in the sense that for all $\eps>0$, there exists $v_0  \in L^2(S_{\gamma})$ such that  for a sequence of times $t \longrightarrow +\infty$
                \begin{equation*} 
        \|v(t)\|_{L^2(S_\gamma)}  \geq C(1+t)^{1-\eps}   \|v_0\|_{L^2(S_\gamma)}. 
            \end{equation*}
         \end{theorem}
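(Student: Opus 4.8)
The plan is to work entirely in the diagonalized Fourier picture provided by Proposition \ref{propeitA}. Recall that, writing $v(t,z)=\sum_n c_n(t)\psi_n(z)$, the pair $U=(f,g)$ with $f(t,\xi)=\sum_n c_n(t)e^{-2i\pi n\xi}$, $g(t,\xi)=\bar f(t,-\xi)$ solves $i\partial_t U = A_{\text{hexa}}(\xi)U$, so that $U(t,\xi)=e^{-itA_{\text{hexa}}(\xi)}U_0(\xi)$, and that $\|v(t)\|_{L^2(S_\gamma)}=\|c(t)\|_{\ell^2}=\|f(t)\|_{L^2([0,1])}$ by the fact that $(\psi_n)$ is a Hilbert basis and by Parseval. \textbf{Global well-posedness} in $L^2(S_\gamma)$ is then immediate: the explicit formula \eqref{expr_expo_A} shows that $e^{-itA_{\text{hexa}}(\xi)}$ is a measurable, and for each fixed $t$ uniformly-in-$\xi$ bounded (by $(1+t)$, see below), family of $2\times 2$ matrices, so $f(t,\cdot)\in L^2$ whenever $f_0\in L^2$; uniqueness and smooth dependence follow because the equation is linear and the Duhamel map is a contraction on $\mathcal C([0,T],L^2)$ for any $T$, exactly as in the proof of Proposition \ref{LWP}.

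\textbf{The upper bound} \eqref{bound00} comes from reading off the entries of \eqref{expr_expo_A}. For $\xi$ bounded away from $0$ and $1$ the quantity $\mu(\xi)$ is bounded below, so the entries $\cos(t\mu)$, $\frac{a}{\mu}\sin(t\mu)$, $\frac{b}{\mu}\sin(t\mu)$ are bounded uniformly in $t$ and $\xi$. Near $\xi=0$ (and symmetrically near $\xi=1$), Proposition \ref{prop_equiv_mu} gives $\mu(\xi)\sim C\xi^2$ while $a(\xi),b(\xi)\to\lambda\neq 0$; the only potentially unbounded entries are $\frac{a(\xi)}{\mu(\xi)}\sin(t\mu(\xi))$ and $\frac{b(\xi)}{\mu(\xi)}\sin(t\mu(\xi))$, which we bound crudely by $|a(\xi)|\,\frac{|\sin(t\mu(\xi))|}{\mu(\xi)}\le |a(\xi)|\,t \lesssim t$, using $|\sin x|\le |x|$. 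Hence $\|e^{-itA_{\text{hexa}}(\xi)}\|_{\mathrm{op}}\lesssim 1+t$ uniformly in $\xi\in[0,1]$, and $\|v(t)\|_{L^2}=\|f(t)\|_{L^2}\le \|e^{-it A_{\text{hexa}}(\cdot)}\|_{L^\infty_\xi}\,\big(\|f_0\|_{L^2}+\|g_0\|_{L^2}\big)\lesssim (1+t)\|v_0\|_{L^2}$, where we used $\|g_0\|_{L^2([0,1])}=\|f_0\|_{L^2([0,1])}$.

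\textbf{The optimality} (lower bound) is where the work lies. The idea is to concentrate initial data near the degenerate frequency $\xi=0$, where the $\frac{1}{\mu}$ factor produces genuine linear-in-$t$ growth, and then quantify how much of that growth survives when one only assumes $f_0\in L^2$ (not $f_0/\mu$ or $(f_0+g_0)/\mu$ in $L^2$). Concretely, I would take a one-parameter family $f_0=f_0^{(\eps)}$ supported in a small interval $[0,\delta]$ and designed so that $f_0+g_0$ does \emph{not} vanish to second order at $\xi=0$ — e.g. choose $f_0$ essentially constant on $[0,\delta_t]$ with $\delta_t\sim t^{-1/2}$ chosen $t$-dependently — and compute, using the explicit formula $f(t)=\cos(t\mu)f_0 - i\,(a f_0+b g_0)\frac{\sin(t\mu)}{\mu}$ from the proof of Theorem \ref{decay_linf}(ii), the size of $\|(af_0+bg_0)\frac{\sin(t\mu)}{\mu}\|_{L^2}$. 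On the window $\mu(\xi)\lesssim 1/t$, i.e. $\xi\lesssim t^{-1/2}$, one has $\frac{\sin(t\mu(\xi))}{\mu(\xi)}\sim t$, so this term has $L^2$ norm $\sim t\cdot\|a f_0+bg_0\|_{L^2(\{\xi\lesssim t^{-1/2}\})}$; balancing this against the normalization $\|f_0\|_{L^2}=1$ and optimizing the profile/width gives growth $\gtrsim t^{1-\eps}$ for every $\eps>0$ (the loss $\eps$ coming from the fact that a genuinely non-integrable $1/\mu$ singularity, which would give exactly $t$, is not allowed when $f_0\in L^2$, so one must accept data that is only logarithmically-or-polynomially close to that borderline). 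I would also need a lower bound on $\|f(t)\|_{L^2}$ rather than merely on one term — this is handled by noting the two terms $\cos(t\mu)f_0$ and $(af_0+bg_0)\frac{\sin(t\mu)}{\mu}$ can be separated by restricting to the frequency region where the second dominates (there $\cos(t\mu)f_0$ is $O(\|f_0\|)$ while the other term is $\gg\|f_0\|$), or alternatively by an almost-orthogonality/time-averaging argument in $t$.

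\textbf{The main obstacle} will be the optimality half: making rigorous the claim that the borderline $1/\mu$ growth can be approached to within $t^{-\eps}$ by genuinely $L^2$ data, which requires a careful $t$-dependent choice of the spectral profile of $f_0$ near $\xi=0$ and a matching lower bound showing no cancellation destroys the growth. The computation is essentially a stationary-phase/scaling estimate localized to $\xi\sim t^{-1/2}$; the bookkeeping of constants and the verification that $g_0(\xi)=\bar f_0(-\xi)$ (hence $f_0+g_0$ at $\xi=0$) behaves as desired under the chosen profile are the delicate points.
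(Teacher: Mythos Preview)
Your treatment of global well-posedness and of the upper bound \eqref{bound00} is essentially the paper's: read off the entries of \eqref{expr_expo_A}, use $|\sin(t\mu)|\le t\mu$ near the zeros of $\mu$, and conclude $\|e^{-itA_{\text{hexa}}(\xi)}\|\lesssim 1+t$ uniformly in $\xi$. Nothing to change there.

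The optimality half, however, has a real gap. The statement asks, for each fixed $\eps>0$, for a \emph{single} initial datum $v_0\in L^2(S_\gamma)$ such that $\|v(t)\|_{L^2}\ge C(1+t)^{1-\eps}\|v_0\|_{L^2}$ (for all large $t$). Your proposed construction --- ``choose $f_0$ essentially constant on $[0,\delta_t]$ with $\delta_t\sim t^{-1/2}$ chosen $t$-dependently'' --- produces a different datum for each time, which only shows that the operator norm of the propagator is $\gtrsim t$; it does not produce one $v_0$ whose orbit grows. A $t$-dependent family cannot close this; you must build a fixed $f_0$ whose profile near $\xi=0$ simultaneously captures the growth at every scale $t$.

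The paper does exactly this with a dyadic ``borderline $L^2$'' profile. Take $f_0$ real and even on $[0,1]$ (so that all $c_k(0)\in\R$, hence $g_0=f_0$ and $a f_0+b g_0=(a+b)f_0\sim 2\lambda f_0$ near $0$), and set
\[
f_0(\xi)=\sum_{k\ge k_0}\frac{2^{k/2}}{k^{\theta}}\,\mathbf 1_{[2^{-(k+1)},\,2^{-k}]}(\xi)\;+\;(\text{the mirror image near }\xi=1),
\qquad \theta>\tfrac12,
\]
which is in $L^2([0,1])$ precisely because $\theta>\tfrac12$ (morally $f_0(\xi)\sim \xi^{-1/2}|\log\xi|^{-\theta}$). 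For $t\sim 2^{2k_1}$ one restricts the integral $\int_0^1 |f_0|^2\big|\tfrac{\sin(t\mu)}{\mu}\big|^2\,d\xi$ to the block $\xi\in[2^{-(k_1+1)},2^{-k_1}]$, where $\mu(\xi)\sim C\xi^2$ gives $t\mu(\xi)\lesssim 1$ and hence $\big|\tfrac{\sin(t\mu)}{\mu}\big|\sim t$; summing the remaining blocks yields $\|U(t)\|_{L^2}^2\gtrsim t^2\,k_1^{1-2\theta}\sim t^2(\log t)^{1-2\theta}$, i.e.\ $\|v(t)\|_{L^2}\gtrsim t(\log t)^{-(\theta-1/2)}\ge c\,t^{1-\eps}$. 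The separation of the two terms in $f(t)=\cos(t\mu)f_0-i(a+b)f_0\,\tfrac{\sin(t\mu)}{\mu}$ is then trivial, since the first is $O(\|f_0\|_{L^2})$ while the second is $\gg\|f_0\|_{L^2}$.

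In short: keep your upper-bound argument, but for optimality replace the $t$-dependent bump by a fixed, real, even $f_0$ whose $L^2$ mass is spread dyadically so that $|f_0(\xi)|^2\sim \xi^{-1}|\log\xi|^{-2\theta}$ near $0$; this is the missing idea.
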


\begin{proof}
 We denote again
    $$ {U_0}(\xi) = \begin{pmatrix} {f}_0(\xi) \\ {g}_0(\xi)\end{pmatrix}, \qquad {f}_0(\xi) = \sum_{k\in\Z} c_k(0)e^{- 2i\pi k \xi}, \quad {g}_0(\xi) = \sum_{k\in\Z} \overline{c_k(0)}e^{- 2i\pi k \xi}.$$
 We compute with equality \eqref{expr_sg_fg} that
    \begin{multline*}
       \|{U}(t,\xi)\|_{L^2_\xi}^2 = \big\| e^{-it A_{\text{hexa}}(\xi) }\begin{pmatrix} {f}_0(\xi) \\[2pt] {g}_0(\xi) \end{pmatrix} \big\|_{L^2_\xi}^2\lesssim  \big\| \cos(t\mu){f}_0 -i\big(a{f}_0+b{g}_0\big)\frac{\sin(t\mu)}{\mu}\big\|_{L^2_\xi}^2  \\
	\lesssim \int_0^1 |\cos(t\mu(\xi)){f}_0(\xi)|^2d\xi + \int_0^1 \abs{\pac{a(\xi){f}_0(\xi)+b(\xi){g}_0(\xi)}\frac{\sin(t\mu(\xi))}{\mu(\xi)}}^2d\xi.  
    \end{multline*}
The first integral is bounded by $\dis C\| {f}_0\|_{L^2_\xi}^2 = C \|v_0 \|_{L^2_z}^2$. \medskip

$\bullet$ Let us prove \eqref{bound00}. Here we only use that $a$ and $b$ are bounded functions, and we use the estimate $\dis |\sin(t\mu(\xi))| \leq t\mu(\xi)$, and the result follows. \medskip

$\bullet$ Let $k_0\geq 2$ so that $2^{-k_0}<1 /2$. Define the indicator function $\dis J_k= {\bf 1}_{[{2^{-k-1}}, {2^{-k}}]}$ and consider the function defined on $[0,1]$ by
$${f}_0(\xi)=\sum_{k=k_0}^{+\infty}{k^{-\theta}}{2^{k/2}}J_k(\xi)+ \sum_{k=k_0}^{+\infty}{k^{-\theta}}{2^{k/2}}J_k(1-\xi)$$
which is extended as an even $1-$periodic function. Assume that $\theta>1/2$, then ${f}_0 \in L^2([0,1])$. Since~$f_0$ is a real-valued even function, we have $c_k(0) \in \R$ and thus ${f}_0={g}_0$. There exist two functions $\alpha, \beta$ such that $a(\xi)= \lambda +\alpha(\xi)$ and $b(\xi)=\lambda+\beta(\xi)$, and if $\delta>0$ is small enough, for all $|\xi| \leq \delta$, $|\alpha(\xi)| \leq c |\xi|$ and $|\beta(\xi)| \leq c |\xi|$. As a consequence 
 \begin{equation*} 
    \big|a(\xi){f}_0(\xi)+b(\xi){g}_0(\xi)\big|  \geq 2\lambda |{f}_0(\xi)|  -2c |\xi| |{f}_0(\xi)| \geq \lambda |{f}_0(\xi)|,
        \end{equation*}
so that we have 
 \begin{eqnarray*}
       \|{U}(t,\xi)\|_{L^2_\xi}^2 &\geq &\int_0^1 \abs{\pac{a(\xi){f}_0(\xi)+b(\xi){g}_0(\xi)}\frac{\sin(t\mu(\xi))}{\mu(\xi)}}^2d\xi \\
       &\geq &c   \int_0^\delta | {f}_0(\xi)  |^2 \Big| \frac{\sin(t\mu(\xi))}{\mu(\xi)}\Big|^2d\xi.
    \end{eqnarray*}
Let $k_1\geq k_0$ be such that  $2^{2k_1} \leq t \leq 2^{2k_1+1}$. Therefore, for $\xi \in {[{2^{-k-1}}, {2^{-k}}]}$, we have  $|\sin(t\mu(\xi))| \geq c |t\mu(\xi)| \geq c 2^{2(k_1-k)}$, so that $\dis  \Big| {\sin\big(t\mu(\xi)\big)}/{\mu(\xi)}\Big|^2 \geq c2^{4k_1}$. Hence, 
\begin{eqnarray*}
 \int_0^\delta | f_0(\xi)  |^2 \Big| \frac{\sin(t\mu(\xi))}{\mu(\xi)}\Big|^2d\xi   
&\geq &c 2^{4k_1}\int_0^\delta  \sum_{k=k_1}^{+\infty}k^{-2\theta} {2^{k}}J_k(\xi) d\xi \\
&\geq &c 2^{4k_1}   \sum_{k=k_1}^{+\infty}{k^{-2\theta}} \\
&= &c 2^{4k_1}k_1^{1-2\theta} \geq c t^{2(1-\eps)},
\end{eqnarray*}
which was the claim.
\end{proof}

\appendix

\section{Symmetries of~(\ref{LLL})}
For $\alpha \in \C$, define  the magnetic translation
\begin{equation*} 
R_{\alpha} : u (z) \mapsto u(z+\alpha) \EE\Big( (\overline z \alpha - z \overline{\alpha})/2\Big).
\end{equation*}
Let us recall some material from \cite[Appendix A]{SchTho}. We have 
    \begin{equation*} 
    R_{\alpha}u=e^{i(\alpha \cdot \Gamma)}u,
       \end{equation*}  
 with $\alpha=\alpha_1+i\alpha_2$ and $\alpha \cdot \Gamma := \alpha_1 \Gamma_1 +\alpha_2 \Gamma_2$, where $\Gamma_1$ and $\Gamma_2$ are defined by
  \begin{equation}\label{infinitesimal}
\Gamma_1=i(z -\partial_z-\frac{\ov z}2), \qquad \Gamma_2=(z +\partial_z+\frac{\ov z}2).
\end{equation}
 Notice that the operators $R_{\alpha}$ do not commute in general. Indeed,    for all $\alpha, \beta \in \C$ 
\begin{equation}\label{commut}
R_{\alpha}  R_\beta = e^{ {\ov \alpha} \beta-\alpha \ov{\beta} } R_{\beta} R_{\alpha}. 
\end{equation}
We also record the following formulas
\begin{equation}
\label{propRalpha}
\begin{split}
& R_\alpha R_\beta = R_{\alpha + \beta} \qquad \mbox{if $\alpha,\beta$ are colinear} \\
& (R_\alpha)^{-1} = R_{-\alpha} \\
& R_\alpha [ u(-z) ] = [R_{-\alpha} u](-z).
\end{split}
\end{equation}
Furthermore, 
\begin{align}\label{com0}
& R_\alpha \Pi u = \Pi R_\alpha u \qquad \mbox{if $|u(x)| \lesssim \langle x \rangle^M$} \\
& (R_\alpha)^* = R_{-\alpha} \qquad \mbox{on $L^2$}, \nonumber
\end{align}
and 
\begin{equation}\label{r-1}
R_{-\beta} ( \alpha \cdot \Gamma ) R_\beta =( \alpha \cdot \Gamma )     -2 \Im(\alpha \ov{\beta}).
\end{equation}

\section{Proof of Proposition \ref{prop41}}\label{appendix-B}

\label{App1}
 Let $u \in \mathcal{E}_{\tau,\gamma}$, and find first a fundamental cell $Q$ with respect to $\mathcal{L}_{\tau, \gamma}$ such that $u$ does not vanish on $\partial Q$. Let $\{ z_k \}_{1\leq k\leq N}$ be the zeroes of $u$ on $Q$, and write
    $$
    \dis u(z) = \EE\Big( -|z|^2/2 \Big) \varphi(z) \prod_{k=1}^N \Theta_{\tau}\Big(\frac{1}{\gamma}(z-z_{k})\Big).
    $$
    By construction, $\varphi$ does not vanish on $\mathbb{C}$, and is entire; thus, it can be written $\varphi = \EE\big(\Psi\big)$, with $\Psi$ entire. Furthermore, denoting $A = \mathbb{C} \setminus \cup_{a \in \mathcal{L}_{\tau,\gamma}} B(a,\eps)$ for $\eps>0$ small enough, it is easy to see that $|\Theta_\tau(z)| \geq C \EE\big({-C|z|^2}\big)$ on~$A$. Since $u$ is bounded on $\mathbb{C}$, this implies that $\mathfrak{Re} \Psi (z) \leq C |z|^2$ on~$A$, hence on $\mathbb{C}$ by the maximum principle; but the Borel-Caratheodory theorem implies then that $\Psi$ is actually a polynomial of degree at most 2. Therefore, we can write
\begin{equation}\label{eqA1}
    \dis u(z) = \lambda \EE\Big( {-|z|^2/2 + \alpha z^2 + \beta z}\Big) \prod_{k=1}^N\Theta_{\tau}\Big(\frac{1}{\gamma}(z-z_{k})\Big),
\end{equation}
    where $\alpha,\beta,\lambda \in \mathbb{C}$. We take for simplicity $\lambda =1$ in the following. The first periodicity condition of~$\mathcal{E}_{\tau,\gamma}$ requires that $u(z+\gamma) = \EE\big( {\gamma}(z-\overline{z})/2\big) u(z)$. Given the above form of $u$ and~\eqref{star}, this is equivalent to
    \begin{multline*}
(-1)^N     \EE\Big( -\frac{1}{2}|z|^2 - \frac{\gamma}{2} \ov{z} + \alpha z^2 + z(-\frac{\gamma}{2} + 2 \alpha \gamma+ \beta) + (\alpha \gamma+ \beta - \frac{\gamma}{2})\gamma \Big) \\
     =  \EE\Big( \frac{\gamma}{2}(z-\ov{z})\Big)  \EE\Big({-\frac{1}{2}|z|^2+\alpha z^2 + \beta z}\Big).
    \end{multline*}
    Identifying the coefficients of the polynomials in the exponential, we see that this equality holds if and only if $\alpha = 1/2$ and $\beta = -{iN\pi}/{\gamma} + {2ij\pi}/{\gamma}$, with $j \in \mathbb{Z}$. The second periodicity condition of $\mathcal{E}_{\tau,\gamma}$ demands that $\dis u(z+\gamma\tau) = \EE\big({{\gamma}(\ov{\tau}z-\tau \ov{z})/2}\big)u(z)$. For~$u$ as above, using~\eqref{star}, which gives
    $$\Theta_{\tau}\Big(\frac{1}{\gamma}(z-z_{k}+\gamma \tau)\Big)   =\Theta_{\tau}\Big(\frac{1}{\gamma}(z-z_{k})+ \tau\Big)   =- e^{-i \pi \tau}e^{- {2i\pi} (z-z_k)/ {\gamma}}\Theta_{\tau}\Big(\frac{1}{\gamma}(z-z_{k})\Big) ,$$
     the periodicity condition is equivalent to
    \begin{multline*}
    \EE\Big(-\frac{1}{2}|z|^2 + \frac{1}{2}z^2 -\frac{\gamma}{2} \tau \ov{z} + z(-\frac{\gamma}{2} \ov{\tau} - \frac{2iN\pi}{\gamma} + \gamma\tau + \beta) \Big)\\
  \times   \EE\Big( (-\frac{\gamma^2}{2} |\tau|^2 + Ni\pi - Ni\pi \tau + \frac{2i\pi}{\gamma} S_N + \frac{\gamma^2}{2} \tau^2 + \beta \gamma \tau)\Big)\\
     = \EE\Big( \frac{\gamma}{2}(\ov{\tau} z - \tau \ov{z})\Big) \EE\Big(-\frac{1}{2} |z|^2 + \frac{1}{2} z^2 + \beta z\Big),
      \end{multline*}
    where $S_N=\sum_{k=1}^Nz_k$. Identifying the coefficients in the polynomials, this equality holds if and only if $\tau_2 = {N\pi}/{\gamma^2}$ (this comes from the identification of the factor $z$) and
    $$
    \frac{\gamma^2}{2} \tau^2 - \frac{\gamma^2}{2} |\tau|^2 + Ni\pi - Ni\pi\tau + \frac{2i\pi}{\gamma} S_N + \beta \gamma\tau = 2i \ell  \pi, \quad \mbox{with $\ell \in \mathbb{Z}$},
    $$
    coming from the constant term. Since $\tau_2 = {N\pi}/{\gamma^2}$, this simplifies to $N + {2}S_N/{\gamma} - N\tau + 2j\tau = 2\ell$, with $\ell \in \mathbb{Z}$. Therefore, $S_N = {\gamma N}(\tau-1)/2+\gamma \ell-\gamma \tau j$. Overall, we found that~$u$ reads
    $$
    u(z) = \EE\Big( {-\frac{1}{2}|z|^2 + \frac{1}{2} z^2 + \frac{i\pi z}{\gamma}(-N + 2j)}\Big) \prod_{k=1}^N \Theta_{\tau}\Big(\frac{1}{\gamma}(z-z_{k})\Big),$$
with
$$ \sum_{k=1}^N z_k = \frac{\gamma N}{2}(\tau-1)+\gamma \ell-\gamma \tau j. $$

There remains to show that $\mathcal{E}_{\tau,\gamma}$ is a vector space of dimension $N$. This follows from two observations: on the one hand, it is a vector space by definition, and on the other hand the number of free parameters in the formula for $u$ is $N$ (the multiplicative constant, and the $N$ zeros which have a prescribed sum).

\section{Useful formulas}

\subsection{Gaussian integral} If $a>0$, $b \in \mathbb{R}$,
\begin{equation}
\label{GaussianIntegral}
\int_{\R} \EE\Big( {-at^2+bt}\Big) dt = \sqrt{\frac{\pi}{a}} \EE\Big(    \frac{b^2}{4a}\Big).
\end{equation}

\subsection{Poisson summation}
Let us recall the Poisson summation formula (see for instance \cite[Chapter VII]{SteinWeiss}): for  $g \in \mathscr{S}(\R)$  set 
$$\hat{g}(n)=\int_{-\infty}^{+\infty}g(x)e^{-2i \pi nx}dx,$$
then for all $ x \in \R$
\begin{equation}\label{PF}
\sum_{n \in \Z}g(x+n)=\sum_{n \in \Z}\hat{g}(n)e^{2i \pi n x}.
\end{equation}
In particular, for $\alpha >0$ and  $g : x \longmapsto \EE\big({-\alpha x^2}\big)$, and by an analytical extension,  we get that for all $z \in \C$
\begin{equation}\label{PF1}
\sum_{n \in \Z} \EE\Big({-\alpha (z+n)^2}\Big) =\sqrt{\frac{\pi}{\alpha}} \sum_{n \in \Z} \EE\Big( {-{\pi^2 n^2 }/{\alpha}+2 i \pi n z}\Big).
\end{equation}

\section{Some conservation laws for \eqref{lin_LLL_hexa}}

\begin{lemme}\label{lemme_conserv_hyp1}
	The set 
$$ \dis \mathcal{A} = \paa{ u = \sum_{n\in\Z} c_n\psi_n, \;\; \Re\pa{\sum_{n\in\Z}c_n} = \Re\pa{\sum_{n\in\Z}nc_n} = 0 } $$
is preserved by the flow of \eqref{lin_LLL_hexa}. More precisely, suppose $\dis v_0 = \sum_{n\in\Z} c_n(0)\psi_n \in \mathcal{F}_\gamma$  and consider the solution $\dis v(t)= \sum_{n\in\Z} c_n(t)\psi_n \in \mathcal{F}_\gamma$ of \eqref{lin_LLL_hexa} with initial data~$v_0$. For an integer $j \geq 0$, we denote 
$$\dis K_j(t) = \sum_{n\in\Z}{n^jc_n(t)} = R_j(t) +iI_j(t)$$
 with $R_j,I_j \in \R$. Then:
    \begin{enumerate}[$(i)$]
	\item For $0 \leq j \leq 3$ an integer, the real part of $K_j(t)$ is constant: $R_j(t) = R_j(0).$ Furthermore, if $j\in\{ 0,1\}$ and $R_j(0) = 0$, then the imaginary part $I_j(t)$ is also constant.
	\item The real part of $K_j(t)$ is \emph{not always} constant for $j = 4.$
\end{enumerate}
\end{lemme}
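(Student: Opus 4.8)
The plan is to move everything to the Fourier variable. By Lemma~\ref{lemmaformula}, in the variable $f(t,\xi)=\sum_n c_n(t)e^{-2i\pi n\xi}$ the linearized flow decouples frequency by frequency into the closed system $i\partial_t f = a(\xi)f + b(\xi)g$ with $g(t,\xi)=\overline{f}(t,-\xi)$, where $a,b$ are real, even and smooth. Since $c_n(t)=\int_0^1 e^{2i\pi n\xi}f(t,\xi)\,d\xi$, the moment $K_j(t)=\sum_n n^j c_n(t)$ equals $(-2i\pi)^{-j}\,\partial_\xi^j f(t,\xi)\big|_{\xi=0}$; I set $P_j(t):=\partial_\xi^j f(t,0)$, so $P_j(t)=(-2i\pi)^j K_j(t)$ and the whole problem reduces to the $P_j$. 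Differentiating $i\partial_t f=af+bg$ a total of $j$ times in $\xi$, evaluating at $\xi=0$, and using that $a,b$ are even (hence $a^{(m)}(0)=b^{(m)}(0)=0$ for odd $m$) together with $\partial_\xi^m g(t,0)=(-1)^m\overline{P_m(t)}$, I obtain the triangular system
\[
i\dot P_j=\sum_{0\le m\le j,\ m\ \mathrm{even}}\binom jm\Big(a^{(m)}(0)\,P_{j-m}+(-1)^j\,b^{(m)}(0)\,\overline{P_{j-m}}\Big).
\]
The only inputs needed about the coefficients are from Proposition~\ref{prop_equiv_mu}: $a(0)=b(0)=\lambda>0$, $a''(0)=b''(0)\in\R$, and crucially $a^{(4)}(0)\ne b^{(4)}(0)$.

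For $j\in\{0,2\}$ only $m\in\{0,2\}$ contribute, and there $a^{(m)}(0)=b^{(m)}(0)$, so the right-hand side equals $2\sum_m\binom jm a^{(m)}(0)\,\Re P_{j-m}$, which is real; hence $\Re\dot P_j=0$, and since $(-2i\pi)^{-j}\in\R$ for even $j$ this says $R_j(t)=\Re K_j(t)$ is constant. For $j\in\{1,3\}$, again only $m\in\{0,2\}$ occur but now $(-1)^j=-1$, so the right-hand side is $2i\sum_m\binom jm a^{(m)}(0)\,\Im P_{j-m}$, purely imaginary; hence $\Im\dot P_j=0$, and since $(-2i\pi)^{-j}$ is purely imaginary for odd $j$, $R_j(t)$ is again constant. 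This proves $R_j(t)=R_j(0)$ for $0\le j\le 3$, and in particular that $\mathcal{A}=\{R_0=R_1=0\}$ is preserved by the flow. If moreover $j\in\{0,1\}$ and $R_j(0)=0$, then the component of $P_j$ controlling $R_j$ (namely $\Re P_0$, resp.\ $\Im P_1$) is both constant and zero at $t=0$, hence identically zero, so the forcing term in $i\dot P_j$ vanishes, $P_j$ is constant, and therefore so is $I_j$.

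For $j=4$ the index $m=4$ enters, and since $a^{(4)}(0)\ne b^{(4)}(0)$ the term $a^{(4)}(0)P_0+b^{(4)}(0)\overline{P_0}=(a^{(4)}(0)+b^{(4)}(0))\Re P_0+i(a^{(4)}(0)-b^{(4)}(0))\Im P_0$ is no longer a real multiple of $\Re P_0$. Taking imaginary parts in the ODE for $P_4$ gives $\frac{d}{dt}\Re P_4=(a^{(4)}(0)-b^{(4)}(0))\,\Im P_0(t)$, while the $j=0$ equation gives $\Im P_0(t)=\Im P_0(0)-2\lambda\,\Re P_0(0)\,t$, which is generically non-constant; hence $R_4(t)$ need not be constant. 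Concretely, $v_0=\psi_0$ gives $f_0\equiv1$, so $P_0(0)=1$ and $P_j(0)=0$ for $j\ge1$, and one computes $R_4(t)=-\dfrac{\lambda\,(a^{(4)}(0)-b^{(4)}(0))}{16\pi^4}\,t^2\not\equiv0$, which establishes $(ii)$.

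The main obstacle is not conceptual but organizational: correctly tracking, in the Leibniz expansion of $\partial_\xi^j(af+bg)$ at $\xi=0$, exactly which derivatives $a^{(m)}(0)$ and $b^{(m)}(0)$ coincide ($m=0,2$) and which do not ($m=4$) — this discrepancy, a consequence of the special arithmetic of the hexagonal lattice recorded in Proposition~\ref{prop_equiv_mu}, is precisely what makes $0\le j\le 3$ conserved and $j=4$ not — together with the parity sign $(-1)^j$ coming from $g=\overline{f}(\cdot,-\xi)$ and the fact that $\Im P_0=I_0$ itself drifts in time. One minor technical point: $K_j(t)$ is literally defined only when $f_0\in H^j([0,1])$, i.e.\ $\sum_n|n|^{2j}|c_n(0)|^2<\infty$; since $a,b,\mu$ are smooth this Sobolev regularity is propagated by the flow, so the $P_j$ and all identities above are legitimate, and the datum $v_0=\psi_0$ used in $(ii)$ is of course smooth.
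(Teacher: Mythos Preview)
Your proof is correct and reaches the same triangular ODE system for the moments as the paper, but by a genuinely different and more economical route. The paper works on the sequence side: it multiplies the coefficient equation \eqref{Lin_coeff_hexa} by $k^j$, sums over $k$, and expands $(k-m+m)^j$ by the binomial formula, producing coefficients built from the explicit moments $T_n=\sum_q q^n e^{-\pi^2q^2/\gamma^2}$ and $T_n^{\text{odd}}$. You instead observe that $K_j=(-2i\pi)^{-j}\partial_\xi^j f(t,0)$ and apply Leibniz to $i\partial_t f=af+bg$; the binomial structure is then automatic, and the only inputs you need are the values $a^{(m)}(0),\,b^{(m)}(0)$ for $m\le 4$, which are exactly what Proposition~\ref{prop_equiv_mu} already records ($a(0)=b(0)=\lambda$, $a''(0)=b''(0)$, $a^{(4)}(0)\ne b^{(4)}(0)$). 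The two computations are Fourier duals of one another --- your derivatives $\ell^{(n)}(0),\,h^{(n)}(0)$ are, up to $(-2i\pi)^n$, precisely the paper's $T_n,\,T_n^{\text{odd}}$ --- but your presentation avoids redoing those convolution sums and makes transparent \emph{why} the obstruction first appears at $j=4$: it is the first level at which $a^{(m)}(0)\ne b^{(m)}(0)$ enters the Leibniz expansion. Your explicit witness $v_0=\psi_0$ with $R_4(t)=-\dfrac{\lambda\big(a^{(4)}(0)-b^{(4)}(0)\big)}{16\pi^4}\,t^2$ is a nice touch that the paper leaves implicit.
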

 
Recall that by \eqref{def1.1}, $\dis \sum_{n \in \Z} \psi_n= \kappa \Phi_0$. Then  if $  \dis v=\sum_{n \in \Z}       c_n \psi_n $, for all $j \geq 0$ we have
$$  \Gamma_1^j  v=\sum_{n \in \Z} \big(\frac{2 n \pi}\gamma\big)^j     c_n \psi_n, $$
therefore, 
\begin{equation*} 
\Re\pa{\sum_{n\in\Z}n^jc_n} =\frac{\ov{\kappa} \gamma^j}{2(2\pi)^j} \int_{S_{\gamma}}  \Gamma^j_1  (v+\widetilde{v}) \ov{\Phi_0} \,dL. 
\end{equation*}

\begin{proof}
We multiply equation \eqref{Lin_coeff_hexa} by $n^j$, and take the sum over $n \in \Z$. Thanks to expression \eqref{convol} we obtain
\begin{multline} \label{sum_Kj}
i\partial_t K_j + \lambda K_j =  \\
  =  \sum_{m\in\Z}c_m\sum_{n\in\Z} n^j {(C'_{L} L- C_L^{\text{odd}}L^{\text{odd}})}_{n-m} +  \sum_{m\in\Z}\ov{c_m}\sum_{n\in\Z} n^j {(C'_{M}M-C_M^{\text{odd}}M^{\text{odd}})}_{n-m},
\end{multline}
where $L$, $M$, $L^{\text{odd}}$ and $M^{\text{odd}}$ are defined in \eqref{def_op_L_rect}, \eqref{def_op_M_rect}, \eqref{def-Lodd} and \eqref{defModd} respectively, and which we recall here:
\begin{equation*}  
L_n = \EE\Big(-\frac{\pi^2n^2}{\gamma^2}\Big), \qquad M_n = \sum_{p\in \Z}\EE\Big({-\frac{\pi^2p^2}{\gamma^2}}\Big) \EE\Big({-\frac{\pi^2}{\gamma^2}(n-p)^2}\Big),
\end{equation*}	
\begin{equation*} 
    L_m^{\text{odd}} = \delta_{m\in2\Z+1}\EE\Big({-\frac{\pi^2m^2}{\gamma^2}} \Big), 
    \end{equation*}
\begin{equation*} 
	M_n^{\text{odd}} =\sum_{k \in 2\Z+1} \EE\Big(-\frac{\pi^2}{\gamma^2}k^2\Big) \delta_{n\in2\Z}\EE\Big(-\frac{\pi^2}{\gamma^2}(n-k)^2\Big).
\end{equation*}

For $\ell \in \N$, we set 
$$ T_\ell := \sum_{q\in\Z}q^\ell\EE\Big( {-\frac{\pi^2q^2}{\gamma^2}}\Big), \qquad T_\ell^{\text{odd}}:=\sum_{q\in2\Z+1}q^\ell\EE\Big( {-\frac{\pi^2q^2}{\gamma^2}}\Big),$$
and note that  $T_\ell = T_\ell^{\text{odd}}=0$ if $\ell \in 2\Z+1$. \medskip

Let us first show that we can rewrite \eqref{sum_Kj} as
\begin{equation}\label{result}
 i\partial_t K_j + \lambda K_j = \sum_{\ell=0}^j \dbinom{j}{\ell}\, \Big( d_{j-\ell}\, K_\ell + s_{j-\ell}\, \ov{K_\ell}\Big), 
 \end{equation}
where $d_p := C'_L T_p - C_L^{\text{odd}} T^{\text{odd}}_p$ and $\dis s_p := \sum_{k=0}^p \dbinom{p}{k}\big(C'_M T_{p-k}T_k - C_M^{\text{odd}} T^{\text{odd}}_{p-k}T^{\text{odd}}_k\big)$. We can observe that $d_\ell = s_\ell=0$ if $\ell \in 2\Z+1$. \medskip

We compute by a binomial formula
\begin{eqnarray*}
\sum_{n\in\Z} n^j L_{n-m} &= &\sum_{n\in\Z} (n-m+m)^j \EE\Big( {-\frac{\pi^2(n-m)^2}{\gamma^2}}\Big)\\
&= &\sum_{\ell=0}^j \dbinom{j}{\ell} m^{j-\ell} \sum_{n\in\Z}(n-m)^\ell\EE\Big( {-\frac{\pi^2(n-m)^2}{\gamma^2}}\Big) \\
&=& \sum_{\ell=0}^j \dbinom{j}{\ell} m^{j-\ell} T_\ell.
\end{eqnarray*}
Similarly, 
\begin{equation*}
    \sum_{n\in\Z} n^j L_{n-m}^{\text{odd}} = \sum_{\ell=0}^j \dbinom{j}{\ell} m^{j-\ell} T_\ell^{\text{odd}}.
\end{equation*}
As a consequence we obtain
$$  \sum_{m\in\Z}c_m\sum_{n\in\Z} n^j {(C'_LL-C_L^{\text{odd}}L^{\text{odd}})}_{n-m} = \sum_{\ell=0}^j \dbinom{j}{\ell}   d_{j-\ell}\,K_{\ell},$$ 
which gives the first part of \eqref{result}. For the second sum we argue in the same way, with one extra convolution. For all $r\in\Z$,
$$ M_r = \sum_{k\in\Z} L_k\,L_{r-k}, \qquad M^{\text{odd}}_r = \sum_{k\in\Z} L^{\text{odd}}_k\,L^{\text{odd}}_{r-k}. $$
Expanding $r^p = \big(k+(r-k)\big)^p$ and using $\dis \sum_{q\in\Z} q^b L_q = T_b$, $\dis \sum_{q\in\Z} q^b L^{\text{odd}}_q = T^{\text{odd}}_b$, we obtain for every $p\geq 0$
$$ \sum_{r\in\Z} r^p M_r = \sum_{k=0}^p \dbinom{p}{k} T_k\,T_{p-k}, \qquad \sum_{r\in\Z} r^p M^{\text{odd}}_r = \sum_{k=0}^p \dbinom{p}{k} T^{\text{odd}}_k\,T^{\text{odd}}_{p-k}. $$
Then, exactly as for $L$, the substitution $n=(n-m)+m$ yields
$$ \sum_{n\in\Z} n^j M_{n-m} = \sum_{p=0}^j \dbinom{j}{p}\, m^{j-p}\sum_{k=0}^p \dbinom{p}{k}\, T_k\,T_{p-k}, $$
and similarly for $M^{\text{odd}}$. Multiplying by $\ov{c_m}$ and summing over $m\in\Z$ and combining with \eqref{sum_Kj} and the first sum we get the result \eqref{result}.
\medskip

Let us show that  $d_0=2\lambda$. Evaluating \eqref{def_fct_lb} and \eqref{def_fct_h} at $\xi=0$,
$$
T_0 = \sum_{q\in\Z}\EE\Big(-\frac{\pi^2 q^2}{\gamma^2}\Big) = \ell(0),
\qquad
T_0^{\text{odd}} = \sum_{q\in 2\Z+1}\EE\Big(-\frac{\pi^2 q^2}{\gamma^2}\Big) = h(0).
$$
Since $C'_L=\frac{2}{\gamma\sqrt{\pi}}T_0$ and $C_L^{\text{odd}}=\frac{4}{\gamma\sqrt{\pi}}T_0^{\text{odd}}$, we obtain
$$
d_0=C'_L\, T_0 - C_L^{\text{odd}}\, T_0^{\text{odd}}
= \frac{2}{\gamma\sqrt{\pi}}\,T_0^{2} - \frac{4}{\gamma\sqrt{\pi}}\big(T_0^{\text{odd}}\big)^{2}
= \frac{2}{\gamma\sqrt{\pi}}\Big(\ell^2(0) - 2\,h^2(0)\Big) = 2\lambda,
$$
the last equality being the definition \eqref{def_lambda_hexa} of $\lambda$. Similarly, $s_0=\lambda$.
\medskip

As a consequence, we get the following formulas: \smallskip

\noindent $\bullet$ For $j=0$ or $j=1$ :
$$ i \partial_t K_j  = \lambda (K_j + \ov{K_j}),$$
so that $\partial_tR_j = 0$ and $-\partial_t I_j = 2\lambda R_j.$ \\
$\bullet$ For $j=2$ :
$$ i \partial_t K_2   = \lambda (K_2 + \ov{K_2}) + 2D_0(K_0 + \ov{K_0}), \quad D_0:= \frac{1}{\gamma\sqrt{\pi}}(T_0T_2-2T_0^{\text{odd}}T_2^{\text{odd}}),$$
so that the equation on $K_2=R_2+iI_2$ becomes $\partial_tR_2 =0$ and $-\partial_tI_2 = 2\lambda R_2 + 4D_0R_0$. \\
$\bullet$ For $j=3$ :
$$ i\partial_tK_3  = \lambda (K_3 + \ov{K_3}) + 6D_0(K_1 + \ov{K_1}), $$
so that $\partial_tR_3 =0$ as before. \\
$\bullet$ For $j=4$ :
$$ i\partial_tK_4  = \lambda (K_4 + \ov{K_4}) + 12D_0(K_2 + \ov{K_2}) + D_2(K_0 + \ov{K_0}) + D_3 \ov{K_0},$$
where 
$$ D_2 = \frac{2}{\gamma\sqrt{\pi}}\pa{T_0 T_4-2T_0^{\text{odd}}T_4^{\text{odd}}}, \qquad D_3 = \frac{6}{\gamma\sqrt{\pi}}\pa{T_2^2-2(T_2^{\text{odd}})^2} \neq  0.$$
Then, $\partial_tR_4 = - D_3I_0 $ and $-\partial_tI_4 = 2\lambda R_4 +24D_0 R_2 +(2D_2+D_3)R_0, $ so that $R_4$ is not preserved when $I_0 \neq 0.$ \\
$\bullet$  Finally, for $j\ge5$ equation \eqref{result} reads
$$ i\partial_tK_j  = \lambda (K_j + \ov{K_j}) + j(j-1)D_0(K_{j-2} + \ov{K_{j-2}}) +\sum_{\ell=0}^{j-4} \dbinom{j}{\ell}\, \Big( d_{j-\ell}\, K_\ell + s_{j-\ell}\, \ov{K_\ell}\Big), $$
which we record for completeness.
\end{proof}
   


\end{document}